\DeclareRobustCommand{\coprod}{\mathop{\text{\fakecoprod}}}
\newcommand{\fakecoprod}{%
  \sbox0{$\prod$}%
  \smash{\raisebox{\dimexpr.9625\depth-\dp0}{\scalebox{1}[-1]{$\prod$}}}%
  \vphantom{$\prod$}%
}
\newcommand{\thetaLaurent}{\theta_{\mathrm{L}}}
\newcommand{\mr}[1]{\mathrm{#1}}
\newcommand{\mf}[1]{\mathfrak{#1}}
\newcommand{\mc}[1]{\mathcal{#1}}
\newcommand{\mb}[1]{\mathbb{#1}}
\newcommand{\ms}[1]{\mathsf{#1}}
\newcommand{\abcdmat}{\smatrix{a&b\\c&d}}
\newcommand{\Z}{\mb{Z}}
\newcommand{\newE}{e}
\newcommand{\newf}{\delta}
\newcommand{\Q}{\mb{Q}}
\newcommand{\R}{\mb{R}}
\newcommand{\C}{\mb{C}}
\newcommand{\Chains}{\mathrm{Chains}}
\newcommand{\varK}{\mathsf{K}}
\newcommand{\barK}{\overline{\mathsf{K}}}
\newcommand{\varC}{\mathsf{k}}
\newcommand{\Lie}{\mathrm{Lie}}
\newcommand{\barC}{\overline{\mathsf{k}}}
\newcommand{\Cones}{\mathsf{Ch}}
\newcommand{\Rays}{S^1}
\newcommand{\Symb}{\mathsf{Symb}}
\newcommand{\varleft}{\langle}
\newcommand{\varright}{\rangle}
\newcommand{\zp}{\mb{Z}_p}
\newcommand{\Gm}{\mathbb{G}_m}
\newcommand{\A}{\mathrm{supp}}
\newcommand{\qp}{\mb{Q}_p}
\newcommand{\G}{\mb{G}}
\newcommand{\gm}{\G_m}
\newcommand{\F}{\mb{F}}
\newcommand{\varKs}{\varK_2(N)}
\newcommand{\bGamma}{\tilde{\Gamma}}
\newcommand{\ifs}{\text{if }}
\newcommand{\vgamma}{\vec{\gamma}}
\newcommand{\inv}{\dagger}
\newcommand{\et}{\mr{\acute{e}t}}
\newcommand{\id}{\mr{id}}
\newcommand{\SL}{\mathrm{SL}}
\newcommand{\GL}{\mathrm{GL}}
\newcommand{\ord}{\mathrm{ord}}
\newcommand{\ps}[1]{\llbracket #1 \rrbracket}
\newcommand{\smatrix}[1]{\left(\begin{smallmatrix}#1\end{smallmatrix}\right)}
\newcommand{\Pmatrix}[1]{\begin{pmatrix}#1\end{pmatrix}}
\DeclareMathOperator{\Hom}{Hom} 
\DeclareMathOperator{\End}{End} \DeclareMathOperator{\Gal}{Gal}
\DeclareMathOperator{\coker}{coker} 
\DeclareMathOperator{\im}{im} 
\DeclareMathOperator{\Ext}{Ext} \DeclareMathOperator{\cha}{char}
\DeclareMathOperator{\Spec}{Spec} \DeclareMathOperator{\Tr}{Tr}
\DeclareMathOperator{\rank}{rank}
\DeclareMathOperator{\Norm}{N}
\newtheorem{theorem}{Theorem}[subsection]
\newtheorem{proposition}[theorem]{Proposition}
\newtheorem{lemma}[theorem]{Lemma}
\newtheorem{corollary}[theorem]{Corollary}
\newtheorem*{thm}{Theorem}
\newtheorem{conjecture}[theorem]{Conjecture}
\theoremstyle{definition}
\theoremstyle{remark}
\newtheorem{remark}[theorem]{Remark}
\newtheorem*{ack}{Acknowledgments}
\newtheorem{example}[theorem]{Example}
\newcounter{countii}
\numberwithin{equation}{section}
\begin{document}

\title{Eisenstein cocycles in motivic cohomology}
\author{Romyar Sharifi and Akshay Venkatesh}
\date{}
\maketitle

\begin{abstract}
	Several authors have studied  homomorphisms from first homology groups of  modular curves
	to $K_2(X)$, with $X$ either a cyclotomic ring or a modular curve.   These maps
	send Manin symbols in the homology groups to 
	Steinberg symbols of cyclotomic or Siegel units.
	 We give a new construction of these maps and a direct proof of their Hecke equivariance,  
	 analogous to the construction of Siegel units using the universal elliptic curve. Our main tool is 
	 a $1$-cocycle  from $\GL_2(\Z)$   to the second $K$-group
	 of the function field
	 of a suitable group scheme over $X$, from which the maps of interest arise by specialization.
\end{abstract}

\setcounter{tocdepth}{1}
\tableofcontents

\newpage

\section{Introduction}

For a positive integer $N$, let $Y_1(N)$ and $X_1(N)$ denote the usual open and closed modular curves over $\Q$. 
In this paper, we provide a new perspective on two  homomorphisms from the integral homology of the $\C$-points of $X_1(N)$  
to second $K$-groups of the cyclotomic integer ring $\Z[\mu_N]$ and the modular curve $Y_1(N)$:\footnote{We  invert $5$ not to	construct the map, but to prove equivariance with respect to the $5$th Hecke operator.}   \begin{eqnarray} \label{cyclmap}
	&\Pi_N \colon   \mbox{integral homology of $X_1(N)_{/\C}$} \to  K_2(\Z[\mu_N])  [\tfrac{1}{2}]\\
	 \label{zetamap}
	&z_N \colon  \mbox{integral homology of $X_1(N)_{/\C}$}  \to K_2(X_1(N)) [\tfrac{1}{30N}].
\end{eqnarray} 
The map $\Pi_N$ was defined explicitly on slightly larger groups by Busuioc \cite{busuioc} and the first author \cite{sharifi}.  
The map $z_N$ was given an explicit construction in a recent preprint of Brunault \cite{brunault-K4}, following earlier constructions of
Goncharov \cite{goncharov} and Brunault \cite{brunault} of an analogous map to $z_N \otimes \Q$ for $Y(N)$.
The $p$-adic realization of $z_N$ for $p \mid N$ was constructed by Fukaya and Kato in their study \cite{fk} of a conjecture of the first author \cite{sharifi}.  
Most of these constructions boil down to the remarkable fact that Steinberg symbols of cyclotomic or Siegel units satisfy relations parallel to the very simple relations satisfied by Manin symbols
(although Fukaya and Kato use norm relations among Beilinson-Kato elements and a $p$-adic regulator computation); see \S \ref{background} for more.  

Our construction is different,  and is analogous to the construction of Siegel units on $Y_1(N)$.  
 Let us specialize to the $\Pi_N$-case  for a moment to give the idea of our construction,
 postponing a more careful discussion to \S \ref{our_approach}.
 Siegel units are  pullbacks  by an $N$-torsion section of theta-functions on the universal elliptic curve over $Y_1(N)$;  these theta-functions are uniquely specified by their poles. In our situation, 
the role of the theta function is played by a ``big'' $1$-cocycle $\Theta$ on $\GL_2(\Z)$ that is valued in (a quotient of) $K_2$ of the function field of $\mathbb{G}_m^2$. This $\Theta$ is again characterized by its ``poles'', i.e., its image under residue maps to $K_1$ of function fields of divisors on $\gm^2$.
We then pull its restriction to $\Gamma_0(N)$ back via a torsion point on $\mb{G}_m^2$ to obtain a cocycle
$$
	\Theta_N \colon \Gamma_0(N) \to K_2(\Q(\mu_N)).
$$
which underlies $\Pi_N$ described above. 

    The construction of the map from $\Gamma_1(N)$ to $K_2(Y_1(N))$ is similar but
  the role of $\mathbb{G}_m^2$ is replaced throughout by the square $E^2$ of an elliptic curve, and then $E$ is varied over the moduli
  space of elliptic curves.
Because the ``big'' cocycle $\Theta$ is characterized by its poles, it is easy to analyze. In contrast,
the specialized cocycle $\Theta_N$ cannot be so analyzed (it has residues only at primes above $N$, and these carry very little information).
  
  In particular, we are able to prove the following (see Theorems \ref{varpi} and \ref{zeta_map} for details): 
 
\begin{thm}
	The map $\Pi_N$ is Eisenstein with respect to the prime-to-$N$ Hecke operators.
\end{thm}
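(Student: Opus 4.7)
The plan is to leverage the fact, emphasized in the introduction, that the ``big'' cocycle $\Theta$ on $\GL_2(\Z)$ is characterized by its residues on divisors in $\gm^2$. Since the specialized cocycle $\Theta_N$ (and hence $\Pi_N$) is built from $\Theta$ by restriction to $\Gamma_0(N)$ and pullback along an $N$-torsion point, Hecke-equivariance at primes $\ell \nmid N$ should be reducible to a statement about how $\Theta$ itself transforms under $\ell$-isogenies on $\gm^2$, and that statement should in turn be verifiable purely on residues.

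First, I would make precise the cohomological interpretation: $\Pi_N$ comes from a cohomology class $[\Theta_N] \in H^1(\Gamma_0(N), K_2)$, on which Hecke operators $T_\ell$ for $\ell \nmid N$ act through the usual double coset $\Gamma_0(N) \smatrix{\ell & 0 \\ 0 & 1} \Gamma_0(N)$, decomposed into single cosets with representatives $\smatrix{\ell & 0 \\ 0 & 1}$ and $\smatrix{1 & j \\ 0 & \ell}$ for $0 \leq j < \ell$. The natural guess is that the Eisenstein eigenvalue arises because the corresponding $\ell+1$ sublattices of $\Z^2$ (equivalently, the $\ell+1$ subgroup schemes of order $\ell$ in $\gm^2$) produce pullbacks of $\Theta$ whose combined residues equal $(1+\ell)$ times the residues of $\Theta$, after accounting for the action of $\smatrix{\ell & 0 \\ 0 & \ell}$ (which acts trivially on a sufficiently rigidified $K_2$-target at primes prime to $N$).

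The key step is then the following computation at the level of $\Theta$: for each of the $\ell+1$ single coset representatives $\gamma$, pull back the value $\Theta(\gamma g \gamma^{-1})$ through the appropriate $\ell$-isogeny on $\gm^2$, sum the results, and compare residues to $(1+\ell) \Theta(g)$. Because residues live in $K_1$ of codimension-one subvarieties, they are linear and much easier to track under isogenies than $K_2$-classes themselves; each $\ell$-isogeny scales the divisor supporting the residue in a controlled way, and summing over the $\ell+1$ isogenies reproduces the original residue with the expected multiplicity. The uniqueness-via-residues characterization of $\Theta$ then forces the $K_2$-identity up to a coboundary on $\GL_2(\Z)$, which after restriction to $\Gamma_0(N)$ and pullback by an $N$-torsion section descends to the Eisenstein identity $(T_\ell - (1+\ell)) [\Theta_N] = 0$ in $H^1(\Gamma_0(N), K_2(\Q(\mu_N)))$, whence the theorem follows by the construction of $\Pi_N$.

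The main obstacle I anticipate is bookkeeping rather than conceptual: ensuring that the coboundary correction produced by the residue argument is harmless after specialization (since it depends on $\GL_2(\Z)$-level data, not just $\Gamma_0(N)$), and handling the diamond operators/determinant character carefully so that the Eisenstein eigenvalue $1+\ell$ appears in the correct normalization for the target $K_2(\Z[\mu_N])[\tfrac{1}{2}]$. A secondary subtlety is the treatment of the prime $\ell = 5$, which is why $5$ is inverted: the residue cocycle argument presumably requires an auxiliary trick or explicit check at $\ell = 5$ related to orders of torsion in $\GL_2$-cohomology, so I would expect the argument to be uniform for $\ell \geq 7$ and to require a small separate verification at $\ell = 5$.
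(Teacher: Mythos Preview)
Your overall strategy matches the paper's: prove a Hecke relation for the big cocycle $\Theta$ on $\GL_2(\Z)$ by exploiting its characterization via residues, then specialize. The paper does exactly this, with the key residue computation being the identity $T_{\ell}^{\varK} e = (\ell + [\ell]^*)e$ in $\varK_0$ (Lemma~\ref{Heckeidentity}), from which one deduces that $(T_\ell - \ell - [\ell]^*)\Theta$ is a coboundary (Proposition~\ref{ThetaEis}), and then one specializes (Theorem~\ref{ThetaNEis}).

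However, there is a genuine error in your computation of the eigenvalue. The Eisenstein relation is \emph{not} $T_\ell = 1 + \ell$ but rather $T_\ell = \ell + \sigma_\ell$, where $\sigma_\ell \in \Gal(\Q(\mu_N)/\Q)$ is the arithmetic Frobenius; see \eqref{map_eisenstein}. Your claim that the scalar matrix $\smatrix{\ell & 0 \\ 0 & \ell}$ ``acts trivially on a sufficiently rigidified $K_2$-target'' is false: the pullback $[\ell]^*$ on $\varK_0$ sends the class $e$ of the identity point to the class of the full $\ell$-torsion $\mu_\ell^2$, not back to $e$. Concretely, the $\ell+1$ cyclic subgroups of $\mu_\ell^2$ overlap only at the identity, so $T_\ell^{\varK}(0)$ equals $\ell$ copies of $(0)$ plus one copy of $\mu_\ell^2 = [\ell]^*(0)$. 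After specialization at $s = (1,\zeta_N)$ one has $s^* \circ [\ell]^* = \sigma_\ell \circ s^*$, so $[\ell]^*$ becomes the nontrivial Galois action $\sigma_\ell$ on $K_2(\Q(\mu_N))$. If you drop this term you will not get a cocycle identity.

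Your remark about $\ell = 5$ is also misplaced. The inversion of $5$ pertains to the map $z_N$ of \eqref{zetamap} in the $\mc{E}^2$-setting (see Remark~\ref{T5}); for $\Pi_N$ the only delicate prime is $\ell = 2$, where the paper obtains only $2(T_2 - 2 - \sigma_2)\Theta_N = 0$ because the passage from $\barK_2$ back to $\varK_2/\langle\{-z_1,-z_2\}\rangle$ introduces $2$-torsion ambiguity (Lemmas~\ref{4-torsion} and~\ref{ThetaEis2}). The argument is in fact uniform for all odd $\ell \nmid N$.
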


\begin{thm}
	The map $z_N$ is equivariant for the prime-to-$N$ Hecke operators.
\end{thm}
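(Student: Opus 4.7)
The plan is to establish Hecke equivariance at the level of the ``big'' cocycle $\Theta$ on $\GL_2(\Z)$ introduced in the paper (now for $E^2$, as $E$ varies over the moduli of elliptic curves, rather than for $\Gm^2$), and then deduce the statement for $z_N$ by pullback along an $N$-torsion section. Fix a prime $\ell \nmid 30N$. The Hecke operator $T_\ell$ acts on the first homology of $X_1(N)_{/\C}$ through Manin symbols via a standard double coset decomposition indexed by integer matrices of determinant $\ell$, while on $K_2(Y_1(N))$ it is realized by the classical correspondence $Y_1(N,\ell) \rightrightarrows Y_1(N)$ coming from the two natural maps $(E,P,C) \mapsto (E,P)$ and $(E,P,C) \mapsto (E/C, P \bmod C)$.

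The first step is to formulate a lift of $T_\ell$ that acts on $\Theta$ itself. Given an integer matrix $g$ of determinant $\ell$, the associated $\ell$-isogeny on the universal elliptic curve yields a natural pullback of $\Theta$, and summing such pullbacks over the appropriate finite set of cosets produces a $1$-cocycle on $\GL_2(\Z)$ valued in the same quotient of $K_2$ of the function field of $E^2$. The key point is that this ``averaged'' cocycle coincides with $\Theta$ (up to an appropriate scalar) because both satisfy the same residue conditions in $K_1$ of divisors: since $\Theta$ is characterized by its residues, the identity to be proved reduces to a residue computation on the torsion-subscheme divisors on $E^2$ and on its $\ell$-isogenous partners, where the relevant units are Siegel-unit-like objects whose transformation under $\ell$-isogenies is classical.

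The main obstacle is the combinatorial book-keeping that matches the matrix-coset decomposition used for $T_\ell$ on Manin symbols with the geometric decomposition of the $\ell$-isogeny correspondence on the modular-curve side, while simultaneously tracking an $N$-torsion section through the isogeny. Here the hypothesis $\gcd(\ell,N)=1$ is essential: it guarantees that the $N$-torsion section passes unambiguously through each $\ell$-isogeny, and it also ensures that the extra residues introduced by the Hecke averaging are supported away from the divisors of interest after specialization. Once the residue matching is in place, the uniqueness of $\Theta$ implies the cocycle-level identity, and specialization along the chosen $N$-torsion section, together with the description of $z_N$ as this pullback, yields Hecke equivariance. The remaining work is then mostly bookkeeping, in contrast to the Eisenstein property of $\Pi_N$, where the degeneration to the cusp forces the extra Hecke eigenvalue contributions to collapse to Eisenstein values rather than full equivariance.
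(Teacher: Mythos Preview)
Your plan has the right shape in one respect: the paper does establish the result at the level of the big cocycle on $\GL_2(\Z)$ valued in $K_2$ of the function field of $\mc{E}^2$, using the residue characterization, and then specializes via the $N$-torsion section. But the content of what must be proved at that level is not what you describe, and the gap is substantive.

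There are \emph{two} distinct Hecke actions in play, and Hecke equivariance of $z_N$ is precisely the statement that they agree on the cocycle. The first, $T_\ell$, is the group-cohomological operator built from coset representatives $g_j \in M_2(\Z)$ of determinant $\ell$ acting \emph{fiberwise} on $\mc{E}^2$ over the fixed base $Y_1(N)$. The second, $T'_\ell$, is the geometric correspondence $\Phi^2_*(\Psi^2)^*$ coming from $\ell$-isogenies of the universal elliptic curve, which moves the base. These are genuinely different operations on $\varK_2$; a determinant-$\ell$ integer matrix does not give an $\ell$-isogeny of the universal curve, so your sentence ``given an integer matrix $g$ of determinant $\ell$, the associated $\ell$-isogeny on the universal elliptic curve yields a natural pullback of $\Theta$'' conflates them. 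The theorem to be proved is that $T_\ell({}_n\Theta)$ and $T'_\ell({}_n\Theta)$ are cohomologous, not that either of them returns a scalar multiple of $\Theta$. Your claim that ``this averaged cocycle coincides with $\Theta$ (up to an appropriate scalar)'' is the Eisenstein-type identity relevant to $\Pi_N$, not the equivariance relevant to $z_N$.

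The residue method does carry the argument, but the correct reduction is: both $T_\ell({}_n\Theta)$ and $T'_\ell({}_n\Theta)$ are cocycles associated (in the sense of Proposition~\ref{abstractcocyc}) to the cycles $T_\ell^{\varK} e_n$ and $T'_\ell e_n$ in $\varK_0^{(0)}$, so it suffices to show these cycles coincide. Since $e_n = V_n^{\varK}(0)$ and the relevant operators commute on $\GL_2(\Z)$-invariants, this reduces to $T_\ell^{\varK}(0) = T'_\ell(0)$, which is a concrete fiberwise comparison: both sides are the formal sum of $K \times K$ over cyclic order-$\ell$ subgroups $K \subset E[\ell]$ (equivalently, the sum over rank-$\le 1$ matrices in $M_2(\F_\ell)$, counted with the right multiplicity). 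You should also be aware that the big cocycle here is ${}_n\Theta$ for an auxiliary prime $n \nmid N$ (the identity section alone does not give a degree-zero cycle on $\mc{E}^2$), and passing from ${}_n\Theta_N$ to the universal $\Theta_N$ and then to $z_N$ requires the additional argument that the operators $V_n$ generate the relevant Hecke algebra.
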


These results may be considered in the context of a body of results  
that suggest close relationships between homology of arithmetic groups
and $K$-groups of algebraic varieties: see for instance
\cite{fks-survey,gon-preprint,stevens,venkatesh}. 
Most relevant to our paper 
is the work of the first author suggesting that the map induced by $\Pi_N$ on an Eisenstein quotient of homology is an isomorphism to $K_2(\Z[\mu_N])^+$ away from $2$-parts; 
see Conjecture \ref{eisconj} for details.   

\subsection{Background on the maps} \label{background}
We describe in more detail some of the forms of the maps $\Pi_N$ and $z_N$ that have appeared in the literature. 
The map $\Pi_N$ is most easily defined on a larger homology group
relative to the ``non-infinity cusps'' $C_1^{\circ}(N)$, which are those that do not lie over the infinity cusp of the modular curve $X_0(N)$.  That is, the map $\Pi_N$ is the restriction of a map
$$
	\Pi_N^{\circ} \colon H_1(X_1(N),C_1^{\circ}(N),\Z) \to K_2(\Z[\mu_N,\tfrac{1}{N}]) \otimes_{\Z} \Z[\tfrac{1}{2}]
$$
taking image in the slightly larger second $K$-group of the $N$-integers of $\Q(\mu_N)$.

The integral homology relative to the cusps is generated by certain classes $[u:v]$ of geodesics between cusps known as Manin symbols, where $(u,v)$ is a pair of relatively prime integers modulo $N$.\footnote{This generation is a consequence of the fact that $\Z$ is a Euclidean ring. For purposes of generalization, our more abstract approach to the construction of analogues of $\Pi_N$ should therefore prove useful.} Those Manin symbols for which both $u$ and $v$ are nonzero generate the homology relative to the non-infinty cusps.\footnote{See \cite[3.3.7]{fk}, but note that our convention for Manin symbols is the standard one, which is to say that it differs from that of \cite{sharifi} and \cite{fk} by application of an Atkin-Lehner involution. This accounts for the differences with those papers in our description.}  
The map $\Pi_N^{\circ}$ was defined in \cite{busuioc, sharifi} to send each such Manin symbol to a Steinberg symbol of cyclotomic $N$-units in $\Q(\mu_N)$:
\begin{equation} \label{explicit_map}
	\Pi_N^{\circ}([u:v]) = \{1-\zeta_N^u,1-\zeta_N^v\},
\end{equation}
where $\zeta_N$ is a primitive $N$th root of unity.
The Manin symbols satisfy very simple relations, and to show this map is well-defined
is to verify that the relations hold at the level of Steinberg symbols, which results from the usual symbol formula $\{x,1-x\} = 0$ for $N$-units $x$ and $1-x$.

In \cite{sharifi}, the first author conjectured that the $p$-adic realization of $\Pi_N$ (i.e., its tensor product with $\zp$, which we will denote by the same notation) for $p$ dividing $N$ is Eisenstein in the sense that for primes $\ell \nmid N$, one has
\begin{equation} \label{map_eisenstein}
	\Pi_N(T_{\ell} x) = (\ell + \sigma_{\ell})\Pi_N(x)
\end{equation}
for $x \in H_1(X_1(N),\Z_p)$, where $T_{\ell}$ is the $\ell$th Hecke operator and $\sigma_{\ell} \in \Gal(\Q(\mu_N)/\Q)$ is the arithmetic Frobenius at $\ell$.  
For primes $\ell \mid N$, he also conjectured that $\Pi_N(U_{\ell}^*x) = \Pi_N(x)$, where $U_{\ell}^*$ is the $\ell$th adjoint Hecke operator. 

Fukaya and Kato proved this conjecture in \cite{fk} by exhibiting $\Pi_N$ as a specialization at the infinity cusp of the $p$-adic realization of $z_N$.\footnote{The idea of composing a rational version of $z_N$ with a specialization at $\infty$ is also found in \cite[Section 3]{goncharov}.}  
Roughly speaking, their map $z_N$ is also the restriction of a map on relative homology sending $[u:v]$ to a Steinberg symbol $\{g_{\frac{u}{N}},g_{\frac{v}{N}}\}$ of Siegel units on $Y_1(N)$. 
Via a regulator computation, they show that the $p$-adic realization of $z_N$ is Hecke-equivariant for the operators $T_{\ell}$ for $\ell \nmid N$ and $U_{\ell}^*$ for $\ell \mid N$ 
 and they then use the fact that the specialization at infinity map is Eisenstein.\footnote{Actually, they prove that the specialization at infinity map is Eisenstein for the prime-to-level operators and also for the remaining operators when applied to the Beilinson-Kato elements in question.}   The first author has frequently expressed a tentative expectation that the Eisenstein property should hold without passing to the $p$-adic realization.  

Here, we give a construction of the maps $\Pi_N$ and $z_N$ without recourse to explicit symbols or regulator computations.\footnote{In fact, we do not show that our map $z_N$ satisfies the expected explicit formula. Rather, we show it holds in the quotient by a group that dies in any standard realization and which is an artifact of making the construction independent of an auxiliary integer.}  As mentioned earlier, this also allows us to prove that \eqref{map_eisenstein} holds for all $\ell \nmid N$ without tensoring with $\zp$. Unlike in the work of Fukaya and Kato, we do not use the Hecke equivariance of $z_N$ to study the Eisenstein property of $\Pi_N$. Rather, we consider these maps entirely separately.

\subsection{Our approach} \label{our_approach}

As we have mentioned, our goal in this paper is to provide an alternate construction of the maps $\Pi_N$ and $z_N$ that is analogous to the construction of Siegel units on $Y_1(N)$ via theta functions on the universal elliptic curve $\mc{E}$ over $Y_1(N)$.   We now  describe this approach in more detail. 

Recall from \cite[Proposition 1.3]{kato} that given a positive integer $n$ prime to $6N$, there is a theta-function ${}_n \theta$ in $\Q(\mc{E})^{\times}$ that is a unit outside of the $n$-torsion, and which is uniquely specified by the properties that its divisor is $n^2(0) - \mc{E}[n]$ and that it is invariant under norm maps attached to multiplication by positive integers prime to $n$.  Siegel units are obtained by pulling back the theta function ${}_n \theta$ to $Y_1(N)$ using $N$-torsion sections.  Though these Siegel units depend upon $n$,
they satisfy a distribution relation that permits one to construct a ``$n=1$'' unit,  upon inverting $6N$.

The analogues of theta functions in our work are parabolic $1$-cocycles on $\GL_2(\Z)$, again valued in second $K$-groups, but of the function fields of the squares of the multiplicative group $\gm$ over $\Q$ and the universal elliptic curve $\mc{E}$ over $Y_1(N)$. That is, the first is a $1$-cocycle
\begin{equation} \label{bigcocycGm}
	\Theta \colon \GL_2(\Z) \to K_2(\Q(\gm^2))/\langle \{-z_1,-z_2\} \rangle,
\end{equation}
where $\GL_2(\Z)$ acts on the $K$-group via pullback of its right-multiplication action on $\gm^2$, and
where $z_i$ denotes the $i$th coordinate function on $\gm^2$ (cf. Proposition \ref{existence}).  
The second is a family of $1$-cocycles
\begin{equation} \label{bigcocycE2}
	{}_n \Theta \colon \GL_2(\Z) \to K_2(\Q(\mc{E}^2)) \otimes_{\Z} \Z[\tfrac{1}{30}]
\end{equation}
depending upon a choice of prime $n \nmid N$. Using $N$-torsion sections,
we pull back the restrictions of these ``big'' cocycles on $\Gamma_1(N)$ to obtain $\Pi_N$  and a map ${}_n z_N$ 
depending on $n$ (which we make explicit only at the level of cocycles).  As with Siegel units, upon further inverting $N$, we obtain a map $z_N$ 
that may be understood as the $n=1$ analogue of the maps ${}_n z_N$.

Because of the characterization of our big cocycles in terms of their residues, it is easy to provide explicit formulas for 
and  analyze how Hecke operators act on them. 
In particular, the compatibility of the classes of these $1$-cocycles with the actions of Hecke operators is verified directly using the 
equivariance of residue maps for integral matrices of nonzero determinant. The analogous properties of the specialized cocycles follow from the analogous formulas for
the big cocycles. 

\subsubsection{Construction of ``big'' cocycles}

The big cocycles are constructed using three-term  motivic complexes. These play the roles of the two-term complex given by the divisor map in the construction of theta-functions.  Let us describe this in more detail.  Taking $G$ to be $\gm$ or $\mc{E}$ in the respective cases, the ``motivic complexes'' are homological complexes in degrees $2$, $1$, and $0$ of the form 
\begin{equation} \label{Crucial}    
	K_2( \Q(G^2)) \xrightarrow{\partial_2} \bigoplus_{D} K_1(\Q(D)) \xrightarrow{\partial_1} \bigoplus_{x} K_0(k(x)), 
\end{equation}
where the maps are residue maps, and $D$ and $x$ vary respectively over irreducible divisors and closed points of $G^2$.  The first map is given on symbols by the tame symbol, and the second map sends an element of $K_1(\Q(D)) = \Q(D)^{\times}$ to its divisor.
These complexes carry an action of the monoid $\Delta$ of integral $2$-by-$2$ matrices with nonzero determinant via pullback under
the endomorphism of right multiplication. They then also have trace maps with respect to multiplication by positive integers.

Much as a theta function is uniquely determined by its ``poles'', or more specifically, its norm-invariant divisor, our cocycles are uniquely determined by choices of a trace-fixed $\GL_2(\Z)$-invariant element $Z$ of $\bigoplus_x K_0(k(x))$, which is to say a formal $\Z$-linear 
sum of points on $G^2$. 

More specifically, given a suitable choice of $Z$ as above in the image of $\partial_1$, we choose a lift 
$$
	\eta \in \bigoplus_D K_1(\Q(D))
$$
of $Z$.  For $\gamma \in \GL_2(\Z)$, we show that $\gamma \eta - \eta \in \im \partial_2$, so there is a unique element  
\begin{equation} \label{abstractcocycG2}
	\Theta^Z_{\gamma} \in K_2(\Q(G^2))/\ker \partial_2,
\end{equation}
with residue $\gamma \eta- \eta$,  
and the recipe $\gamma \mapsto \Theta^Z_{\gamma}$ defines a ``big'' cocycle $\Theta^Z$ on $\GL_2(\Z)$.  Its cohomology class depends upon the choice of $Z$ but not the choice of $\eta$ (cf.~Proposition \ref{abstractcocyc}).

In the case that $G = \gm$, the complex \eqref{Crucial} is left exact, and the kernel of $\partial_2$ is identified with $H^2(\gm^2,2)$.  For
$x_0$ the identity in $\gm^2$, we choose $Z$ to be the class $e$ of the identity element of the $\GL_2(\Z)$-fixed subgroup $K_0(k(x_0)) \cong \Z$ of $\bigoplus_x K_0(k(x))$. We choose $\eta$ to be the class of $1-z_1^{-1}$ on the rank $1$ subtorus defined by $z_2 = 1$,
though as mentioned the class of $\Theta = \Theta^e$ is independent of this choice. In fact, since we take $\eta$ to be trace fixed,
the ambiguity inherent in taking the quotient of $K_2(\Q(\gm^2))$ by $\ker \partial_2 = H^2(\gm^2,2)$ can be further reduced to its
trace-invariant part, which is generated by $\{-z_1,-z_2\}$.

In the case $G = \mc{E}$, the homology of the motivic complex \eqref{Crucial} does not vanish anywhere, but if we restrict to its trace-invariant part, then, at least upon inverting $6$, it is right exact and the image of the residue map $\partial_1$ is the kernel of the degree map $\bigoplus_x K_0(k(x)) \to \Z$. Since there is no meromorphic function on an elliptic curve
 whose divisor is supported at the origin, the role of $1 \in K_0(k(x_0))$
 in the above construction must be replaced by a slightly less canonically chosen trace-invariant and $\GL_2(\Z)$-fixed element $e_n$ that is
 known to be in the image of $\partial_1$, its choice depending on an auxiliary prime $n \nmid N$.

 \begin{remark}[Toric geometry perspective]
    We also provide an alternate point of view on the cocycle $\Theta$ in the $\gm$-case that is tied to toric geometry
    and which allows us to reduce the ambiguity in $\Theta$ up to torsion of small order. 
    As observed by Brion \cite{brion},
    the function that sends a rational cone $C \subseteq \R^2$ to the  generating function
    $$
    	\phi(C) =\sum_{(m,n) \in \Z^2 \cap C^{\vee}} z_1^m z_2^n \in \Q(z_1, z_2)
    $$ 
    of the {\em dual cone} is additive with respect to subdivisions of cones. (The right-hand series analytically continues from its region of convergence
    to a rational function.)
    The differential symbol 
    $$
    	\{f, g\} \mapsto \frac{d \log(f) \wedge d \log(g)}{dz_1 \wedge dz_2}
    $$
    gives rise to a map $K_2(\Q(\mathbb{G}_m^2)) \rightarrow \Q(z_1, z_2)$. 
    We explain in Section \ref{K2boundary} how the association $C \mapsto \phi(C)$ lifts to $K_2(\Q(\gm^2))$ along this map. 
    For $\gamma \in \SL_2(\Z)$, the image of the cone spanned by $(1,0)$ and $\gamma(1,0)$ 
    is a lift of $\Theta_{\gamma}$. The resulting map is only a cocycle modulo $\{-z_1, -z_2\}$,  and we explain 
    in \S \ref{lifting} how it can be modified to avoid even this ambiguity.
 \end{remark}

\subsubsection{Specialization}

To obtain our specialized cocycles, we pull back our big cocycles under $N$-torsion sections of $G^2$ of the form $(1,\iota_N)$,
where $\iota_N$ is an $N$-torsion point or section of $G$.  That is, for $\gm$, we take $\iota_N$ to be a primitive $N$th root of unity $\zeta_N$, and for $\mc{E}$, we take $\iota_N \colon Y_1(N) \to \mc{E}$ to be the universal $N$-torsion section.  The values $\Theta_{\gamma}$ for $\gamma \in \GL_2(\Z)$ need not be regular at $(1,\iota_N)$, but they are for $\gamma$ in the congruence subgroup 
$\bGamma_0(N)$ of $\GL_2(\Z)$ consisting of matrices with bottom left entry divisible by $N$.  So, we must first restrict to this group
prior to taking the pullback. 

For instance, in the case that $G = \gm$, upon pulling back via $(1,\zeta_N)$, we obtain a cocycle
\begin{equation} \label{specializedGm}
 	\Theta_N \colon \bGamma_0(N) \to K_2(\Q(\mu_N))/\langle \{-1,-\zeta_N\} \rangle,
\end{equation}
the right-hand side being the quotient of $K_2(\Q(\mu_N))$ by a group of order at most $2$.  
The restriction of $\Theta_N$ to
$\Gamma_1(N)$ is a homomorphism taking image in the corresponding quotient of $K_2(\Z[\mu_N])$. In fact, it is easy to see that 
$\Theta$ is parabolic so that $\Theta_N$ induces a map from the parabolic homology of the latter group to the quotient of 
$K_2$, which in turn yields $\Pi_N$.  

The map ${}_n z_N$ for $Y_1(N)$ is constructed analogously.  By pulling back, we obtain a cocycle
\begin{equation} \label{specializedE2}
	{}_n \Theta_N \colon \bGamma_0(N) \to K_2(Y_1(N)) \otimes_{\Z} \Z[\tfrac{1}{30}].
\end{equation}
Much as with Siegel units \cite{kato}, upon specialization we can define a universal rational cocycle independent of this choice.
That is, the pullbacks of the resulting classes to $K_2(Y_1(N))$ satisfy natural distribution relations in $n$
that permit one, upon inverting $N$, to construct a specialized cocycle $\Theta_N$ that should be thought of as the $n=1$ case of the construction; see Theorem \ref{canoncocyc}.

The Eisenstein property of $\Pi_N$ and Hecke equivariance of $z_N$ follow from analogous properties of the cohomology classes of the big cocycles, as do the explicit formula for $\Pi_N$ that arises from \eqref{map_eisenstein} and its analogue for $z_N$ involving Steinberg symbols of Siegel units.   

We are moreover able to show the expected explicit formula for $\Theta_N$ as a sum of Steinberg symbols of Siegel units (Beilinson-Kato elements) in Proposition \ref{expformmod}  {\em modulo} a subgroup of $K_2(Y_1(N))$ that vanishes under any standard regulator map.   It would be desirable to eliminate this last ambiguity.

\subsubsection{Relationship to other topics in the literature}   \label{emc}  

Our construction does not stand in isolation but is related to a rich body of theory that has been developed in different contexts.  It is particularly notable that in both cases studied here, one can view the class of the big cocycle $\Theta$ as arising from a class in equivariant motivic cohomology. We briefly describe this class in the $\gm$-case in \S \ref{eqmotivic1}.
 
 The equivariant class corresponding to $\Theta$ provides
 a kernel to pass between cohomology of $\Gamma_1(N)$ and various $K$-groups. 
Our situation 
  is formally similar to the theory of reductive dual pairs, where the theta-function provides a kernel
 to pass between automorphic forms on different groups, and in fact our proofs of Hecke equivariance are formally similar
 to the arguments about theta-kernels. 
 The idea of using an equivariant class as kernel 
has been used in other contexts, for example in Soul{\'e}'s work \cite{Soule} on the Chern character in algebraic $K$-theory. 
 
Our paper is also related to a number of recent works constructing classes in different flavors
 of equivariant cohomology   \cite{bhyy, bcg, ks}.  
 The class most relevant to us is the Eisenstein symbol studied in \cite{beilinson, faltings}, 
 but constructed here equivariantly. The possibility of such an equivariant refinement was observed 
 in a different context by Nekov\'a\v{r} and Scholl \cite[\S 13]{NS}.
A closely related story is the theory of polylogarithms \cite{BL1, HK}, or again, more precisely, the 
equivariant version of such a theory, as is discussed in \cite[\S 3.7]{BKL}.

Our goals are, however, rather different to
those of the papers mentioned above: namely, we aim to develop a framework optimized for the analysis of
 \eqref{cyclmap} and  \eqref{zetamap}, with an emphasis on the explicit description of these maps by symbols.  
This framework can certainly be extended to study other interesting examples as well, such as relating the first homology of Bianchi spaces and Steinberg symbols of elliptic units,
or relating the second homology of locally
 symmetric spaces for $\GL_3$ and Steinberg symbols of three Siegel units, as proposed in \cite[\S 4.2]{fks-survey}. 
 When working in sufficient generality, it will likely be fruitful to systematically proceed in an equivariant fashion.

\subsection{An outline}

We briefly summarize the contents of the paper.  We start by recalling and establishing certain constructions of motivic cohomology useful to our study in Section \ref{prelim}.  Most importantly, we employ coniveau spectral sequences to construct Gersten-type complexes in Milnor $K$-theory, paying special attention to the case of the square of a commutative group scheme.

The next three sections treat the case of $\gm^2$. In Section \ref{gmcase}, we construct the big cocycle $\Theta$ of \eqref{bigcocycGm}. We derive an explicit formula for $\Theta$ in Proposition \ref{expform} and study its behavior under Hecke operators in Proposition \ref{ThetaEis}.
We then specialize $\Theta$ at a torsion point to construct the cyclotomic cocycle $\Theta_N$ of \eqref{specializedGm} in Section \ref{cyclococyc}, deriving its explicit formula (Proposition \ref{explicitspecialized}) and its transformation under Hecke operators  (Theorem \ref{ThetaNEis}) from the results on $\Theta$.  We recover the map $\Pi_N$ of \eqref{cyclmap} from $\Theta_N$ and verify its Eisenstein property in Theorem \ref{Eisprop}.
Section \ref{K2boundary} has a rather different flavor: in it, we examine the construction of $\Theta$ through the lens of toric geometry. 
The main tool is Proposition \ref{quasi-iso}, which constructs a map from the chain complex of the circle to the motivic complex. 

In the final two sections of the paper, we turn to the more technically demanding case of $\mc{E}^2$.
In Section \ref{E2}, we construct the
big cocycles ${}_n \Theta$ of \eqref{bigcocycE2} for primes $n \nmid N$, 
derive an explicit formula for them in Theorem \ref{expformE2},
and demonstrate their Hecke equivariance in Theorem \ref{heckeE2}.
In Section \ref{modcocyc}, we specialize these cocycles ${}_n \Theta$ using an $N$-torsion section to obtain the cocycles
${}_n \Theta_N$ of \eqref{specializedE2}.  We construct a ``universal'' cocycle $\Theta_N$ independent of $n$ in Theorem \ref{canoncocyc}, and we derive an explicit formula for it in Proposition \ref{expformmod}.  Finally, in Theorem \ref{zeta_map}, we construct the map $z_N$ of \eqref{zetamap} and establish its Hecke equivariance. 
 
\begin{ack}

The research of R.S. was supported in part by the National Science Foundation under Grant No.\  DMS-1801963.  He thanks T. Fukaya and K. Kato for prior conversations regarding maps on homology, T. Geisser, M. Levine, and M. Spitzweck for answers to questions regarding motivic cohomology, and C. Khare for a conversation on Galois representations. He also thanks T. Smits and F. Vu for a careful reading of a draft of this work, and E. Lecouturier and P. Xu for very helpful comments on the preprint version.

The research of A.V. was supported  in part by the National Science Foundation under Grant No.\ 1931087. He thanks Aravind Asok for patiently answering questions about motivic cohomology. He also gratefully acknowledges conversations
with N. Bergeron, P. Charollois, and L. Garcia.

Finally, we thank the referees for careful readings that resulted in several corrections.

\end{ack}

\section{Preliminaries on motivic cohomology} \label{prelim}

We shall recall basic properties of motivic cohomology groups in \S \ref{motcoh} and  coniveau spectral sequences in \S \ref{coniveau}.
We shall use this coniveau spectral sequences 
to construct Gersten-type complexes 
in Milnor $K$-theory that will be central to our later study, paying special attention to the case of the square of a commutative group scheme.

 In \S \ref{tracemaps}, we recall the trace maps which will allow us to take fixed parts.  In \S \ref{smallercomplex}, we discuss the particular case of the square of a commutative group scheme of interest to us, introducing our complexes that compute motivic cohomology and various quasi-isomorphic subcomplexes of motivic cohomology groups.

\subsection{Motivic cohomology} \label{motcoh}
We shall define motivic cohomology  using Bloch's cycle complexes: see for instance \cite{bloch-alg, bloch-moving, levine-tech, levine-K}.  This has a certain psychological advantage for us in that it allows us to think of our classes as coming from cycles. 
  However, which theory of motivic cohomology is used does not matter in our final results, which concern smooth schemes over perfect fields.

Let $Y$ denote a quasi-projective scheme of finite type over a perfect field $F$. For nonnegative integers $j$ and $k$, let $z^k(Y,j)$ denote the group of codimension $k$ cycles in $Y \times \Delta^j$ (the $F$-fiber product) that meet $Y \times \Phi$ for each face $\Phi$ of the algebraic $j$-simplex $\Delta^j$ over $F$ properly. 
 Via alternating sums of face maps, the $z^k(Y,\cdot)$ form a homological complex with $z^k(Y,j)$ in degree $j$. This is Bloch's cycle complex for $Y$; its homology groups are called higher Chow groups. These complexes admit pullbacks by flat maps and pushforwards by proper maps \cite[Proposition 1.3]{bloch-alg}

For any $i \in \Z$, we set
$$
	H^i(Y,k) = H_{2k-i}(z^k(Y,\cdot)).
$$  
We also set $H^i(Y,k) = 0$ for negative integers $k$.
If $Y$ is smooth, then $H^i(Y,k)$ is naturally isomorphic to the $i$th motivic cohomology group of $Y$ with $\Z(k)$-coefficients in the sense of Voevodsky \cite{voevodsky} (see \cite[Theorem 19.1]{mvw}):\footnote{For general $Y$ and $F$ admitting resolution of singularities, they are isomorphic to motivic Borel-Moore homology groups \cite[Theorem 19.18]{mvw}}
$$H^i(Y, k) \cong H^i(Y, \Z(k)).$$  
 As such, we will refer to the groups $H^i(Y,k)$ themselves as motivic cohomology groups.
 (This is slightly non-standard notation, which hopefully makes some of the typography easier to read.)
 
We briefly summarize a number of standard properties of these groups.  To start with, as a consequence of the strong moving lemma of Bloch \cite[Theorem 0.1]{bloch-moving}, they admit arbitrary pullbacks (see \cite[Theorem 4.1]{bloch-alg}).  
They also satisfy: 
\begin{itemize}
\item if $Y = \coprod_{h=1}^t Y_h$ is a finite disjoint union of $F$-schemes, then $H^i(Y,k) \cong \bigoplus_{h=1}^t H^i(Y_h,k)$;
\item $H^i(Y,k) \cong H^i(Y \times \mb{A}^1,k)$ via pullback by the projection morphism $Y \times_F \mb{A}^1 \to Y$ (see \cite[Theorem 2.1]{bloch-alg});
\item $H^0(Y, 0) \cong \Z$ if $Y$ is connected and $H^i(Y, 0) = 0$ for $i \neq 0$;
 \item if $Y$ is smooth, then $H^1(Y,1)$ is naturally isomorphic to the group of global units on $Y$, and $H^2(Y,1)$ is naturally isomorphic to the Picard group of $Y$, while $H^i(Y,1) = 0$ for $i \notin \{1,2\}$ (see \cite[Corollary 4.2]{mvw});   
 \item if $Y$ is smooth, then $H^i(Y,k) = 0$ for $i > k + \dim Y$ (see \cite[Theorem 3.6]{mvw}); 
 \item if $Y$ is a smooth variety over $F$, then $H^i(Y,k) = 0$ for $i > 2k$ (see \cite[Theorem 19.3]{mvw}); 
 \item if $f \colon X \rightarrow Y$ is a finite locally free morphism of quasi-projective $F$-schemes of finite type (so proper of relative dimension zero), then $f_* f^*$ is multiplication by the degree of $f$ (cf. \cite[\href{https://stacks.math.columbia.edu/tag/02RH}{Lemma 02RH}]{stacks}).  
 \end{itemize}
 
Suppose that $Y$ is equidimensional. Then, for any closed $F$-subscheme $\rho \colon Z \to Y$ of pure codimension $c$ and its complement $\iota \colon U \to Y$, there is an exact Gysin sequence   \begin{equation} \label{GZ} 
	\cdots \rightarrow H^i(Y, k) \xrightarrow{\iota^*} H^i(U, k)  \xrightarrow{\partial} H^{i-2c+1}(Z, k-c) \xrightarrow{\rho_*} H^{i+1}(Y, k) 
	\rightarrow \cdots. 
\end{equation}
We refer to the map $\partial$ as a residue map.  It results from the distinguished triangle determined by the left exact sequence of complexes given by pushforward by $\iota$ and pullback by $\rho$ given by Bloch's moving lemma.
 
Motivic cohomology also has cup products 
$$
	\cup \colon H^i(Y,k) \times H^{i'}(Y,k') \to H^{i+i'}(Y,k+k'),
$$ 
which can be constructed by pulling back an external product via the diagonal \cite[\S 5]{bloch-alg}.  
There is then an isomorphism of graded rings 
$$
	\bigoplus_{i=0}^{\infty} K_i^M(F) \xrightarrow{\sim} \bigoplus_{i=0}^{\infty} H^i(F,i)
$$
induced by the standard identifications of both sides with $\Z$ and $F^{\times}$ in degrees $0$ and $1$ 
(see \cite[Theorem 5.1 and Lemma 5.6]{mvw}).  Recall that the canonical homomorphism $K_i^M(F) \to K_i(F)$ to the $i$th algebraic $K$-group of $F$ is an isomorphism for $i \le 2$, the case of $i=2$ being Matsumoto's theorem.

We will need to compare compositions of pushforwards and pullbacks. 
For instance, we shall often employ the following in the case that the the underlying schemes are spectra of fields
and $i=k$, in which case the assertion is one of Milnor $K$-theory
 (see also \cite[Rule 1c, p.\ 329]{Rost} for a direct formulation of this assertion, noting Theorem 1.4 therein).

\begin{lemma}[Base change] \label{pullbackdiagram} 
	Suppose that
	$$ 
	\begin{tikzcd}
	X' \arrow{r}{\pi_X} \arrow{d}{f'} & X  \arrow{d}{f} \\
	Y' \arrow{r}{\pi_Y} & Y 
	\end{tikzcd}
	$$
	is a Cartesian diagram of smooth quasi-projective schemes of finite type over $F$, with $\pi_Y$ flat and $f$ proper.  Then 
	$\pi_X$ is flat, $f'$ is proper, and
	$$
		(f')_* \pi_X^* = \pi_Y^* f_*
	$$ 
	as morphisms $H^i(X, k) \rightarrow H^i(Y', k)$.
\end{lemma}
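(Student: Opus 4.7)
The first two assertions are immediate from the Cartesian property of the square: flatness is preserved under arbitrary base change, which gives flatness of $\pi_X$, and properness is preserved under arbitrary base change, which gives properness of $f'$. The substantive content is the identity of maps on motivic cohomology, and the plan is to establish this identity already at the level of Bloch's cycle complexes, from which the statement follows by passing to homology.

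The strategy is as follows. A codimension $k$ cycle $Z \in z^k(X, j)$ is represented by a $\Z$-linear combination of integral closed subschemes of $X \times_F \Delta^j$ meeting each face properly. Applying $\mr{id}_{\Delta^j}$ to all morphisms in the given diagram produces another Cartesian square with the analogous flatness and properness, so it suffices to verify that for every integral closed subscheme $W \subseteq X \times_F \Delta^j$, one has the cycle-level identity
$$
	(f' \times \mr{id})_* (\pi_X \times \mr{id})^* [W] = (\pi_Y \times \mr{id})^* (f \times \mr{id})_* [W],
$$
with $(\pi_X \times \mr{id})^*$ and $(\pi_Y \times \mr{id})^*$ the flat pullbacks (scheme-theoretic preimage with multiplicities given by local ring lengths) and $(f' \times \mr{id})_*$ and $(f \times \mr{id})_*$ the proper pushforwards (images with multiplicities given by residue field degrees). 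One then needs to check that each side lies in $z^k(Y', j)$, i.e., meets faces properly, which is automatic since the operations involved preserve this condition by flatness of $\pi_Y$ (hence of $\pi_X$) and properness of $f$ (hence of $f'$).

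The main step is thus the classical base change formula for algebraic cycles under a Cartesian square with one leg flat and the other proper. This is a standard identity in intersection theory, proven by comparing the two sides on the locally closed strata of $W$ and of $f(W)$ and invoking the compatibility of length of a module with flat base change along with the multiplicativity of residue field degrees. The anticipated obstacle is not conceptual but bookkeeping: one must verify that the relevant multiplicities on both sides match, which amounts to a local computation at the generic points of the components of the pushforward/pullback cycles. In the case of interest in the paper, namely $i = k$ and the schemes being spectra of fields, the lemma specializes to the standard base change statement in Milnor $K$-theory (rule 1c of \cite[p.~329]{Rost}), where the verification reduces to a finite field extension calculation.
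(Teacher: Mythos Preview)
Your proposal is correct and follows essentially the same approach as the paper: both reduce the identity to the level of Bloch's cycle complexes, where flat pullback and proper pushforward are defined on cycles themselves (not merely after the moving lemma), and then invoke the classical base change formula from intersection theory (the paper cites \cite[Proposition 1.7]{fulton} for this, and \cite[Proposition 1.3]{bloch-alg} for compatibility with the face maps). Your write-up is more explicit about the bookkeeping, but the underlying argument is the same.
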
 
 
\begin{proof}
	The assertions regarding $\pi_X$ and $f'$ are standard.  Since $\pi_X$ and $\pi_Y$ are
	flat, these morphisms are already defined on cycles 
	by taking inverse images and images, so they are defined on the terms of Bloch's cycle complexes, and they are compatible 
	with the boundary maps (cf.~\cite[Proposition 1.3]{bloch-alg}).  The stated equality of 
	compositions then already holds at the level of complexes (cf. \cite[Proposition 1.7]{fulton}).
\end{proof}

\begin{corollary}[Projection formula] \label{projformula}
	Let $f \colon X \to Y$ be a proper 
	morphism of smooth quasi-projective schemes of finite type over $F$, and let $\alpha \in H^i(X,k)$ and $\beta \in H^{i'}(X,k')$.
	Then 
	$$
		f_*(\alpha \cup f^*(\beta)) = f_*(\alpha) \cup \beta \in H^{i+i'}(X,k+k').
	$$
\end{corollary}

\begin{proof}
	We need only apply Lemma \ref{pullbackdiagram} to the cartesian square
	$$
		\begin{tikzcd}[column sep = large]
		X \arrow{r}{(1 \times f) \circ \Delta_X} \arrow{d}{f} & X \times Y \arrow{d}{f \times 1} \\
		Y \arrow{r}{\Delta_Y} & Y \times Y,
		\end{tikzcd}
	$$
	where $\Delta_X$ and $\Delta_Y$ are the diagonal embeddings of $X$ and $Y$, respectively.
\end{proof}

We also have the following compatibility of residues with transfers and inclusions of fields.  
 
\begin{lemma} \label{restra}
	Let $E/F$ be a finite extension of fields.  Then for $v$ a discrete valuation on $F$,  one has 
	$$ 
		\partial_{v} \circ \Norm_{E/F}  =  \sum_{w \mid v} \Norm_{k(w)/k(v)} \circ \partial_w 
	$$
	as morphisms $K_n^{M}(E) \rightarrow K_{n-1}^{M}(k(v))$ on Milnor $K$-theory; the sum on the right is over valuations 
	$w$ on $E$ extending $v$, the symbols $\partial_v$ and $\partial_w$ are the residue maps on Milnor $K$-theory induced by the
	valuations $v$ and $w$, and $\Norm$ denotes transfer in Milnor $K$-theory.  
	
	Similarly, for each $w \mid v$ as above, we have 
	$$
	\partial_w \circ \iota_{E/F} =  e_{k(w)/k(v)} \cdot \iota_{k(w)/k(v)} \circ \partial_v
	$$
	as morphisms $K_n^M(F) \rightarrow K_{n-1}^M(k(w))$,
	where $\iota$ denotes a map on Milnor $K$-theory induced by inclusions of fields, and $e_{k(w)/k(v)}$ is the ramification index. 
\end{lemma}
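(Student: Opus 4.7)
The plan is to derive both identities from the naturality of the Gysin residue of \eqref{GZ} under pushforward and pullback, as recorded in Lemma \ref{pullbackdiagram}, applied to the finite flat morphism $f \colon \Spec A \to \Spec \mc{O}_v$, where $\mc{O}_v \subset F$ is the valuation ring of $v$ and $A$ is its integral closure in $E$. Since $A$ is a semilocal Dedekind domain whose maximal ideals correspond to the valuations $w \mid v$ extending $v$, with residue fields $k(w)$, the restrictions of $f$ to generic and special fibers are $\Spec E \to \Spec F$ and $\coprod_{w \mid v} \Spec k(w) \to \Spec k(v)$, respectively. Norm in Milnor $K$-theory is matched with pushforward along these finite maps of fields via the identification of Milnor $K$-theory with motivic cohomology in top degree from \S \ref{motcoh}.

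For the first identity, I would organize the data into the two Cartesian squares
\[
\begin{tikzcd}
\Spec E \arrow{r} \arrow{d} & \Spec F \arrow{d} \\
\Spec A \arrow{r} & \Spec \mc{O}_v
\end{tikzcd}
\qquad
\begin{tikzcd}
\coprod_{w \mid v} \Spec k(w) \arrow{r} \arrow{d} & \Spec k(v) \arrow{d} \\
\Spec A \arrow{r} & \Spec \mc{O}_v
\end{tikzcd}
\]
obtained by pulling back, respectively, the open generic fiber and the closed special fiber across $f$. These fit into a morphism of Gysin triangles from the one for $\Spec k(v) \hookrightarrow \Spec \mc{O}_v$ to the one for $\coprod_{w} \Spec k(w) \hookrightarrow \Spec A$. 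Pushing forward along $f$ and commuting past the Gysin boundary via Lemma \ref{pullbackdiagram} on each face yields $\partial_v \circ \Norm_{E/F} = \sum_{w \mid v} \Norm_{k(w)/k(v)} \circ \partial_w$, where the splitting of the transfer as a sum over $w$ reflects the decomposition of $\Spec(A \otimes_{\mc{O}_v} k(v))$ into its components $\Spec k(w)$.

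For the second identity, I would fix $w \mid v$ and localize $A$ at $w$, reducing to the finite extension of discrete valuation rings $\mc{O}_v \hookrightarrow \mc{O}_w$. Pullback along $\Spec \mc{O}_w \to \Spec \mc{O}_v$ takes the inclusion $\iota_{E/F}$ on generic fibers. The ramification index enters because the scheme-theoretic preimage of the Cartier divisor $\Spec k(v) \hookrightarrow \Spec \mc{O}_v$ is the divisor $e_{k(w)/k(v)} \cdot \Spec k(w)$; functoriality of the Gysin triangle under flat pullback then produces the factor $e_{k(w)/k(v)}$, after which one compares with the inclusion of residue fields $k(v) \hookrightarrow k(w)$.

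The main obstacle is that $\Spec \mc{O}_v$ is not a smooth variety over a perfect field, so the version of motivic cohomology recalled in \S \ref{motcoh} does not apply to it verbatim; one must either extend the Gysin formalism to regular one-dimensional schemes over $F$ (which is standard but requires care), or invoke Rost's axiomatic framework of cycle modules, where these compatibilities appear as part of the reciprocity axioms (rules R3a and R3c in \cite{Rost}). Alternatively, one can reduce to the Bass-Tate presentation of $K_n^M(E)$ by generators of the form $\{a_1, \ldots, a_{n-1}, \pi\}$ with $\pi$ a uniformizer, and verify both formulas directly on such generators; this is the route of least geometric machinery and is the approach I would take if a self-contained proof were desired.
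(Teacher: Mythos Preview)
The paper does not actually prove this lemma: its entire proof is a citation to Rost's cycle-module axioms (Theorem 1.4 and the relevant ``R3'' rules in \cite{Rost}). So there is no argument in the paper to compare against; the authors are simply recording a known fact with a reference.

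Your proposal is therefore more ambitious than what the paper does. The geometric outline via Gysin sequences over $\Spec \mc{O}_v$ is the right picture, and you correctly flag the obstacle: the framework of \S\ref{motcoh} is set up only for smooth quasi-projective schemes over a field, so neither the Gysin sequence \eqref{GZ} nor Lemma \ref{pullbackdiagram} applies literally to $\Spec \mc{O}_v \hookleftarrow \Spec k(v)$. (Note also that Lemma \ref{pullbackdiagram} is a base-change statement for $f_*$ and $\pi^*$, not a compatibility of $f_*$ with the Gysin boundary $\partial$; the latter is a separate functoriality that you would need to supply.) Your two fallback options are both sound: invoking Rost's axioms is exactly what the paper does, and the Bass--Tate generator check is a standard self-contained route. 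One small bibliographic point: the paper cites Rules 3b and 3c of \cite{Rost}, whereas you cite R3a and R3c; in Rost's numbering the norm/residue compatibility is R3a and the pullback/ramification compatibility is R3b, so double-check which labels you want.
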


\begin{proof}
	This is stated (without proof, but with references) in \cite[Theorem 1.4]{Rost}; see in particular Rules 3b and 3cs therein. 
\end{proof} 
  
\subsection{Coniveau spectral sequences} \label{coniveau}

Let us recall the \emph{coniveau spectral sequence} for motivic cohomology.
We refer to \cite{deglise}, which contains many of the 
details required to set this up. 
The primary role of this spectral sequence is that it provides
complexes that compute motivic cohomology in our situations of interest, and these are also manifestly equivariant for the automorphism
group of the ambient variety. 
 
Continuity properties of motivic cohomology \cite[Lemma 3.9]{mvw} imply that
for a finite type smooth connected variety $Y$ over a field $F$ with function field $k(Y)$, we have 
$$ 
 	H^p(k(Y),q) \cong \varinjlim_{U \subset Y} H^p(U, q),
$$ 
where the limit is taken over open subvarieties $U$ of $Y$.\footnote{This isomorphism is not a tautology, as the definition of motivic cohomology involves the choice of base scheme: here, on the left, it is $k(Y)$, whereas on the right, it is $F$.} 
 
For $U$ as above and any irreducible divisor $D$ such that $D \cap U$ is nonempty,  
 there is a residue homomorphism 
 $$
 	H^p(U - (D \cap U), q) \rightarrow H^{p-1}(D \cap U, q-1).
$$ 
Consider the collection of open sets $U$ such that $D \cap U$ is smooth and nonempty.
The collection of sets $U - (D \cap U)$ is cofinal in open sets on $Y$,
and the collection of $D \cap U$ is cofinal in open sets on $D$.  
Therefore, the residue maps for $U$ in the collection induce a residue map  
\begin{equation} \label{RR}
 	H^p(k(Y), q) \rightarrow H^{p-1}(k(D), q-1).
 \end{equation}
 This latter map is determined by the field $k(Y)$ and the valuation $v$ on it which cuts out $D$ in $Y$ (see \cite[Lemma 5.4.5]{deglise}).
 When $p=q$, it is the residue in Milnor $K$-theory (see \cite[Proposition 6.2.3]{deglise}). 

With these preliminaries in hand, we recall the coniveau spectral sequence for $n \ge 0$.

\begin{theorem} \label{coniveauthm}
 	There is a right half-plane spectral sequence with $E_1$-page
	$$ E_1^{p,q} = \bigoplus_{x \in Y_p} H^{q-p}(k(x), n-p) \Rightarrow H^{p+q}(Y, n),$$ 
	where $Y_p$ denotes the set of points of $Y$ of codimension $p$ and the
	differentials are residue maps.
\end{theorem}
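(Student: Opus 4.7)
The plan is to build the spectral sequence from the standard filtration of $Y$ by codimension of support, following the approach in \cite{deglise}. For each $p \ge 0$, consider the directed system of closed subschemes $Z \subseteq Y$ of codimension at least $p$. Each such $Z$ with complement $U = Y - Z$ produces the Gysin long exact sequence \eqref{GZ}, which we rephrase as the defining triangle for motivic cohomology with support $H^i_Z(Y,n)$:
$$H^i_Z(Y,n) \to H^i(Y,n) \to H^i(U,n) \to H^{i+1}_Z(Y,n).$$
Taking the colimit over all such $Z$ yields groups $H^i_{(p)}(Y,n) = \varinjlim_Z H^i_Z(Y,n)$ fitting into a tower
$\cdots \to H^i_{(p+1)}(Y,n) \to H^i_{(p)}(Y,n) \to \cdots \to H^i_{(0)}(Y,n) = H^i(Y,n)$. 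The associated exact couple, with $D_1^{p,q} = H^{p+q}_{(p)}(Y,n)$ and $E_1^{p,q}$ given by the relative terms, produces the desired spectral sequence.

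The main step is to identify the $E_1$-page. Given a codimension-$p$ point $x \in Y_p$ with closure $W = \overline{\{x\}}$, in a sufficiently small open $U \subseteq Y$ meeting $W$, we may assume that $W \cap U$ is smooth of pure codimension $p$ in $U$. The Gysin sequence \eqref{GZ} then identifies the cohomology with support as $H^i_{W \cap U}(U,n) \cong H^{i-2p}(W \cap U, n-p)$. Passing to the colimit over such $U$ and applying the continuity isomorphism $H^*(k(x),*) \cong \varinjlim_{U} H^*(W \cap U, *)$ recalled in the preceding discussion gives contribution $H^{q-p}(k(x), n-p)$ from $x$. A standard argument using cohomology supported on differences $Z \setminus Z'$ (with $Z' \subset Z$ of codimension $\ge p+1$) shows that these contributions assemble into the direct sum $\bigoplus_{x \in Y_p} H^{q-p}(k(x), n-p)$ in the $E_1$-term.

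For the differentials, the connecting maps of the Gysin sequences provide the $d_1$ by construction of the exact couple; under the identification above these become, on each summand, the residue map \eqref{RR}, since these residues are characterized by the valuations on function fields cutting out codimension-one subvarieties (see \cite[Lemma 5.4.5]{deglise}). Convergence is straightforward: the filtration is finite since $Y_p$ is empty for $p > \dim Y$, so the spectral sequence lives in the right half-plane and degenerates after finitely many pages. The main technical hurdle is carefully controlling the colimits and invoking purity correctly at each codimension-$p$ point; the rest is formal bookkeeping with the exact couple.
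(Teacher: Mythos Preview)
Your construction is correct and follows the standard Bloch--Ogus approach via cohomology with supports. The paper organizes things slightly differently: rather than forming a single exact couple from the colimit groups $H^i_{(p)}(Y,n)$, it fixes a finite decreasing chain $Z = (Z_0 \supset Z_1 \supset \cdots)$ of closed subschemes with each $Z_p - Z_{p+1}$ smooth of pure codimension $p$, builds a spectral sequence $E(Z)$ from the Gysin sequences \eqref{GZ} for the successive pairs, and then passes to the direct limit of the $E(Z)$ over all such chains (ordered by refinement). The advantage of the paper's route is that it never needs cohomology with supports as a separate notion---only the Gysin sequence for smooth closed subschemes already set up in \eqref{GZ}---whereas your approach is more direct but tacitly uses the general localization triangle for arbitrary closed $Z$. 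On that point, one small imprecision: the sequence \eqref{GZ} as stated in the paper already assumes pure codimension and has purity built in, so it is not literally the triangle defining $H^i_Z(Y,n)$ for general $Z$; you should distinguish the localization sequence (which you actually use to build the exact couple) from the Gysin identification (which you use afterward to compute $E_1$).
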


The coniveau spectral sequence essentially carries the information of ``all Gysin sequences at once''
and is a limit of spectral sequences attached to these Gysin sequences. We briefly explain its derivation: attached to a 
decreasing system $Z = (Z_p)_{p \in \Z}$ of closed $F$-subschemes of $Y$ with each $Z_p-Z_{p+1}$ smooth,
$Z_p = Y$ for $p \le 0$, each $Z_p$ with $1 \le p \le n$ of pure codimension $p$, and $Z_p = \varnothing$ for $p > n$,
we have Gysin sequences 
\begin{equation} \label{Gysin_coniveau}
	\cdots \to H^i(Z_p,n-p) \to H^i(Z_p-Z_{p+1},n-p-1) \xrightarrow{\partial} H^{i-1}(Z_{p+1},n-p-1) \to \cdots
\end{equation}
for $0 \le p \le n-1$.
Setting $D^{p,q} = H^{q-p}(Z_p,n-p)$ and $E^{p,q} = H^q(Z_p-Z_{p+1},n)$, the exact couple $(D^{p,q},E^{p,q})$ determined by the 
exact sequences of \eqref{Gysin_coniveau} gives rise to a convergent right half-plane spectral sequence $E(Z)$ with $E_1$-page
$$
	E_1^{p,q}(Z)  = H^{q-p}(Z_p-Z_{p+1},n-p) \Rightarrow E^{p+q}(Z) = H^{p+q}(Y,n).
$$
Note that the $q$th row of the $E_1$-page of this spectral sequence $E(Z)$ is a complex the form
$$
	H^q(Y-Z_1,n) \xrightarrow{\partial} H^{q-1}(Z_1-Z_2,n-1) \xrightarrow{\partial} \cdots \xrightarrow{\partial} H^{q-n}(Z_n,0).
$$
Our convention will be that $p$th term in this complex has homological degree $n-p$.

If we have two collections $Z' = (Z'_p)_p$ and $Z = (Z_p)_p$ of closed subschemes as above 
with each $Z'_p$ a closed subscheme of $Z_p$, then  we obtain morphisms $E^1_{p,q}(Z) \to E^1_{p,q}(Z')$
via composition $j^* \iota_*$ of pushforward and pullback along
$$Z_p' - Z'_{p+1} \xrightarrow{\iota} Z_p - Z'_{p+1}  \xleftarrow{j} Z_p -Z_{p+1}$$
(with $\iota$ a closed immersion and $j$ an open immersion). 
In particular, we can take direct limits of the spectral sequences over directed sets of such collections.  If we use the collection of all $Z$, then we obtain the coniveau spectral sequence.

The row for $q = n$ in the $E_1$-page of the coniveau sequence is a homological complex $\varK$ given in degrees $n$ through $0$ by 
\begin{equation} \label{generalcplx}
 	\varK = \varK^{(n)}(Y) \colon \quad K_n^M k(Y) \to \bigoplus_{x \in Y_1} K_{n-1}^M k(x) \to \cdots \to \bigoplus_{x \in Y_n} K_0^M k(x).
\end{equation} 
It follows from Lemmas \ref{pullbackdiagram} and \ref{restra} that pushforwards by proper maps and pullbacks by flat maps induce morphisms between these sequences via transfer maps and the maps induced by inclusions of fields, respectively, on Milnor $K$-theory. 
In this paper, we employ this complex for $n = 2$.  So, let us describe this case in more detail.
 
\begin{example} \label{n=2}
	Suppose that $n=2$.  Then the $E_1$-terms of the coniveau sequence in the range $0 \leq p \leq 2$ 
	and $0 \leq q \leq 2$ look like this: 
	\begin{equation*} \label{BGQ}
  	\begin{tikzcd}
	(q=2) & \underbrace{ H^2(k(Y), 2)}_{K_2 k(Y)}  \ar[r] & \bigoplus_D \underbrace{H^1(k(D), 1)}_{K_1 k(D)} \ar[r] &  
	\bigoplus_x \underbrace{ H^0(k(x), 0) }_{K_0 k(x)} \\ 
  	(q=1) & H^1(k(Y), 2)  \ar[r] &  0& 0  \\ 
 	(q=0) & H^0(k(Y), 2)  \ar[r] & 0 & 0 \\
		 & (p=0) & (p=1) & (p=2).
 	\end{tikzcd}
	\end{equation*}
	where the direct sums are over divisors $D$ and codimension $2$ points $x$.

	Except possibly those with $p = 0$ and $q < 0$, all other terms vanish, recalling that the motivic cohomology 
	$H^i(F,k)$ of a field $F$ vanishes when $i > k$. 
	In particular, the spectral sequence degenerates, and the row
	\begin{equation} \label{varCcomplex}
		\varK = \varK^{(2)}(Y) \colon \quad
		K_2 k(Y) \xrightarrow{\partial_2} \bigoplus_D K_1 k(D)  \xrightarrow{\partial_1} \bigoplus_{x} \Z 
	\end{equation}
	is a complex in homological degrees $2$, $1$, and $0$
	computing the cohomology groups $H^2(Y, 2)$, $H^3(Y, 2)$, and $H^4(Y, 2)$, respectively.
	
	As noted after \eqref{RR}, the $D$-component of the map $\partial_2$ is given by the tame symbol in $K$-theory
	\begin{equation} \label{tamesymbol}
		\{f,g\} \mapsto (-1)^{v(f) v(g)} g^{v(f)} f^{-v(g)}
	\end{equation}
	for the valuation $v$ attached to $D$.
	The map $\partial_1$ takes the divisor of $f \in k(D)^{\times}$ (i.e., yielding the order of vanishing at $f$ in each $K_0k(x) \cong \Z$
	for $x \in D$), which we interpret in the sense of intersection theory if $D$ is not smooth. 
\end{example}

\begin{remark}  \label{definedonU}
	Suppose that $n \le 2$.
	For any (connected) open subscheme $U$ of $Y$, the maps
	\begin{equation*}
		H^n(U,n) \rightarrow H^n(k(Y), n) \cong K_nk(Y) 
	\end{equation*}
	are injective, as follows for $n = 2$ from the form of the coniveau spectral sequence for $U$ in Example \ref{n=2}, 
	noting that $k(U) = k(Y)$ (and for $n \le 1$ more easily).
	Accordingly, we will say that a class in $K_n k(Y)$ is defined on $U$ if it lies in the image
	of the morphism $H^n(U,n) \rightarrow H^n(k(Y),n)$.
	Given a class in $K_nk(Y)$ defined on $U$ and a closed point $x  \in U$,
	it is then meaningful to specialize $\kappa$ to $x$ via pullback, producing a class in $K_n k(x)$.  
\end{remark}

\subsection{Trace maps} \label{tracemaps} 

Let $G$ be a smooth, connected commutative group scheme over our base smooth variety $Y$ over $F$.
Let $U$ be a nonempty open $F$-subscheme of a closed $F$-subscheme of $G$ of pure codimension.  Multiplication by any positive integer $m$ defines a morphism $m \colon m^{-1}U \to U$.  
 Pushforward by the finite map given by multiplication by $m$ on $G$ induces a map
$$
	H^i(m^{-1}U,k) \to H^i(U,k).
$$
If $m^{-1}U$ is a subscheme of $U$, then precomposing the pushforward by $m$ with pullback under inclusion
gives a morphism
$$
	[m]_* \colon H^i(U,k) \to H^i(U,k)
$$
denoted by the same symbol, which we refer to as a \emph{trace map} for $m$.  (The reader might compare with
 \cite[Definition 2.1.1]{kr}.)
 
  In the remainder of this paper, we will frequently be interested in  the ``fixed parts'' of motivic cohomology groups,
 comprising all elements fixed by all trace maps $[p]_*$ for $p$ prime not equal to the characteristic of $F$. 
 This frequently isolates a subspace of elements of geometric significance.
 We will also consider various slightly weaker notions which we will refer to
 as ``generalized fixed parts'' to distinguish  from the strict version.

 \begin{example} 
Let $z$ denote the coordinate function on $G = \gm$ over $F$.
Choose $m$ not divisible by the characteristic of $F$, and suppose that $U$ open in $G$ satisfies $m^{-1}U \subset U$.
The map $[m]_*$ on $H^1(U,1) \subset F(z)^{\times}$ 
is characterized by the property that 
for $f \in H^1(U,1)$
and $\alpha \in U(F) \subseteq F^{\times}$ 
\begin{equation} \label{normm}
	([m]_*f)(\alpha) =   \prod_{\beta^m = \alpha} f(\beta),
\end{equation}
with the product taken over $m$th roots of $\alpha$ inside an algebraic closure of $F$.
The pullback $[m]^*$ is given more simply by
$$
	([m]^*f)(\alpha) = f(\alpha^m).
$$

In particular, for $U = \gm-\{1\}$, the norm map
$[m]_*$ fixes $1-z$ in $H^1(\gm-\{1\},1)$, as follows from the calculation
\begin{equation} \label{1zeqn}
 	\prod_{i=0}^{m-1} (1-\zeta_m^i z^{1/m}) = 1-z,
\end{equation}
where $\zeta_m$ denotes a primitive $m$th root of unity.  (In fact, $1-z$ is $[m]_*$-fixed even for $m$ divisible by $\cha F$.)
 \end{example}

\begin{example} \label{product example}
Take two smooth connected commutative group schemes $G_1$ and $G_2$ over $F$, 
 and set $G =G_1 \times G_2$. For $\nu_j \in H^{i_j}(G_j,k_j)$ with $i_j \in \Z$ and $k_j \ge 0$, 
define the exterior product $\nu_1 \boxtimes \nu_2 \in H^{i_1+i_2}(G, k_1+k_2)$
as the cup product $\pi_1^* \nu_1 \cup \pi_2^* \nu_2$, with $\pi_j \colon G \to G_j$ the
projection maps.  We then have
\begin{equation} \label{factor} [m]_* (\nu_1 \boxtimes \nu_2) = [m]_* \nu_1 \boxtimes [m]_* \nu_2.\end{equation}
(To verify this from basic properties, factor the multiplication-by-$m$ map $[m]$ 
 as a product of corresponding maps $[m]_1$ and $[m]_2$ in the first and second coordinates.
Then \eqref{factor} follows from the equality
$$[m]_{1*} (\pi_1^* \nu_1 \cup \pi_2^* \nu_2) = [m]_{1*} (\pi_1^* \nu_1 \cup  [m]_1^* \pi_2^* \nu_2)
= [m]_{1*} \pi_1^* \nu_1 \cup \pi_2^* \nu_2 = \pi_1^* [m]_{*} \nu_1 \cup \pi_2^* \nu_2,$$
where the middle equality is the projection formula of Lemma \ref{projformula},
together the analogous assertion with the roles of first and second variables switched.)
\end{example}

The maps $[m]_*$ commute with each other, with pullback to open subschemes, with pushforward by inclusion of closed subschemes, and with residue maps in Gysin sequences (see \cite[\S 2.1]{kr}). 
They also induce a self-map of the $E_1$-page of the coniveau spectral sequence of Theorem \ref{coniveauthm}.

\begin{remark} \label{tracemapsrmk}
 	For $x \in G_p$ (i.e., a codimension $p$ point) and $y \in G_p$ with $my = x$, the trace map
	$$
		[m]_* \colon \bigoplus_{y \in G_p} K_{n-p}^M k(y) \to \bigoplus_{x \in G_p} K_{n-p}^M k(x) 
	$$
	is the sum of norm maps associated to the induced inclusions $k(x) \hookrightarrow k(y)$ for $x, y \in G_p$
	with $my = x$.  By \cite[Lemma 14]{hesselholt}, this is compatible with residues, so differentials on the $E_1$-page
	of the coniveau sequence. In particular, we have trace maps on our complexes $\varK^{(n)}(G)$ of \eqref{generalcplx}.
\end{remark}

\subsection{Powers of commutative group schemes} \label{smallercomplex}

If we start with a smooth, equidimensional quasi-projective scheme $\mc{Y}$ of finite type over $Y$, then
instead of  
taking a limit of motivic cohomology groups over all open subvarieties of $\mc{Y}$, it is natural
to use only those subvarieties which are themselves defined over $Y$.  As in Section \ref{coniveau}, we have a coniveau-type spectral sequence for this limit. We are actually interested in only very special cases with finer structure. Correspondingly, we consider here complements of much smaller collections of closed subsets defined over $Y$ and limits thereof.

Now let us fix $n \ge 1$ and let $\mc{Y} = G^n$ be the $n$th power of a smooth, connected commutative group scheme $G$ of relative dimension $1$ over $Y$, such as $\mb{G}_{m/Y}$ or a smooth family of elliptic curves over $Y$.  We use throughout the convention that the monoid 
$$
	\Delta = M_n(\Z) \cap \GL_n(\Q)
$$ 
of integral matrices of nonzero determinant acts by right multiplication on $G^n$.  E.g., if $n = 2$ and $\smatrix{a&b\\c&d} \in \Delta$, then for any $g_1, g_2 \in G$, we have
\begin{equation} \label{rightaction}
	(g_1,g_2) \cdot \smatrix{a&b\\c&d} = (g_1^ag_2^c,g_1^bg_2^d).
\end{equation}
This being a right action, the monoid $\Delta$ then acts on the \emph{left} on the motivic cohomology groups $H^i(G^n,k)$ by pullback.

Even better, $\Delta$ acts on the left on the complex $\varK = \varK^{(n)}(\mc{Y})$ of \eqref{generalcplx}, also by pullback. That is, if $x \in \mc{Y}_q$ is a codimension $q$ point of $\mc{Y} = G^r$, $\delta \in \Delta$, and $y \in \mc{Y}_q$ is such that $y \cdot \gamma = x$, then pullback yields a map $\gamma^* \colon k(x) \to k(y)$ of residue fields,  and this induces $\gamma^* \colon K_{n-q}^M k(x) \to K_{n-q}^M k(y)$.  The pullback map on $\varK_{n-q}$ is the sum of these maps, and the residue maps are clearly equivariant for this action.
  
Now let us focus on the case $n = 2$ of interest to us.  We consider divisors of the form
\begin{equation} \label{rankone}
	S_{\alpha} = S_{i,j} = \ker\Bigl(G^2 \xrightarrow{z_1^jz_2^j} G \Bigr)
\end{equation}
for nonzero $\alpha = (i,j) \in \Z^2$.  Then $S_{i,j}$ is connected if and only if $i$ and $j$ are relatively prime. 

Take a finite indexing set $I \subset \Z^2 - \{(0,0)\}$ with at least two elements and
containing at most one representative of each element of $\mb{P}^1(\Q)$.  We set
\begin{eqnarray*}
	S_I = \bigcup_{\alpha \in I} S_{\alpha} &\mr{and}& U_I = G^2 - S_I,
\end{eqnarray*}
where we regard $S_I$ as a closed subscheme of $G^2$ 
with its reduced scheme structure 
and $U_I$ as an open subscheme of $\gm^2$. 
We then consider the union of pairwise intersections
$$
	T_I = \bigcup_{\substack{\alpha, \beta \in I \\ \alpha \neq \beta}} (S_{\alpha} \cap S_{\beta}),
$$
which is a finite subgroup scheme of $G^2$ by our choice of $I$.
Let 
$$
	S_I^{\circ} = S_I - T_I
$$ 
so that $S_I^{\circ}$ is the disjoint union of the smooth subschemes 
$S_{\alpha}^{\circ} = S_{\alpha} \cap S_I^{\circ}$
for $\alpha \in I$.

This fits into the setting above for $n = 2$ with $Z_1 = S_I$ and $Z_2 = T_I$, so $Z_0-Z_1 = U_I$ and 
$Z_1-Z_2 = S_I^{\circ}$.  We obtain a spectral sequence having the following terms
in degrees $(p,q)$ with $0 \le p \le 2$ and $0 \le q \le 2$:
\begin{equation*} 
\begin{tikzcd}
	(q=2) & H^2(U_I,2)  \ar[r]  &H^1(S_I^{\circ},1) \ar[r] &  
	H^0(T_I,0) \\ 
	(q=1) & H^1(U_I,2)  \ \ar[r] &  0& 0  \\ 
	(q=0) & H^0(U_I,2)  \ar[r] & 0 & 0\\
		 & (p=0) & (p=1) & (p=2).
\end{tikzcd} 
\end{equation*}
	
This spectral sequence maps to the coniveau sequence detailed in Example \ref{n=2} 
(with $Y$ replaced by $G^2$).  It follows from Remark \ref{definedonU} that
each of the complexes
$$
	\varK_I \colon \quad  H^2(U_I,2) \to H^1(S_I^{\circ},1) \to H^0(T_I,0)
$$
injects quasi-isomorphically into the \emph{big complex}
$$
	\varK \colon \quad K_2(k(G^2)) \to \bigoplus_D K_1k(D) \to \bigoplus_x K_0k(x).
$$

If we order our indexing sets by $I \le I'$ if $U_{I'} \subseteq U_I$, 
then the limit complex $\varinjlim_I \varK_I$ is also
a quasi-isomorphic subcomplex of $\varK$.  The constructions in this paper can all be carried out using this
complex, which is just large enough to allow for the definition of Hecke actions on $\GL_2(\Z)$-cocycles, 
in that it is preserved under the pullback action of the monoid $\Delta = M_2(\Z) \cap \GL_2(\Q)$.

\section{The square of the multiplicative group} \label{gmcase}

In this section, we shall define a cocycle 
$$ \Theta: \GL_2(\Z) \longrightarrow K_2(\Q(\mathbb{G}_m^2))/\mbox{everywhere regular classes},$$
where ``everywhere regular'' means the image of $H^2(\mathbb{G}_m^2, 2)$. 
We will primarily work over the base field $\Q$, but on occasion we will need to work over a finite base field. We follow the notation  for motivic cohomology of Section \ref{motcoh}.

  In \S \ref{motivcohgmr}, we begin by computing the motivic cohomology of $\gm^r$ for $r \ge 1$.  In \S \ref{symbolsmult}, we introduce explicit symbols in the terms of our motivic complex to be used in the construction.  The parabolic cocycle $\Theta$ is constructed in \S \ref{cocycle}, and its explicit formula and its parabolicity are verified using its characterizing property.  In \S \ref{actions}, we then exhibit an Eisenstein property of the class of $\Theta$ for Hecke operators of all prime levels.

\subsection{Motivic cohomology of $\gm^r$} \label{motivcohgmr}

Let $z$ denote the coordinate function on the multiplicative group $\gm$ over a field $F$, 
 normalized so that the value of $z$ at the identity element is $1$.  The motivic cohomology of $\gm$ involves classes directly constructed from $-z$, together with classes pulled back from the motivic cohomology of $\Spec F$ itself.  We extend this description to powers of $\gm$.

First, we construct suspension isomorphisms in motivic cohomology.  Recall the definition of exterior product from Example \ref{product example}.

\begin{proposition} \label{suspension}
	Let $Y$ denote an equidimensional quasi-projective scheme of finite type over $F$. 
	There is a natural isomorphism
	$$
		H^i(Y, k) \oplus H^{i-1}(Y, k-1) \xrightarrow{\sim} H^i(\gm \times Y,k), 
	$$
	where the map on the first summand is pullback under projection to the first factor 
	and the map on the second summand is left exterior product with $-z$, considered as a class in $H^1(\gm, 1)$. 
\end{proposition}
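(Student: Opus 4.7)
The plan is to derive the suspension isomorphism from the Gysin sequence for the closed embedding $\rho \colon Y \cong \{0\} \times Y \hookrightarrow \mathbb{A}^1 \times Y$, with open complement $\iota \colon \gm \times Y \hookrightarrow \mathbb{A}^1 \times Y$, combined with homotopy invariance. Applying \eqref{GZ} with codimension $c = 1$ produces
$$
\cdots \to H^i(\mathbb{A}^1 \times Y, k) \xrightarrow{\iota^*} H^i(\gm \times Y, k) \xrightarrow{\partial} H^{i-1}(Y, k-1) \xrightarrow{\rho_*} H^{i+1}(\mathbb{A}^1 \times Y, k) \to \cdots,
$$
and homotopy invariance identifies $H^j(\mathbb{A}^1 \times Y, \ell) \xrightarrow{\sim} H^j(Y, \ell)$ via pullback along the projection. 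Under this identification $\iota^*$ becomes the pullback $q^*$ along the projection $q \colon \gm \times Y \to Y$, which is precisely the map in the first summand of the statement.

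The heart of the argument is to exhibit a splitting $s \colon H^{i-1}(Y, k-1) \to H^i(\gm \times Y, k)$ of $\partial$ by the recipe $s(\nu) = (-z) \boxtimes \nu$ of the statement. Granted $\partial \circ s = \mathrm{id}$, the connecting map $\rho_*$ vanishes in every degree, and the long exact sequence collapses to a short exact sequence
$$
0 \to H^i(Y, k) \xrightarrow{q^*} H^i(\gm \times Y, k) \xrightarrow{\partial} H^{i-1}(Y, k-1) \to 0,
$$
with $q^*$ injective because it admits a retract via pullback along any $F$-section of $q$ (e.g.~along $\{1\} \times Y \hookrightarrow \gm \times Y$). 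Combining the splittings $q^*$ and $s$ yields the desired direct-sum decomposition, and naturality in $Y$ follows from the naturality of each ingredient.

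To verify $\partial \circ s = \mathrm{id}$, I would invoke a Leibniz-type compatibility of the residue with exterior products: for any $\alpha \in H^1(\gm, 1)$ and $\nu \in H^{i-1}(Y, k-1)$, the residue of $\alpha \boxtimes \nu$ along $\{0\} \times Y$ equals $\partial_0(\alpha) \boxtimes \nu$, where $\partial_0 \colon H^1(\gm, 1) \to H^0(\Spec F, 0) \cong \Z$ is the residue along $\{0\} \subset \mathbb{A}^1$. Specialized to $\alpha = -z$, one has $\partial_0(-z) = 1$ since $-z$ has a simple zero at the origin (cf.\ the tame-symbol description in Example \ref{n=2}), and therefore $\partial(s(\nu)) = 1 \boxtimes \nu = \nu$.

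The main obstacle is verifying this compatibility of residues with exterior products. It should follow formally from the construction of the residue as the boundary in the distinguished triangle from Bloch's moving lemma together with the multiplicativity of the cup product; alternatively, working directly with cycle complexes one can move $\alpha$ off $\{0\} \times Y$ and then reduce to a formal identity for tame symbols in Milnor $K$-theory via Lemmas \ref{pullbackdiagram} and \ref{restra}.
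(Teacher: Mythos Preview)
Your proposal is correct and follows essentially the same route as the paper: the Gysin sequence for $\{0\}\times Y \subset \mathbb{A}^1\times Y$, homotopy invariance, and the splitting $\nu \mapsto (-z)\boxtimes \nu$ via $\partial((-z)\boxtimes \nu)=\partial(-z)\boxtimes \nu=\nu$. The paper simply asserts the Leibniz compatibility $\partial(\alpha\boxtimes\nu)=\partial(\alpha)\boxtimes\nu$ in one line without further comment, whereas you flag it as the main point to check; your extra remark about injectivity of $q^*$ via a section is fine but redundant once $\rho_*=0$ gives exactness.
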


The reason for choosing $-z$, as opposed to $z$, will be made clear in Lemma \ref{fixedpartgm}.  
This result is well-known and corresponds to the ``fundamental theorem'' of algebraic $K$-theory (proved for $K_0$ and $K_1$ by Bass
and in general by Quillen).   
\begin{proof}
	Consider the canonical embedding $\iota \colon \gm \times Y \hookrightarrow \mb{A}^1 \times Y$ given by the usual embedding 
	in the first coordinate and the identity in the second.  The Gysin sequence has the form
	$$
		\cdots \to H^i(\mb{A}^1 \times Y,k) \to H^i(\gm \times Y,k) \xrightarrow{\partial} H^{i-1}(Y,k-1) \to \cdots.
	$$
	As noted in \S \ref{motcoh}, the pullback $H^i(Y,k) \to H^i(\mb{A}^1 \times Y,k)$ by the projection map 
	is an isomorphism.  Thus, it suffices to show that $\partial$ is split by a map on the righthand summand in the theorem, 
	and this follows from $\partial(-z \boxtimes x) = \partial(-z) \boxtimes x = x$ for $x \in H^{i-1}(Y,k-1)$.
 \end{proof}

\begin{corollary} \label{cohomgmr1}
	Let $Y$ denote an equidimensional quasi-projective scheme of finite type over a field $F$, and let $r \ge 1$.  
	There is a natural isomorphism
	$$
		H^i(\gm^r,k) \cong \bigoplus_{j=0}^{\min(k,r)} H^{i-j}(F,k-j)^{\binom{r}{j}}.
	$$
\end{corollary}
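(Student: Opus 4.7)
The plan is to proceed by induction on $r$, treating Proposition \ref{suspension} as the engine and the binomial identity of Pascal as the combinatorial bookkeeping. The base case $r = 1$ is immediate: Proposition \ref{suspension} applied with $Y = \Spec F$ gives
$$H^i(\gm, k) \cong H^i(F, k) \oplus H^{i-1}(F, k-1),$$
which matches the claim, since $\binom{1}{0} = \binom{1}{1} = 1$ and the sum truncates at $\min(k,1)$.

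For the inductive step, I would factor $\gm^r = \gm \times \gm^{r-1}$ and apply Proposition \ref{suspension} with $\gm^{r-1}$ playing the role of $Y$. Note that $\gm^{r-1}$ is smooth, equidimensional, and quasi-projective of finite type over $F$, so the proposition applies and yields
$$H^i(\gm^r, k) \cong H^i(\gm^{r-1}, k) \oplus H^{i-1}(\gm^{r-1}, k-1).$$
Applying the induction hypothesis to each summand decomposes each as a sum of copies of $H^{i'-j}(F, k'-j)$. Reindexing the second summand via $j \mapsto j + 1$ (so its contribution of $H^{(i-1)-j}(F, (k-1)-j)^{\binom{r-1}{j}}$ becomes $H^{i-j'}(F, k-j')^{\binom{r-1}{j'-1}}$), the total multiplicity of $H^{i-j}(F, k-j)$ becomes $\binom{r-1}{j} + \binom{r-1}{j-1}$, which equals $\binom{r}{j}$ by Pascal's identity. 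This gives the required decomposition.

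The only subtlety is making the index ranges match: the first sum runs over $j \in \{0, \dots, \min(k, r-1)\}$ and the reindexed second over $j \in \{1, \dots, \min(k, r)\}$, whereas the target sum runs over $j \in \{0, \dots, \min(k, r)\}$. Using the conventions $\binom{r-1}{-1} = 0$ and $\binom{r-1}{r} = 0$, together with the vanishing $H^{i-j}(F, k-j) = 0$ when $k - j < 0$, all ``missing'' summands on either side vanish automatically, so the sums consolidate cleanly. Naturality of the isomorphism is inherited from the naturality of Proposition \ref{suspension}, since the construction is obtained by iterating it.

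I do not foresee any real obstacle: once Proposition \ref{suspension} is available, this is a routine induction whose entire content is combinatorial bookkeeping of $\binom{r}{j}$ via Pascal's triangle. The only place to be careful is to check that the identifications given by the suspension isomorphism in the inductive step agree (up to sign conventions in the exterior product with $-z$) with the iterated construction one would naturally write down; this is automatic from the associativity and symmetry of the exterior product used to form $(-z)^{\boxtimes j} \boxtimes (\text{class pulled back from a factor})$.
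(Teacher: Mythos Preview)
Your proposal is correct and follows essentially the same approach as the paper: induction on $r$ via Proposition \ref{suspension} applied with $Y = \gm^{r-1}$, together with the vanishing $H^{i-j}(F,k-j)=0$ for $j>k$ to truncate the sum. Your write-up is simply a more detailed version of the paper's terse one-line argument, with the Pascal bookkeeping made explicit.
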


\begin{proof}
	This follows by induction on $r$ by iterating Proposition \ref{suspension}, i.e., taking $Y = \gm^{r-1}$ in the inductive step.  
	Note that $H^{i-j}(F,k-j) = 0$ if $k < j$, so the direct sum stops at the minimum of $k$ and $r$.
\end{proof}

Since $H^i(F,k) = 0$ for $i > k$, we obtain in particular:

\begin{corollary} \label{cohomgmr2}
	The groups $H^i(\gm^r,k)$ vanish for all $i > k$.
\end{corollary}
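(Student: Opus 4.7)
The plan is to deduce the statement immediately from Corollary \ref{cohomgmr1} together with the vanishing of motivic cohomology of a field in degrees exceeding the weight. There is no real obstacle here; this is a formal consequence of results already collected.

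Concretely, I would proceed as follows. By Corollary \ref{cohomgmr1}, we have a natural decomposition
$$
  H^i(\gm^r,k) \cong \bigoplus_{j=0}^{\min(k,r)} H^{i-j}(F,k-j)^{\binom{r}{j}}.
$$
For $i > k$ and any $j$ in the range $0 \le j \le \min(k,r)$, we have $i - j > k - j \ge 0$. The bullet list in \S\ref{motcoh} includes the vanishing statement that for smooth $Y$ over $F$, $H^a(Y,b) = 0$ whenever $a > b + \dim Y$. Applying this to $Y = \Spec F$, which has dimension $0$, yields $H^{i-j}(F, k-j) = 0$ for each such $j$. Hence every summand on the right-hand side vanishes, and the claim follows.

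The only thing to remark is that one should check the degenerate edge case $k = 0$: in that case $\min(k,r) = 0$, the sum collapses to $H^i(F,0)^{\binom{r}{0}}$, and this vanishes for $i > 0$ by the bulleted property $H^i(Y,0) = 0$ for $i \ne 0$ when $Y$ is a field (which is connected). So there is nothing new required beyond what Corollary \ref{cohomgmr1} and the background properties deliver.
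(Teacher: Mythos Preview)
Your argument is correct and matches the paper's own reasoning exactly: the paper simply notes that $H^i(F,k)=0$ for $i>k$ and records the corollary as an immediate consequence of the decomposition in Corollary~\ref{cohomgmr1}.
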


 \subsection{Symbols in the complex computing motivic cohomology}  \label{symbolsmult}
 
Recall from Example \ref{n=2} 
that the coniveau spectral sequence gives rise to a homological complex $\varK$ with nonzero terms in degrees $2$, $1$, and $0$
given by
\begin{equation*} \label{bigcomplex}  
\varK \colon \quad	K_2 k(\mathbb{G}_m^2) \rightarrow \bigoplus_{D} K_1 k(D)  \rightarrow \bigoplus_{x} K_0 k(x),
\end{equation*}
the sums being taken over irreducible divisors and closed points respectively.   This complex
computes the cohomology groups $H^*(\mathbb{G}_m^2, 2)$ in degrees $2$ to $4$ from left to right. 
Therefore, by Corollary \ref{cohomgmr2}, the sequence is exact at middle and right, and its homology at the left is $H^2(\gm^2,2)$.
Let $\barK_2$ be the quotient of $\varK_2$ by the image of $H^2(\gm^2, 2)$ so that  we get a {\em short exact} sequence
$$ 0 \rightarrow \barK_2 \rightarrow \varK_1 \rightarrow \varK_0 \rightarrow 0.$$
 We shall denote the boundary maps in this complex by the generic symbol $\partial$.

The monoid $\Delta = \GL_2(\Q) \cap M_2(\Z)$ acts on the right on $\gm^2$ by the formula of \eqref{rightaction}.
As explained in \S \ref{smallercomplex}, the complex $\varK$ is correspondingly endowed with a left $\Delta$-action via pullback.
This action descends to an action on $\barK$.  For now, we use only the induced action of the group $\GL_2(\Z)$; 
we will employ the full $\Delta$-action in \S \ref{actions}.

Let us define special elements
$$ e \in \varK_0, \quad \varleft  a, c \varright  \in \varK_1, 	\quad \langle \gamma \rangle  \in \varK_2$$
attached to a primitive vector $(a,c) \in \Z^2$  or to a matrix $\gamma \in \GL_2(\Z)$ that satisfy 
\begin{equation} \label{boundary}
	\partial \langle \smatrix{a&b\\c&d} \rangle = \begin{cases} \langle a,c \rangle -\langle -b, -d\rangle & \ifs \det \gamma=1, \\
	 \langle -a,-c \rangle -\langle b, d\rangle & \ifs \det \gamma=-1 \end{cases}	
	\quad	
	 \mbox{and} \quad 
	\partial \langle a,c \rangle = e.
\end{equation}
These are:
\begin{itemize}
\item the $\GL_2(\Z)$-fixed class $e \in \varK_0$ of the element $1 \in \Z$ supported at the identity of $\gm$,
\item for a primitive vector $(a,c) \in \Z^2$ and the torus $S_{a,c} = \ker (\gm^2 \xrightarrow{(x,y) \mapsto x^ay^c} \gm)$ of \eqref{rankone},
the image $\langle a,c \rangle \in \varK_1$ of the invertible function 
$$ 1-z_1^b z_2^d \in \mathcal{O}(S_{a,c}-\{1\})^{\times} \hookrightarrow K_1( \Q(S_{a,c})),$$
where $\smatrix{a & b \\ c & d} \in \SL_2(\Z)$ extends $(a,c)$; 
this is independent of the choice of $(b,d)$, since another choice simply alters the function $z_1^b z_2^d$
by a multiple of $z_1^a z_2^c$, which is $1$ on $S_{a,c}$,

\item for
  $\gamma = \smatrix{a&b\\c&d} \in \GL_2(\Z)$,
  and its columns $v_1= (a,c)$ and $v_2= (b,d)$, the Steinberg symbol
$$\langle \gamma \rangle = \langle v_1, v_2 \rangle 
 	= \{1-z_1^az_2^c, 1-z_1^b z_2^d\} \in \varK_2. $$ 
Note that $\langle \gamma \rangle = \gamma^* \langle \smatrix{1&0\\0&1} \rangle$
is the image of $(1-z_1^az_2^c) \cup (1-z_1^bz_2^d) \in H^2(\gm^2 - S_{a,c} \cup S_{b,d},2)$.
\end{itemize}

The special elements of the form $e$, $\langle a,c \rangle$ for $(a,c) \in \Z^2$ primitive, and 
$\langle \gamma \rangle$ for $\gamma \in \GL_2(\Z)$ together span a subcomplex $\Symb$ of $\varK$ that we refer to as the \emph{symbol complex}.  We'll return to it in Sections \ref{cyclococyc} and \ref{K2boundary}.  
That these symbols satisfy \eqref{boundary} follows directly from the description in Example \ref{n=2} of the residue maps in $\varK$ in terms of tame symbols \eqref{tamesymbol} and divisors.  

\begin{remark}
We note for later use that, for $\gamma  = \abcdmat \in \GL_2(\Z)$,
the pullback $\gamma^* \langle 0,1 \rangle$ is supported on the torus $S_{b,d}$ which is 
the kernel of $(z_1, z_2) \mapsto z_1^b z_2^d$.
On this divisor, it is given by $\gamma^* (1-z_1^{-1}) = 1- z_1^{-a} z_2^{-c}$. 
Thus
\begin{equation} \label{gamma01} \gamma^* \langle 0,1 \rangle = \begin{cases} \langle b,d \rangle  & \ifs \det(\gamma)=1, \\ \langle -b, -d \rangle  & \ifs \det(\gamma)=-1. \end{cases}\end{equation}
\end{remark}

 \subsection{The cocycle} \label{cocycle}

 Pulling back the complex $\barK_2 \rightarrow \varK_1 \rightarrow \varK_0$
to the cyclic subgroup generated by $e \in \varK_0$, we get an extension
of $\Z$ by $\barK_2$, and so an extension class in
\begin{equation} \label{ext point of view} \Ext^1_{\Z[\GL_2(\Z)]}(\Z,\barK_2) = H^1(\GL_2(\Z), \barK_2).\end{equation}
We shall describe a cocycle representing this class more explicitly in Proposition \ref{existence} below. We then give an explicit recipe for $\Theta_{\gamma}$ as a sum of symbols $\langle \rho \rangle$ with $\rho \in \SL_2(\Z)$
and show that it lies in parabolic cohomology.  Because $H^2(\GL_2(\Z),\Z)$
is torsion, a multiple of $\Theta$ can actually be lifted to $\varK_2$; in Section \ref{K2boundary},
we sketch how to do this explicitly.

\begin{proposition} \label{existence}
	There is a $1$-cocycle  
	$$ 
		\Theta \colon \GL_2(\Z) \to \barK_2, \quad \gamma \mapsto \Theta_{\gamma},
	$$
	uniquely characterized by the
	property that
  	$$
		\partial \Theta_{\gamma} =   (\gamma^* - 1)  \langle 0,1 \rangle.
	$$
\end{proposition}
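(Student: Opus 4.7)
The plan is to read off $\Theta$ from the short exact sequence of $\GL_2(\Z)$-modules
$$0 \to \barK_2 \xrightarrow{\partial} \varK_1 \xrightarrow{\partial} \varK_0 \to 0$$
recorded in the paragraph preceding the statement (exactness on the right uses $H^3(\gm^2,2) = H^4(\gm^2,2) = 0$ from Corollary \ref{cohomgmr2}, while exactness on the left is built into the definition of $\barK_2$). Conceptually, pulling this sequence back along the $\GL_2(\Z)$-equivariant map $\Z \to \varK_0$ sending $1 \mapsto e$ produces the extension whose class in $H^1(\GL_2(\Z),\barK_2)$ is the one indicated in \eqref{ext point of view}, and $\Theta$ will be the cocycle representative obtained by choosing the set-theoretic lift $1 \mapsto \langle 0,1\rangle$ of $e$ to $\varK_1$ and taking its coboundary.

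Concretely, for each $\gamma \in \GL_2(\Z)$, since $e$ is $\GL_2(\Z)$-fixed and the residue maps are $\GL_2(\Z)$-equivariant (Section \ref{smallercomplex}), the element $(\gamma^* - 1)\langle 0,1\rangle \in \varK_1$ satisfies
$$\partial\bigl((\gamma^* - 1)\langle 0,1\rangle\bigr) = (\gamma^* - 1) e = 0.$$
By exactness at $\varK_1$, there is a unique $\Theta_\gamma \in \barK_2$ with $\partial \Theta_\gamma = (\gamma^* - 1)\langle 0,1\rangle$; uniqueness uses the injectivity of $\partial$ on $\barK_2$. This settles existence and uniqueness of the value at each group element simultaneously. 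The $1$-cocycle identity $\Theta_{\gamma\delta} = \Theta_\gamma + \gamma^*\Theta_\delta$ is then verified by applying the injective map $\partial$ and telescoping:
$$\partial(\Theta_\gamma + \gamma^*\Theta_\delta) = (\gamma^* - 1)\langle 0,1\rangle + \gamma^*(\delta^* - 1)\langle 0,1\rangle = ((\gamma\delta)^* - 1)\langle 0,1\rangle = \partial \Theta_{\gamma\delta}.$$

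The argument is a short diagram chase, so there is no genuine obstacle to overcome. All the substantive input has been secured in preceding sections: exactness of the short sequence via the motivic cohomology computations of Section \ref{motivcohgmr}, the $\GL_2(\Z)$-fixedness of $e$ as part of its definition just above the proposition, and the $\Delta$-equivariance of residue maps recorded in Section \ref{smallercomplex}. The formulas \eqref{boundary} and \eqref{gamma01} are not needed for the bare existence-and-uniqueness statement, although they become central later for deriving an explicit formula for $\Theta_\gamma$ as a sum of Steinberg symbols $\langle \rho \rangle$ with $\rho \in \SL_2(\Z)$ and for checking parabolicity.
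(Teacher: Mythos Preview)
Your proof is correct and follows essentially the same argument as the paper: both use the short exact sequence $0 \to \barK_2 \to \varK_1 \to \varK_0 \to 0$ to define $\Theta_\gamma$ uniquely via $\partial\Theta_\gamma = (\gamma^*-1)\langle 0,1\rangle$ (noting this has trivial boundary $e-e=0$), and verify the cocycle identity by applying the injective $\partial$ and telescoping. Your additional framing via the extension class \eqref{ext point of view} is helpful context but not a different method.
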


\begin{proof}
	Since $(\gamma^*-1)\langle 0,1 \rangle$ has trivial boundary $e-e = 0$, it is the boundary
	of a unique $\Theta_{\gamma} \in \barK_2$.  
	Since pullback is a left action, we have
	$$
		\partial\Theta_{\gamma \gamma'} = (\gamma^*(\gamma')^*-1)\langle 0,1 \rangle
		= \gamma^*((\gamma')^*-1)\langle 0,1 \rangle + (\gamma^*-1)\langle 0,1 \rangle
		= \partial(\gamma^*\Theta_{\gamma'} + \Theta_{\gamma}),
	$$
	for $\gamma, \gamma' \in \GL_2(\Z)$.  That $\Theta$ is a cocycle therefore follows by the exactness of $\barK$.
\end{proof}

Next, we give an explicit recipe for values of $\Theta$ in terms of our special symbols in $\varK_2$ using a standard variant of the
Euclidean algorithm, analogous to writing a geodesic between cusps on the modular curve as a sum of Manin symbols.  We make
the latter analogy precise in \S \ref{homology}.  

Given $\gamma = \smatrix{a&b\\c&d} \in \GL_2(\Z)$, the Euclidean algorithm allows us to find a sequence 
$(v_i)_{i=0}^k$ in $\Z^2$ for some $k \ge 0$ with $v_0 = (0,1)$ and $v_k = \det(\gamma) (b,d)$   and such that the $v_i = (b_i,d_i)$ satisfy 
$\det \smatrix{b_{i-1} & b_i \\ d_{i-1} & d_i} = 1$ for all $1 \le i \le k$.  We call such a sequence $(v_i)_{i=0}^k$ a 
\emph{connecting sequence} for $\gamma$.

\begin{proposition} \label{expform} 
	Let $\gamma = \smatrix{a&b\\c&d} \in \GL_2(\Z)$, and choose a connecting sequence $(v_i)_{i=0}^k$ for $\gamma$.  
	Then we have the following equality in $\barK_2$:
	$$
		\Theta_{\gamma} = \sum_{i=1}^k \langle v_i, -v_{i-1} \rangle.
	$$
\end{proposition}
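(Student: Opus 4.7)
The plan is to exploit the uniqueness in Proposition \ref{existence}: $\Theta_\gamma$ is the unique element of $\barK_2$ whose image under $\partial$ equals $(\gamma^* - 1)\langle 0,1 \rangle$. Consequently, to prove the displayed identity, it suffices to verify that the proposed sum $\sum_{i=1}^k \langle v_i, -v_{i-1} \rangle$ has that same boundary. No manipulation inside $\barK_2$ (modulo the image of $H^2(\gm^2,2)$) is needed for that verification beyond computing in the image under $\partial$.

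The first observation I would record is that each term $\langle v_i, -v_{i-1}\rangle$ is legitimately a symbol attached to an element of $\SL_2(\Z)$. Indeed, writing $v_{i-1} = (b_{i-1}, d_{i-1})$ and $v_i = (b_i, d_i)$, the defining property of a connecting sequence gives $b_{i-1}d_i - b_i d_{i-1} = 1$, and the matrix with columns $v_i$ and $-v_{i-1}$ has determinant $b_{i-1}d_i - b_i d_{i-1} = 1$. Applying the determinant-one case of \eqref{boundary}, I obtain
\begin{equation*}
  \partial \langle v_i, -v_{i-1}\rangle \;=\; \langle v_i \rangle \;-\; \langle -(-v_{i-1})\rangle \;=\; \langle v_i \rangle - \langle v_{i-1}\rangle,
\end{equation*}
so the sum telescopes to $\langle v_k\rangle - \langle v_0\rangle$.

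The last step is then to identify the two endpoints. Since $v_0 = (0,1)$, the second term is $\langle 0,1\rangle$. For the first, $v_k = \det(\gamma)(b,d)$, and the formula \eqref{gamma01} gives $\gamma^* \langle 0,1\rangle = \langle b,d\rangle$ when $\det(\gamma)=1$ and $\gamma^* \langle 0,1\rangle = \langle -b,-d\rangle$ when $\det(\gamma)=-1$; either way this coincides with $\langle v_k\rangle$. Hence the total boundary is $(\gamma^*-1)\langle 0,1\rangle$, matching the characterizing property of $\Theta_\gamma$, and we conclude by uniqueness.

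Honestly, there is no serious obstacle here once one trusts Proposition \ref{existence}; the only minor points requiring care are (i) that the signs in the boundary formula \eqref{boundary} and in \eqref{gamma01} align correctly in both the $\det(\gamma) = \pm 1$ cases, and (ii) that existence of a connecting sequence is guaranteed (which is a standard outcome of the Euclidean algorithm, using that $(b,d)$ is primitive). As a byproduct, the argument shows that, modulo the ambiguity of $\barK_2$, the right-hand side is independent of the choice of connecting sequence, since any two choices must produce elements of $\varK_2$ with the same boundary.
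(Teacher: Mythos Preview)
Your proof is correct and follows essentially the same approach as the paper: both compute the boundary of $\sum_{i=1}^k \langle v_i, -v_{i-1}\rangle$ via \eqref{boundary}, telescope to $\langle v_k\rangle - \langle v_0\rangle$, identify this with $(\gamma^*-1)\langle 0,1\rangle$ using \eqref{gamma01}, and conclude by the uniqueness in Proposition~\ref{existence}. Your additional remarks (verifying the determinant of the matrix with columns $v_i,-v_{i-1}$ is $1$, and noting independence of the connecting sequence) are correct elaborations but not logically new.
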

Recall from \S \ref{symbolsmult} that $\langle v_i, -v_{i-1} \rangle$ is the symbol associated to the matrix with first column $v_i$ and second column  $-v_{i-1}$.
\begin{proof}
	By \eqref{gamma01} and \eqref{boundary} we have
	$$ 
		\partial \left( \sum_{i=1}^k \langle v_i, -v_{i-1} \rangle \right)
		= \sum_{i=1}^k (\langle v_i \rangle - \langle v_{i-1} \rangle) = \langle v_k \rangle - \langle v_0 \rangle 
		=  \gamma^* \langle 0,1 \rangle - \langle 0,1 \rangle.
	$$
	Since $\Theta_{\gamma}$ and $\sum_{i=1}^k \langle v_i,-v_{i-1} \rangle$ have the same boundary, they are
	equal in $\barK_2$.
\end{proof}

\begin{example}
	Take $\gamma = \smatrix{-1&0\\0&-1}$.  Then $v_0 = (0,1)$, $v_1 = (-1,0)$, and $v_2 = (0,-1)$ form a connecting
	sequence for $\gamma$, so 
	$$
		\Theta_{\gamma} = \langle (-1,0),(0,-1) \rangle + \langle (0,-1),(1,0) \rangle = \{ -z_1^{-1}, 1-z_2^{-1} \},
	$$
	which equals $-\{ -z_1, 1-z_2 \}$ in $\barK_2$.
\end{example}

We will use a perhaps slightly nonstandard notion of parabolic cohomology for $\GL_2(\Z)$, consistent with our use
of right actions of $\GL_2(\Z)$ on group schemes.   That is, we define the $\GL_2(\Z)$-parabolic cohomology group $H^1_P(\GL_2(\Z),M)$
for a $\Z[\GL_2(\Z)]$-module $M$ to be the intersection of the kernels of the restriction maps from $H^1(\GL_2(\Z),M) \to H^1(P,M)$, where $P$ runs over all stabilizers of nonzero elements of $\Z^2$
under the right action of $\GL_2(\Z)$. We say that a $1$-cocycle $\GL_2(\Z) \to M$ is \emph{parabolic} if its class lies in the parabolic cohomology group $H^1_P(\GL_2(\Z),M)$.

 \begin{proposition} \label{parabolic}
	The cocycle $\Theta$ is parabolic.
\end{proposition}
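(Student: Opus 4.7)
The cocycle $\Theta$ arises from the connecting homomorphism of the short exact sequence $0 \to \barK_2 \to \varK_1 \to \varK_0 \to 0$ applied to $e \in \varK_0^{\GL_2(\Z)}$. Consequently, for any subgroup $P \subset \GL_2(\Z)$, the restricted class $\Theta|_P$ vanishes in $H^1(P, \barK_2)$ if and only if $e$ admits a $P$-invariant lift in $\varK_1$. My plan is to construct such a lift. Since $\GL_2(\Z)$ acts transitively on $\mb{P}^1(\Q)$, all stabilizers of cusps are conjugate in $\GL_2(\Z)$, so it suffices to produce the lift for a single stabilizer. I would take $P = \mathrm{Stab}([0:1]) = \{\smatrix{a & b \\ 0 & d} : a, d \in \{\pm 1\}, b \in \Z\}$, generated by $T = \smatrix{1&1\\0&1}$, $\epsilon_1 = \smatrix{-1&0\\0&1}$, and $\epsilon_2 = \smatrix{1&0\\0&-1}$.

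The key geometric input is the $P$-stable divisor $S_{1,0} = \{z_1 = 1\}$, which contains the origin and on whose coordinate ring $\Q(z_2)$ the induced $P$-action factors through the finite abelianization: the unipotent radical $\langle T \rangle$ and $\epsilon_1$ act trivially, while $\epsilon_2$ acts by inversion $z_2 \mapsto z_2^{-1}$. I would first construct a coboundary witness on the subgroup $\langle T, \epsilon_1 \rangle$ by taking $\xi = \{1 - z_2, 1 - z_1^{-1}\} \in \barK_2$. A direct computation with the tame-symbol formula \eqref{tamesymbol} gives $\partial \xi = \langle 0, 1 \rangle - \langle 1, 0 \rangle$ in $\varK_1$; then since $\langle 1, 0 \rangle$ is fixed by $\langle T, \epsilon_1 \rangle$, injectivity of $\partial$ on $\barK_2$ yields $\Theta_\gamma = (\gamma^* - 1)\xi$ for all $\gamma \in \langle T, \epsilon_1 \rangle$.

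The main obstacle, which I expect to be the technical heart of the argument, is extending this coboundary witness to the full group $P$ by accommodating $\epsilon_2$. Since $\epsilon_2^* \langle 0,1 \rangle = \langle 0, 1 \rangle$ by \eqref{gamma01}, we have $\Theta_{\epsilon_2} = 0$, and so the witness $\xi$ must be modified to an $\epsilon_2$-invariant element without spoiling the identities on $T$ and $\epsilon_1$. Equivalently, one must find an $\langle T, \epsilon_1 \rangle$-invariant correction $\lambda \in \barK_2$ with $(\epsilon_2^* - 1)\lambda = -(\epsilon_2^* - 1)\xi$ in $\barK_2$; unwinding, this amounts to constructing a $P$-invariant lift of $e$ to $\varK_1$ supported (modulo the image of $\partial$) on the divisor $S_{1,0}$ and symmetric under $z_2 \mapsto z_2^{-1}$. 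Once this adjustment is carried out, one concludes by the cocycle relation that the modified $\xi$ witnesses $\Theta_\gamma = (\gamma^*-1)\xi$ for every $\gamma \in P$, proving that $\Theta$ is parabolic.
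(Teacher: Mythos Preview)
Your reduction to a single parabolic via transitivity and your connecting-homomorphism interpretation are both correct, but you have chosen the wrong parabolic and thereby turned a one-line proof into an obstruction. The paper works with the stabilizer of $\infty$, namely $P_\infty = \{\smatrix{1&0\\c&\pm 1}\}$. For any $\gamma\in P_\infty$ the second column is $(0,\pm 1)$, so \eqref{gamma01} gives $\gamma^*\langle 0,1\rangle = \langle 0,1\rangle$, hence $\partial\Theta_\gamma = 0$ and therefore $\Theta_\gamma = 0$ in $\barK_2$. That is the whole argument: $\langle 0,1\rangle$ is already a $P_\infty$-invariant lift of $e$, and the restriction of $\Theta$ to $P_\infty$ is identically zero, not merely a coboundary.

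By contrast, the ``$\epsilon_2$-adjustment'' you anticipate for your upper-triangular $P$ does not exist. A $P$-invariant $\nu\in\varK_1$ has finite support, necessarily a union of finite $P$-orbits of divisors. Among irreducible divisors through the identity, only $S_{1,0}$ has a finite $T$-orbit: a $T$-periodic irreducible curve must be cut out by a polynomial in $z_1$ alone, and passing through $(1,1)$ forces that polynomial to be $z_1-1$. Hence the coefficient of $e$ in $\partial\nu$ equals $\mathrm{ord}_{z_2=1}(f)$, where $f\in\Q(z_2)^\times$ is the $S_{1,0}$-component of $\nu$. But $\epsilon_2$-invariance says $f(z_2^{-1})=f(z_2)$; writing $f=(z_2-1)^m h$ with $h(1)\neq 0$ and evaluating this symmetry at $z_2=1$ gives $(-1)^m=1$, so $m$ is even. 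Thus no $P$-invariant lift of $e$ exists, and $\Theta|_P$ is genuinely \emph{not} a coboundary.

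Your partial success on $\langle T,\epsilon_1\rangle$ is precisely the conjugate, via $\smatrix{0&1\\1&0}$, of the paper's argument on $P_\infty$; you have simply re-derived it with an unnecessary explicit witness $\xi$. (Note that the paper's $P_\infty$ is itself an index-$2$ subgroup of the full point-stabilizer of $\infty$, and the same parity obstruction blocks the extension there too; for the applications to $\Gamma_1(N)$ with $N\ge 3$ this is harmless because $-I\notin\Gamma_1(N)$.)
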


\begin{proof}
	Since the right action of $\GL_2(\Z)$ on $\mb{P}^1(\Q)$ is transitive, 
	it is enough to verify triviality upon restriction to the stabilizer $P_{\infty} = \{ \smatrix{1&0 \\ c&\pm 1} \mid c \in \Z\}$ 
	of $\infty$.  If we take $\gamma \in P_{\infty}$, then by \eqref{gamma01},  
	$$
		\partial\Theta_{\gamma} = (\gamma^*-1)\langle 0,1 \rangle = \langle 0,1 \rangle - \langle 0,1 \rangle = 0,
	$$ 
	so $\Theta_{\gamma} = 0$ in $\barK_2$.
 \end{proof}
 
 \begin{remark} \label{integral}
 	The parabolic cocycle $\Theta$ is also \emph{integral} in a sense we shall now describe. 
	For this, let us suppose the theory of motivic cohomology over schemes over Dedekind domains (see the work of Levine 
	\cite{levine-tech}, Geisser \cite{geisser}, and Spitzweck \cite{spitzweck}), which we denote as over fields. 
	Take the direct limit of second motivic cohomology groups
	$\varK_{2/\Z} = \varinjlim_U H^2(U,2)$, where $U$ runs over the open $\Z$-subschemes of $\gm^2{}_{/\Z}$ that 
	are complements of unions of kernels of morphisms 
	$\gm^2{}_{/\Z} \to \gm^2{}_{/\Z}$ with $(z_1,z_2) \mapsto z_1^az_2^c$ for some primitive $(a,c) \in \Z^2-\{0\}$.  There is a canonical
	injection $\varK_{2/\Z} \hookrightarrow \varK_2$, under which the inverse image of $H^2(\gm^2,2)$ is $H^2(\gm^2{}_{/\Z},2)$.  
	The statement is then that 
	$$
		\mbox{$\Theta$ takes values in $\varK_{2/\Z}/H^2(\gm^2{}_{/\Z},2)$}.
	$$ 
	This can be seen directly from the explicit formula of Proposition \ref{expform} or without recourse to this formula
	using Gysin sequences and Lemma \ref{fixedpart} over finite fields (supposing an expected compatibility of pushforwards
	and residues as in Lemma \ref{restra} that we did not endeavor to check). 
 \end{remark}
    
\subsection{Hecke actions} \label{actions}

 We now turn to the action of Hecke operators on the class of our $1$-cocycle $\Theta$.
To set the stage, suppose that $\Delta$ is a submonoid of $M_2(\Z) \cap \GL_2(\Q)$ and $\Gamma$ is a finite index subgroup of 
$\GL_2(\Z) \cap \Delta$.
We recall the explicit formulas for the action of Hecke operators of double cosets for $\Gamma \backslash \Delta / \Gamma$ 
on the cohomology $H^1(\Gamma, M)$ for any $\Z[\Delta]$-module $M$.

For $g \in \Delta$, write
\begin{equation} \label{doublecoset}
	\Gamma g \Gamma = \coprod_{j=1}^t g_j\Gamma.
\end{equation}
For $\gamma \in \Gamma$, there exists a permutation $\sigma \in S_t$ (the permutation group on $t$ letters) 
and elements
$\gamma_j \in \Gamma$ such that $\gamma g_j =  g_{\sigma(j)}\gamma_j$ for $1 \le j \le t$. 
For a $1$-cocycle $\theta \colon \Gamma \to M$ and $\gamma \in \Gamma$, we set
\begin{equation} \label{Heckecocyc}
	T(g)\theta(\gamma) =  \sum_{j=1}^t g_{\sigma(j)}\theta(\gamma_j),
\end{equation}
which in general depends on the chosen coset representatives $g_j$. The following lemma is well known and verified simply by writing out the definitions. 
\begin{lemma} \label{Heckelemma}
	For a $1$-cocycle $\theta \colon \Gamma \to M$, the cochain $T(g)\theta$ is a cocycle with class 
	independent of the choice of double coset decomposition.  In particular, $T(g)$ induces a well-defined action on 
	$H^1(\Gamma,M)$.  Moreover, this restricts to an action on parabolic cocycles and parabolic cohomology.
\end{lemma}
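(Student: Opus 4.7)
My plan is to verify, in order, the four claims of the lemma: (a) $T(g)\theta$ is a cocycle; (b) its class does not depend on the choice of representatives $g_j$; (c) $T(g)$ is compatible with coboundaries and so induces a well-defined action on $H^1(\Gamma,M)$; and (d) parabolic classes map to parabolic classes. Each is a direct cochain computation leveraging the relations $\alpha g_j = g_{\sigma(j)} \alpha_j$ and the cocycle identity for $\theta$.

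For (a), given $\alpha, \beta \in \Gamma$ with associated permutations $\sigma, \tau$, I would compose to get $(\alpha\beta) g_j = g_{\sigma(\tau(j))} \alpha_{\tau(j)} \beta_j$, expand $\theta(\alpha_{\tau(j)} \beta_j)$ via the cocycle identity of $\theta$, simplify $g_{\sigma(\tau(j))} \alpha_{\tau(j)} = \alpha g_{\tau(j)}$ on the $\theta(\beta_j)$-piece, and reindex the remaining piece through the bijection $\tau$. This yields $T(g)\theta(\alpha\beta) = T(g)\theta(\alpha) + \alpha \cdot T(g)\theta(\beta)$. For (b), replacing $g_j$ by $g_j' = g_j \delta_j$ with $\delta_j \in \Gamma$ preserves $\sigma$ and changes $\alpha_j$ to $\delta_{\sigma(j)}^{-1} \alpha_j \delta_j$; expanding via the cocycle identity (and using $\delta \theta(\delta^{-1}) = -\theta(\delta)$) telescopes the difference of the two versions of $T(g)\theta(\alpha)$ to the coboundary $(\alpha - 1)\sum_j g_j \theta(\delta_j)$. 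For (c), a parallel but shorter computation shows that $T(g)(dm) = d\bigl(\sum_j g_j m\bigr)$, using that $\sigma$ is a bijection of $\{1,\dots,t\}$ and that $g_{\sigma(j)} \alpha_j = \alpha g_j$.

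For (d), let $P = P_p$ be the $\Gamma$-stabilizer of some $p \in \mb{P}^1(\Q)$ and suppose $\theta|_P = d\mu$. For $\alpha \in P$, decompose the permutation $\sigma_\alpha$ into cycles $(j_0,\ldots,j_{n-1})$; iterating the defining relation yields
\[
\alpha_{j_{n-1}} \cdots \alpha_{j_0} = g_{j_0}^{-1} \alpha^n g_{j_0},
\]
which lies in the parabolic subgroup $P_{p \cdot g_{j_0}}$ on which $\theta$ is again a coboundary. Expanding $\theta$ on this product via the cocycle identity and comparing term-by-term with the cycle's contribution $\sum_i g_{j_{i+1 \bmod n}} \theta(\alpha_{j_i})$ to $T(g)\theta(\alpha)$ shows that each cycle contribution lies in $(\alpha - 1) M$. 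Summing over cycles, and then reducing to a generator of $P$, produces a single $\nu \in M$ with $T(g)\theta|_P = d\nu$. The main obstacle is this last step: the calculation visibly exhibits membership in $(\alpha - 1) M$ for each fixed $\alpha$, but both the coset permutation and its cycle decomposition depend on $\alpha$, so to extract one trivializer $\nu$ valid for all of $P$ one must use (b) to adjust coset representatives within each $\alpha$-orbit (collapsing cycles for a chosen generator of $P$ to a single term) and then exploit the essentially cyclic structure of $P$ modulo its finite part to propagate from that generator to all of $P$.
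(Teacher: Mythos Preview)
Your outline is correct and, since the paper itself omits the proof entirely (saying only that it ``is well known and verified simply by writing out the definitions''), you have supplied precisely what the paper leaves implicit. Parts (a), (b), (c) are standard and your computations are accurate.

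Two small remarks on (d). First, the hypothesis you need is that $\theta$ is parabolic at \emph{every} parabolic subgroup, not just at the fixed $P$; you use this when invoking triviality on $P_{p\cdot g_{j_0}}$, so your opening ``suppose $\theta|_P = d\mu$'' should be strengthened accordingly. Second, the comparison you describe is not literally term-by-term: the cocycle expansion of $\theta(\alpha_{j_{n-1}}\cdots\alpha_{j_0})$ yields $\sum_i \alpha^{\,n-1-i} g_{j_{i+1}}\theta(\alpha_{j_i})$ after multiplying by $g_{j_0}$, whereas the cycle's contribution to $T(g)\theta(\alpha)$ is the unweighted sum $\sum_i g_{j_{i+1}}\theta(\alpha_{j_i})$. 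The point is that their difference lies in $(\alpha-1)M$ because each $\alpha^{k}-1$ is divisible by $\alpha-1$ in the group ring, and the weighted sum itself equals $(\alpha^n-1)g_{j_0}\mu' \in (\alpha-1)M$. With this observation your argument goes through for a generator of the infinite cyclic part of $P$, and since parabolics here are virtually infinite cyclic, that suffices (the finite part contributes nothing once one works modulo coboundaries, or one notes $H^1$ of a finite cyclic group injects into that of the full parabolic in the relevant sense).
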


\begin{remark}  \label{leftright} 
	The Hecke operators $T(g)$ of Lemma \ref{Heckelemma} arise from left coset decompositions of $\Gamma g \Gamma$ for 
	$g \in \Delta$.
	Given a right $\Delta$-module $N$, the analogous construction to the above yields {\em right} Hecke operators $T^R(g)$ 
	using a decomposition of $\Gamma g \Gamma$ into right cosets, as often found in the literature 
	(see, e.g., \cite[Section 8.3]{shimura}).  
	
	The two operators are related as follows. 	Write $*$ for the anti-involution on $\GL_2(\Q)$ given by $g^* = (\det g) g^{-1}$.  
 If $\theta \colon \Gamma \to M$ is a left cocycle, then $
	\theta' \colon \gamma \mapsto \theta(\gamma^{-1})$
	is a $\Gamma$-cocycle for the right action of $\Delta$ on $M$ given by $(m, h) \mapsto h^* m$,
	and the rule $\theta \mapsto \theta'$ intertwines the actions of $T(g)$ and $T^R(g^*)$. 
	If the action on $M$ is trivial, then there is no distinction between left and right cocycles,
	and the actions of $T(g)$ and $T^R(g^*)$ coincide. 
\end{remark}

Returning to our case of interest, we again take
$$
	\Delta = M_2(\Z) \cap \GL_2(\Q)
$$
and $\Gamma = \GL_2(\Z)$.
The monoid $\Delta$ acts on the right on $\gm^2(Y)$ for
any smooth $\Q$-scheme $Y$ by the formula \eqref{rightaction}, and
this right action on $\gm^2$ induces a left pullback action on $\varK$ as in \S \ref{smallercomplex}.  

For example, the pullback by $\gamma \in \Delta$
of an invertible regular function $f$ on $S_{i,j}-\{1\}$
is the function on  $S_{ai+bj,ci+dj} = S_{i,j}\gamma^{-1}$ defined by
$$
	(\gamma^*f)(z_1,z_2) = f((z_1,z_2)\gamma) = f(z_1^a z_2^c, z_1^b z_2^d),
$$
and the divisor of $\gamma^*f$ is the pullback of the divisor of $f$. 	
This action descends to an action on $\barK$ also, since $M_2(\Z) \cap \GL_2(\Q)$
acts by pullback on $H^2(\gm^2,2)$, compatibly with its morphism to $\varK_2$. 
In the case of the matrix $\smatrix{\ell & 0 \\ 0 & \ell}$ we denote this action on $\varK$
more succinctly by $[\ell]^*$.  
 
For a prime $\ell$, let $T_{\ell} = T(g)$ for $g = \smatrix{ \ell \\ & 1 }$, where $T(g)$ as in \eqref{Heckecocyc} above for the coset representatives \footnote{This operator agrees with the Hecke operator $T^R(g^*)$ as defined via right cosets of $g^* = \smatrix{1 \\ & \ell}$ as in Remark \ref{leftright}.}
\begin{equation} \label{Gjdef} g_j = \Pmatrix{ \ell & j \\ & 1 } \mbox{ for } 0 \le j \le \ell - 1 \quad \mbox{ and } \quad 
g_{\ell} = \Pmatrix{ 1 \\ & \ell }\end{equation} 
in $\Gamma g \Gamma = \coprod_{j=0}^{\ell} g_j \Gamma$. By Lemma \ref{Heckelemma}, this gives an action of $T_{\ell}$ on cohomology independent of the latter decomposition.
Let us also define an 
endomorphism of the complex $\varK$
by the rule 
$$T_{\ell}^{\varK} = \sum_{j=0}^{\ell} g_j^* \colon \varK \to \varK.$$
This depends upon the choice of $g_j$ and
appears primarily as a computational aid. 

We can now compute the action of $T_{\ell}$ on the class of $\Theta$ from the action of $T_{\ell}^K$ on $e$ of \S \ref{symbolsmult}.

\begin{lemma} \label{Heckeidentity}
	For each prime $\ell$, we have an equality 
	$$
		T_{\ell}^{\varK} e = (\ell + [\ell]^*)e
	$$
	of elements of $\varK_0$.
\end{lemma}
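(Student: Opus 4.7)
\medskip

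\noindent\textbf{Proof plan.} The element $e \in \varK_0$ is the class of the identity element $\mathrm{id} \in \gm^2$, viewed as a closed point with residue field $\Q$, together with the generator $1 \in K_0(\Q) \cong \Z$. For any $g \in \Delta$, pullback on $\varK_0$ is induced by pullback of cycles along the finite flat map $g \colon \gm^2 \to \gm^2$, and so $g^* e$ is the class of the scheme-theoretic fiber $g^{-1}(\mathrm{id})$, i.e., of the kernel $\ker(g) \subset \gm^2[\det g]$. Since we work over $\Q$ and $\ell$ is invertible, each of the maps $g_j$ and $[\ell]$ is étale at the identity (in fact everywhere, as translation-invariant isogenies of $\gm^2$), so the preimages are reduced, and $g^* e$ is simply the formal sum of the classes of the closed points of $\ker(g)$ in $\bigoplus_x K_0 k(x)$.

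With this understood, the identity reduces to a matching of $0$-cycles supported on $\gm^2[\ell]$. Using the explicit coset representatives of \eqref{Gjdef}, for $0 \le j \le \ell - 1$ one computes
$$ \ker(g_j) = \{(z_1,z_2) \in \gm^2 : z_1^{\ell} = 1,\ z_2 = z_1^{-j}\}, $$
an order-$\ell$ cyclic subgroup, while
$$ \ker(g_{\ell}) = \{(1,z_2) : z_2^{\ell} = 1\}. $$
As $j$ varies, these are precisely the $\ell + 1$ rank-one subgroups of $\gm^2[\ell] \cong (\Z/\ell)^2$, parametrized by $\mathbb{P}^1(\F_{\ell})$.

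The key combinatorial step is then the observation that, as $0$-cycles on $\gm^2[\ell]$,
$$ \sum_{j=0}^{\ell} [\ker(g_j)] = \ell \cdot [\mathrm{id}] + [\gm^2[\ell]], $$
because each nonidentity point of $\gm^2[\ell]$ generates a unique rank-one subgroup and so lies in exactly one of the kernels, while the identity lies in all $\ell + 1$ of them. Since $[\ell]^* e$ is, by the same reasoning as above, represented by the scheme $\gm^2[\ell]$ regarded as a $0$-cycle on $\gm^2$, we obtain
$$ T_{\ell}^{\varK} e = \sum_{j=0}^{\ell} g_j^* e = (\ell+1) e + \bigl([\ell]^* e - e\bigr) = (\ell + [\ell]^*) e, $$
as claimed.

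The only point requiring any care is the identification of the pullback on $\varK_0$ with the reduced-scheme preimage under the étale maps $g_j$; this is immediate from the construction of the complex $\varK$ via the coniveau spectral sequence (the map on the $E_1$-page in the column $p = 2, q = 2$ is a sum of transfers along inclusions of residue fields, so in particular flat pullback of a single closed point is the sum of the closed points of its reduced preimage). The combinatorial heart of the argument is simply the classical fact that $T_{\ell}$ is parametrized by lines in $(\Z/\ell)^2$, which here manifests as the decomposition of $\gm^2[\ell] \setminus \{\mathrm{id}\}$ into its $\ell + 1$ punctured cyclic subgroups.
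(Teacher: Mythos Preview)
Your proof is correct and follows essentially the same approach as the paper: both identify $T_{\ell}^{\varK} e$ with the formal sum of the $\ell+1$ cyclic order-$\ell$ subgroups of $\mu_{\ell}^2$, and then use the combinatorial count (each nontrivial $\ell$-torsion point lies in exactly one such subgroup, the identity in all of them) to rewrite this as $\ell e + [\ell]^* e$. Your write-up is simply a more explicit unpacking of the paper's two-sentence argument. One minor terminological slip: the maps on $\varK_0$ induced by pullback along $g_j$ are the maps $K_0 k(x) \to K_0 k(y)$ coming from the \emph{inclusions} of residue fields $k(x) \hookrightarrow k(y)$, not transfers; but since both send $1 \mapsto 1$ in $K_0$, this does not affect the argument.
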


\begin{proof}
	The left-hand side is the sum (with multiplicity) of the classes of the $\ell+1$ cyclic $\ell$-subgroups of $\mu_{\ell}^2$. 
	This is the sum of the class $[\ell]^*e$ of $\mu_{\ell}^2$ and $\ell$ copies of $e$. 
\end{proof}

In the following, note that $[\ell]^*$ acts on the cocycle $\Theta$ through its action on $\barK_2$.

\begin{proposition} \label{ThetaEis}
	The $1$-cocycle $(T_{\ell} -\ell -[\ell]^*)\Theta$ has trivial class in $H^1(\GL_2(\Z), \barK_2)$.
\end{proposition}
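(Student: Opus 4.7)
My plan is to exploit the characterizing property of $\Theta$, namely that $\partial\Theta_{\gamma} = (\gamma^*-1)\langle 0,1\rangle$, together with the injectivity of $\barK_2 \hookrightarrow \varK_1$ coming from the short exact sequence $0 \to \barK_2 \to \varK_1 \to \varK_0 \to 0$. The strategy is to show that the boundary of $(T_{\ell}-\ell-[\ell]^*)\Theta_\gamma$ is visibly of the form $(\gamma^*-1)\partial\xi$ for some $\xi \in \barK_2$ independent of $\gamma$; by injectivity this will force $(T_{\ell}-\ell-[\ell]^*)\Theta_\gamma = (\gamma^*-1)\xi$, which is the coboundary $d\xi$.

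First, I would compute $\partial (T_\ell\Theta)(\gamma)$ directly from the definition \eqref{Heckecocyc}. Using the coset representatives $g_j$ from \eqref{Gjdef} and the permutation $\sigma$ given by $\gamma g_j = g_{\sigma(j)}\gamma_j$, I get
\[
\partial (T_\ell\Theta)(\gamma) = \sum_j g_{\sigma(j)}^*\,\partial\Theta_{\gamma_j} = \sum_j g_{\sigma(j)}^*(\gamma_j^*-1)\langle 0,1\rangle = \sum_j (\gamma g_j)^*\langle 0,1\rangle - \sum_j g_{\sigma(j)}^*\langle 0,1\rangle.
\]
Since $\sigma$ is a permutation the second sum equals $T_\ell^{\varK}\langle 0,1\rangle$, and the first sum equals $\gamma^* T_\ell^{\varK}\langle 0,1\rangle$ because pullback is a left action. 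So
$\partial (T_\ell\Theta)(\gamma) = (\gamma^*-1)\, T_\ell^{\varK}\langle 0,1\rangle$. Because $[\ell]^*$ commutes with every $\gamma^*$ (the scalar matrix is central), one also has $\partial(\ell+[\ell]^*)\Theta_\gamma = (\gamma^*-1)(\ell+[\ell]^*)\langle 0,1\rangle$, and combining gives
\[
\partial\bigl((T_\ell-\ell-[\ell]^*)\Theta\bigr)(\gamma) = (\gamma^*-1)\bigl(T_\ell^{\varK}-\ell-[\ell]^*\bigr)\langle 0,1\rangle.
\]

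Now I invoke Lemma \ref{Heckeidentity}: applying $\partial$ to the element $(T_\ell^{\varK}-\ell-[\ell]^*)\langle 0,1\rangle \in \varK_1$ gives $(T_\ell^{\varK}-\ell-[\ell]^*)e = 0$. By exactness of $\barK_2 \to \varK_1 \to \varK_0$, there exists a (unique) $\xi \in \barK_2$ with $\partial\xi = (T_\ell^{\varK}-\ell-[\ell]^*)\langle 0,1\rangle$. Then for every $\gamma \in \GL_2(\Z)$,
\[
\partial\bigl((T_\ell-\ell-[\ell]^*)\Theta_\gamma\bigr) = (\gamma^*-1)\partial\xi = \partial\bigl((\gamma^*-1)\xi\bigr).
\]
Since $\barK_2 \hookrightarrow \varK_1$ is injective by construction of $\barK_2$ (it is the quotient of $\varK_2$ by the kernel of $\partial$), this forces $(T_\ell-\ell-[\ell]^*)\Theta_\gamma = (\gamma^*-1)\xi$, i.e., $(T_\ell-\ell-[\ell]^*)\Theta$ is the coboundary $d\xi$, which has trivial class in $H^1(\GL_2(\Z),\barK_2)$.

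I expect the only real care needed is in the first step: keeping the bookkeeping straight for the permutation $\sigma$ and verifying that the identity $(g_1 g_2)^* = g_1^* g_2^*$ for the left pullback action is used with the correct orientation. Everything else is formal consequence of the defining property of $\Theta$ and the key input Lemma \ref{Heckeidentity}.
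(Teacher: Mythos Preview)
Your proof is correct and follows essentially the same approach as the paper's own proof. The only cosmetic difference is the ordering: the paper first produces the element $\psi\in\varK_2$ with $\partial\psi=(T_\ell^{\varK}-\ell-[\ell]^*)\langle 0,1\rangle$ and then computes the boundary of $T_\ell\Theta$, whereas you compute the boundary first and then invoke exactness to obtain $\xi\in\barK_2$; the underlying argument (the permutation bookkeeping in \eqref{TT}, the use of Lemma~\ref{Heckeidentity}, and the injectivity of $\barK_2\hookrightarrow\varK_1$) is identical.
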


\begin{proof}
	As the residue of $\langle 0,1 \rangle$ is $e$, Lemma \ref{Heckeidentity} and the 
	$\Delta$-equivariance of the boundary maps in $\varK$ imply that
	$
		(T_{\ell}^{\varK} - \ell - [\ell]^*)\langle 0,1 \rangle  
	$
	has zero residue.
	Accordingly there exists $\psi \in \varK_2$  so that
	\begin{equation} \label{psi}
		\partial \psi = (T_{\ell}^{\varK} - \ell - [\ell]^*)\langle 0,1 \rangle.
	\end{equation}
	For $\gamma \in \GL_2(\Z)$, let $\sigma$ be a permutation of $\{0,\ldots,\ell\}$, and let $\gamma_j \in \GL_2(\Z)$ for
	$0 \le j \le \ell$ be such that $\gamma g_j = g_{\sigma(j)} \gamma_j$.
	We then have
	\begin{equation} \label{TT}
		 (\gamma^*-1)T_{\ell}^{\varK} \langle 0,1 \rangle
		 =(\gamma^* -1) \sum_{j=0}^{\ell} g_j^* \langle 0,1 \rangle 
		= \sum_{j=0}^{\ell} g_{\sigma(j)}^*(\gamma_j^*-1) \langle 0,1 \rangle
		= \partial \left( \sum_{j=0}^{\ell} g_{\sigma(j)}^* \Theta_{\gamma_j} \right)
		= \partial (T_{\ell}\Theta)_{\gamma},
	\end{equation} 
	where the last equality follows by definition \eqref{Heckecocyc} of our Hecke action on $\varK$.
	Comparing \eqref{psi} and \eqref{TT}, we see that 
	$$
		\partial (\gamma^*-1)\psi = (\gamma^*-1)(T_{\ell}^K-\ell-[\ell]^*)\langle 0,1 \rangle = \partial((T_{\ell}-\ell-[\ell]^*)\Theta)_{\gamma}.
	$$
	It follows that $((T_{\ell}-\ell-[\ell]^*)\Theta)_{\gamma}$ and $(\gamma^*-1)\psi$ coincide in $\barK_2$, and therefore 
	the cocycle $(T_{\ell}-\ell-[\ell]^*)\Theta$ is the coboundary of $\psi$.  
\end{proof}

\section{The cyclotomic cocycle} \label{cyclococyc}

We specialize the cocycle
of the previous section at an $N$-torsion point of $\gm^2$. 
 There are two points to be addressed:  classes in $K_2$ of the function field cannot {\em a priori} be specialized at a point,
 and the previous cocycle was valued not in this $K_2$ but its quotient by everywhere regular classes.
 To remedy the second issue, we narrow down the regular classes using trace maps. 
 
In \S \ref{fixedparts},  we calculate the fixed part of the motivic cohomology of $\gm^2$ under trace maps. We show that our explicit symbols are contained in the fixed parts of our big complex and use this to reduce the ambiguity in the values of $\Theta$.  In \S \ref{specialization}, we construct the explicit cocycle $\Theta_N$ by pulling back $\Theta$,
verify its explicit formula (Proposition \ref{explicitspecialized}), and demonstrate its Eisenstein property (Theorem \ref{ThetaNEis}) 
for prime-to-level Hecke operators.  In \S \ref{homology}, we compare with prior work: in particular, we show that $\Theta_N$ induces the map $\Pi_N$ of \eqref{cyclmap} that is the restriction of the explicit map of \cite{busuioc, sharifi}, and we verify the Eisenstein property of $\Pi_N$.

\subsection{Fixed parts via suspension}
  \label{fixedparts}

\subsubsection{Fixed parts of the cohomology of $\gm^2$}

The results of \S \ref{motivcohgmr} imply that the motivic cohomology group $H^2(\gm^2,2)$ breaks up as a direct sum of motivic cohomology classes that are $\Z$-multiples of $(-z_1) \cup (-z_2)$ and sums of classes pulled back via one of the two projection maps.
We want to be able to ``ignore'' the latter classes, and to kill them we will use trace maps.

We work in this subsection over a base field $F$.  For $r \ge 1$, we define the fixed part of the motivic cohomology group $H^i(\gm^r,k)$ as
\begin{equation} \label{fixedpointsGm}
	H^i(\gm^r,k)^{(0)} = \{ \alpha \in H^i(\gm^r,k) \mid ([p]_*-1)\alpha = 0  \text{ for all primes $p \neq \cha F$} \}.
\end{equation}
 In general, if $F$ has zero or sufficiently large characteristic, then this fixed part is the direct summand of $H^i(\gm^r,k)$ given by $H^i(F,k)$ in the decomposition of Corollary \ref{cohomgmr1}.  However, we shall only be interested in these groups in very specific cases: in particular, 
let us study them for $i = k \le r \in \{1,2\}$.

\begin{lemma} \label{fixedpartgm}
	The element $-z \in H^1(\gm,1)$ generates $H^1(\gm,1)^{(0)}$.
\end{lemma}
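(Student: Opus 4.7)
The plan is to invoke Proposition~\ref{suspension} with $Y = \Spec F$, which yields a direct sum decomposition
\[ H^1(\gm,1) \;\cong\; H^1(F,1) \oplus H^0(F,0) \;=\; F^{\times} \oplus \Z, \]
with the first summand given by pullback of constants along the structure map and the second generated, as a $\Z$-module, by $-z$. Equivalently, using the description $H^1(\gm,1) = \mathcal{O}(\gm)^{\times} = F^{\times} \cdot z^{\Z}$, every class admits a unique expression as $a \cdot (-z)^n$ with $a \in F^{\times}$ and $n \in \Z$. The strategy is then to compute the action of each $[p]_{*}$ on the two summands separately and read off the fixed part from the answer.

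For the second summand, I would invoke the pointwise formula $([p]_{*} f)(\alpha) = \prod_{\beta^{p}=\alpha} f(\beta)$ recalled in the example of \S\ref{tracemaps} to show $[p]_{*}(-z) = -z$ for every prime $p$: the product of the $p$-th roots of $\alpha$ in $\overline{F}$ equals $(-1)^{p+1}\alpha$ (read off from the constant term of $x^{p}-\alpha$), so
\[ ([p]_{*}(-z))(\alpha) \;=\; \prod_{\beta^{p}=\alpha}(-\beta) \;=\; (-1)^{p}(-1)^{p+1}\alpha \;=\; -\alpha. \]
This is the analogue for $-z$ of the observation \eqref{1zeqn} made for $1-z$, and it yields the inclusion $\Z\cdot(-z) \subseteq H^1(\gm,1)^{(0)}$.

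For the reverse inclusion, the same pointwise formula gives $[p]_{*}(a) = a^{p}$ on a constant $a \in F^{\times}$, and combining this with multiplicativity of $[p]_{*}$ yields $[p]_{*}(a(-z)^n) = a^{p}(-z)^n$. Membership in the fixed part therefore forces $a^{p-1} = 1$ for every prime $p \neq \cha F$. The only mildly delicate step is to deduce $a = 1$ from this: an element of infinite order is excluded immediately, while an $a$ of finite order $m>1$ would force $m \mid p-1$ for every prime $p \neq \cha F$, contradicted by choosing any such $p$ with $p \not\equiv 1 \pmod m$ (for instance a prime divisor of $m-1$, or $p=2$ if $m$ is odd, adjusted to avoid $\cha F$). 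I do not expect any serious obstacle beyond this elementary verification.
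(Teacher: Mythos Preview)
Your approach is essentially the same as the paper's: write every unit as $a\cdot(-z)^n$, verify $[m]_*(-z)=-z$, and reduce to showing a constant $a$ with $a^{m-1}=1$ for all relevant $m$ must be trivial. The only wrinkle is your endgame: your parenthetical suggestions (a prime divisor of $m-1$, or $p=2$ when $m$ is odd) do not cover the case $m=2$, since $m-1=1$ has no prime divisors and $m$ is even. The paper sidesteps this by simply taking $m=2$ when $\cha F\neq 2$ (giving $a^2=a$, hence $a=1$) and $m=3$ when $\cha F=2$ (giving $a^3=a$, hence $a^2=1$, hence $a=1$ in characteristic~$2$); you could patch your argument the same way.
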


\begin{proof}
	The group $H^1(\gm, 1)$ consists of the invertible functions on $\mathbb{G}_{m/F}$. As such, 
	each element is uniquely of the form $\eta (-z)^k$ for $\eta \in F^{\times}, k \in \Z$. 
	
	Suppose that such a class is $[m]_*$-fixed for 
        $m$ prime to $\cha F$.  The global unit $-z$ is $[m]_*$-fixed for $m$ prime to $\cha F$ since
        \begin{equation} \label{-zfixed}
        		[m]_*(-z) = \prod_{i=0}^{m-1} (-\zeta_m^i z^{1/m}) = - z.
        \end{equation}
        Thus, we also have $[m]_* \eta = \eta^m$ for such $m$.  
        
       If $\cha F \neq 2$, then $\eta^2 =\eta$ and so $\eta=1$;          
       in characteristic $2$ the equality $\eta^3=\eta$ implies the same conclusion. 
       So $\eta=1$, and the claim follows.          
 \end{proof}

Let us turn to $\gm^2$ over $F$, on which we let $z_i$ denote the $i$th coordinate function.  

\begin{lemma} \label{fixedpart}
	The group $H^1(\gm^2,1)^{(0)}$ 
	is trivial, and if $F$ has characteristic not $2$ or $3$ or is a finite field, then
	$H^2(\gm^2, 2)^{(0)}$ is $\Z$-free of rank $1$, generated by $(-z_1) \cup (-z_2)$. 
\end{lemma}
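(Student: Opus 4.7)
The strategy is to decompose each motivic cohomology group via Corollary~\ref{cohomgmr1} and compute the eigenvalue of each trace map $[m]_*$ on the various summands. Iterating Proposition~\ref{suspension} twice, the summand with $j$ factors of ``$-z$'' is spanned by classes of the form $(-z_{i_1}) \cup \cdots \cup (-z_{i_j}) \cup \pi^*\nu$ for $\nu \in H^{i-j}(F, k-j)$ and $\{i_1,\ldots,i_j\} \subseteq \{1,2\}$, where $\pi \colon \gm^2 \to \Spec F$ is the structure map. The key step is to compute how $[m]_*$ acts on each such summand.

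For this eigenvalue computation, I will combine three ingredients: Lemma~\ref{fixedpartgm} yields $[m]_*(-z) = -z$ on a single $\gm$-factor; the projection formula together with $[m]^* \pi^* = \pi^*$ and $[m]_*[m]^* = m \cdot \mathrm{id}$ (on a single $\gm$) gives $[m]_* \pi^* = m \cdot \pi^*$ on that factor; and Example~\ref{product example} says $[m]_*$ respects exterior products. Together these imply that $[m]_*$ acts on the summand with $j$ factors of ``$-z$'' as multiplication by $m^{2-j}$. In particular, on $H^1(\gm^2, 1) \cong F^\times \oplus \Z \cdot (-z_1) \oplus \Z \cdot (-z_2)$ the eigenvalues are $m^2, m, m$; on $H^2(\gm^2, 2) \cong K_2^M(F) \oplus (F^\times)^2 \oplus \Z \cdot ((-z_1) \cup (-z_2))$ they are $m^2, m, m, 1$.

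Reading off the fixed parts is then mechanical. Any summand with eigenvalue $m$ contributes nothing as $m \geq 2$; a constant $c \in F^\times$ in a summand with eigenvalue $m^2$ requires $c^{m^2-1}=1$ for all primes $m \neq \cha F$, which forces $c = 1$ either by $\gcd(3,8)=1$ (when $\cha F \notin \{2,3\}$, using $m=2,3$) or by the characteristic-$p$ identity $(c-1)^{p^n} = c^{p^n}-1$ when $p \in \{2,3\}$. Hence $H^1(\gm^2, 1)^{(0)} = 0$, and in $H^2$ only the eigenvalue-$1$ summand $\Z \cdot ((-z_1) \cup (-z_2))$ and possibly $K_2^M(F)$ contribute. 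The main obstacle is the latter: fixedness requires $(m^2-1)$-annihilation for all primes $m \neq \cha F$, which holds when $\cha F \notin \{2,3\}$ by the same $\gcd$ argument, and which is vacuous when $F$ is finite since $K_2^M(F) = 0$ by Bass--Tate. Either hypothesis of the lemma therefore isolates the fixed part to $\Z \cdot ((-z_1) \cup (-z_2))$, as claimed.
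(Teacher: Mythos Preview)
Your proof is correct and follows essentially the same approach as the paper: decompose via the suspension isomorphism of Corollary~\ref{cohomgmr1}, verify that trace maps preserve the summands and act by $m^{2-j}$ on the piece with $j$ factors of $(-z)$, and then read off the fixed parts by case analysis on the characteristic. One small imprecision: your sentence ``any summand with eigenvalue $m$ contributes nothing as $m \geq 2$'' implicitly uses $m=2$, which is unavailable when $\cha F = 2$. There you must take $m=3$ and then argue that the resulting $2$-torsion in $F^{\times}$ (the eigenvalue-$m$ summand in $H^2$) vanishes---either by your own Frobenius identity $(c-1)^2 = c^2-1$, or by noting that $|F^{\times}|$ is odd when $F$ is finite of characteristic $2$. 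The paper makes this case explicit in its step~(ii).
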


\begin{proof}  
	Let us use $\nu_j$ to denote the class of $-z_j$ in $H^1(\gm^2,1)$ for short.
	As in Corollary \ref{cohomgmr1}, we have an isomorphism
	\begin{equation} \label{iterated}
		H^2(F, 2) \oplus H^1(F, 1) \oplus H^1(F, 1) \oplus H^0(F, 0) \xrightarrow{\sim} H^2(\gm^2,2), 
	\end{equation}
	where the maps are given by pullback and (left) cup product with $1$, $\nu_1$, $\nu_2$ and $\nu_1 \cup \nu_2$, 
	respectively.
	
	Let $m$ be prime to $\cha F$. By Example \ref{product example} and \eqref{-zfixed}, the class $\nu_1 \cup \nu_2$ is $[m]_*$-fixed.
	Any class $\eta$ that is pulled back from $\Spec F$  satisfies
	$[m]^*\eta = \eta$, so for such $\eta$ and any $\alpha$ among $1$, $\nu_1$, $\nu_2$ and $\nu_1 \cup \nu_2$, Corollary
	\ref{projformula} yields
	$$
		[m]_*(\alpha \cup \eta) = [m]_*(\alpha \cup [m]^*\eta) = ([m]_*\alpha) \cup \eta,
	$$
	which tells us that the trace maps preserve the summands in \eqref{iterated}.  
	Thus, we need only
	consider the summands individually.  So, let us suppose that $\alpha \cup \eta$ is $[m]_*$-fixed:
	\begin{itemize}
	\item[(i)]  If $\alpha = 1$, then $[m]_*\eta = m^2\eta$.  So, if $\eta$ is $[m]_*$-fixed, then
	$(m^2-1)\eta = 0$. 
	If $\cha F \ge 5$, then since this 
	is true for $m = 2$ and $m = 3$, we have $\eta = 0$.  In the case that $i = k = 1$, note that if $\cha F = 2$, then 
	$8\eta = 0$ implies $\eta = 0$ and if $\cha F = 3$, then $3\eta = 0$ implies $\eta = 0$.  If $i = k = 2$ and $F$
	is a finite field, then $H^2(F,2) = 0$, so $\eta = 0$.
	
	\item[(ii)] If $\alpha = \nu_j$, then we have
	$$
		[m]_*(\nu_j \cup \eta) = m(\nu_j \cup \eta),
	$$
	so if $\nu_j \cup \eta$ is $[m]_*$-fixed, then it is $(m-1)$-torsion.  
	If $i = k = 1$, then $H^0(F,0) \cong \Z$, so $\eta = 0$.
	In general, so long as $\cha F \ge 3$, then $\eta$ is trivial taking $m = 2$.  If $i = k = 2$ and $\cha F = 2$, then
	by taking $m=3$, we see that $\eta$ is $2$-torsion in $F^{\times}$, so trivial.
	\item[(iii)] 
	If $\alpha = \nu_1 \cup \nu_2$, then it is indeed $[m]_*$-fixed.
	\end{itemize}
\end{proof}

\subsubsection{Fixed parts of complexes} \label{fixedpartssubsec}

We return to the consideration of $\gm^2$ over $\Q$.  As explained in Remark \ref{tracemapsrmk}, the trace maps $[m]_*$ 
act on the complex $\varK$ as well. We define the fixed complex $\varK^{(0)}$ in exactly the same way as \eqref{fixedpointsGm}.
 
\begin{lemma} \label{mfixed}
	The symbols defined in \S \ref{symbolsmult} all lie in the fixed part of $\varK$:
	$$e \in \varK_0^{(0)}, \ \langle a,c \rangle \in \varK_1^{(0)}, \ \langle\gamma\rangle \in \varK_2^{(0)}.$$
 	
\end{lemma}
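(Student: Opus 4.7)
\emph{Proof plan.} The approach is to reduce each symbol to a canonical representative via the $\GL_2(\Z)$-action on $\varK$, and then to verify fixedness for that representative by direct computation. The reductions are legitimate because $\gamma^*$ for $\gamma \in \GL_2(\Z)$ commutes with every $[m]_*$: this is an instance of Lemma \ref{pullbackdiagram} applied to the Cartesian square built from the automorphism $\gamma$ and the finite flat map $[m]$, noting that $\gamma$ and $[m]$ commute as endomorphisms of $\gm^2$ (both are monomial maps, and $(z^m)^a = (z^a)^m$).

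With this reduction in hand, the computations are as follows. For $e$: Remark \ref{tracemapsrmk} expresses $[m]_*$ on $\varK_0$ as a sum of norms over preimages $y$ of each $x$; since $e$ is supported only at the identity $(1,1)$, whose unique preimage under $[m]$ in the support of $e$ is itself, the relevant norm on $K_0 \Q \cong \Z$ is the identity, so $[m]_* e = e$. For $\langle a, c \rangle$: by \eqref{gamma01}, every such symbol is a $\GL_2(\Z)$-pullback of $\langle 0,1 \rangle = 1 - z_1^{-1}$ on $S_{0,1} \cong \gm$, so it suffices to fix the latter. The restriction of $[m]$ to $S_{0,1}$ is the $m$-th power map, so the trace is the Kummer-theoretic norm $N_{\Q(z_1)/\Q(z_1^m)}$, and the identity $\prod_{i=0}^{m-1}(1 - \zeta_m^{-i} z^{-1}) = 1 - z^{-m}$ (a variant of \eqref{1zeqn}) identifies this norm with $1 - z_1^{-1}$ in the target's coordinates. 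For $\langle \gamma \rangle$: by definition this symbol equals $\gamma^*\{1-z_1, 1-z_2\}$, reducing again to the case $\gamma = I$. The trace is the norm for $\Q(z_1,z_2)/\Q(z_1^m, z_2^m)$, and bilinearity of the Steinberg symbol collapses the product over the Galois group $\mu_m \times \mu_m$ into $\{1 - z_1^m, 1-z_2^m\}$ via two applications of the same cyclotomic identity; in the target's coordinates this is $\{1-z_1, 1-z_2\}$.

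The main subtlety to track is the bookkeeping around source versus target coordinate systems under $[m]$: the norm of a function computed in the source's variables lies in the image of the inclusion $[m]^* \colon K_n k(\text{target}) \hookrightarrow K_n k(\text{source})$, and reinterpreting it via this inclusion yields the claimed element in the target. A related minor point is that the other irreducible components of $[m]^{-1}(S_{0,1})$, namely $\{z_2 = \zeta\}$ for nontrivial $\zeta \in \mu_m$, also map dominantly to $S_{0,1}$, but they do not contribute to $[m]_*\langle 0,1\rangle$ since $\langle 0,1 \rangle$ is zero there; the analogous remark handles the other cases. Beyond this bookkeeping, the whole argument rests on the single cyclotomic identity $\prod_i(1 - \zeta_m^i z) = 1 - z^m$.
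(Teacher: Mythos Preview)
Your proof is correct and follows essentially the same approach as the paper: reduce to canonical representatives via commutativity of $\gamma^*$ with $[m]_*$, then verify fixedness directly using the cyclotomic identity \eqref{1zeqn}. The only cosmetic difference is in the $\varK_2$ step: the paper invokes the exterior-product factorization $[m]_*(\nu_1 \boxtimes \nu_2) = [m]_*\nu_1 \boxtimes [m]_*\nu_2$ from Example~\ref{product example} to reduce immediately to the one-variable case, whereas you unpack the norm over $\mu_m \times \mu_m$ and collapse it via bilinearity---but these are the same computation.
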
	

\begin{proof}
	First, note that $[m]_*e = e$ for all $m$.  From \eqref{1zeqn}, we see that $\langle a,c \rangle \in \varK_1^{(0)}$.
	Finally, since $[m]_*$ on $\varK_2$ is given by the ``product of the pushforwards by $m$ in the first
	and second variable'' by \eqref{factor}, it fixes $\langle (1,0),(0,1) \rangle$.  We then note that $\gamma^*$
	and $[m]_*$ commute.
\end{proof}
		
\begin{proposition} \label{Thetalifts}
	The cocycle $\Theta$ lifts  to a cocycle valued in $\varK_2/\langle \{-z_1, -z_2\} \rangle$. 
\end{proposition}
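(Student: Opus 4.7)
The plan is to construct the lift directly using the explicit formula of Proposition \ref{expform}, exploiting the fixed-part calculation of Lemma \ref{fixedpart}. The key observation is that Lemma \ref{fixedpart} (applied over $F = \Q$, which has characteristic zero) identifies the trace-fixed part $H^2(\gm^2,2)^{(0)}$ precisely with the cyclic subgroup $\langle\{-z_1,-z_2\}\rangle$. Thus, although the class of $\Theta_\gamma$ in $\barK_2$ is defined only up to all of $H^2(\gm^2,2)$, if we can arrange a representative in $\varK_2^{(0)}$, then the ambiguity is reduced to exactly $\langle\{-z_1,-z_2\}\rangle$.

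Concretely, I would first define, for each $\gamma \in \GL_2(\Z)$ and each choice of connecting sequence $(v_i)_{i=0}^{k}$ for $\gamma$,
$$\hat\Theta_\gamma = \sum_{i=1}^k \langle v_i, -v_{i-1} \rangle \in \varK_2.$$
By Lemma \ref{mfixed}, each term lies in $\varK_2^{(0)}$, so $\hat\Theta_\gamma \in \varK_2^{(0)}$. The computation at the start of the proof of Proposition \ref{expform} shows that $\partial \hat\Theta_\gamma = (\gamma^*-1)\langle 0,1\rangle$ already at the level of $\varK_1$, so any two choices of connecting sequence yield representatives differing by an element of $\ker \partial_2 = H^2(\gm^2,2)$. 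But this difference also lies in $\varK_2^{(0)}$, hence in $H^2(\gm^2,2)^{(0)} = \langle\{-z_1,-z_2\}\rangle$. So $\gamma \mapsto \hat\Theta_\gamma$ is a well-defined map
$$\hat\Theta \colon \GL_2(\Z) \longrightarrow \varK_2 / \langle\{-z_1,-z_2\}\rangle$$
lifting $\Theta$.

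Next I would check the cocycle condition. Since $\gamma^*$ commutes with the trace maps $[m]_*$, the action of $\GL_2(\Z)$ preserves the fixed subcomplex $\varK^{(0)}$. Therefore the element
$$c_{\gamma,\gamma'} := \hat\Theta_{\gamma\gamma'} - \gamma^* \hat\Theta_{\gamma'} - \hat\Theta_\gamma$$
lies in $\varK_2^{(0)}$. A direct calculation with $\partial$, using its $\GL_2(\Z)$-equivariance and $\partial \hat\Theta_\gamma = (\gamma^*-1)\langle 0,1\rangle$, gives $\partial c_{\gamma,\gamma'} = 0$. Hence $c_{\gamma,\gamma'} \in \ker(\partial_2) \cap \varK_2^{(0)} = H^2(\gm^2,2)^{(0)} = \langle\{-z_1,-z_2\}\rangle$, so $\hat\Theta$ satisfies the cocycle identity in $\varK_2/\langle\{-z_1,-z_2\}\rangle$.

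The only real subtlety is the role of Lemma \ref{fixedpart}: without the identification of $H^2(\gm^2,2)^{(0)}$ with $\langle\{-z_1,-z_2\}\rangle$, one would not know that the residual ambiguity after imposing trace-invariance is as small as claimed. Everything else is a direct verification from the explicit symbol calculus and the equivariance of the residue maps.
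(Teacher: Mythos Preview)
Your proposal is correct and follows essentially the same approach as the paper: use the explicit formula of Proposition \ref{expform} together with Lemma \ref{mfixed} to see that $\Theta_\gamma$ always has a representative in $\varK_2^{(0)}$, and then invoke Lemma \ref{fixedpart} to identify the residual ambiguity $H^2(\gm^2,2)^{(0)}$ with $\langle\{-z_1,-z_2\}\rangle$. The only cosmetic difference is that the paper observes directly that the image of $\varK_2^{(0)}$ in $\barK_2$ is a $\GL_2(\Z)$-stable subgroup isomorphic to $\varK_2^{(0)}/\langle\{-z_1,-z_2\}\rangle$, so the cocycle property of the lift is inherited for free from that of $\Theta$ in $\barK_2$, whereas you re-verify it by hand.
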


\begin{proof}
 	Indeed, Proposition \ref{expform}  and Lemma \ref{mfixed} imply that for each $\gamma \in \GL_2(\Z)$, 
	the cocycle $\Theta_{\gamma}$ is valued in the image of $\varK_2^{(0)} \rightarrow \barK_2$. 
	By Lemma \ref{fixedpart}, that image is isomorphic  to $\varK_2^{(0)}/ \langle \{-z_1, -z_2\} \rangle$.   
	Thus $\Theta$ lifts to that group, and {\em a fortiori} to  $\varK_2/ \langle \{-z_1, -z_2\} \rangle$.   
\end{proof}

In the remainder of this section, we will implicitly regard $\Theta$ as valued in $\varK_2/\langle \{-z_1, -z_2\} \rangle$. 

\begin{remark} \label{symbolcomplex}
	The symbol complex $\Symb$ defined in \S \ref{symbolsmult} is contained in the fixed part under trace maps of the 
	limit complex $\varinjlim_I \varK_I \subset \varK$ \S \ref{smallercomplex}.  
	In fact, it is the fixed part of a certain motivic subcomplex $\varC$ of $\varinjlim_I \varK_I$ that we refer to as the
	\emph{small complex}.
	
	To make this precise, for $\gamma = \smatrix{a&b\\c&d} \in \SL_2(\Z)$, let us set $\varC_{\gamma} = \varK_I$
	for $I = \{(a,c),(b,d)\}$.
	The complex $\varC_{\gamma}$ has the form
	$$
		H^2(\gm^2 - S_{a,c} \cup S_{b,d},2) \to H^1(S_{a,c}-\{1\},1) \oplus H^1(S_{b,d}-\{1\},1) \to H^0(\{1\},0),
	$$
	where $S_{a,c}$ and $S_{b,d}$ are the rank one tori of \eqref{rankone}, and the last term is identified with $\Z$.  
	Using Gysin sequences and the results of Section \ref{motivcohgmr}, 
	it is not hard to see that $\varC_{\gamma}^{(0)}$ is canonically a direct summand of $\varC_{\gamma}$ such that
	\begin{itemize}
		\item $\varC_{\gamma,2}^{(0)} \cong \Z^4$ is generated by $\langle \pm(a,c), \pm(b,d) \rangle$,
		\item $\varC_{\gamma,1}^{(0)} \cong \Z^4$ is generated by $\langle \pm(a,c) \rangle$ and $\langle \pm(b,d) \rangle$, and
		\item $\varC_{\gamma,0}^{(0)} \cong \Z$ is generated by $e$.
	\end{itemize}
	The homology of $\varC_{\gamma}^{(0)}$ is then concentrated in degree $2$, being isomorphic to $H^2(\gm^2,2)^{(0)} \cong \Z$.
	
	Define the small complex $\varC \subset \varK$ to be the span of the $\varC_{\gamma}$ for $\gamma \in \SL_2(\Z)$.  
	One may verify that the small complex is, like its subcomplexes $\varC_{\gamma}$, 
	a quasi-isomorphic subcomplex of $\varK$, and from our description of each
	$\varC_I^{(0)}$, we see that $\varC^{(0)}$ is precisely the symbol complex $\Symb$.  
	In fact, $\Symb = \varC^{(0)}$ is a $\Z[\GL_2(\Z)]$-direct summand of $\varC$ with homology $H^2(\gm^2,2)^{(0)}$ in degree $2$.
	In particular, by Proposition
	\ref{expform}, our cocycle $\Theta$ takes values in $\barC_2^{(0)} = \varC_2^{(0)}/\langle (-z_1) \cup (-z_2) \rangle$.
\end{remark}
 
\subsection{Specialization at an $N$-torsion point} \label{specialization}

Fix a positive integer $N \ge 2$.  Let us fix the notation for the congruence subgroups of $\GL_2(\Z)$ that we shall use from this point forward.   That is, we set \begin{eqnarray} \label{tildegammadef} &\bGamma_0(N) =\left\{ \smatrix{a&b\\c&d} \in \GL_2(\Z) \mid N \mid c \right\}, \\
 \label{tildegammadef2}& \bGamma_1(N) = \left\{ \smatrix{a&b\\c&d} \in \bGamma_0(N) \mid d \equiv 1 \bmod N \right\}.
 \end{eqnarray}
We denote by $\Gamma_0(N)$ and $\Gamma_1(N)$ their respective intersections
with $\SL_2(\Z)$, as usual.  Setting $\Delta = M_2(\Z) \cap \GL_2(\Q)$, we also
have associated monoids
\begin{eqnarray} \label{Delta0def}
	&\Delta_0(N) = \left\{ \smatrix{a&b\\c&d} \in \Delta \mid (d,N) = 1 \text{ and } N \mid c \right\}, \\
 	&\label{Delta1def}  \Delta_1(N) = \left\{ \smatrix{a&b\\c&d} \in \Delta_0(N) \mid d \equiv 1 \bmod N  \right\}.
\end{eqnarray}

In this section, we specialize our cocycle $\Theta$ at the $N$-torsion point 
\begin{equation}  \label{sdef} 
	s \colon \Spec \Q(\mu_N) \rightarrow \gm^2
\end{equation} 
with value $(1,\zeta_N) \in \gm(\Q(\mu_N))^2$ to obtain a cocycle
$$
	\Theta_N \colon \tilde{\Gamma}_0(N) \to K_2(\Q(\mu_N))/\langle \{-1,-\zeta_N\} \rangle.
$$
 
\subsubsection{The specialized cocycle} 

We turn to the specialization of $\Theta$ at the $\Q(\mu_N)$-point $s$ of \eqref{sdef}, which is given by the map $z_1 \mapsto 1$ and $z_2 \mapsto \zeta_N$ on coordinate rings. The stabilizer of $s$ in $\GL_2(\Z)$ under its right action on $\gm(\Q(\mu_N))^2$ is the congruence subgroup
$\tilde{\Gamma}_1(N)$ of \eqref{tildegammadef2}.
For $j \in (\Z/N\Z)^{\times}$, let $\sigma_j \in \Gal(\Q(\mu_N)/\Q)$ be such that $\sigma_j(\zeta_N) = \zeta_N^j$.  Via the isomorphism
\begin{equation} \label{eazy_e}
	\tilde{\Gamma}_0(N)/\tilde{\Gamma}_1(N) \xrightarrow{\sim} (\Z/N\Z)^{\times}, \quad \smatrix{a & b \\ c & d} \mapsto d
\end{equation}
and its composite with $d \mapsto \sigma_d$, we may consider any $\Z[\Gal(\Q(\mu_N)/\Q)]$-module as a $\Z[\tilde{\Gamma}_0(N)]$-module. In particular, we let $\tilde{\Gamma}_0(N)$ act on $K_2(\Q(\mu_N))$ in this fashion.

Note that $s^*$ is not well-defined on the whole of $\varK_2$: there is no field map $\Q(\gm^2) \to \Q(\mu_N)$.  In order to pull back
the values of our cocycle via $s$, we show that for $\gamma = \smatrix{a & b \\ c & d} \in \tilde{\Gamma}_0(N)$, any lift to $\varK_2$ of $\Theta_{\gamma}$ lies in a sufficiently small subgroup of $\varK_2$ upon which $s^*$ can be defined.  

For this, let 
$$
	U_{\gamma} = \gm^2 - S_{b,d} \cup S_{0,1},
$$ 
which is to say the complement in $\gm^2$ of the subtori that are the kernels of $(z_1, z_2) \mapsto z_1^b z_2^d$ and $(z_1,z_2) \mapsto z_2$.
Since $(1,\zeta_N) \in U_{\gamma}$, the specialization map 
$$
	s^* \colon H^2(U_{\gamma},2) \to H^2(\Q(\mu_N),2) \cong K_2(\Q(\mu_N))
$$ 
is well-defined.
The residue of $\Theta_{\gamma}$ in $\varK_1$ is $\langle \det(\gamma)(b,d) \rangle - \langle 0,1 \rangle$.
Therefore, any lift of $\Theta_{\gamma}$ to $\varK_2$ is defined on $U_{\gamma}$ in the sense of Remark \ref{definedonU}.

Proposition \ref{Thetalifts} provides a canonical lift of $\Theta$ to a cocycle valued in $\varK_2/\langle \{-z_1, -z_2\} \rangle$, which
we also denote by $\Theta$. The value $\Theta_{\gamma}$ lies inside
$$\barK_2(N) :=\varinjlim_{s \in U}\, H^2(U,2)/\langle (-z_1) \cup (-z_2) \rangle,$$
where the limit runs over the open $\Q$-subschemes $U$
of the $\Q$-scheme $\gm^2$ containing $s$.  
 Specialization at $s$ now defines a morphism
\begin{equation} \label{sdef2} s^* \colon \barK_2(N) \rightarrow K_2(\Q(\mu_N))/Z_N\end{equation}
where 
  $Z_N$  is  the $\tilde{\Gamma}_0(N)$-stable subgroup of $K_2(\Q(\mu_N))$ generated by the
specialization $\{-1,-\zeta_N\}$ of the symbol $\{-z_1,-z_2\} \in K_2(\Q(\gm^2))$ under $s^*$.  
 It therefore makes sense to speak of 
 $$
 	\Theta_{N,\gamma} := s^*\Theta_{\gamma}
$$ as an element of $K_2(\Q(\mu_N))/Z_N$,  
 Note that
$$
	\{-1,-\zeta_N\} = \begin{cases}
	\{-1,-1\} & \ifs N \text{ is odd}, \\ 
	0 & \ifs N \text{ is even}, 
	\end{cases}  
$$
so $Z_N$ is a group of order dividing $2$.  
	
\begin{proposition} \label{cyclcocycle}
	The map
	$$
		\Theta_N \colon \tilde{\Gamma}_0(N) \to K_2(\Q(\mu_N))/Z_N, \quad \gamma \mapsto \Theta_{N,\gamma}
	$$
	is a parabolic cocycle.
 \end{proposition}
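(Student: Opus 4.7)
The plan is to reduce both assertions — the cocycle identity and parabolicity — to the corresponding facts for $\Theta$ (Propositions \ref{existence} and \ref{parabolic}) via the specialization map $s^*$ of \eqref{sdef2}. The key preliminary is to verify that $s^*$ intertwines the pullback action of $\bGamma_0(N)$ on $\barK_2(N)$ with the Galois action on $K_2(\Q(\mu_N))/Z_N$ induced by the map $\bGamma_0(N) \to (\Z/N\Z)^{\times}$ of \eqref{eazy_e}. This boils down to the commutativity of the diagram
$$
\begin{tikzcd}
\Spec \Q(\mu_N) \arrow{r}{\sigma_d} \arrow{d}{s} & \Spec \Q(\mu_N) \arrow{d}{s} \\
\gm^2 \arrow{r}{\gamma} & \gm^2,
\end{tikzcd}
$$
for $\gamma = \smatrix{a & b \\ c & d} \in \bGamma_0(N)$: the $\Q(\mu_N)$-point $\gamma \circ s$ is $(1,\zeta_N) \cdot \gamma = (\zeta_N^c, \zeta_N^d) = (1, \zeta_N^d)$ because $N \mid c$, matching the point $s \circ \sigma_d$. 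Pullback then yields $s^* \gamma^* = \sigma_d^* s^*$ on motivic cohomology of a suitable open containing $s$.

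With this compatibility in hand, applying $s^*$ to the cocycle identity $\Theta_{\gamma\gamma'} = \Theta_\gamma + \gamma^* \Theta_{\gamma'}$ immediately gives the cocycle identity for $\Theta_N$. For parabolicity, I would trivialize $\Theta_N$ separately on the stabilizer in $\bGamma_0(N)$ of each cusp. For $\infty$, the argument of Proposition \ref{parabolic} — or equivalently the length-zero connecting sequence in the formula of Proposition \ref{expform} — shows $\Theta_\gamma = 0$ already in $\varK_2/\langle\{-z_1,-z_2\}\rangle$ for $\gamma \in P_\infty \cap \bGamma_0(N)$, so $\Theta_{N,\gamma} = 0$. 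For any other cusp $x$, choose $\sigma \in \GL_2(\Z)$ with $\sigma \cdot \infty = x$ whose $(2,2)$-entry is not divisible by $N$ — always possible since every cusp of $\Gamma_0(N)$ has a representative $[p:q]$ with $q \mid N$, and the $(2,2)$-entry of any lift $\sigma = \smatrix{p & r \\ q & s}$ can be adjusted modulo $q$ to avoid $N \mid s$. Combining the cocycle relation with $\Theta_p = 0$ for $p \in P_\infty$ and the identity $\Theta_{\sigma^{-1}} = -(\sigma^{-1})^* \Theta_\sigma$ obtained from $\Theta_I = 0$ then yields $\Theta_\gamma = (1 - \gamma^*)\Theta_\sigma$ for $\gamma \in \sigma P_\infty \sigma^{-1}$. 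The condition on $\sigma$ guarantees that the residues of $\Theta_\sigma$, supported on $S_{0,1} \cup \sigma^{-1} S_{0,1}$, avoid $s$, so $\Theta_\sigma$ itself is specializable; applying $s^*$ gives $\Theta_{N,\gamma} = (1 - \gamma) s^* \Theta_\sigma$, the desired coboundary.

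The main obstacle will be parabolicity at cusps other than $\infty$: since $s^*$ is only partially defined on $\varK_2/\langle\{-z_1,-z_2\}\rangle$, it is not automatic that the cochain trivializing $\Theta$ on a stabilizer in $H^1(\GL_2(\Z), \barK_2)$ lies in the domain of $s^*$. Resolving this requires the combinatorial choice of $\sigma$ described above, but once that is in place the rest of the argument is formal.
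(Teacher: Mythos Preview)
Your argument for the cocycle identity is correct and matches the paper: the key is the equivariance $s^* \gamma^* = \sigma_d \circ s^*$, which you verify via the commuting square. You also correctly identify a subtlety the paper glosses over: for parabolicity at cusps other than $\infty$, one must check that the trivializing cochain $\Theta_\sigma$ actually lies in the domain $\barK_2(N)$ of $s^*$.

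However, your resolution of this point has a gap. Once the parabolic $\sigma P_\infty \sigma^{-1}$ is fixed (with $P_\infty$ lower-triangular as in the paper), the entry $\sigma_{22}$ is determined \emph{up to sign}: replacing $\sigma$ by $\sigma p$ with $p \in P_\infty$ only multiplies the second column by $\pm 1$. So your claim that $\sigma_{22}$ ``can be adjusted modulo $q$'' is false. The confusion stems from mixing conventions: you parametrize by left-action cusps $[p:q]$ and write $\sigma = \smatrix{p&r\\q&s}$, but with the paper's right action the group $\sigma P_\infty \sigma^{-1}$ is the stabilizer of $\infty \cdot \sigma^{-1} = [s:-r]$, not of $[p:q]$. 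Concretely, for the parabolic fixing $[0:1]$ (i.e., the upper-triangular subgroup $\{\smatrix{\pm 1 & b \\ 0 & \pm 1}\} \subset \bGamma_0(N)$), every admissible $\sigma$ has $\sigma_{22} = 0$; then $\Theta_\sigma$ is supported on $S_{0,1} \cup S_{\pm 1,0}$, and $s = (1,\zeta_N)$ lies on $S_{1,0}$, so $\Theta_\sigma \notin \barK_2(N)$.

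This single $\bGamma_0(N)$-orbit must be treated separately. One way is a direct computation: for $\gamma = \smatrix{1&1\\0&1}$, the $N$-connecting sequence $(0,1),(-1,-2),(1,1)$ and Proposition~\ref{explicitspecialized} give
\[
\Theta_{N,\gamma} = \{1-\zeta_N^{-2},1-\zeta_N^{-1}\} + \{1-\zeta_N,1-\zeta_N^2\},
\]
and a short manipulation with Steinberg relations (using $1-\zeta^{-k} = -\zeta^{-k}(1-\zeta^k)$ and $\{a,1-a\}=0$) reduces this to $\{-1,-\zeta_N^{-1}\} \in Z_N$. The case of general $b$ and the sign matrices then follows from the cocycle relation. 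Without this (or an equivalent argument), your proof of parabolicity is incomplete.
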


\begin{proof}
	That $\Theta_N$ is a cocycle follows from if we can show that $s^*$ as in \eqref{sdef2} is a homomorphism of 
	$\tilde{\Gamma}_0(N)$-modules. 
	Here, note that	
	$\bGamma_0(N)$
	acts on $K_2(\Q(\mu_N))$ as in \eqref{eazy_e}. 
	Write $\gamma = \smatrix{a&b\\c&d}$ and note that the the two maps
	$\Spec \Q(\mu_N) \rightarrow \mathbb{G}_m^2$
	defined by 
	 $\gamma \circ s$ and $s \circ \sigma_d$
	  coincide, since viewed on coordinate rings 
	both send $z_1$ to $1 = \zeta_N^c$ and $z_2$ to $\zeta_N^d$. 
	In particular, $\bGamma_0(N)$ acts on $\ms{K}_2(N)$, since for any $\Q$-subscheme $U$ of $\gm^2$ with $s \in U(\Q(\mu_N))$, 
	the composition $\gamma \circ s = s \circ \sigma_d$ is also a $\Q(\mu_N)$-point of $U$.
	This implies that  
	$
		s^* \circ \gamma^* = \sigma_d \circ s^*
	$
	on  $\barK_2(N)$.

	The proof of Proposition \ref{parabolic} argued that $\Theta$ is trivial on a lower-triangular 
	parabolic $P_{\infty}$ in $\GL_2(\Z)$. An arbitrary parabolic $Q$ of $\tilde{\Gamma}_0(N)$ has the form
	$Q = \mu P_{\infty} \mu^{-1} \cap \tilde{\Gamma}_0(N)$ for some $\mu \in \SL_2(\Z)$. For $\gamma \in P_{\infty}$, we have
	$\Theta_{\mu\gamma\mu^{-1}} = (1-(\mu\gamma\mu^{-1})^*)\Theta_{\mu}$. So long as $\Theta_{\mu}
	\in \ms{K}_2(N)$, we then have that $\Theta|_Q \colon Q \to \ms{K}_2(N)$ is a coboundary, and for this, it suffices that
	$\mu = \smatrix{a'&b'\\c'&d'}$ satisfies $N \nmid d'$. On the other hand, the set of $\mu$ with $N \mid d'$ is exactly the coset 
	$\tilde{\Gamma}_0(N)\smatrix{0&1\\1&0}$, and in this case $Q = \mu P_{\infty} \mu^{-1}$. We may then 
	suppose $\mu = \smatrix{0&1\\1&0}$, for which $Q = \{ \smatrix{\pm 1 & n \\ 0 & 1} \mid n \in \Z\}$. Since 
	$\Theta_N$ lifts to a map taking the value $\{1-\zeta_N,1-\zeta_N^{-1}\} = 0$ on $\smatrix{1 & -1 \\ 0 & 1}$
	and the value $\{-1,-\zeta_N\} \in Z_N$ on $\smatrix{-1 & 0 \\ 0 & 1}$, it is trivial on $Q$, and we have parabolicity.
\end{proof}

We need to modify the notion of connecting sequence of \S \ref{cocycle} adapted to the level $N$ structure.
Specifically, let us refer to a connecting sequence $(b_i,d_i)_{i=0}^k$ for $\gamma \in \tilde{\Gamma}_0(N)$ with the property that $N \nmid d_i$
for all $0 \le i \le k-1$ as an \emph{$N$-connecting sequence} for $\gamma$.
Note that an $N$-connecting sequence always exists, as we verify by a little fiddling.

\begin{lemma} \label{fiddling}
	Given a primitive vector $(b,d) \in \Z^2$ with $N \nmid d$, there exists a sequence $(v_i)_{i=0}^k$ in $\Z^2$ 
	with $v_i \wedge v_{i+1} = 1$ for $0 \le i < k$ such that $v_0 = (0,1)$, $v_k = (b,d)$, and $v_i = (b_i,d_i)$ with $N \nmid d_i$ 
	for all $0 \le i \le k$.
\end{lemma}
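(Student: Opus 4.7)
The plan is to prove this by strong induction on $(b,d)$ with respect to lexicographic complexity $(|d|,|b|)$, by identifying a suitable ``predecessor'' vector and building the sequence backwards. Given $(b,d)$ primitive with $N \nmid d$, the inductive step constructs a primitive $(b',d')$ satisfying: (a) $b' d - d' b = 1$, so that $(b',d')$ can serve as the penultimate vector; (b) $N \nmid d'$; and (c) $(|d'|,|b'|) < (|d|,|b|)$. Given such a $(b',d')$, the induction hypothesis produces an $N$-connecting sequence from $(0,1)$ to $(b',d')$, and appending $(b,d)$ yields the desired sequence.

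The general solutions to (a) form a one-parameter family $\{(b'_0 + kb,\, d'_0 + kd) : k \in \Z\}$ for any fixed solution. Condition (b) restricts us to those $k$ with $d'_0 + kd \not\equiv 0 \pmod N$; since $N \nmid d$, the residues $(d'_0 + kd) \bmod N$ take $N/\gcd(d,N) \geq 2$ distinct values as $k$ varies, so at most a fraction $\gcd(d,N)/N$ of integers $k$ fail (b). For condition (c), when $|d| \geq 2$, choose $k^*$ minimizing $|d'_0 + kd|$: the minimum $r$ satisfies $0 \leq r \leq |d|/2$, and $r = 0$ would force $d \mid d'_0$, hence $d \mid 1$ by (a), contradicting $|d| \geq 2$; thus $r > 0$. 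If $k^*$ itself satisfies (b), we are done with $|d'| = r < |d|$; otherwise, one of the adjacent integers $k^* \pm 1$ satisfies (b) and yields $|d'| \leq |d| - r < |d|$, so complexity strictly decreases.

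For the base case $|d| = 1$, I would further induct on $|b|$. When $(b, \pm 1)$ with $|b| \geq 1$, the predecessor $(b - \sign(b),\, -\sign(b) \cdot d)$ has second coordinate $\pm 1$, satisfies (a), and has $|b'| = |b| - 1$. The innermost base case is $(b,d) = (0,1)$, where the empty sequence suffices, and $(0,-1)$, which is handled explicitly by the sequence $(0,1),\,(-1,-1),\,(0,-1)$ (noting $-1 \not\equiv 0 \pmod N$ since $N \geq 2$).

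The main obstacle I foresee is the delicate interaction between conditions (b) and (c): namely, the $k$ minimizing $|d'|$ might coincide with one of the few $k$ forbidden by (b), forcing a suboptimal choice. The key point is that the minimizer and its two nearest integer neighbors cannot all be forbidden by (b), since consecutive values of $d'_0 + kd$ differ by $d \not\equiv 0 \pmod N$; this combined with the bound $r \geq 1$ ensures the strict decrease of $|d|$ (or, at $|d| = 1$, of $|b|$) needed for the induction to close.
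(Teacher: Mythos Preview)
Your inductive approach is sound and genuinely different from the paper's proof. The paper begins with an \emph{arbitrary} connecting sequence and then surgically repairs each ``bad'' index $i$ (where $N \mid d_i$) by replacing the triple $v_{i-1}, v_i, v_{i+1}$ with a longer run built from vectors $v_{i+1} + j v_i$; since $N \mid d_i$, every such combination has second coordinate congruent to $d_{i+1} \not\equiv 0 \pmod N$. Your approach instead builds the sequence from scratch by descent on $(|d|,|b|)$, which is arguably cleaner and sidesteps the sign-repair trick $x,-y,-x,y$ needed at the end of the paper's argument. Your inductive step for $|d| \geq 2$ is correct as written (in fact when $N \mid d'_0 + k^*d$, \emph{both} neighbours $k^* \pm 1$ satisfy (b), so you may freely pick the one giving $|d'| = |d| - r$).

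There is, however, a slip in the base case $|d|=1$: the vector $(b - \sign(b),\, -\sign(b)\, d)$ does not in general satisfy condition (a). For instance, with $(b,d) = (2,1)$ your formula gives $(1,-1)$, but $1 \cdot 1 - (-1) \cdot 2 = 3$. The fix is immediate: take $d' = -\sign(b)$ and $b' = d(1 - |b|)$; then $b'd - d'b = 1$, $|b'| = |b| - 1$, and $|d'| = 1$, so the nested induction closes.
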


\begin{proof}
	Choose any connecting sequence $(v_i)_{i=0}^k$ with $v_k = (b,d)$.  Suppose that $v_i$ has second coordinate divisible by $N$.
	Then neither $v_{i-1}$ nor $v_{i+1}$ does. We will insert another sequence between $v_{i-1}$ and $v_{i+1}$,
	no element of which has second coordinate divisible by $N$.  For $v, w \in \Z^2$, consider $v \wedge w$ as an integer via
	the identification of $\bigwedge^2 \Z^2$ with $\Z$ using the basis vector $(1,0) \wedge (0,1)$.  Note that $ 
		v_{i-1} \wedge v_i = v_i \wedge v_{i+1} = 1$ 
	for $1 \le i \le k-1$. 
 Write $t = v_{i-1} \wedge v_{i+1}$. 
	We suppose that $t \geq 1$, the other case being easier.  
 	The sequence with $v_{i-1},v_i,v_{i+1}$ replaced by 
	$v_{i-1}, v_{i+1}+(1-t) v_i, \ldots, v_{i+1} - v_i, v_{i+1}$
 	has nearly the desired properties
	since 
	$$
		v_{i-1} \wedge (v_{i+1} + (1-t)v_i) = 
		(v_{i+1}-(j-t)v_i) \wedge (v_{i+1} - (j+1-t)v_i) = 
		 1 
	$$
	for $1 \le j \leq t-1$. 
	However, the last pair of adjacent vectors $x = v_{i+1}-v_i$ and $y = v_{i+1}$ satisfies $x \wedge y = -1$,
 	rather than $1$.  To remedy this, we replace $x,y$ by the sequence
	 $x, -y, -x, y$.
\end{proof}

\begin{remark} \label{smallNcplx} 
	As in Remark \ref{symbolcomplex}, Lemma \ref{fiddling} tells us that $\Theta$ restricted to $\tilde{\Gamma}_0(N)$ 
	takes values in (a quotient of) the degree $2$ term of the subcomplex of the small complex $\varC$ spanned by the 
	$\varC_{\gamma}$ for those $\gamma = \smatrix{a&b\\c&d} \in \SL_2(\Z)$ such that $N \nmid c$ and $N \nmid d$.
\end{remark}

We then have the following explicit formula for our cocycle.

\begin{proposition} \label{explicitspecialized}
	Let $\gamma = \smatrix{a & b \\ c &d} \in \tilde{\Gamma}_0(N)$, and let $(b_i,d_i)_{i=0}^k$ be an $N$-connecting sequence for 
	$\gamma$.
	Then
	$$
		\Theta_{N,\gamma} = \sum_{i=1}^k \{ 1- \zeta_N^{d_i}, 1-\zeta_N^{-d_{i-1}} \}.
	$$
\end{proposition}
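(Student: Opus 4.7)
The plan is to combine the explicit formula for $\Theta$ in Proposition \ref{expform} with a direct term-by-term application of $s^*$. By Proposition \ref{expform}, given the $N$-connecting sequence $(v_i)_{i=0}^k$ for $\gamma$, we have the identity
$$\Theta_{\gamma} = \sum_{i=1}^k \langle v_i, -v_{i-1} \rangle$$
in $\barK_2$. Each summand lies in $\varK_2^{(0)}$ by Lemma \ref{mfixed}, and by Lemma \ref{fixedpart} the image of $\varK_2^{(0)}$ in $\barK_2$ is identified with $\varK_2^{(0)}/\langle\{-z_1,-z_2\}\rangle$; so the same sum is in fact the canonical lift of $\Theta_{\gamma}$ to $\varK_2/\langle\{-z_1,-z_2\}\rangle$ guaranteed by Proposition \ref{Thetalifts}.

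Next I would verify that this explicit lift lies in the subgroup $\barK_2(N)$ of \eqref{sdef2} on which $s^*$ is defined. The symbol $\langle v_i, -v_{i-1}\rangle$ is a class in $H^2(\gm^2 - S_{v_{i-1}} \cup S_{v_i}, 2)$. The point $s = (1,\zeta_N)$ lies on $S_{(a,c)}$ precisely when $N \mid c$, so it suffices to know $N \nmid d_i$ for every $0 \le i \le k$. The $N$-connecting sequence hypothesis gives this for $0 \le i \le k-1$, while $d_k = \pm d$ with $(d,N)=1$, since $\gamma \in \bGamma_0(N)$. Hence taking $U = \gm^2 - \bigcup_{i=0}^k S_{v_i}$, the sum defines a class in $H^2(U,2)$ with $s \in U$, and its image in $\varK_2/\langle\{-z_1,-z_2\}\rangle$ lies in $\barK_2(N)$.

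Finally I would apply the specialization $s^*$ summand by summand. Unwinding the definition in \S \ref{symbolsmult}, $\langle v_i, -v_{i-1}\rangle$ is the Steinberg symbol $\{1-z_1^{b_i}z_2^{d_i},\, 1-z_1^{-b_{i-1}}z_2^{-d_{i-1}}\}$. Since $s^* z_1 = 1$ and $s^* z_2 = \zeta_N$, each factor specializes to a genuine nonzero element of $\Q(\mu_N)$ by the openness verified in the previous step, yielding $\{1-\zeta_N^{d_i},\, 1-\zeta_N^{-d_{i-1}}\}$ in $K_2(\Q(\mu_N))/Z_N$. Summing over $i$ produces the stated formula.

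I do not anticipate a serious obstacle: the argument is essentially mechanical once one has Propositions \ref{expform} and \ref{Thetalifts} in hand. The one subtlety worth being careful about is confirming that the specific sum of symbols chosen is indeed a representative of $\Theta_{\gamma}$ in a subgroup on which specialization is defined — and this is what the $N$-connecting sequence hypothesis (together with Lemma \ref{fiddling} ensuring existence of such a sequence) was tailored to provide.
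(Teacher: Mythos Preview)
Your proposal is correct and follows the same approach as the paper: invoke Proposition \ref{expform} for the explicit sum of symbols and then specialize each term via $s^*$. The paper's proof is terser (it simply records $s^*\langle (b_i,d_i),(-b_{i-1},-d_{i-1})\rangle = \{1-\zeta_N^{d_i},1-\zeta_N^{-d_{i-1}}\}$), while you spell out why each summand lies in $\barK_2(N)$ and how the $N$-connecting hypothesis guarantees this, which is a reasonable elaboration rather than a different route.
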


\begin{proof}
	By Proposition \ref{expform}, we need only note that 
	$$
		s^*\langle (b_i,d_i),(-b_{i-1},-d_{i-1}) \rangle = \{1-\zeta_N^{d_i},1-\zeta_N^{-d_{i-1}}\}
	$$
	for $1 \le i \le k$. 
\end{proof}

Since $1-\zeta_N^c$ is an $N$-unit for all $c \not\equiv 0 \bmod N$ and the map $K_2(\Z[\mu_N,\frac{1}{N}]) \to K_2(\Q(\mu_N))$ is
an injection, we have the following corollary.

\begin{corollary} \label{N-integrality}
	The cocycle $\Theta_N$ takes values in $K_2(\Z[\mu_N,\frac{1}{N}])/Z_N$.
\end{corollary}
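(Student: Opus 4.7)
The plan is to deduce this directly from the explicit formula for $\Theta_N$ proved in Proposition \ref{explicitspecialized}, supplemented by two standard inputs: regularity of cyclotomic units and injectivity of $K_2$ of a Dedekind ring into $K_2$ of its fraction field.

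First, for $\gamma = \smatrix{a&b\\c&d} \in \bGamma_0(N)$, observe that $(d, N) = 1$: since $N \mid c$, the relation $\det \gamma = ad - bc = \pm 1$ forces $ad \equiv \pm 1 \bmod N$. By Lemma \ref{fiddling}, $\gamma$ admits an $N$-connecting sequence $(b_i, d_i)_{i=0}^k$; the defining condition ensures $N \nmid d_i$ for $0 \le i \le k-1$, while $d_0 = 1$ and $d_k = \pm d$ are coprime to $N$ by the preceding observation. Hence $(d_i, N) = 1$ for every $0 \le i \le k$, which covers precisely the indices appearing in the two slots of the symbols in Proposition \ref{explicitspecialized}.

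Next, I would invoke the classical fact that $1 - \zeta_N^e \in \Z[\mu_N, \tfrac{1}{N}]^{\times}$ whenever $(e, N) = 1$: such elements are units at every prime of $\Z[\mu_N]$ not dividing $N$. It follows that each Steinberg symbol $\{1 - \zeta_N^{d_i}, 1 - \zeta_N^{-d_{i-1}}\}$ in the formula of Proposition \ref{explicitspecialized} lies in the image of $K_2(\Z[\mu_N, \tfrac{1}{N}]) \to K_2(\Q(\mu_N))$, and therefore so does the sum representing $\Theta_{N,\gamma}$ in $K_2(\Q(\mu_N))$.

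Finally, to pass from ``representable by an $N$-integral symbol'' to ``lies in $K_2(\Z[\mu_N, \tfrac{1}{N}])/Z_N$,'' I would use injectivity of the map $K_2(\Z[\mu_N, \tfrac{1}{N}]) \hookrightarrow K_2(\Q(\mu_N))$, which follows from Quillen's localization sequence for the Dedekind domain $\Z[\mu_N, \tfrac{1}{N}]$ (the tame symbols at maximal ideals detecting elements with support on codimension-one subschemes). Note also that $Z_N$ is generated by $\{-1, -\zeta_N\}$, itself a symbol of $N$-units, so $Z_N$ lifts canonically to $K_2(\Z[\mu_N, \tfrac{1}{N}])$ and quotienting by it commutes with the injection. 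There is no real obstacle at this stage; the substantive content has already been packed into Proposition \ref{explicitspecialized} and Lemma \ref{fiddling}, which together guarantee a symbol representation using only $N$-units in $\Q(\mu_N)$.
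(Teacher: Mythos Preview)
Your approach is exactly the paper's: invoke the explicit formula of Proposition~\ref{explicitspecialized}, note that the cyclotomic elements appearing are $N$-units, and use injectivity of $K_2(\Z[\mu_N,\tfrac{1}{N}]) \hookrightarrow K_2(\Q(\mu_N))$.

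There is one slip, however. From $N \nmid d_i$ you conclude $(d_i,N)=1$; this is false for composite $N$ (e.g.\ $N=6$, $d_i=2$). The $N$-connecting sequence of Lemma~\ref{fiddling} only guarantees $N \nmid d_i$, and the intermediate $d_i$ may well share a proper factor with $N$. The fix is immediate: the fact you need is the slightly stronger statement that $1-\zeta_N^e \in \Z[\mu_N,\tfrac{1}{N}]^{\times}$ for \emph{all} $e \not\equiv 0 \bmod N$, not just for $(e,N)=1$. Indeed, if $g=\gcd(e,N)$ and $M=N/g$, then $\zeta_N^e$ is a primitive $M$th root of unity, and $1-\zeta_N^e$ is a unit away from the primes dividing $M \mid N$. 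This is precisely what the paper asserts in the sentence preceding the corollary. With that correction your argument goes through unchanged.
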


\begin{remark}
	One could use Remark \ref{integral} to avoid the explicit formula in proving this corollary (supposing the same expected property of
	integral motivic cohomology), since pullback by $(1,\zeta_N)$ defines a morphism $\ms{K}_{2/\Z}(N) \to K_2(\Z[\mu_N,\frac{1}{N}])$,
	where $\ms{K}_{2/\Z}(N) = \ms{K}_{2/\Z} \cap \ms{K}_2(N)$.
\end{remark}

In fact, we can do slightly better.

\begin{lemma} \label{finerintegrality}
	If $N$ is divisible by two distinct primes, then $\Theta_N$ takes values in $K_2(\Z[\mu_N])/Z_N$.  Otherwise,
	its restriction to $\Gamma_1(N)$ does.
\end{lemma}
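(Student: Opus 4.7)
The plan is to combine the explicit formula of Proposition~\ref{explicitspecialized} with the localization sequence for $K_2$ of Dedekind domains. By Corollary~\ref{N-integrality}, $\Theta_{N,\gamma}$ already lies in $K_2(\Z[\mu_N,\tfrac{1}{N}])/Z_N$, so the remaining content is the vanishing of its image under the residue maps $K_2(\Q(\mu_N)) \to k(\mathfrak{p})^\times$ for primes $\mathfrak{p}$ of $\Z[\mu_N]$ lying over rational primes dividing $N$. Passing to the quotient by $Z_N$ is harmless here, since $\{-1,-\zeta_N\}$ is a symbol of units and hence has trivial tame symbol at every $\mathfrak{p}$.

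To control these residues I will choose a well-behaved $N$-connecting sequence $(b_i,d_i)_{i=0}^k$ for $\gamma$. The endpoints are automatic: $d_0=1$, and $d_k=\det(\gamma)d$ is coprime to $N$ for any $\gamma\in\bGamma_0(N)$. The fiddling argument of Lemma~\ref{fiddling} applies verbatim with $N$ replaced by a prime $p\mid N$, producing a connecting sequence with $p\nmid d_i$ for all $i$. When $N$ has at least two distinct prime divisors, I will iterate this fiddling prime by prime, using the Chinese remainder theorem to choose the inserted parameters so that coprimality conditions established for earlier primes are preserved; the outcome is a connecting sequence with $\gcd(d_i,N)=1$ for all $i$.

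With such a sequence in hand, the multi-prime case is immediate: since $N$ is not a prime power, each $1-\zeta_N^{d_i}$ is a global unit in $\Z[\mu_N]$ (as $\Phi_N(1)=1$, by the standard computation $\prod_{d\mid n}d^{\mu(n/d)}=1$ for $n$ with at least two prime factors), so every Steinberg symbol appearing in $\Theta_{N,\gamma}$ already lies in $K_2(\Z[\mu_N])$. In the prime power case $N=p^e$ with $\gamma\in\Gamma_1(N)$, we have $\det\gamma=1$ and $d_k=d\equiv 1\pmod{N}$; there is a unique prime $\mathfrak{p}$ over $p$, uniformized by $1-\zeta_N$, and by choice of sequence each $1-\zeta_N^{d_i}$ has $\mathfrak{p}$-valuation exactly $1$. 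Using the standard congruence $(1-\zeta_N^a)/(1-\zeta_N)\equiv a\pmod{\mathfrak{p}}$ for $a$ coprime to $p$, the tame symbol at $\mathfrak{p}$ of the $i$th summand simplifies to $d_i/d_{i-1}\pmod{\mathfrak{p}}$, and the multiplicative telescoping over $i=1,\ldots,k$ gives $d_k/d_0\equiv 1\pmod{\mathfrak{p}}$; the residue therefore vanishes.

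The main technical obstacle is the iterated-fiddling step: a single application of Lemma~\ref{fiddling} only removes divisibility by one chosen prime, and after fiddling for a second prime $q$ one must check that divisibility by the first prime has not been reintroduced. I expect this to be handled by choosing, at each insertion, an auxiliary parameter that avoids divisibility by every prime in the support of $N$ simultaneously via the Chinese remainder theorem; the necessary freedom is present because each successive vector in the inserted sequence differs from the previous one by a primitive vector in a direction transverse to the avoided congruence conditions.
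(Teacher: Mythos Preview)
Your prime-power case is essentially the paper's argument (up to a harmless sign: the tame symbol of the $i$th term is $d_{i-1}/d_i$ rather than $d_i/d_{i-1}$, but the telescoped product is $1$ either way since $d_0=1$ and $d_k\equiv 1\pmod N$).

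The multi-prime case has a genuine gap at exactly the point you flag. The fiddling in Lemma~\ref{fiddling} inserts the specific vectors $v_{i+1}+jv_i$ for a fixed range of $j$; there is no free auxiliary parameter. When you fiddle a second time to remove divisibility by a new prime $q$, the inserted $d$-coordinates $d_{i+1}+jd_i$ are automatically coprime to $q$ (since $q\mid d_i$ forces $q\nmid d_{i+1}$), but for a prime $p$ already handled you only know $p\nmid d_i$ and $p\nmid d_{i+1}$, and nothing prevents $p\mid d_{i+1}+jd_i$ for some $j$ in the prescribed range. So the iteration as you describe it can reintroduce divisibility by $p$. The existence of a connecting sequence with all $d_i$ coprime to $N$ may well be true, but it needs a genuinely different argument, and you have not supplied one.

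The paper sidesteps this by a trick worth noting: it never seeks a single sequence good for all primes dividing $N$. Instead, to check vanishing of the residue at a given prime $\ell\mid N$, it picks a \emph{different} prime $p\mid N$ and uses a $p$-connecting sequence (one application of Lemma~\ref{fiddling}). Since $p\nmid d_i$, the order of $\zeta_N^{d_i}$ is divisible by the full power of $p$ in $N$, hence is not a power of $\ell$; therefore $1-\zeta_N^{d_i}$ is a unit at every prime above $\ell$, even if $\ell\mid d_i$. This gives $\delta_{\ell}(\Theta_{N,\gamma})=1$ for each $\ell\mid N$ separately, which is all one needs.
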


\begin{proof}
	Fix a prime $\ell$ dividing $N$.  Let $F_{\ell}$ denote the residue field at a prime of $\Q(\mu_N)$ over $\ell$, and
	consider the tame symbol map 
	$$
		\delta_{\ell} \colon K_2(\Z[\mu_N,\tfrac{1}{N}])/Z_N \to F_{\ell}^{\times},
	$$
	of \eqref{tamesymbol}.
	The common kernel of the maps $\delta_{\ell}$ is $K_2(\Z[\mu_N])/Z_N$. Thus, it suffices 
	to see that $\delta_{\ell} \circ \Theta_N$ is trivial on the congruence subgroups of interest. 
	
	Suppose first that $N$ is divisible by two distinct primes.  For any prime $p \mid N$ with $p \neq \ell$,
	we have that $\tilde{\Gamma}_0(N) \subset \tilde{\Gamma}_0(p)$, so there exists
	a $p$-connecting sequence $(b_i,d_i)_{i=0}^k$ for any $\gamma \in \tilde{\Gamma}_0(N)$. 
	But then each $1-\zeta_N^{d_i}$ is a unit locally at primes over $\ell$, so 
	$\delta_{\ell}(\{1-\zeta_N^{d_i},1-\zeta_N^{-d_{i-1}}\})$ vanishes. By Proposition 
	\ref{explicitspecialized}, we then have $\delta_{\ell}(\Theta_{N,\gamma}) = 1$, independent of $\ell$.
	
	Next, suppose that $N$ is a power of a prime $\ell$.  Given $\gamma = \smatrix{a&b\\c&d} 
	\in \tilde{\Gamma}_0(N)$, there exists an $\ell$-connecting sequence $(b_i,d_i)_{i=0}^k$ for $\gamma$.  Then
	each $1-\zeta_N^{d_i}$ has valuation $1$ at $(1-\zeta_N)$, so
	$$
		\delta_{\ell}(\{1-\zeta_N^{d_i},1-\zeta_N^{-d_{i-1}}\}) = -\frac{1-\zeta_N^{-d_{i-1}}}{1-\zeta_N^{d_i}} \bmod
		(1-\zeta_N),
	$$
	which reduces to $\frac{d_{i-1}}{d_i}$ in $\F_{\ell}^{\times}$.  Proposition \ref{explicitspecialized} then yields that
	$\delta_{\ell}(\Theta_{\gamma}) = \det(\gamma) d^{-1} \bmod \ell$, which is trivial if $\gamma \in \Gamma_1(N)$.
\end{proof}

\subsubsection{Hecke equivariance} \label{Hecke-equiv}

We next consider the Hecke equivariance of $\Theta_N$.  
Let us set $\Phi = \langle \{-z_1,-z_2\} \rangle \subset \varK_2$ for simplicity, which we also view as a subgroup of $H^2(\gm^2,2)$.
Over the next few lemmas, we show that the class of $\Theta$ in $H^1(\GL_2(\Z),\varK_2/\Phi)$ is annihilated by all of
the operators $T_{\ell}-\ell-[\ell]^*$ for odd primes $\ell$, as well as by $2(T_2-2-[2]^*)$, in order to show the analogous Eisenstein property of $\Theta_N$ in Theorem
\ref{ThetaNEis}.
  
\begin{lemma} \label{K2s}
For any finite index subgroup $\Gamma$ of $\GL_2(\Z)$, the inclusion 
$\barK_2(N) \hookrightarrow \varK_2/\Phi$
induces an injection on $H^1(\Gamma, -)$.
\end{lemma}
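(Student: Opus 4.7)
The plan is to show that the connecting homomorphism $\delta \colon H^0(\Gamma, Q) \to H^1(\Gamma, \bar K_2(N))$ attached to the short exact sequence $0 \to \bar K_2(N) \to \varK_2/\Phi \to Q \to 0$ vanishes, where $Q$ is the cokernel of the inclusion (interpreting $\bar K_2(N)$ as its $\Gamma$-stable enlargement, i.e., the subgroup of classes regular at the finite $\Gamma$-orbit of the closed point of $s$, when $\Gamma$ does not itself stabilize that point). The first step is to identify $Q$ via residues. Consider the three-term homological complex $\bar\varK\colon \varK_2/\Phi \to \varK_1 \to \varK_0$: by the exactness of $\varK$ in Example \ref{n=2}, this complex is exact in degrees $\le 1$ and has top homology $H^2(\gm^2,2)/\Phi$. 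The subcomplex $\bar\varK^{\mathrm{good}}\colon \bar K_2(N) \to \varK_1^{\mathrm{good}} \to \varK_0^{\mathrm{good}}$ of classes whose residues avoid $\Gamma\cdot s$ has the \emph{same} top homology (the natural map is an isomorphism on $H_2$, since everywhere-regular classes remain regular at $\Gamma\cdot s$). A long-exact-sequence computation then forces the quotient complex $\bar\varK/\bar\varK^{\mathrm{good}}$ to be everywhere exact, identifying $Q$ with $\ker(\varK_1^{\mathrm{bad}} \to \varK_0^{\mathrm{bad}})$ as a $\Gamma$-module, where ``bad'' denotes restriction to divisors, resp.\ points, meeting $\Gamma\cdot s$.

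The second step is a diagram chase comparing two parallel short exact sequences of $\Gamma$-modules sharing the same kernel:
\begin{align*}
0 &\to H^2(\gm^2, 2)/\Phi \to \bar K_2(N) \to \ker \partial_1 \cap \varK_1^{\mathrm{good}} \to 0, \\
0 &\to H^2(\gm^2, 2)/\Phi \to \varK_2/\Phi \to \ker \partial_1 \to 0.
\end{align*}
The compatibility of their long exact $\Gamma$-cohomology sequences, together with a four-lemma-type argument, reduces the desired injectivity of $H^1(\Gamma,\bar K_2(N)) \to H^1(\Gamma,\varK_2/\Phi)$ to the analogous injectivity of $H^1(\Gamma, \ker\partial_1 \cap \varK_1^{\mathrm{good}}) \to H^1(\Gamma, \ker\partial_1)$. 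Since the quotient here is again the $\Gamma$-module $Q$, this is essentially the same question translated one step lower in the motivic complex, and it can be attacked by iterating the strategy on the two-term complex $\varK_1 \to \varK_0$.

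The main obstacle will be controlling the combinatorics of the diagram chase and ensuring the obstructions really vanish at each reduction. Concretely, given a $\Gamma$-invariant $\bar q \in Q$ viewed in $\varK_1^{\mathrm{bad}}$, one extends it to a class in $\ker \partial_1$ by adjoining a compensating residue on good divisors, exploiting that $\partial_1\colon \varK_1^{\mathrm{good}} \to \varK_0^{\mathrm{good}}$ is surjective (from the exactness of the good subcomplex in degree $0$). The choice of compensating residue is unique up to $\ker\partial_1 \cap \varK_1^{\mathrm{good}}$, and the obstruction to making it $\Gamma$-equivariant sits in $H^1(\Gamma, \ker\partial_1 \cap \varK_1^{\mathrm{good}})$. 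The harder part will be to exploit the decomposition $\varK_1^{\mathrm{bad}} = \bigoplus_{D\in\mathrm{bad}} K_1 k(D)$ as a direct sum over the finite set of $\Gamma$-orbits of bad divisors (each summand being induced from its stabilizer) so that Shapiro's lemma and a final use of surjectivity on the ``good'' complex force the obstructions to cancel, yielding the vanishing of $\delta$.
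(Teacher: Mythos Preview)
Your proposal takes a far more elaborate route than the paper and never actually closes the argument. The paper's proof is essentially a one-liner once you write down the Gysin sequence
\[
0 \to \barK_2(N) \to \varK_2/\Phi \to \bigoplus_{s\in D} k(D)^{\times},
\]
because it suffices to show that the right-hand side has \emph{no} $\Gamma$-invariants (not merely that the connecting map $\delta$ vanishes). Since this is a direct sum indexed by irreducible divisors $D$ through $s$ and $\Gamma$ permutes the summands, a nonzero $\Gamma$-invariant would force some $D$ to have finite $\Gamma$-orbit. The paper rules this out directly: writing $D=\{f=0\}$ with $f\in\Q[z_1^{\pm1},z_2^{\pm1}]$ irreducible, the exponent support $\mathrm{supp}(f)\subset\Z^2$ has at least two elements (else $f$ is a unit), and applying powers of any hyperbolic $\gamma\in\Gamma$ makes the diameter of $\mathrm{supp}(f)\gamma^n$ tend to infinity, so $D\gamma^n\neq D$ for $n\gg0$. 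That is the entire proof.

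By contrast, your plan tries to identify $Q$ precisely as $\ker(\varK_1^{\mathrm{bad}}\to\varK_0^{\mathrm{bad}})$, reduce via a comparison of two short exact sequences to the analogous question one step lower, and then ``force the obstructions to cancel'' by Shapiro's lemma. Several of your intermediate steps are correct or nearly so (the good subcomplex really is exact in degrees $\le 1$, though you assert this without proof), but the purported reduction via the four-lemma is not carefully justified, and the final paragraph is not an argument at all --- it is a description of what one would hope happens. You never exhibit the mechanism by which the obstructions cancel, and indeed there is no reason to expect a purely formal cancellation: the vanishing genuinely uses the geometric input that bad divisors have infinite orbits, which your approach never invokes. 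Once you have that input, all the machinery of good/bad complexes, iterated reductions, and Shapiro's lemma becomes unnecessary.
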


\begin{proof}
From Gysin sequences, we see that there is an exact sequence
\begin{equation} \label{GysinK2N} 
	0 \rightarrow \barK_2(N) \rightarrow \varK_2/\Phi  \rightarrow \bigoplus_{s\in D} k(D)^{\times},
\end{equation}
where the sum ranges over divisors $D$ containing $s$.
The lemma follows if we know that the finite index subgroup $\Gamma$ of $\bGamma_0(N)$
has trivial invariants on the right-hand group. 

We claim that the orbit of any divisor $D$ containing $(1,\zeta_N)$ on $\mathbb{G}_m^2$ under $\GL_2(\Z)$ is infinite, so no element
of the direct sum can be fixed by the finite index subgroup $\Gamma$.
 Such a divisor is the vanishing locus of an $f \in \Q[z_1^{\pm 1}, z_2^{\pm 1}]$ that 
 is unique up to units, i.e., up to some $c z_1^i z_2^j$ with $c \in \Q^{\times}$ and $i, j \in \Z$. 
Define the support $\mathrm{supp}(f)$ of $f$ to be  the set of $(a,b) \in \Z^2$ for which
the coefficient of $z_1^a z_2^b$ is nonzero. Note $|\mathrm{supp}(f)| \geq 2$. For any hyperbolic element $\gamma \in \Gamma$, the   
diameter of $\mathrm{supp}(f)\gamma^n$ 
increases without bound as $n \rightarrow \infty$. In particular, $\mathrm{supp}(f)\gamma^n$ cannot be a translate of $\mathrm{supp}(f)$ for sufficiently large $n$, and therefore $D \gamma^n \neq D$ for all $n \ge 1$.
\end{proof}

\begin{lemma} \label{4-torsion} 	The kernel of the map on $H^1(\GL_2(\Z),-)$ induced by the quotient map $\varK_2/\Phi \twoheadrightarrow \barK_2$ is $2$-torsion.
\end{lemma}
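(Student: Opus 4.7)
The plan is to show that $H^1(\GL_2(\Z), M)$ is itself $2$-torsion, where $M := H^2(\gm^2, 2)/\Phi$; the long exact sequence associated with $0 \to M \to \varK_2/\Phi \to \barK_2 \to 0$ then exhibits the kernel of interest as a quotient of $H^1(\GL_2(\Z), M)$, and the result follows.

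The first step is to describe $M$ explicitly as a $\GL_2(\Z)$-module. By Corollary \ref{cohomgmr1}, the iterated suspension identifies $H^2(\gm^2, 2)$ with $K_2(\Q) \oplus \Q^\times \cdot \{-z_1, *\} \oplus \Q^\times \cdot \{-z_2, *\} \oplus \Z \cdot \{-z_1, -z_2\}$ as an abelian group; since $\Phi$ is the last summand, $M$ equals the first three. The pullback copy $K_2(\Q) \hookrightarrow M$ is $\GL_2(\Z)$-stable with trivial action (pulled back from $\Spec \Q$). However, the naive complement spanned by $\{-z_j, \alpha\}$ is \emph{not} $\GL_2(\Z)$-stable: the formula $\{-z_1^a z_2^c, \alpha\}$ acquires a $K_2(\Q)$-correction of the form $(1+a+c)\{-1, \alpha\}$ that need not vanish, since $1+a+c$ can be odd.

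The key trick is a basis change: replace $\{-z_j, \alpha\}$ with $\{z_j, \alpha\} = \{-z_j, \alpha\} - \{-1, \alpha\}$. A direct bilinearity computation yields
\[
\gamma^*\{z_1, \alpha\} = \{z_1^a z_2^c, \alpha\} = a\{z_1, \alpha\} + c\{z_2, \alpha\},
\]
with no $K_2(\Q)$-contribution (and analogously for $\{z_2, \alpha\}$). Hence the subgroup $\tilde V \subseteq M$ generated by the classes $\{z_j, \alpha\}$ is $\GL_2(\Z)$-stable and isomorphic to $\Q^\times \otimes \Z^2$ with the standard $\GL_2(\Z)$-action. Since $\{z_j, \alpha\}$ and $\{-z_j, \alpha\}$ differ by an element of $K_2(\Q)$, the new complement together with $K_2(\Q)$ still spans $M$ and meets it trivially; this produces a direct-sum decomposition $M = K_2(\Q) \oplus \tilde V$ of $\GL_2(\Z)$-modules, and hence
\[
H^1(\GL_2(\Z), M) = H^1(\GL_2(\Z), K_2(\Q)) \oplus H^1(\GL_2(\Z), \tilde V).
\]

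It remains to verify that each summand is $2$-torsion. For the first, the abelianization $\GL_2(\Z)^{\mathrm{ab}}$ is $(\Z/2)^2$ (generated by the images of $S$ and $J$), so $H^1(\GL_2(\Z), K_2(\Q)) = \Hom((\Z/2)^2, K_2(\Q)) = K_2(\Q)[2]^2$. For the second, the central involution $-I$ acts on $\tilde V$ by $-1$ (since $(-I)^*\{z_j, \alpha\} = \{z_j^{-1}, \alpha\} = -\{z_j, \alpha\}$), while by the standard centrality argument (the induced action of a central element on $H^1$ is trivial because its conjugation action is) the action of $-I$ on $H^1(\GL_2(\Z), \tilde V)$ equals the identity; combining these forces $2 H^1(\GL_2(\Z), \tilde V) = 0$. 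The main obstacle, on which the argument hinges, is finding the equivariant splitting: one must use $\{z_j, \alpha\}$ rather than the suspension's natural $\{-z_j, \alpha\}$, exploiting that the difference $\{-1, \alpha\} \in K_2(\Q)$ is already $\GL_2(\Z)$-invariant, so adjusting the basis by it does not alter the abelian-group splitting but does eliminate the parity-sensitive $K_2(\Q)$-obstruction to $\GL_2(\Z)$-equivariance.
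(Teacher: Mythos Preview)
Your proof is correct and takes a genuinely different route from the paper's argument, though both begin the same way: identifying the kernel as a quotient of $H^1(\GL_2(\Z), M)$ with $M = H^2(\gm^2,2)/\Phi$, and decomposing $M$ as a direct sum of a trivial $\GL_2(\Z)$-module and a copy of $\Q^{\times} \otimes W$ (with $W = \Z^2$ carrying the standard action). The paper states this decomposition without comment; your explicit basis change from $\{-z_j,\alpha\}$ to $\{z_j,\alpha\}$ is a nice clarification of why the splitting is actually $\GL_2(\Z)$-equivariant (minor point: the difference is $\{z_j,\alpha\} - \{-z_j,\alpha\} = \{-1,\alpha\}$, so your sign is off, but this is immaterial). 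Where the arguments diverge is in handling the $W$-summand: the paper invokes the universal coefficient theorem to reduce to $H_i(\GL_2(\Z),W)$ for $i \in \{0,1\}$ and then runs a Mayer--Vietoris computation using the amalgamated-free-product description of $\SL_2(\Z)$; you instead observe that the central element $-I$ acts as $-1$ on $\tilde V$ but trivially on cohomology, forcing $2H^1(\GL_2(\Z),\tilde V)=0$ in one line. Your argument is more elementary and conceptual, avoiding any explicit group-homology calculation; the paper's approach, while heavier, gives slightly more (it identifies the relevant homology groups rather than just bounding their exponent).
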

 
\begin{proof} 
	Recall from Corollary \ref{cohomgmr1} and Lemma \ref{fixedpart} that $H^2(\gm^2,2)$ is the direct sum of subgroups
	generated by symbols of the form $a \cup b$, $(-z_1) \cup b$, $(-z_2) \cup b$, and $(-z_1) \cup (-z_2)$ with $a, b \in \Q^{\times}$.
	It follows that, as a $\Z[\GL_2(\Z)]$-module, the group 
	$H^2(\Gm^2, 2)/\Phi$ is a direct sum of copies of modules $A$ and $W \otimes_{\Z} A$ with $A$ having trivial $\GL_2(\Z)$-action,
 	where $W$ is the group $\Z^2$ endowed with the standard left $\GL_2(\Z)$-action.
 
 	By the universal coefficient sequence (which is split), it is then enough to verify that the groups $H_1(\GL_2(\Z), \Z)$
 	and $H_i(\GL_2(\Z), W)$ for $i \in \{0,1\}$ are $2$-torsion.  In fact $H_1(\GL_2(\Z), \Z) \cong \GL_2(\Z)^{\mr{ab}} \cong 
	(\Z/2\Z)^2$. Since
	$\SL_2(\Z)$ acts transitively on $W-\{0\}$,   the group $H_0(\GL_2(\Z),W)$ vanishes, and
	 $H_1(\GL_2(\Z), W)$ is a quotient of $H_1(\SL_2(\Z),W)$. It then suffices to show the latter group is killed by $2$. 
	
	The group $\SL_2(\Z)$ is an amalgamated free product of the cyclic $4$-subgroup generated
	by $S = \smatrix{0 & 1\\ -1 & 0}$ and the cyclic $6$-subgroup generated by $T = \smatrix{1 & -1\\ 1 & 0}$
 	over the $2$-subgroup generated by $\sigma=S^2=T^3$.  Thus we have a Mayer-Vietoris sequence
 	$$
		\cdots \to H_1(\langle S \rangle,W) \oplus H_1(\langle T \rangle,W) \rightarrow H_1(\SL_2(\Z),W) 
 	\rightarrow H_0(\langle \sigma \rangle,W) \rightarrow \cdots.
	$$
 	The first two groups vanish because neither $S$ nor $T$ have invariants, and the last is $(\Z/2\Z)^2$. 
\end{proof}

 \begin{lemma}  \label{ThetaEis2} 
 	The operator $T_{\ell}-\ell-[\ell]^*$ kills the class of $\Theta$ (resp., $2 \Theta$) in 
 	$H^1(\GL_2(\Z),\varK_2/\Phi)$ for $\ell \neq 2$ (resp., for $\ell=2$).
\end{lemma}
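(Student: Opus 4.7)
The plan is to bootstrap from Proposition \ref{ThetaEis}, which already gives the desired vanishing in the larger quotient $\bar\varK_2 = \varK_2/\ker\partial$, to the finer quotient $\varK_2/\Phi$, paying a $2$-torsion penalty that accounts for the $\ell = 2$ case. The key input is the short exact sequence $0 \to H^2(\gm^2,2)/\Phi \to \varK_2/\Phi \to \bar\varK_2 \to 0$, whose associated long exact cohomology sequence shows that the class of $(T_\ell - \ell - [\ell]^*)\Theta$ in $H^1(\GL_2(\Z), \varK_2/\Phi)$ lies in the image of $H^1(\GL_2(\Z), H^2(\gm^2,2)/\Phi)$. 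By Lemma \ref{4-torsion}, this image is $2$-torsion, which immediately establishes the $\ell = 2$ case.

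For odd $\ell$, one must sharpen this to actual vanishing, not merely $2$-torsion. The strategy is to revisit the proof of Proposition \ref{ThetaEis} and choose the auxiliary element $\psi$ (with $\partial\psi = (T_\ell^\varK - \ell - [\ell]^*)\langle 0,1\rangle$) more canonically, so that the coboundary relation $(T_\ell - \ell - [\ell]^*)\Theta = (\gamma^*-1)\psi$ lifts directly to $\varK_2/\Phi$, rather than merely to $\bar\varK_2$. The element $\psi$ is a priori ambiguous modulo $H^2(\gm^2, 2)/\Phi$, but the proposal is to pin it down by demanding that it be trace-fixed. The right-hand side $(T_\ell^\varK - \ell - [\ell]^*)\langle 0,1\rangle$ is trace-fixed with respect to all $[p]_*$ with $p \neq \ell$ (and automatically $p \neq 0 = \cha\Q$), since $\langle 0,1\rangle$ is trace-fixed by Lemma \ref{mfixed} and the relevant $[p]_*$ commute with $T_\ell^\varK$ and $[\ell]^*$ via Lemma \ref{pullbackdiagram} applied to Cartesian squares (valid when $p$ and $\ell$ are coprime).

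The remaining ambiguity in a trace-fixed lift of $\psi$ modulo $\Phi$ lies in the trace-fixed part of $H^2(\gm^2, 2)/\Phi$, which I claim vanishes. This follows by a direct computation using the decomposition of Corollary \ref{cohomgmr1}: modulo $\Phi$, the group $H^2(\gm^2, 2)$ is a sum of a pullback $K_2(\Q)$ summand (on which $[p]_*$ acts by $p^2$), two $H^1(\Q,1) \cong \Q^\times$ summands of the form $a \cup (-z_i)$ (on which $[p]_*$ acts by $a \mapsto a^p$ by the projection formula combined with Lemma \ref{fixedpartgm}), and the kernel $\Phi$ itself; over $\Q$, only the element $1 = 0$ is simultaneously fixed by all $[p]_*$ in each of these nontrivial summands. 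Since $\Theta$ itself takes values in the trace-fixed quotient of $\varK_2/\Phi$ (Remark \ref{symbolcomplex}), the difference cocycle $\gamma \mapsto (T_\ell - \ell - [\ell]^*)\Theta_\gamma - (\gamma^*-1)\psi$ is valued in this vanishing trace-fixed part, proving it is identically zero. (For $\ell = 2$, the trace-fixed condition must exclude $p = 2$, and the resulting ``almost trace-fixed'' part of $H^2(\gm^2, 2)/\Phi$ is $\{\pm 1\}$ in each $H^1(\Q,1)$ summand, explaining the $2$-torsion penalty.)

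The main obstacle will be the technical verification that trace maps $[p]_*$ for $p \neq \ell$ genuinely commute with the Hecke pullback $T_\ell^\varK = \sum_j g_j^*$: individual $g_j^*$ for $g_j \in \Delta$ with $\det g_j = \ell$ do not commute with $[p]_*$ in the naive way (the relevant fiber product differs from $\gm^2$ by a factor of $\mu_p^2$), so one must check that the full sum $T_\ell^\varK$, which corresponds to a genuine cycle of sublattices of index $\ell$, is preserved by $[p]_*$. This is where the coprimality of $p$ and $\ell$ becomes essential and where the explicit choice of coset representatives $g_j$ in \eqref{Gjdef} needs to be compared against the corresponding $[p]$-preimages.
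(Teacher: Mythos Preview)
Your strategy coincides with the paper's: reduce via Proposition \ref{ThetaEis} and Lemma \ref{4-torsion} to a class $\tau_{\ell}$ that is $[m]_*$-fixed for $m$ prime to $\ell$ and lies in the image of $H^1(\GL_2(\Z), H^2(\gm^2,2)/\Phi)$, then exploit that $[m]_*$ acts by $m$ or $m^2$ on the summands. Your flagged ``main obstacle'' is not one: for $p$ coprime to $\ell$, each individual square with $g_j$ and $[p]$ is already Cartesian (since $\ker g_j \cap \mu_p^2 = 0$), so $[p]_* g_j^* = g_j^* [p]_*$ by Lemma \ref{pullbackdiagram}, exactly as the paper uses.

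There is, however, a genuine gap at $\ell = 3$. Your claim that the $(p \neq \ell)$-trace-fixed part of $H^2(\gm^2,2)/\Phi$ vanishes fails on the $K_2(\Q)$ summand when $\ell = 3$: there $[p]_*$ acts by $p^2$, and since $3 \mid p^2 - 1$ for every prime $p \neq 3$, the fixed condition only forces $3x = 0$. By Tate's computation $K_2(\Q)$ has nontrivial $3$-torsion, so this fixed part is nonzero. The paper repairs this by working at the class level: it takes $\epsilon = 8[1]_* - 6[2]_* + [4]_*$, which satisfies $\sum a_m m = \sum a_m m^2 = 0$ (hence $\epsilon$ annihilates anything coming from $H^2(\gm^2,2)/\Phi$) and $\sum a_m = 3$ (hence $\epsilon$ acts as $3$ on the $[m]_*$-fixed class $\tau_3$), giving $3\tau_3 = 0$; combined with $2\tau_3 = 0$ from Lemma \ref{4-torsion}, one obtains $\tau_3 = 0$. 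Your argument can be patched the same way.

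A secondary issue: you assume a trace-fixed $\psi$ exists, but taking fixed parts of $0 \to H^2(\gm^2,2)/\Phi \to \varK_2/\Phi \to \barK_2 \to 0$ is only left exact, and you have not argued the required surjectivity. The paper avoids this by never lifting the coboundary relation, instead arguing directly with the class $\tau_\ell$ and the action of the operators $[m]_*$ on it.
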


\begin{proof}
	Let $\tau_{\ell}$ denote the class of $(T_{\ell}-\ell-[\ell]^*) \Theta$ in the latter group.
	Lemma \ref{ThetaEis} implies that $\tau_{\ell}$ lies in the kernel of the homomorphism
	\begin{equation} \label{fdef}
		f \colon H^1(\GL_2(\Z),\varK_2/\Phi) \to H^1(\GL_2(\Z), \barK_2)
	\end{equation}
	of Lemma \ref{4-torsion}, so is $2$-torsion.
	 In particular, we have the statement for $\ell = 2$.
	
	This kernel of $f$ is a quotient of $H^1(\GL_2(\Z),H^2(\gm^2,2)/\Phi)$.
	By Lemma \ref{fixedpart} and the decomposition \eqref{iterated}, the latter group is a direct sum of subgroups on which either 
	every $[m]_*$ acts by one of the scalars $m$ and $m^2$.
 	In particular,   $\tau_{\ell}$ is killed by any operator
	$$\epsilon = \sum_{m=1}^{\infty} a_m [m]_*$$
	with $a_m \in \Z$ and $\sum ma_m = \sum m^2 a_m=0$. 

	Next, let us note that the actions of $T_{\ell}$ and $[m]_*$ on $H^1(\GL_2(\Z), \varK_2/\Phi)$
	commute if $\ell \nmid m$ since by Lemma \ref{pullbackdiagram}, the trace map $[m]_*$ commutes with the pullbacks 
	used in the definition of $T_{\ell}$ for $\ell \nmid m$.
	Consequently, $\tau_{\ell}$ is fixed by each such $[m]_*$.
	Therefore, so long as the $a_m$ are zero for $m$ not prime to $\ell$, the operator
	$\epsilon$ above acts on $\tau_{\ell} $ by the scalar $\sum a_m$, and we conclude that
	$ \sum a_m \cdot \tau_{\ell}=0$
	for $a_m$ as above such that $a_m = 0$ if $\ell $ divides $m$. 
 	Taking $a_1 = -a_2 = 3a_3 = 3$, we see that  $\tau_{\ell}$ is zero  
 	for $\ell \ge 5$ as desired. Taking $a_4 = 1$, $a_2= -6$, and $a_1 = 8$, we see that $3\tau_3 = 0$,
	which is sufficient as $2\tau_3 = 0$ as well.
 \end{proof}
 
Let $\Delta_0(N)$ be the monoid of matrices with lower-left entry divisible by $N$ and lower-right entry
prime to $N$: see \eqref{Delta0def}. 
For a prime $\ell$ and $g = \smatrix{ \ell \\ & 1 }$ as before, we let $T_{\ell} = T(g)$ for $\ell \nmid N$, 
 with $T(g)$ as in Section \ref{actions}.

\begin{theorem} \label{ThetaNEis}  
	For primes $\ell$ not dividing $2N$, we have
	$$
		T_{\ell}\Theta_N= (\ell +\sigma_{\ell}) \Theta_N
	$$
	in $H^1(\bGamma_0(N),K_2(\Q(\mu_N))/Z_N)$.  If $N$ is odd, we have $2(T_2-2-\sigma_2)\Theta_N = 0$.\footnote{In fact,
	one can verify by explicit computation that at least the restriction of $(T_2-2-\sigma_2)\Theta_N $ to $\Gamma_1(N)$
	is trivial.} 
\end{theorem}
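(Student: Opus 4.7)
The plan is to deduce the theorem from Lemma~\ref{ThetaEis2} in two stages: first restrict the vanishing $(T_\ell - \ell - [\ell]^*)[\Theta] = 0$ (resp.~$2(T_2 - 2 - [2]^*)[\Theta] = 0$) from $\GL_2(\Z)$ down to $\bGamma_0(N)$, then apply the specialization $s^*$ of \eqref{sdef2} to land in $H^1(\bGamma_0(N), K_2(\Q(\mu_N))/Z_N)$.

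For the restriction step, I would verify that when $\ell \nmid N$, the coset representatives $g_j$ of~\eqref{Gjdef} also give a decomposition $\bGamma_0(N) g \bGamma_0(N) = \coprod_{j=0}^{\ell} g_j \bGamma_0(N)$. The $g_j$'s for $0 \le j \le \ell-1$ lie trivially in the double coset (as $g_j = \smatrix{1 & j \\ 0 & 1} g$), and $g_\ell$ does also: one finds $\alpha = \smatrix{a & b \\ Nc & d} \in \Gamma_0(N)$ with $\ell \mid d$, which exists by Bezout using $\gcd(\ell, N) = 1$; then $\alpha g \beta = g_\ell$ for the appropriate $\beta \in \Gamma_0(N)$, and the resulting $\ell + 1$ cosets are distinct because they are already distinct as $\GL_2(\Z)$-cosets. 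With identical coset reps at both levels, the Hecke formula~\eqref{Heckecocyc} shows that the restriction map commutes with $T_\ell$, so the Eisenstein relation persists in $H^1(\bGamma_0(N), \varK_2/\Phi)$.

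To transport to the specialized setting, I would first lift the relation to $H^1(\bGamma_0(N), \barK_2(N))$. The restricted cocycle $\Theta|_{\bGamma_0(N)}$ takes values in $\barK_2(N)$ (as used to define $\Theta_N$ in Proposition~\ref{cyclcocycle}, using that $\gcd(d, N) = 1$ for $\smatrix{a&b\\c&d} \in \bGamma_0(N)$), and a similar check for each Hecke term $g_{\sigma(j)}^* \Theta_{\gamma_j}$ shows its residues lie on divisors avoiding $s = (1,\zeta_N)$; then the injectivity of Lemma~\ref{K2s} promotes the vanishing to $H^1(\bGamma_0(N), \barK_2(N))$. Finally, applying $s^*$ yields the claim: the $\bGamma_0(N)$-equivariance of $s^*$ (with $\gamma$ acting as $\sigma_{d(\gamma)}$) is in Proposition~\ref{cyclcocycle}, and the key new identity $s^* \circ [\ell]^* = \sigma_\ell \circ s^*$ follows because $[\ell] \circ s$ and $s \circ \sigma_\ell$ are the same $\Q$-morphism $\Spec \Q(\mu_N) \to \gm^2$, both hitting $(1, \zeta_N^\ell)$. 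The $\ell = 2$, $N$ odd statement follows by applying the same argument to $2\Theta$. The principal obstacle is the bookkeeping on domains of definition — especially ensuring the cobounding $\psi$ implicit in Lemma~\ref{ThetaEis2} need not be tracked explicitly, which is exactly the role played by the Lemma~\ref{K2s} injectivity.
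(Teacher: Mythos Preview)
Your proposal is correct and follows essentially the same approach as the paper's proof: invoke Lemma~\ref{ThetaEis2}, observe that the same coset representatives $g_j$ work for $\bGamma_0(N)$ when $\ell \nmid N$, use the injectivity of Lemma~\ref{K2s} to lift the vanishing to $\barK_2(N)$-valued cohomology, and then push forward along $s^*$ using the $\Delta_0(N)$-equivariance $s^* \circ \delta^* = \sigma_d \circ s^*$ (which in particular gives $s^* \circ [\ell]^* = \sigma_\ell \circ s^*$). Your extra care in checking that each Hecke term $g_{\sigma(j)}^*\Theta_{\gamma_j}$ lands in $\barK_2(N)$ is subsumed in the paper by the observation that the $g_j$ lie in $\Delta_0(N)$, which preserves $\barK_2(N)$.
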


\begin{proof}
	In Lemma \ref{ThetaEis2} we proved 
	that for $\ell \nmid 2N$, the cocycle $(T_{\ell} -\ell -[\ell]^*)\Theta$ is cohomologous to zero when considering $\Theta$ as a 
	$\GL_2(\Z)$-cocycle
	with target $\varK_2/\langle\{-z_1, -z_2\}\rangle$.  For $\ell \nmid N$, the elements $g_j$ of \eqref{Gjdef} lie
	in $\Delta_0(N)$ and still provide left coset representatives of $\bGamma_0(N) \smatrix{\ell \\ & 1} \bGamma_0(N)$.
	By Lemma \ref{K2s}, the class of $(T_{\ell} -\ell -[\ell]^*)\Theta$ then remains zero
	when $\Theta$ is considered as a $\bGamma_0(N)$-cocycle with target $\barK_2(N)$.
	
	Moreover,  the map $s^* \colon \barK_2(N) \rightarrow K_2(\Q(\mu_N))/Z_N$, 
	 is equivariant for the action of 
	$\Delta_0(N)$ in the sense that $\sigma_d \circ s^* = s^* \circ \delta$,
	where $\delta =\abcdmat \in \Delta_0(N)$; in particular $\sigma_{\ell} \circ s^* = s^* \circ [\ell]^*$.	
	Therefore, 
	$$s^* (T_{\ell} - \ell - [\ell]^*) \Theta = (T_{\ell} - \ell - \sigma_{\ell}) \Theta_N$$
	is cohomologous to zero,  as a cocycle with target in $K_2(\Q(\mu_N))/Z_N$.
	
	The same argument goes through for $\ell = 2$ if $N$ is odd by multiplying everything by $2$.
\end{proof}

\subsection{Maps on the homology of $X_1(N)$} \label{homology}

In this section, we compare our constructions with others in the literature. We show how the cocycle $\Theta_N$ induces a map on the homology of the usual closed modular curve $X_1(N)$ over $\C$, which is to say the quotient of the extended upper half-plane $\mb{H}^*$ by the congruence subgroup $\Gamma_1(N)$ of $\SL_2(\Z)$.
This agrees with the map constructed independently by Busuioc \cite{busuioc} and the first author \cite{sharifi}, which can be
defined explicitly on Manin symbols on a slightly larger homology group of $X_1(N)$, taken relative to some of its cusps.  We show
that  this induced map factors through the quotient of homology by an Eisenstein ideal away from the level, providing a complement to a result
of Fukaya and Kato \cite{fk} on $p$-parts for $p \mid N$ that was a conjecture of the first author.
 
\subsubsection{Maps defined on Manin symbols}

Let us suppose that $N \ge 4$.  Let $C_1(N) = \Gamma_1(N) \backslash \mb{P}^1(\Q)$ denote the cusps in the modular curve $X_1(N)$, which is taken over $\C$ in this section.  
For $\alpha, \beta \in \mb{P}^1(\Q)$, let $\{\alpha \to \beta\}$ denote the class
in the relative homology group $H_1(X_1(N),C_1(N),\Z)$ of the geodesic in $\mb{H}^*$ from $\alpha$ to $\beta$.  
If $\alpha$ and $\beta$ are equivalent cusps, then $\{\alpha \to \beta \}$ lies in the homology of $X_1(N)$.

Let us set
$$
	\vgamma = \{0 \to \gamma \cdot 0\} \in H_1(X_1(N),\Z)
$$
for $\gamma \in \Gamma_1(N)$.  This class is independent of the choice of element $0 \in \mb{H}^*$,
and there is a commutative diagram 
$$
\begin{tikzcd}
\Gamma_1(N) \arrow[dr,"\gamma \mapsto \vgamma"']  \ar[r] & H_1(Y_1(N), \Z) \ar[d]  \\
& H_1(X_1(N), \Z),
\end{tikzcd}
$$
where the horizontal and vertical arrow are the standard maps, and all three maps are surjections.

For an abelian group $M$ with an action of complex conjugation, we let $M_+$ denote the maximal quotient on which complex
conjugation acts trivially.   

\begin{proposition} \label{maphomology}
	There is a unique $(\Z/N\Z)^{\times}$-equivariant homomorphism 
	$$
		\Pi_N \colon H_1(X_1(N),\Z)_+ \to K_2(\Z[\mu_N])/Z_N.
	$$
	that sends the image of $\vgamma$ to $\Theta_{N,\gamma}$ for all $\gamma \in \Gamma_1(N)$.
\end{proposition}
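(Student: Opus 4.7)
The plan is to obtain $\Pi_N$ by passing $\Theta_N$ through a chain of quotients: first from $\Gamma_1(N)$ to $\Gamma_1(N)^{\mr{ab}} = H_1(Y_1(N),\Z)$, then to $H_1(X_1(N),\Z)$ via parabolicity, and finally to the $+$-quotient via an extension to $\bGamma_1(N)$. First I would restrict $\Theta_N$ from $\bGamma_0(N)$ to $\Gamma_1(N)$. Since every $\gamma \in \Gamma_1(N)$ has $d \equiv 1 \bmod N$, the isomorphism \eqref{eazy_e} shows $\Gamma_1(N)$ acts trivially on the target $K_2(\Q(\mu_N))/Z_N$, so $\Theta_N|_{\Gamma_1(N)}$ is a group homomorphism. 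Lemma \ref{finerintegrality} refines the target to $K_2(\Z[\mu_N])/Z_N$. Because $N \ge 4$, the group $\Gamma_1(N)$ is torsion-free and $Y_1(N)_{/\C}$ is a $K(\Gamma_1(N),1)$, so $\Gamma_1(N)^{\mr{ab}} \cong H_1(Y_1(N),\Z)$ and our map descends to a homomorphism from $H_1(Y_1(N),\Z)$.

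Next I would factor this through $H_1(X_1(N),\Z)$. The kernel of the surjection $H_1(Y_1(N),\Z) \twoheadrightarrow H_1(X_1(N),\Z)$ is generated by parabolic loops. By Proposition \ref{cyclcocycle}, $\Theta_N$ is parabolic, so its restriction to any $P \cap \Gamma_1(N)$, with $P$ a stabilizer of a cusp, is a coboundary; but a coboundary with respect to a trivial action is identically zero, so $\Theta_N$ vanishes on every parabolic element of $\Gamma_1(N)$ and descends as required.

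To factor through the $+$-part, observe that $\bGamma_1(N) = \Gamma_1(N) \cup \iota\,\Gamma_1(N)$ with $\iota = \smatrix{-1&0\\0&1}$ and that $\bGamma_1(N)$ also acts trivially on the target (every element has $d \equiv 1 \bmod N$). Hence $\Theta_N|_{\bGamma_1(N)}$ is a homomorphism, and a direct calculation gives $\Theta_N(\iota\gamma\iota^{-1}) = \Theta_N(\iota) + \Theta_N(\gamma) - \Theta_N(\iota) = \Theta_N(\gamma)$ for $\gamma \in \Gamma_1(N)$. Since the antiholomorphic involution $\tau \mapsto -\overline{\tau}$ realizing complex conjugation on $X_1(N)_{/\C}$ induces the conjugation-by-$\iota$ action on $\pi_1(Y_1(N)) = \Gamma_1(N)$, the map on $H_1(X_1(N),\Z)$ is invariant under complex conjugation and therefore factors through the coinvariants $H_1(X_1(N),\Z)_+$, yielding $\Pi_N$.

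Finally, for $(\Z/N\Z)^\times$-equivariance, given $d \in (\Z/N\Z)^\times$ I would choose $\delta_d \in \bGamma_0(N)$ with bottom-right entry $d$. The diamond operator $\langle d \rangle$ on $H_1(X_1(N),\Z)$ is induced by conjugation by $\delta_d$ on $\Gamma_1(N)^{\mr{ab}}$. Applying the cocycle identity twice (and using $\delta_d \Theta_N(\delta_d^{-1}) = -\Theta_N(\delta_d)$ coming from $\Theta_N(1) = 0$) produces $\Theta_N(\delta_d\gamma\delta_d^{-1}) = \sigma_d \Theta_N(\gamma)$, which is exactly the equivariance. Uniqueness is immediate from the fact that the classes $\vgamma$ with $\gamma \in \Gamma_1(N)$ span $H_1(X_1(N),\Z)$, hence also its $+$-quotient. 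The main subtlety I anticipate is the identification of complex conjugation with conjugation by $\iota$ on $\pi_1$: this is classical but requires attention to orientation and to the distinction between $\Gamma_1(N)$ and $\bGamma_1(N)$, with the key point being that although $\iota \notin \SL_2(\Z)$, it lies in $\bGamma_1(N)$ and acts trivially on coefficients, which is what makes the cocycle computation collapse.
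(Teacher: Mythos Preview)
Your proposal is correct and follows essentially the same approach as the paper: restrict $\Theta_N$ to $\Gamma_1(N)$ to get a homomorphism (trivial action), land in $K_2(\Z[\mu_N])/Z_N$ via Lemma \ref{finerintegrality}, factor through $H_1(X_1(N),\Z)$ by parabolicity, and through the $+$-quotient using the $\bGamma_1(N)/\Gamma_1(N)$-invariance. Your treatment is in fact more explicit than the paper's on two points---the cocycle computation yielding $(\Z/N\Z)^{\times}$-equivariance and the identification of complex conjugation with conjugation by $\iota$---both of which the paper asserts without writing out.
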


\begin{proof}
    	Since the action of $\tilde{\Gamma}_0(N)$ on $K_2(\Q(\mu_N))$ is trivial on $\tilde{\Gamma}_1(N)$,
    	the restriction of $\Theta_N$ to $\Gamma_1(N)$ induces a $(\Z/N\Z)^{\times}$-equivariant homomorphism 
    	\begin{equation} \label{first}
		H_1(Y_1(N),\Z)  \xrightarrow{\sim} H_1(\Gamma_1(N), \Z) \to K_2(\Z[\mu_N,\tfrac{1}{N}])/Z_N,
	\end{equation}
   	where $d \in (\Z/N\Z)^{\times}$ acts  by diamond operators on the first term and by the Galois element $\sigma_d$
    	with $\sigma_d(\zeta_N) = \zeta_N^d$ on the last. 
    	This homomorphism actually takes values in the subgroup $K_2(\Z[\mu_N])/Z_N$ by Lemma \ref{finerintegrality}.
    
    	The composition in \eqref{first} factors through  $H_1(Y_1(N), \Z)  \rightarrow H_1(Y_1(N), \Z)_+$, since it
    	is invariant by the natural action $Q := \tilde{\Gamma}_1(N)/\Gamma_1(N)$ on the left-hand side.
    This $Q$ is a group of order $2$, and its nontrivial
    element acts on  $H_1(Y_1(N),\Z)$ by complex conjugation $z \mapsto -\bar{z}$.
	
	Finally, the composition in \eqref{first} also factors through $H_1(Y_1(N), \Z) \rightarrow H_1(X_1(N), \Z)$.
	That is, the cocycle $\Theta_N$ is a coboundary, hence trivial, on all parabolic subgroups of $\Gamma_1(N)$, 
	which are right stabilizers of nonzero elements of $\mb{P}^1(\Q)$.
	These parabolics
	are also left stabilizers of elements 
	of $\mb{P}^1(\Q)$ inside $\mb{H}^*$ and thereby
   	generate the kernel of $\Gamma_1(N)^{\mathrm{ab}} \rightarrow H_1(X_1(N), \Z)$. 
\end{proof}

Let $C_1^{\circ}(N) \subset C_1(N)$ denote the set of cusps not lying over $\infty \in \Gamma_0(N) \backslash \mb{P}^1(\Q)$.
Given $u, v \in \Z/N\Z$ with $(u,v) = (1)$, let 
$$
	[u:v] = \left\{ \frac{b}{d} \to \frac{a}{c} \right\} = \gamma \{ 0 \to \infty \},
$$ 
where $\smatrix{a&b \\ c&d} \in \SL_2(\Z)$ with $(u,v) = (c,d) \bmod N\Z^2$.  These Manin symbols
for $u, v \neq 0$ generate $H_1(X_1(N),C_1^{\circ}(N),\Z)$.  In fact, this relative homology group has a presentation on the 
Manin symbols with relations
\begin{eqnarray}\label{maninsymbol}
	[u:v] = -[-v:u] &\mr{and}&
	[u:v] = [u:u+v] + [u+v:v],
\end{eqnarray}
the latter for $u \neq -v$ (cf.~\cite[3.3.7]{fk} and \cite[\S 5.4]{sharifi-aws}).
It also has an action of diamond operators $\langle j \rangle$ for $j \in (\Z/N\Z)^{\times}$, given explicitly by
$$
	\langle j \rangle [u:v] = [ju:jv].
$$

Let us set $\Z' = \Z[\frac{1}{2}]$.  
In general, for an abelian group $M$ with an action of complex conjugation, 
let us use $m_+$ to denote the image
of $m \in M$ in $(M \otimes_{\Z} \Z')_+$.
The presentation of $H_1(X_1(N),C_1^{\circ}(N),\Z')_+$ as a $\Z'$-module on the generators $[u:v]_+$
has the additional relations $[u:v]_+ = [-u:v]_+$ for all $u, v \neq 0$.  

The following construction is due to Busuioc \cite{busuioc} and the first author \cite[Proposition 5.7]{sharifi}.  We give a proof that also gives some idea of where it becomes necessary to invert $2$.
 
\begin{proposition}[Busuioc, Sharifi] \label{comparison}
	There is a $(\Z/N\Z)^{\times}$-equivariant homomorphism
	$$
		\Pi^{\circ}_N \colon H_1(X_1(N),C_1^{\circ}(N),\Z')_+ \to (K_2(\Z[\mu_N,\tfrac{1}{N}]) \otimes_{\Z} \Z')_+, \quad
		[c:d]_+ \mapsto \{1-\zeta_N^c,1-\zeta_N^d \}_+.
	$$
\end{proposition}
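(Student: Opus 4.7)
The plan is to define $\Pi^{\circ}_N$ first on the free abelian group spanned by symbols $[u:v]$ with $u,v \in (\Z/N\Z)-\{0\}$ by the stated formula, and then to check that the Manin relations \eqref{maninsymbol}, together with the additional $+$-relation $[u:v]_+ = [-u:v]_+$, map to zero in the target. For $c$ nonzero modulo $N$, the element $1-\zeta_N^c$ is a unit at all primes not lying over $N$, so $\{1-\zeta_N^c,1-\zeta_N^d\}$ does define an element of $K_2(\Z[\mu_N,\tfrac{1}{N}])$.

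The heart of the argument is the three-term relation $[u:v] = [u:u+v] + [u+v:v]$, which I would deduce from the Steinberg relation $\{x,1-x\} = 0$ applied to
$$ x = \frac{1-\zeta_N^v}{1-\zeta_N^{u+v}}, \qquad 1-x = \frac{\zeta_N^v(1-\zeta_N^u)}{1-\zeta_N^{u+v}}, $$
whose sum to $1$ is the identity $(1-\zeta_N^v)+\zeta_N^v(1-\zeta_N^u) = 1-\zeta_N^{u+v}$. Expanding $\{x,1-x\}$ by bimultiplicativity and using antisymmetry $\{a,b\} = -\{b,a\}$, the Steinberg identity $\{1-\zeta_N^v,\zeta_N^v\} = 0$, and $\{a,a\} = \{a,-1\}$, a direct computation yields
$$ \{1-\zeta_N^u,1-\zeta_N^v\} = \{1-\zeta_N^{u+v},1-\zeta_N^v\} + \{1-\zeta_N^u,1-\zeta_N^{u+v}\} + \epsilon_{u,v}, $$
with error $\epsilon_{u,v} = \{1-\zeta_N^{u+v},-\zeta_N^{-v}\}$ that is not zero in $K_2$ in general. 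Applying complex conjugation and using $1-\zeta_N^{-a} = -\zeta_N^{-a}(1-\zeta_N^a)$, the sum $\epsilon_{u,v} + \overline{\epsilon_{u,v}}$ collapses to $\{-\zeta_N^{-(u+v)},-\zeta_N^v\}$, which by bilinearity is a $\Z$-linear combination of the $2$-torsion symbols $\{-1,-1\}$, $\{\zeta_N,-1\}$, $\{-1,\zeta_N\}$, and $\{\zeta_N,\zeta_N\} = \{\zeta_N,-1\}$. Hence $\epsilon_{u,v}$ vanishes in the $+$-quotient after tensoring with $\Z' = \Z[\tfrac{1}{2}]$.

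The two-term relation $[u:v] + [-v:u] = 0$ follows by antisymmetry together with the identity $(1-\zeta_N^v)/(1-\zeta_N^{-v}) = -\zeta_N^v$, reducing it to the vanishing of $\{1-\zeta_N^u,-\zeta_N^v\}$ in $(K_2 \otimes \Z')_+$, which is the same $2$-torsion analysis. The additional $+$-relation $[u:v]_+ = [-u:v]_+$ translates under the map, via complex conjugation and the same rewriting of $1-\zeta_N^{-v}$, to an assertion of the same form. Finally, $(\Z/N\Z)^\times$-equivariance is immediate from the match $\langle j\rangle[u:v] = [ju:jv]$ with the Galois action $\sigma_j$ on cyclotomic units. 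I expect the main obstacle to be the three-term relation: one must spot the right input to Steinberg and then locate the residual error precisely in the $2$-torsion of $K_2$, which also pinpoints why inverting $2$ and passing to the $+$-part are both necessary.
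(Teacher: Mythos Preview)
Your argument is correct and rests on the same core idea as the paper's proof: apply the Steinberg relation to a ratio of cyclotomic units and show the residual terms of the form $\{1-\zeta_N^a,\pm\zeta_N^b\}$ die in $(K_2\otimes\Z')_+$. The organization differs slightly. The paper first isolates the lemma $\{1-x,y\}_+=0$ for $x,y\in\mu_N-\{1\}$ (via the same averaging with complex conjugation you use), deduces from it the sign-invariance $\{1-\zeta_N^a,1-\zeta_N^b\}_+=\{1-\zeta_N^{\pm a},1-\zeta_N^{\pm b}\}_+$, and then applies Steinberg to $\eta=\frac{1-x}{1-xy}$, $1-\eta=\frac{1-y^{-1}}{1-x^{-1}y^{-1}}$, which yields the \emph{exact} identity $\{1-x,1-x^{-1}y^{-1}\}+\{1-xy,1-y^{-1}\}=\{1-x,1-y^{-1}\}$ in $K_2$ with no error term; the sign-invariance then flips the exponents. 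Your choice of Steinberg input is essentially the same ratio, but you keep the $\zeta_N^v$ factor in $1-x$ rather than absorbing it via the rewriting $1-\eta=\frac{1-y^{-1}}{1-x^{-1}y^{-1}}$, which is why you pick up the error $\epsilon_{u,v}$ and handle it directly. Both routes are equivalent; the paper's has the mild advantage of cleanly separating the $K_2$-identity from the $2$-torsion correction.
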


\begin{proof}
 	For $\alpha, \beta \in \Z[\mu_N,\frac{1}{N}]^{\times}$, we denote by $\{\alpha, \beta\}_+$
	the projection of the Steinberg symbol to  $(K_2(\Z[\mu_N,\frac{1}{N}]) \otimes_{\Z} \Z')_+$.
	 Since we kill $2$-torsion, we have
	\begin{equation} \label{v1}  \{-1, \alpha\}_+ = \{\zeta_N, \zeta_N\}_+ = 0. \end{equation} 
 Now take $x,y \in \mu_N-\{1\}$. Then  
		$$
		 \{ 1-x, y \}_+ = \frac{1}{2}(\{ 1-x, y \}_+ + \{ 1-x^{-1}, y^{-1} \}_+)  = \frac{1}{2}\{ -x, y \}_+   \stackrel{\eqref{v1}}{=} 0,
	$$
	where the first equality is from invariance under complex conjugation and the second uses bilinearity. Therefore,
	$\{1-\zeta_N^a, 1-\zeta_N^b\}_+$ is invariant under changing the sign of either $a$ or $b$,
	whence the first relation of \eqref{maninsymbol}.	
	The second relation of \eqref{maninsymbol} follows from this invariance and 
 	$$ 
		\{1-x, 1-x^{-1} y^{-1}\} + \{1-xy, 1-y^{-1}\} = \{1-x, 1-y^{-1}\}
	$$
	for $xy \neq 1$, this equality holding  without inverting $2$ and taking quotients trivial under complex conjugation.
 	In turn, this follows from the relation $\{\eta, 1-\eta\}=0$
 	with 
	\begin{eqnarray*}
		\eta = \frac{1-x}{1-xy}&\mr{and}&  1-\eta 
		 = \frac{1-y^{-1}}{1-x^{-1}y^{-1}}.
	\end{eqnarray*}
\end{proof}

The restriction of $\Pi^{\circ}_N$ to $H_1(X_1(N),\Z)_+$ agrees with the map induced by our cocycle $\Theta_N$.
The first statement in the following is due to Fukaya and Kato \cite[Theorem 5.3.3]{fk} for $p \mid N$, after taking $\zp$-coefficients,
and in general, a direct proof can be found in \cite[Lemma 5.4.1]{sharifi-aws}.  For us, the first statement follows from the second, as 
$\Pi_N$ takes values in $K_2(\Z[\mu_N])/Z_N$ by Proposition \ref{maphomology} (following Lemma \ref{finerintegrality}, which
is related to the aforementioned results).

\begin{proposition}
	The restriction of $\Pi^{\circ}_N$ to $H_1(X_1(N),\Z)_+$ takes values in $(K_2(\Z[\mu_N]) \otimes_{\Z} \Z')_+$ 
	and agrees with the composition of $\Pi_N$ with the quotient map from $K_2(\Z[\mu_N,\frac{1}{N}])/Z_N$.
\end{proposition}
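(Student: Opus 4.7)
The plan is to deduce the first (integrality) assertion from the second (agreement). Indeed, $\Pi_N$ takes values in $K_2(\Z[\mu_N])/Z_N$ by Proposition \ref{maphomology} (which invokes Lemma \ref{finerintegrality}), so its composition with the quotient map lands in the image of $(K_2(\Z[\mu_N]) \otimes_\Z \Z')_+$ inside $(K_2(\Z[\mu_N,\tfrac1N]) \otimes_\Z \Z')_+$. Once agreement is shown, the restriction of $\Pi_N^\circ$ to $H_1(X_1(N),\Z)_+$ inherits this property.

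For agreement, the strategy is to check equality of the two homomorphisms on a generating set. The composition $\Gamma_1(N)^{\mathrm{ab}} \twoheadrightarrow H_1(Y_1(N),\Z) \twoheadrightarrow H_1(X_1(N),\Z)$ is surjective, so it suffices to show that $\Pi_N(\vec\gamma_+) = \Pi_N^\circ(\vec\gamma_+)$ for every $\gamma \in \Gamma_1(N)$, where $\vec\gamma = \{0 \to \gamma \cdot 0\}$. Write $\gamma = \abcdmat$ and fix an $N$-connecting sequence $(b_i,d_i)_{i=0}^k$ for $\gamma$ in the sense of \S\ref{specialization}, available by Lemma \ref{fiddling} (note that $d_0 = 1$ and $d_k = d \equiv 1 \bmod N$ are automatically nonzero mod $N$ since $\gamma \in \Gamma_1(N)$). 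On the one hand, Proposition \ref{explicitspecialized} gives
$$\Pi_N(\vec\gamma_+) \;=\; \Theta_{N,\gamma,+} \;=\; \sum_{i=1}^k \bigl\{1-\zeta_N^{d_i},\, 1-\zeta_N^{-d_{i-1}}\bigr\}_+.$$

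On the other hand, telescope $\vec\gamma$ through the intermediate cusps of the connecting sequence:
$$\vec\gamma \;=\; \bigl\{0 \to \tfrac{b}{d}\bigr\} \;=\; \sum_{i=1}^k \bigl\{\tfrac{b_{i-1}}{d_{i-1}} \to \tfrac{b_i}{d_i}\bigr\}$$
in $H_1(X_1(N),C_1^\circ(N),\Z)$. The point of matrix bookkeeping is that the matrix
$M_i = \left(\begin{smallmatrix} b_i & -b_{i-1} \\ d_i & -d_{i-1} \end{smallmatrix}\right)$
lies in $\SL_2(\Z)$ since $\det M_i = b_{i-1}d_i - b_id_{i-1} = 1$ by the defining property of a connecting sequence, and it satisfies $M_i \cdot 0 = b_{i-1}/d_{i-1}$ and $M_i \cdot \infty = b_i/d_i$. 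Hence $\{b_{i-1}/d_{i-1} \to b_i/d_i\} = M_i\{0 \to \infty\}$ is precisely the Manin symbol $[d_i : -d_{i-1}]$ (with both entries nonzero mod $N$ by our choice of connecting sequence). Applying $\Pi^\circ_N$ via Proposition \ref{comparison} yields
$$\Pi_N^\circ(\vec\gamma_+) \;=\; \sum_{i=1}^k \bigl\{1-\zeta_N^{d_i},\,1-\zeta_N^{-d_{i-1}}\bigr\}_+,$$
which coincides term-by-term with $\Pi_N(\vec\gamma_+)$.

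The main, albeit modest, obstacle is the telescoping identification of $\vec\gamma$ with a $\Z$-linear combination of Manin symbols: one must choose the right sign conventions so that the matrix $M_i$ has determinant $+1$ and sends $\{0 \to \infty\}$ to the correct geodesic, without accidentally flipping orientation. All other steps are direct citations of Propositions \ref{comparison}, \ref{maphomology}, \ref{explicitspecialized} and Lemma \ref{fiddling}; no further computation in $K_2$ is required, since the $+$-invariance and inversion of $2$ handled the sign issues already at the level of Proposition \ref{comparison}.
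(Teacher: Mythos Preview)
Your proof is correct and follows essentially the same approach as the paper: both deduce the integrality statement from the agreement statement via Proposition~\ref{maphomology}, and both verify agreement by writing $\vec\gamma$ as a telescoping sum of Manin symbols along an $N$-connecting sequence and comparing with Proposition~\ref{explicitspecialized}. Your handling of the sign bookkeeping via the matrix $M_i$ is in fact slightly more careful than the paper's, which passes through the intermediate expression $[d_i:d_{i-1}]_+$ before invoking the sign relation $[u:v]_+ = [-u:v]_+$.
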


\begin{proof}
	We may write any element of $H_1(X_1(N),\Z)$ as $\vgamma$ for some 
	$\gamma \in \Gamma_1(N)$.  Let $(b_i,d_i)_{i=0}^k$ be an $N$-connecting sequence for this $\gamma$, so in particular
	$(b_0,d_0) = (0,1)$ and $(b_k,d_k) = (b,d)$.  Then
	$$
		\vgamma = \left\{ 0 \to \frac{b}{d} \right\} = \sum_{i=1}^k \left\{ \frac{b_{i-1}}{d_{i-1}} \to \frac{b_i}{d_i} \right \}
		= \sum_{i=1}^k [d_i:d_{i-1}]_+ = \sum_{i=1}^k [d_i:-d_{i-1}]_+
	$$
	is sent by $\Pi_N^{\circ}$ to
	$\sum_{i=1}^k \{1-\zeta_N^{d_i}, 1-\zeta_N^{-d_{i-1}} \}_+$.
	By Proposition \ref{explicitspecialized}, this sum is the image of $\Theta_{N,\gamma} = \Pi_N(\vgamma)$. 
\end{proof}

\subsubsection{Eisenstein property} \label{Eisprop}

For a prime $\ell$, we define the Hecke operator $T_{\ell}$ (denoted $U_{\ell}$ if $\ell \mid N$)  on $H_1(X_1(N),C_1(N),\Z)$ to be that arising from a right coset decomposition of $\Gamma_1(N) \smatrix{ 1 \\ & \ell } \Gamma_1(N)$. Its adjoint, or dual, $T_{\ell}^*$ is similarly the right Hecke operator for $\smatrix{\ell \\ & 1}$ (denoted $U_{\ell}^*$ if $\ell \mid N$).  

\begin{remark}
The operators $T_{\ell}$ on relative homology are adjoint to the corresponding right coset operators on compactly supported cohomology $H^1_c(Y_1(N),\Z)$, which project to operators that agree with the (left coset) operators $T_{\ell}$ on $H^1(\Gamma_1(N),\Z)$ previously defined by Remark \ref{leftright}.  

Note that $H_1(X_1(N),C_1(N),\Z)$ is the left $\Gamma_1(N)$-coinvariant group of the group of degree zero divisors in $\Z[\mb{P}^1(\Q)]$ under the standard left $(M_2(\Z) \cap \GL_2(\Q))$-action. Thus, if we choose a set of right coset representatives for the double coset of $\smatrix{1 \\ & \ell}$ and define $T_{\ell}$ on $\Z[\mb{P}^1(\Q)]$ by the sum of their actions, then this induces the $T_{\ell}$-action on relative homology.
\end{remark}

The adjoint operators preserve the subgroup $H_1(X_1(N),C_1^{\circ}(N),\Z)$, but the operators $U_{\ell}$ for $\ell \mid N$ do not.
Let us consider the adjoint Hecke algebra 
\begin{equation} \label{adjhecke}
	\mb{T}^*_N \subset \End_{\Z}(H_1(X_1(N),C_1^{\circ}(N),\Z)),
\end{equation}
which also acts on $H_1(X_1(N),\Z)$.  Inside this algebra, we have the prime-to-level and full Eisenstein ideals
\begin{eqnarray*}
	I'_N = (T^*_{\ell}- 1 - \ell\langle \ell \rangle^* \mid \ell \nmid N \text{ prime})
&
\mr{and} 
&
	I_N = I'_N + (U^*_{\ell}-1 \mid \ell \mid N \text{ prime}).
\end{eqnarray*}
Since $\langle \ell \rangle^* = \langle \ell \rangle^{-1}$ and 
$T_{\ell}^* = \langle \ell \rangle^{-1} T_{\ell}$ for $\ell \nmid N$ in $\mb{T}^*_N$, note that
\begin{equation} \label{compareEis}
	T^*_{\ell} - 1 - \ell\langle \ell \rangle^* = \langle \ell \rangle^{-1} (T_{\ell} - \ell - \langle \ell \rangle).
\end{equation}

The first author has frequently floated the following conjecture that $\Pi_N$ is \emph{Eisenstein}, so factors through the quotient of homology by the action of $I_N$ (or equivalently, that $\Pi_N(Tx) = 0$ for all $T \in I_N$ and $x \in H_1(X_1(N),\Z')_+$) and, moreover, induces an isomorphism on the quotient by $I_N$.

\begin{conjecture}[Sharifi] \label{eisconj} \	\begin{enumerate}
		\item[a.] The map $\Pi_N$ factors through a map
		$$
			\varpi_N \colon H_1(X_1(N),\Z')_+ \otimes_{\mb{T}^*_N} \mb{T}^*_N/I_N \to (K_2(\Z[\mu_N]) \otimes_{\Z} \Z')_+.
		$$
		\item[b.] The map $\varpi_N$ is an isomorphism.
	\end{enumerate}
\end{conjecture}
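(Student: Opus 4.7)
The plan divides into proving factorization (a) and isomorphism (b); the two parts have very different character. For (a), the portion involving prime-to-$N$ Hecke operators is in direct reach of the cocycle-level results: Theorem \ref{ThetaNEis} gives $T_{\ell}\Theta_N = (\ell+\sigma_\ell)\Theta_N$ in $H^1(\tilde{\Gamma}_0(N), K_2(\Q(\mu_N))/Z_N)$ for odd primes $\ell \nmid N$, together with $2(T_2-2-\sigma_2)\Theta_N = 0$ when $N$ is odd. Since $Z_N$ has order at most $2$ and is killed upon tensoring with $\Z' = \Z[\tfrac{1}{2}]$, the class of $\Theta_N$ is annihilated by $T_\ell - \ell - \sigma_\ell$ for every $\ell \nmid N$ after this base change. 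Transporting to the homology side via Proposition \ref{maphomology}, converting between left- and right-coset Hecke operators using Remark \ref{leftright}, and invoking \eqref{compareEis}, one sees that $T_\ell^* - 1 - \ell\langle\ell\rangle^*$ annihilates the image of $\Pi_N$. This establishes factorization through the prime-to-$N$ Eisenstein ideal $I'_N$.

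For the remaining operators $U_\ell^*$ with $\ell \mid N$, the direct strategy breaks down because the matrix $\smatrix{\ell \\ & 1}$ does not stabilize the specialization point $s = (1,\zeta_N)$ in a manner permitting naive pullback: pullback sends $\zeta_N$ to $\zeta_N^\ell$, which has order $N/\ell$. My proposal is to work systematically with the family of specializations $\Theta_{N\ell^k}$ and use distribution relations between them, analogous to those satisfied by Siegel units (compare the construction of $\Theta_N$ from ${}_n\Theta_N$ sketched in \S \ref{our_approach}). One should verify that the trace from $\Theta_{N\ell}$ down to $\Theta_N$ is compatible with $U_\ell^*$ in exactly the manner required, so that invariance under $U_\ell^*$ follows formally from a norm compatibility in $K_2$ of the tower of cyclotomic fields. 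The main technical hurdle will be controlling residues carefully when $s$ lies on divisors moved by $\smatrix{\ell \\ & 1}$, since then extra components appear; this requires a refinement of the integrality analysis of Lemma \ref{finerintegrality} near primes above $\ell$.

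The hardest part will be (b). My approach would be to decompose both sides as $\Z'[(\Z/N\Z)^\times]$-modules by character and compare them component-by-component. On the homology side, the Manin symbol presentation of Proposition \ref{comparison} together with the Eisenstein quotient yields a reasonably explicit description in terms of cyclotomic data. On the $K$-theory side, $(K_2(\Z[\mu_N])\otimes \Z')_+$ decomposes via Galois cohomology, and away from $2$ its structure is governed by results surrounding the Birch--Tate conjecture and the Iwasawa main conjecture for totally real fields. For $p$-parts at primes $p \mid N$, Fukaya and Kato prove a version of this isomorphism in \cite{fk} using Beilinson--Kato elements and $p$-adic $L$-functions; extending their argument to primes away from $N$ is the key missing ingredient. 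A natural attempt is to construct an inverse to $\varpi_N$ by means of a Soul\'e-type reciprocity or regulator map that pairs naturally with the cocycle $\Theta$. However, proving surjectivity in every relevant character component, equivalently that cyclotomic Steinberg symbols generate enough of $K_2(\Z[\mu_N])^+$, appears to require genuinely new arithmetic input beyond the motivic framework developed here, and this is where I expect the main obstacle to lie.
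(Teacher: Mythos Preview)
The statement you are attempting to prove is labeled in the paper as a \emph{Conjecture}, not a theorem; the paper does not provide a proof of it. What the paper actually establishes is Theorem~\ref{varpi}: $\Pi_N$ factors through $\mb{T}^*_N/I'_N$, i.e., only through the \emph{prime-to-level} Eisenstein ideal. Your treatment of the prime-to-$N$ operators in part~(a) matches this argument essentially exactly, including the use of Theorem~\ref{ThetaNEis}, the passage to $\Z'$-coefficients to kill $Z_N$ and the $2$-torsion ambiguity at $\ell=2$, and the translation via \eqref{compareEis} and Remark~\ref{leftright}.

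The remaining portions of your proposal are not comparable to anything in the paper because the paper proves nothing further. For the operators $U_\ell^*$ with $\ell \mid N$, the paper explicitly notes (just before Theorem~\ref{varpi}) that the only known result is the Fukaya--Kato theorem giving factorization after tensoring with $\zp$ for $p \mid N$; your tower-and-distribution-relation idea is a reasonable heuristic but is not a proof, and you correctly flag the residue control as a genuine obstacle. For part~(b), the paper offers no argument at all; your sketch via character decomposition and Iwasawa-theoretic input is again a plausible outline, but as you yourself say, surjectivity would require arithmetic input beyond what is developed here. In short: the part of your proposal that constitutes an actual proof coincides with what the paper proves (Theorem~\ref{varpi}), and the rest is a research program toward an open conjecture rather than a proof to be compared.
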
 

Part a of Conjecture \ref{eisconj} is a stronger form of an earlier conjecture \cite[Conjecture 5.8]{sharifi} that the tensor product of $\Pi_N$ with the identity on $\zp$ for a prime $p \mid N$ is Eisenstein.  The earlier conjecture was proven by Fukaya and Kato in \cite[Theorem 5.3.5]{fk}.\footnote{In fact, \cite{sharifi}, the first author constructed a conjectural inverse to $\varpi_N \otimes \mr{id}_{\zp}$ on most primitive eigenspaces in the case that $p \nmid \varphi(N)$, and Fukaya and Kato proved an important result in its direction in \cite{fk}.} In fact, they showed the 
following stronger result.

\begin{theorem}[Fukaya-Kato]
	For $p \mid N$, the map $\Pi_N^{\circ} \otimes_{\Z} \id_{\zp}$ factors through a map
	$$
		\varpi^{\circ}_N \colon H_1(X_1(N),C_1^{\circ}(N),\zp)_+ \otimes_{\mb{T}^*_N} \mb{T}^*_N/I_N
		\to (K_2(\Z[\mu_N,\tfrac{1}{N}]) \otimes_{\Z} \zp)_+.
	$$
\end{theorem}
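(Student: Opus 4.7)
The plan is to follow the strategy outlined in the introduction, attributed to Fukaya and Kato: I would exhibit $\Pi_N^{\circ} \otimes \id_{\zp}$ as the composition of two maps, a Hecke-equivariant map $z_N^{\zp}$ into the $K_2$-group of the open modular curve $Y_1(N)$, followed by a specialization-at-$\infty$ map which turns Hecke correspondences on the source into Eisenstein scalars on the target.

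First, I would invoke (or construct, along the lines of Section \ref{modcocyc}) the $\zp$-adic realization of $z_N$, sending the Manin symbol $[u:v]$ to the Steinberg symbol $\{g_{u/N}, g_{v/N}\}$ of Siegel units on $Y_1(N)$. The essential input is that this realization is equivariant for the adjoint Hecke operators $T_\ell^*$ ($\ell \nmid N$) and $U_\ell^*$ ($\ell \mid N$). For $\ell \nmid N$, this follows from Theorem \ref{zeta_map} of the present paper, together with the translation between left and right cocycles recorded in Remark \ref{leftright}. For $\ell \mid N$, however, it is deeper and is established in \cite{fk} via a $p$-adic regulator computation which identifies $z_N^{\zp}$ with a specific class constructed from Beilinson-Kato zeta elements, whose norm compatibilities then supply the $U_\ell^*$-equivariance.

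Next, I would construct the specialization morphism $\sigma_\infty \colon (K_2(Y_1(N)) \otimes \zp)_+ \to (K_2(\Z[\mu_N,\tfrac{1}{N}]) \otimes \zp)_+$ via the tame symbol at the cusp $\infty$. The classical $q$-expansion of a Siegel unit has the shape $g_{u/N}(q) = (\text{unit}) \cdot (1-\zeta_N^u) \cdots$, so that $\sigma_\infty$ sends $\{g_{u/N}, g_{v/N}\}$ to $\{1-\zeta_N^u, 1-\zeta_N^v\}$ modulo $Z_N$, thereby identifying $\sigma_\infty \circ z_N^{\zp}$ with $\Pi_N^{\circ} \otimes \id_{\zp}$ on Manin symbols through Proposition \ref{comparison}. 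The remaining ingredient is the Eisenstein property of $\sigma_\infty$ itself: a local analysis at the cusp, using the Tate curve model and the decomposition of Hecke correspondences in terms of cyclic $\ell$-isogenies of the Tate curve, shows that $\sigma_\infty$ intertwines $T_\ell^*$ with multiplication by $1 + \ell \langle \ell \rangle^{-1}$ for $\ell \nmid N$, and $U_\ell^*$ with multiplication by $1$ for $\ell \mid N$; the latter reduction rests on the fact that only the canonical cyclic subgroup of the Tate curve remains admissible for the $\Gamma_1(N)$-level structure at $\infty$. Composing Hecke equivariance of $z_N^{\zp}$ on the source with this Eisenstein behavior of $\sigma_\infty$ on the target forces $\sigma_\infty \circ z_N^{\zp}$ to annihilate the full Eisenstein ideal $I_N$, yielding the desired factorization.

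The hard part is the Hecke equivariance of $z_N^{\zp}$ for $U_\ell^*$ at primes $\ell \mid N$. The motivic techniques of the present paper handle the $\ell \nmid N$ case directly and cleanly, but for bad primes the only known route passes through the regulator identification of $z_N^{\zp}$ with classes assembled from Beilinson-Kato elements together with their norm relations, which is the technical heart of \cite{fk} and would require substantial additional input to reproduce from the framework developed here.
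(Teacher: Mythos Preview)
The paper does not give its own proof of this theorem: it is stated as a result of Fukaya and Kato with a citation to \cite{fk}, and the paper merely summarizes their strategy in the paragraph immediately following the statement (and in the footnote in \S\ref{background}). Your proposal is essentially a reconstruction of that summary: factor $\Pi_N^{\circ}\otimes\id_{\zp}$ as the Hecke-equivariant zeta map $z_N$ into $K_2$ of the modular curve followed by specialization at a cusp, and then use that the specialization is Eisenstein. So there is nothing in the paper to compare against beyond this outline, and your outline matches it.

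One point of precision where your sketch diverges from the paper's description: you claim that the specialization map $\sigma_\infty$ intertwines $U_\ell^*$ with multiplication by $1$ for $\ell\mid N$ as a general property of the map. The paper is more careful here, stating that Fukaya--Kato prove the specialization is Eisenstein for the prime-to-level operators in general, but for $U_\ell^*$ with $\ell\mid N$ only \emph{when applied to the Beilinson--Kato elements in question}, not as an identity valid on all of $K_2(Y_1(N))$. This does not affect the final conclusion, since one only needs $(\sigma_\infty\circ z_N^{\zp})(U_\ell^*-1)=0$ on the image of relative homology, but your Tate-curve argument as stated would not establish the stronger claim you make. Also note that the paper's description places the specialization at the cusp $0$ rather than $\infty$; the discrepancy with \S\ref{background} is presumably the Atkin--Lehner convention issue flagged in the footnotes.

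You correctly identify the genuine difficulty: the $U_\ell^*$-equivariance of $z_N^{\zp}$ for $\ell\mid N$ is not accessible by the motivic methods of this paper and requires the regulator computation and norm relations from \cite{fk}.
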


Though we expect that $\Pi^{\circ}_N$ is Eisenstein in general, the induced map $\varpi^{\circ}_N$ is not always an isomorphism.  A special case of this conjecture is considered by Lecouturier in \cite[Conjecture 4.32]{lecouturier} (see also Conjecture 4.33 therein, 
which follows from the latter conjecture).

The proof of the result of Fukaya and Kato arises through a description of $\Pi_N$ as the composition of two maps: first, a Hecke-equivariant map $z_N$ that takes Manin symbols to cup products of Siegel units (i.e., Beilinson-Kato elements), and second, a specialization map induced by pullback at the cusp $0$.  The proof of the Hecke equivariance of $z_N$ goes through a string
of Iwasawa-theoretic and Hida-theoretic constructions and the computation of a $p$-adic regulator.  Their result then follows from the fact that the specialization at zero factors through $I'_N$ and is also trivial on the operators $U^*_{\ell}-1$ applied to Beilinson-Kato elements. 

Though we do not use it to study $\Pi_N$, we give a construction of a motivic version of the map $z_N$ and prove its prime-to-level Hecke equivariance in Section \ref{universal}.  Instead, as a consequence of what we have already done, we obtain a result over $\Z'$ for the prime-to-level Eisenstein ideal without any use of Beilinson-Kato elements.\footnote{The unpublished manuscript \cite{stevens} of Stevens contains another approach through which it may be possible to obtain this result.}  
In fact, by Theorem \ref{ThetaNEis}, we have the following.

\begin{theorem} \label{varpi}
	The map $\Pi_N$ factors through a map
	$$
		\varpi_N \colon H_1(X_1(N),\Z)_+ \otimes_{\mb{T}^*_N} \mb{T}^*_N/I'_N \to K_2(\Z[\mu_N])/Z_N.
	$$
\end{theorem}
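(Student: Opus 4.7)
The plan is to deduce the Eisenstein property of $\Pi_N$ directly from the cohomological Hecke identity for $\Theta_N$ given by Theorem \ref{ThetaNEis}. To begin, using the fact that diamond operators are units in $\mb{T}^*_N$ and the algebraic identity \eqref{compareEis}, one reduces the claim to showing that for every prime $\ell \nmid N$ and every $x \in H_1(X_1(N),\Z)_+$,
$$
	\Pi_N(T_\ell x) = (\ell + \sigma_\ell)\Pi_N(x)
$$
in $K_2(\Z[\mu_N])/Z_N$, where the $(\Z/N\Z)^{\times}$-equivariance of $\Pi_N$ from Proposition \ref{maphomology} has been used to replace the diamond operator $\langle \ell \rangle$ by $\sigma_\ell$.

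The next step is to dualize this identity into the language of cocycles. By Proposition \ref{maphomology}, $\Pi_N$ is induced by the homomorphism $\Gamma_1(N)^{\mr{ab}} \to K_2(\Z[\mu_N])/Z_N$ attached to the restriction of $\Theta_N$ to $\Gamma_1(N)$, on which the module action is trivial. Under this identification, Remark \ref{leftright} (in its trivial-coefficient case, where $T(g)$ and $T^R(g^*)$ coincide) matches the left Hecke operator $T_\ell$ on cohomology of Section \ref{actions} with the right Hecke operator denoted $T_\ell$ on $H_1(X_1(N),\Z)_+$ in Section \ref{Eisprop}. Consequently, the sought identity on $\Pi_N$ amounts to
$$
	T_\ell \Theta_N|_{\Gamma_1(N)} = (\ell + \sigma_\ell)\Theta_N|_{\Gamma_1(N)}
$$
as homomorphisms from $\Gamma_1(N)^{\mr{ab}}$ to $K_2(\Q(\mu_N))/Z_N$. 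For $\ell \nmid 2N$, Theorem \ref{ThetaNEis} asserts that $T_\ell \Theta_N - (\ell + \sigma_\ell)\Theta_N$ is a coboundary in $H^1(\bGamma_0(N), K_2(\Q(\mu_N))/Z_N)$; restricting such a coboundary to $\Gamma_1(N)$ — where $\bGamma_0(N)$ acts trivially on $K_2(\Q(\mu_N))/Z_N$ — produces the zero homomorphism, giving the required equality. The prime $\ell = 2$ (only relevant when $N$ is odd) is handled by the stronger pointwise statement in the footnote to Theorem \ref{ThetaNEis}, which asserts precisely that the restriction of $(T_2 - 2 - \sigma_2)\Theta_N$ to $\Gamma_1(N)$ is identically zero.

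The main obstacle in executing this plan is the verification that the three Hecke operators denoted $T_\ell$ — on $H^1(\bGamma_0(N),-)$, on $H^1(\Gamma_1(N),-)$, and on $H_1(X_1(N),\Z)_+$ — are compatible under restriction and duality in the required way. For $\ell \nmid N$, the representatives $g_j = \smatrix{\ell & j \\ & 1}$ for $0 \le j < \ell$ together with $g_\ell = \smatrix{1 \\ & \ell}$ give left coset representatives for the $\bGamma_0(N)$-double coset of $\smatrix{\ell \\ & 1}$; a bookkeeping computation, making use of the $(\Z/N\Z)^{\times}$-equivariance of $\Theta_N$, shows that the resulting $T_\ell \Theta_N$ restricts to a cocycle on $\Gamma_1(N)$ which agrees, up to coboundaries vanishing on $\Gamma_1(N)$, with the value obtained from a $\Gamma_1(N)$-coset decomposition, and whose dual interpretation via Remark \ref{leftright} yields the classical $T_\ell$ on $H_1$. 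Once these compatibilities are in place, the argument is straightforward.
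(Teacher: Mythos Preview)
Your proposal is correct and follows essentially the same route as the paper: reduce via \eqref{compareEis} and the $(\Z/N\Z)^{\times}$-equivariance of $\Pi_N$, invoke Theorem \ref{ThetaNEis} (and its footnote for $\ell=2$), and then verify that the Hecke operator $T_\ell$ on cocycles matches the operator $T_\ell$ on $H_1(X_1(N),\Z)_+$. The only difference is that where you appeal to Remark \ref{leftright} and a ``bookkeeping'' argument for this last compatibility, the paper carries it out explicitly: it chooses left coset representatives $g_j$ for $\Gamma_1(N)\smatrix{\ell\\&1}\Gamma_1(N)$ with lower-right entry congruent to $1$ modulo $N$, computes $(T_\ell\Theta_N)_\gamma = \sum_j \Theta_{N,\gamma_j} = \sum_j \Pi_N(\vec\gamma_j)$ directly, and then verifies $T_\ell\vec\gamma = \sum_j \vec\gamma_j$ by an elementary manipulation with the adjoints $h_j = \smatrix{\ell\\&\ell}g_j^{-1}$ acting on geodesics.
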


\begin{proof}
	From Theorem \ref{ThetaNEis}, we have that $T_{\ell}\Theta_N = (\ell+\sigma_{\ell})\Theta_N$ as homomorphisms 
	from $\Gamma_1(N)$ to $K_2(\Z[\mu_N])/Z_N$.  Since $\sigma_{\ell} \circ \Pi_N = \Pi_N \circ \langle \ell \rangle$
	and noting \eqref{compareEis}, it suffices by Proposition \ref{comparison} to check that 
	$$
		(T_{\ell}\Theta_N)_{\gamma} = \Pi_N(T_{\ell}\vgamma).
	$$
	
	For $g = \smatrix{\ell \\ & 1}$, we may choose left coset representatives 
	of $\Gamma_1(N)g\Gamma_1(N)$ as in \eqref{doublecoset}
	with bottom right entry $1$ modulo $N$ as follows: for $1 \le j < \ell$, set $g_j = \smatrix{\ell & j \\ & 1}$,
	and set  $g_{\ell} = \delta_{\ell} \smatrix{1 \\ & \ell}$ with $\delta_{\ell}$ as above.  These agree with the matrices in \eqref{Gjdef}
	aside from $g_{\ell}$.
	For the map $\Pi_N$ constructed in Proposition \ref{maphomology}, for $\gamma \in \Gamma_1(N)$,
	we then have
	$$
		(T_{\ell}\Theta_N)_{\gamma} = \sum_{j=0}^{\ell} g_{\sigma(j)}^* \Theta_{N,\gamma_j} = \sum_{j=0}^{\ell} \Theta_{N,\gamma_j} 
		= \sum_{j=0}^{\ell} \Pi_N(\vgamma_j),
	$$
	where $\gamma g_j = g_{\sigma(j)} \gamma_j$ for $\sigma$
	a permutation of $\{0,\ldots,\ell\}$ and $\gamma_j \in \Gamma_1(N)$.
	
	On the other hand, let $h_j = \smatrix{\ell  \\ & \ell}g_j^{-1}$ be the adjoint of $g_j$ so that
	$h_j \gamma^{-1} = \gamma_j^{-1} h_{\sigma(j)}$.
	Since $\{ \alpha \to \beta \} + \{ \beta \to \epsilon \} = \{ \alpha \to \epsilon \}$ for $\alpha, \beta,\epsilon
	\in \mb{H}^*$ and $\{ 0 \to \mu^{-1} \cdot 0 \} = -\vec{\mu}$ for $\mu \in \Gamma_1(N)$, we have
	$$
		T_{\ell}\vgamma = -T_{\ell} \{0 \to \gamma^{-1} 0\} 
		= -\sum_{j=0}^{\ell} \{ h_j0 \to \gamma_j^{-1} h_{\sigma(j)} 0 \}
		= -\sum_{j=0}^{\ell} \{ h_{\sigma(j)} 0 \to \gamma_j^{-1} h_{\sigma(j)} 0 \}
		= \sum_{j=0}^{\ell} \vgamma_j,
	$$
	hence the result.
\end{proof}

\section{The $\gm^2$-cocycle via toric geometry} \label{K2boundary} 
This section exists to provide a different viewpoint on the above results
and minor improvements upon some of them. We will describe a map
$$ \mbox{chain complex of $S^1$} \longrightarrow [\varK_2 \rightarrow \varK_1]$$
in the derived category of abelian groups with $\GL_2(\Z)$-action. 
This map can be used to recover the previous cocycle, and even lift it to $\varK_2$.
Moreover it allows us to outline the connection of our results with 
  equivariant motivic cohomology, as discussed in \S \ref{emc}. 
  The key point in the argument is  to utilize 
  the behavior of $K_2$ classes along the boundary of toric compactifications. 

  The geometric construction  that we give is  closely related to joint work in progress of the second-named
author with Bergeron, Charollois and Garcia (although that work does not deal with $K$-theory, rather with differential forms).  
However, the viewpoint of this section is also close to that taken by numerous other authors on related questions, 
among which we mention Nori, Sczech, Stevens, Solomon, and Garoufalidis--Pommersheim \cite{Nori, Scz, So, stevens, GP}.
Particularly relevant is a very recent paper of Lim and Park \cite{lim-park}, which completes the work of Stevens and lifts a ``Shintani cocycle'' to the ``Stevens cocycle'' along a $\mr{dlog}$ map.
 As with several of the named references,
\cite{lim-park} works with cocycles for $\GL_n(\Q)$ valued in a module of distributions; as such, it does not directly relate
to the type of toric geometry that we emphasize here but nonetheless seems very closely related to 
  an infinite level version of our construction.

At certain points
one could proceed by symbols and relations, but we have tried to avoid this.
Our point of view would extend without complication to higher dimensions, for instance.

 \subsection{Residues on $K_2$ of the function field of a torus} \label{restorus}

\subsubsection{Some toric geometry}  \label{toricsetup}
  
It will be helpful to proceed a bit more canonically. Let $T = \gm^2$,
let $X = X_*(T)$ be the cocharacter group of $T$, and set $X_{\R}  = X \otimes_{\Z} \R \cong \R^2$. 
Fix an orientation on $X_{\R}$, which in particular allows us to make the identification $\bigwedge^2 X \cong \Z$;
for $x,y \in X$, we accordingly write $x \wedge y \in \Z$. 
Let $X^* = X^*(T)$ be the character group of $T$, and 
denote by 
$$
	\langle \ \, , \ \rangle \colon X \times X^* \to \Z
$$
the pairing that  describes the composition $\gm \to \gm$.

Let us view the torus $\gm^2$ as $T = \Spec \Q[X^*]$, and let $\Q(T)$ be the function field of $T$.
For each primitive $\lambda \in X$, let $V_{\lambda} \subset X^*$ be the dual  
cone of characters which pair {\em non-positively} with $\lambda$. 
 Let $\Q[V_{\lambda}]$ be the monoid algebra of $V_{\lambda}$.
 Since each element $\nu$ of $V_{\lambda}$ is a regular function
 on $T$, we have inclusions
 $$ \Q[V_{\lambda}] \hookrightarrow \Q[X^*] \hookrightarrow \Q(T).$$
 In particular, the first inclusion  
induces an open immersion $T \rightarrow T_{\lambda}$,
where 
$$
	T_{\lambda} = \Spec \Q[V_{\lambda}].
$$

The toric variety $T_{\lambda}$ has the following properties, all of which are readily proven
by choosing coordinates.\footnote{For instance, we may suppose
that $\lambda$ is the cocharacter $t \mapsto (t^{-1}, 1)$ of $\mathbb{G}_m^2$, that
$V_{\lambda}$ is the group of characters $(z_1, z_2) \mapsto z_1^i z_2^j$ with
$i \geq 0$, and that $T_{\lambda}$ is the compactification $\mathbb{A}^1 \times \mathbb{G}_m$.}  
The limit $Q_{\lambda} = \lim_{x \rightarrow \infty} \lambda(x)$ exists in the partial compactification $T_{\lambda}$ of $T$.
In other words, the map $t \mapsto \lambda(t)$, considered as a morphism $\mathbb{G}_m \rightarrow T_{\lambda}$,
extends over $\infty \in \mathbb{P}^1$. 
 The complement $D_{\lambda} = T_{\lambda} - T$ is a divisor on $T_{\lambda}$,
and $Q_{\lambda}$ belongs to this divisor.  The 
vanishing order of any $\chi \in X^*$   along $D_{\lambda}$
given by $-\langle \lambda, \chi \rangle$. 
The stabilizer of $Q_{\lambda}$ under the torus action of $T$ on $T_{\lambda}$ is precisely $\lambda(\mathbb{G}_m)$, and this provides
a $T$-equivariant  identification
$$ D_{\lambda} \cong T/\lambda(\mathbb{G}_m) $$
under which $Q_{\lambda}$ is taken to the identity.
Moreover, any choice of $\mu \in X$ with $\mu \wedge \lambda = 1$ induces an isomorphism
$\mathbb{G}_m \xrightarrow{\mu} T \rightarrow T/\lambda(\mathbb{G}_m)$
which permits us to identify $D_{\lambda}$ with $\gm$.
 
 \subsubsection{Residues of classes in $K_2(\Q(\gm^2))$}
 \label{bdy}

We continue with the notation of \S \ref{toricsetup}. Our key result, Proposition \ref{reslambda} below,
describes the boundary behavior of classes in $K_2$ of the function field of $T$ along toric boundary divisors. 

 Let $S^1$ denote the circle, viewed as the quotient of $X_{\R}-\{0\}$ by positive scalings:
 $$
 	S^1 = (X_{\R}-\{0\})/\R_+.
$$
 We shall identify points of $S^1$ with rays $\R_+ x \subset X_{\R}$
 for $x \in X_{\R}$ nonzero 
 (i.e., half-lines with boundary the origin). 
 A point in $S^1$ is {\em rational} if it is the image of an element of $X$, i.e., if the associated ray passes through a point of $X$.

\begin{proposition} \label{reslambda}
	For $\kappa \in K_2(\Q(T))$, there is a 
	locally constant function $n=n_{\kappa} \colon S^1 - \Sigma \rightarrow \Z$
	with $\Sigma$ a finite set of rational points, having the following property:
	\begin{quote}
	if $n$ is defined on the ray $\R_{+} \lambda$,  then the residue of $\kappa$ along $D_{\lambda} \cong \mathbb{G}_m$
	has the form $c z^{n(\lambda)}$ for some scalar $c \in \Q^{\times}$.
 	\end{quote}
\end{proposition}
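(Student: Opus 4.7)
The plan is to reduce to a single Steinberg symbol $\{f,g\}$ with $f,g$ nonzero Laurent polynomials, and then read off the residue along $D_\lambda$ from a Newton polytope calculation. By bilinearity of symbols, $\kappa$ can be written as a finite sum $\sum_i \{f_i,g_i\}$ with $f_i, g_i \in \Q(T)^\times$, and after splitting $f=f_1/f_2$ via $\{f_1/f_2,g\}=\{f_1,g\}-\{f_2,g\}$ one can arrange all arguments to lie in $\Q[X^*]\setminus\{0\}$. Since the residue $\partial_\lambda \colon K_2(\Q(T)) \to K_1(\Q(D_\lambda))$ is multiplicative, it suffices to treat a single such symbol and take $\Sigma_\kappa$ as the union of the exclusion sets produced for each summand.

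\textbf{Newton fans.} For $h \in \Q[X^*]\setminus\{0\}$ with support $\A(h)$, let $P(h) = \mathrm{conv}(\A(h)) \subset X^*_\R$ be the Newton polytope. Its normal fan is a complete rational fan in $X_\R$; define $\Sigma_h \subset S^1$ to be the image of the union of its codimension-one cones, a finite set of rational points. For $\lambda$ on a ray outside $\Sigma_h$, the functional $\langle \lambda, \cdot \rangle$ attains its maximum on $P(h)$ at a unique vertex $\chi^*_\lambda(h) \in \A(h)$; this vertex is constant on each connected component of $S^1 - \Sigma_h$, and $v_\lambda(h) = -\langle \lambda, \chi^*_\lambda(h) \rangle$. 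I then take $\Sigma = \Sigma_f \cup \Sigma_g$.

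\textbf{Residue as a monomial.} Fix $\lambda$ outside $\Sigma$ and pick $\tau \in X^*$ with $\langle \lambda, \tau \rangle = -1$, so $\tau$ is a uniformizer along $D_\lambda$. For $h \in \{f,g\}$, the rescaling $\tau^{-v_\lambda(h)} h \in \Q[X^*]$ has valuation zero, and every monomial except $a_{\chi^*_\lambda(h)} \chi^*_\lambda(h) \tau^{-v_\lambda(h)}$ acquires strictly positive $\lambda$-valuation and hence vanishes on $D_\lambda$; the surviving character pairs trivially with $\lambda$, so descends to a character of $D_\lambda \cong T/\lambda(\gm)$. Substituting into the tame-symbol formula, the $\tau$-powers cancel and one obtains
$$\partial_\lambda\{f,g\} = (-1)^{v_\lambda(f)v_\lambda(g)}\, a_g^{v_\lambda(f)} a_f^{-v_\lambda(g)} \cdot \overline{\chi^*_\lambda(g)^{v_\lambda(f)} \chi^*_\lambda(f)^{-v_\lambda(g)}},$$
with $a_h = a_{\chi^*_\lambda(h)}$, a scalar in $\Q^\times$ times a character of $D_\lambda$. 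Identifying $D_\lambda \cong \gm$ via $\mu \in X$ with $\mu \wedge \lambda = 1$, this character becomes $z^{n(\lambda)}$ with
$$n(\lambda) = v_\lambda(f)\langle \mu,\chi^*_\lambda(g)\rangle - v_\lambda(g)\langle \mu,\chi^*_\lambda(f)\rangle = \chi^*_\lambda(f) \wedge \chi^*_\lambda(g),$$
the last equality using $v_\lambda(h) = -\langle \lambda, \chi^*_\lambda(h)\rangle$ and the duality of $(\lambda,\mu)$ as a basis of $X$.

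\textbf{Conclusion and main obstacle.} Because the exponent is the wedge of the dominant vertices, it depends only on these vertices and is therefore locally constant on $S^1-\Sigma$. Multiplying the contributions from the symbols in the chosen presentation of $\kappa$ preserves the form ``scalar in $\Q^\times$ times a power of $z$'' and yields the required function $n_\kappa$. The step that most needs care is identifying $\chi^*_\lambda(h)\tau^{-v_\lambda(h)}$ as an intrinsic character of $D_\lambda$ (independent of the choice of uniformizer $\tau$) and interpreting the resulting exponent via the wedge $\chi^*_\lambda(f) \wedge \chi^*_\lambda(g)$; once this is in place, the finiteness and rationality of $\Sigma$ and the local constancy of $n$ are immediate from standard properties of normal fans of lattice polytopes.
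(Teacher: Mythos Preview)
Your proof is correct and follows essentially the same route as the paper: reduce to a single Steinberg symbol $\{f,g\}$ with $f,g\in\Q[X^*]$, pick out the monomial in each that maximizes $\langle\lambda,\cdot\rangle$, compute the tame symbol, and identify the resulting character of $D_\lambda$ as $z^{\chi_f\wedge\chi_g}$. The only notable difference is cosmetic: you phrase the excluded set $\Sigma_h$ as the codimension-one skeleton of the normal fan of the Newton polytope (so that the maximum is attained at a unique \emph{vertex}), whereas the paper excludes the larger set of rays on which $\langle\lambda,\cdot\rangle$ fails to be injective on all of $\A(h)$; your choice gives a slightly smaller $\Sigma$, but the subsequent tame-symbol computation and the identification $n(\lambda)=\chi^*_\lambda(f)\wedge\chi^*_\lambda(g)$ are identical.
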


\begin{proof}
It is sufficient to analyze the case that
$\kappa = \{f, g\}$, where $f$ and $g$ are nonzero elements of $\Q[X^*]$, for such symbols generate $K_2(\Q(T))$.  

We may write any $f \in \Q[X^*]$ as a finite sum
$$
	f= \sum_{\chi \in X^*} a_{\chi}(f) \chi
$$
with $a_{\chi}(f) \in \Q$.  Let $\A(f)$ denote the finite set 
\begin{equation} \label{supp}
	\A(f) = \{ \chi \in X^* \mid a_{\chi}(f) \neq 0 \}.
\end{equation}

For a nonzero $\lambda \in X
\otimes_{\Z} \R$, 
consider the function $\phi_{f,\lambda} \colon \A(f) \rightarrow \R$ given by 
$$
	\phi_{f,\lambda}(\chi) = \langle \lambda, \chi \rangle
$$
on $\chi \in \A(f)$.  If $\phi_{f,\lambda}$ is injective on the finite set $\A(f)$, then we let $\chi_{f, \lambda}$
be the unique element $\chi \in \A(f)$ maximizing $\langle \lambda, \chi \rangle$.
It is invariant under rescaling $\lambda$ by a positive real number.  
Letting $\Sigma_f$ denote the (finite!) collection of rays $\R_+ \lambda$ for which $\phi_{f,\lambda}$ is not injective, we then have a locally
constant function
 $$
 	\Rays - \Sigma_f \rightarrow X^*, \qquad \R_+ \lambda \mapsto \chi_{f,\lambda}.
 $$ 
 
Now fix a rational point of $S^1 -\Sigma_f$, corresponding to the ray $\R_+ \lambda$
for some primitive $\lambda \in X$, and write
$$\chi_f = \chi_{f,\lambda}, \quad a_f = a_{\chi_f}(f), \quad \mbox{and} \quad v_f = -\langle \lambda, \chi_f \rangle.$$
As above, $v_f$ is the vanishing order of $f$ along $T_{\lambda}$; it may be negative. 
 Note that
$f \cdot \chi_f^{-1}$
extends to $T_{\lambda}$, because for any $\chi \in \A(f)$,
the ratio 
$\chi \chi_f^{-1}$ has non-positive pairing with $\lambda$.  Since $\chi\chi_f^{-1}$ vanishes on $D_{\lambda}$ for $\chi \in \A(f)$ with 
$\chi \neq \chi_f$, the value of $f \cdot \chi_f^{-1}$ along $T_{\lambda}-T$ is the constant $a_f$.

Now suppose that $\R_+ \lambda \notin \Sigma_f$ also does not belong
to the set $\Sigma_g$ for $g \in \Q[X^*]$.
The image of $\{f, g \}$ in $K_1(\Q(D_{\lambda}))$ is therefore the tame symbol given by
$$ (-1)^{v_fv_g} \frac{g^{v_f}}{f^{v_g}} =
c \frac{\chi_g^{v_f}}{\chi_f^{v_g}},
$$
where $c$ is the constant $(-1)^{v_fv_g} a_g^{v_f}a_f^{-v_g} \in \Q^{\times}$. 
The right-hand side defines a function on $T$, constant on $\lambda(\mathbb{G}_m)$,  which
extends over $T_{\lambda}$, and thus can be restricted to $D_{\lambda}$. 

Note that the value of $\chi_g^{v_f}\chi_f^{-v_g} \in X^*$
on a cocharacter $\mu$ is given by
$$ \langle \mu,\chi_f\rangle \langle \lambda,\chi_g \rangle - \langle \lambda,\chi_f \rangle \langle \mu,\chi_g \rangle = 
(\mu \wedge \lambda)(\chi_f \wedge \chi_g).$$
Recall that we are identifying $D_{\lambda}$ with $\mathbb{G}_m$
via any cocharacter $\mu \colon \mathbb{G}_m \rightarrow T$ with $\mu \wedge \lambda=1$;
with respect to this identification,  the tame symbol above is identified with $c z^n$, where $n = \chi_f\wedge \chi_g$. 
In particular, $n=n(\lambda)$ is locally constant on the set $\Rays - \Sigma_f \cup \Sigma_g$ of rays. 
\end{proof}

\begin{example} \label{firstquadrant}
	Take $f = 1-z_1$ and $g = 1-z_2$. 
	The sets $\A(f)$ and $\A(g)$ in \eqref{supp} are $\{0, (1,0)\}$ and $\{0, (0,1)\}$, respectively. 
	Then $\Sigma_f$  consists of the ray $\R_+ (0,1)$ together with its negative, and $\Sigma_g$ is the ray $\R_+ (1,0)$
	together with its negative.   
	For $\lambda = (a,b)$, we have
	\begin{eqnarray*} 
		\chi_f = \begin{cases} (1,0)  & \ifs a >0 \\ 0  & \ifs a < 0 \end{cases} &\text{and}&
		\chi_g = \begin{cases} (0,1) & \ifs b>0 \\ 0 & \ifs b<0. \end{cases}
	\end{eqnarray*}
	Therefore, if we choose the standard orientation where $(1,0) \wedge (0,1) = 1$, 
	then  $n=\chi_f \wedge \chi_g$ is given by the function 
	\begin{equation} \label{EX12}
		(a,b) \mapsto \begin{cases} 1 & \ifs a>0 \mbox{ and }  b>0, \\ 0 & \ifs a <0 \mbox{ or } b<0, \end{cases}
	\end{equation} 
	which is to say, the characteristic function of the counterclockwise
	arc from $(1,0)$ to $(0,1)$ on $S^1$,  or equivalently of the first quadrant in $\R^2$. 
\end{example}

\subsubsection{Values on symbols}

The map $n$ of Proposition \ref{reslambda}
can be described as a homomorphism
from $K_2(\Q(T))$ to the
set of locally constant functions on $S^1$, defined on the complement
of a finite set of rational points. If we identify two such functions 
when they agree off of a finite set, then the target becomes
a group under pointwise addition, and the map $n$ a group homomorphism. 
 Call this group 
 $\Cones_1$:
 $$ \Cones_1 = \{\mbox{$\Z$-valued locally constant functions on $S^1-\Sigma$, with $\Sigma \subset S^1_{\Q}$ finite}\}/\sim,$$
 where $\sim$ is the equivalence relation of agreeing off of a finite set.

\begin{example} 
For any $\ell, \ell' \in S^1_{\Q}$. 
let $[\ell, \ell']$ be the {\em counterclockwise} arc from $\ell$ to $\ell'$,
which we identify with an element of  $\Cones_1$ via its characteristic function.
(Thus, if $\ell=\ell'$, then $[\ell,\ell']$ is the zero element.) 

Observe that the group $\Cones_1$ has a presentation with generators the elements $[\ell, \ell']$ and relations  
\begin{equation} \label{ellbasic} [\ell, \ell''] =[\ell, \ell'] + [\ell', \ell''] \end{equation}
for $\ell'$ lying on the counterclockwise arc from $\ell$ to $\ell''$ (including both endpoints).
Indeed, writing $\mathsf{G}$ for the abstract group so presented,
the homomorphism $\mathsf{G} \rightarrow \Cones_1$ is readily seen to be surjective.
On the other hand, by recursive use of \eqref{ellbasic}, any element of $\mathsf{G}$ can be written
as a finite sum $\sum_i m_i [a_i, b_i]$ where $m_i \in \Z$ and where the intervals are disjoint
except at their endpoints, and the condition of vanishing in $\Cones_1$
then implies that the sum must be empty.

We can then reformulate Example \ref{firstquadrant}
as saying that
$ \{1-z_1, 1-z_2\} \mapsto [(1,0), (0,1)]$ under $n$. 
More generally, if $\nu_1, \nu_2$ form a positively oriented basis of $X$ (i.e., $\nu_1 \wedge \nu_2=1$),
then 
\begin{equation} \label{fq2} n \colon \{1-\nu_1^*, 1-\nu_2^*\} \mapsto [\R_+\nu_1,\R_+ \nu_2],\end{equation}
where $\nu_1^*, \nu_2^* \in X^*$ are the dual basis elements.
\end{example}

\begin{remark}
For later use, we note that for any $\ell_1, \ell_2, \ell_3 \in S^1_{\Q}$ we have
 \begin{equation} \label{above} [\ell_1, \ell_3] =   [ \ell_1, \ell_2 ]  + [ \ell_2, \ell_3 ]  - \delta(\ell_1, \ell_2, \ell_3), 
\end{equation}
 where $\delta = 0$ when $\ell_2$ lies on the counterclockwise arc from $\ell_1$ to $\ell_3$ including endpoints,
 and $\delta=1$ otherwise.  (In particular,  $[\ell_1, \ell_2] + [\ell_2, \ell_1] = 1$ unless $\ell_1=\ell_2$). 
 Note that it follows from this that
 $\delta$ satisfies the  (homogeneous) cocycle relation
\begin{equation} \label{deltacocycle} \delta(\ell_1, \ell_2, \ell_3) - \delta(\ell_0, \ell_2, \ell_3)+ \delta(\ell_0, \ell_1, \ell_3) - \delta(\ell_0, \ell_1, \ell_2) = 0.\end{equation} 
\end{remark}

\subsection{Comparison of chain complexes} \label{main}

We continue to suppose that $T = \mathbb{G}_m^2$, providing an identification $X = \Z^2$. 
The right automorphism group of $T$ is an algebraic group which, consistent with our prior conventions, we 
consider as acting on the right on $T$.
By functoriality, we obtain the usual right action of $\GL_2(\Z)$ on $X$ regarded as row vectors.

\subsubsection{Alternate description of the chain complex for $S^1$}
 
The group $\Cones_1$ introduced above fits into a chain complex that computes the 
homology of $S^1$:

\begin{lemma} \label{nabla}
Let $\Cones_0$ be the group of finitely-supported $\Z$-valued functions on the rational points of $S^1$,
and define $\nabla \colon \Cones_1 \rightarrow \Cones_0$ via  $$
	\nabla f(x) = f(x^-) - f(x^+),
$$
where $f(x^+)$ (resp., $f(x^-)$) is the limit of $f(y)$ as $y$ approaches $x$ clockwise (resp., counterclockwise).
Then there is an isomorphism
$$ 
	[\Cones_1 \xrightarrow{\nabla} \Cones_0] \xrightarrow{\sim} \Chains_*(S^1)
$$  
in the derived category of $\Z[\GL_2(\Z)]$-modules, where $\Chains_*(S^1)$ denotes the singular chain complex of $S^1$.
Here, the left $\GL_2(\Z)$-action on both sides is induced by the right $\GL_2(\Z)$-action on $X$. 
 \end{lemma}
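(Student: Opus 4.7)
The plan is to identify $[\Cones_1 \xrightarrow{\nabla} \Cones_0]$ with a filtered colimit of cellular chain complexes of CW structures on $S^1$ with rational $0$-skeleta, and to derive the isomorphism in the derived category via the standard cellular-to-singular comparison. For each finite subset $\Sigma \subset S^1_{\Q}$ with $|\Sigma| \geq 2$, let $C^{\Sigma}_* \subset [\Cones_1 \to \Cones_0]$ consist of $0$-chains supported on $\Sigma$ and $1$-chains in the $\Z$-span of those $[\ell, \ell']$ with $\ell, \ell' \in \Sigma$. Using the relations \eqref{ellbasic}, $C^{\Sigma}_*$ is free of rank $|\Sigma|$ in each degree, generated by the arcs between consecutive points of $\Sigma$; it is thus identified with the cellular chain complex of the CW structure on $S^1$ whose $0$-cells are $\Sigma$ and whose $1$-cells are the complementary arcs oriented counterclockwise, with cellular boundary precisely the restriction of $\nabla$.

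First I would verify the colimit identification $[\Cones_1 \to \Cones_0] = \varinjlim_{\Sigma} C^{\Sigma}_*$, which is immediate from the definitions: every locally constant function representing an element of $\Cones_1$ has only finitely many discontinuities, all at rational points of $S^1$. Next, after fixing, for each pair of distinct rational points $\ell, \ell' \in S^1_\Q$, a singular $1$-simplex $\sigma_{\ell, \ell'} \colon [0,1] \to S^1$ parametrizing the counterclockwise arc from $\ell$ to $\ell'$, the assignment $[\ell, \ell'] \mapsto \sigma_{\ell, \ell'}$ on adjacent pairs in $\Sigma$ (together with $\ell \mapsto$ constant $0$-simplex at $\ell$) yields the standard cellular-to-singular comparison $C^{\Sigma}_* \to \Chains_*(S^1)$, which is a quasi-isomorphism. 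Since filtered colimits of quasi-isomorphisms of chain complexes of abelian groups are again quasi-isomorphisms, passing to the colimit would deliver the asserted quasi-isomorphism. Equivariance of the resulting morphism in the derived category would follow because $\GL_2(\Z)$ acts on $S^1_{\Q}$ and permutes the indexing poset $\{\Sigma\}$, respecting the colimit construction, provided the pullback $\GL_2(\Z)$-action on $\Cones_*$ is understood with the $\mathrm{sign}\circ\det$ orientation twist required to match the action on $H_1(S^1)$.

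The hard part is the lack of strict compatibility of the chosen comparison maps $C^{\Sigma}_* \to \Chains_*(S^1)$ under refinement: for $m$ on the counterclockwise arc from $\ell$ to $\ell'$, the singular chain $\sigma_{\ell, \ell'}$ and the concatenation $\sigma_{\ell, m} + \sigma_{m, \ell'}$ are distinct (though nullhomologously different) $1$-chains, so the square indexed by $\Sigma \subset \Sigma'$ commutes only up to canonical chain homotopy. I would address this in one of two ways: either by replacing $\Chains_*(S^1)$ with a quasi-isomorphic model in which arc-concatenation is strictly associative (e.g., a polyhedral or cubical chain complex modeling $S^1$ as a cellular circle), so that the refinement maps become strict, or by working directly with the diagram of chain-homotopy equivalences and invoking that its colimit is well-defined in the derived category via the canonical homotopies arising from the contractibility of each subarc. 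Either route routes cleanly through the $\GL_2(\Z)$-action, since the permutation of $\Sigma$'s commutes with refinement.
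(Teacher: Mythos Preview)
Your approach via filtered colimits of cellular chain complexes is viable, but it is substantially more elaborate than what the paper does, and the ``hard part'' you identify is precisely what the paper's shortcut avoids. The paper does not map to $\Chains_*(S^1)$ directly; it first replaces it by the quasi-isomorphic two-term truncation $[\coker(d_2) \to \Chains_0(S^1)]$. The map $\Cones_1 \to \coker(d_2)$ then sends $[\ell,\ell']$ to the class of any singular $1$-simplex parametrizing the counterclockwise arc from $\ell$ to $\ell'$. The point is that in $\coker(d_2)$ the equality $\sigma_{\ell,\ell''} = \sigma_{\ell,\ell'} + \sigma_{\ell',\ell''}$ already holds (the difference is the boundary of a $2$-simplex mapping into the arc), so well-definedness on $\Cones_1$ reduces to checking the single relation \eqref{ellbasic}, and no colimit or homotopy coherence is needed. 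The quasi-isomorphism is then verified by a direct homology computation: $\ker\nabla$ is generated by the constant function $1$ and $\coker\nabla$ by any point mass, and these map to generators of $H_1(S^1)$ and $H_0(S^1)$. In effect, passing to $\coker(d_2)$ is exactly your option (a) realized in the simplest possible way, and it dissolves the refinement-compatibility problem before it arises. Your option (b) would require genuine homotopy coherence (not just individual homotopies), which is true here because the arcs are contractible, but is overkill for the statement.

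Your remark about the $\det$-twist is well taken and deserves to be made explicit: under the naive pullback action on functions, the constant function $1 \in \Cones_1$ is $\GL_2(\Z)$-fixed, whereas orientation-reversing elements act by $-1$ on $H_1(S^1,\Z)$. So strict $\GL_2(\Z)$-equivariance of the map requires the sign twist you mention on $\Cones_1$. This is consistent with, and indeed forced by, Proposition~\ref{quasi-iso}, where $f_2(1) = \{-z_1,-z_2\}$ transforms by $\det$ under pullback.
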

 
 \begin{proof}
  Indeed, the complex 
  $$\cdots \rightarrow \Chains_2(S^1) \xrightarrow{d_2} \Chains_1(S^1) \xrightarrow{d_1} \Chains_0(S^1) \rightarrow 0.$$
   of singular chains is  quasi-isomorphic to 
its truncation
$\coker(d_2) \rightarrow \Chains_0(S^1)$.
There is an obvious injection $\Cones_0 \rightarrow \Chains_0(S^1)$, as well as a $\GL_2(\Z)$-equivariant map
$\Cones_1 \rightarrow  \coker(d_2)$
which sends  $[\ell, \ell']$ to the singular simplex  $[0,1] \rightarrow S^1$
that proceeds at constant speed from $\ell$ to $\ell'$;
to verify this is well-defined one just checks the relation \eqref{ellbasic}.  

Since
\begin{equation} \label{lll} \nabla [\ell, \ell'] = 1_{\ell'} - 1_{\ell},\end{equation} 
these maps provide a morphism of complexes.
To see that is a quasi-isomorphism, note that the homology of the
complex  $[\Cones_1 \xrightarrow{\nabla} \Cones_0] $
is $\Z$ in both degrees. That is, the cokernel of $\nabla$ is 
generated by the image of any function that assigns a single rational point on $S^1$
the value $1$,
and the kernel of $\nabla$ is generated by the constant function with value $1$ in $\Cones_1$. 
These map to generators of $H_0(S^1,\Z)$ and $H_1(S^1,\Z)$, respectively.
 \end{proof}

\subsubsection{The motivic complex via toric geometry}

For $v \in X$ primitive, let $[\R_+v]$ denote the characteristic function of the image of $\R_+v$ in $S^1$. 
 The indexing of symbols in the following proposition differs from our prior indexing of symbols in $\varK$,
which was effectively done by characters, rather than cocharacters.  
 
\begin{proposition}  \label{quasi-iso}
	There is a morphism
     	$$
     		f \colon [\Cones_1 \rightarrow \Cones_0 \rightarrow \Z] \rightarrow [\varK_2 \rightarrow \varK_1 \rightarrow \varK_0],
    	$$
    	of complexes of $\Z[\GL_2(\Z)]$-modules, 
     	where the right-hand complex $\varK$ is the coniveau complex computing $H^*(\Gm^2, 2)$ of \S \ref{symbolsmult}.
    	More explicitly, there is a commutative diagram
      	$$
     		\begin{tikzcd}
     		\Cones_1 \arrow{r}{\nabla} \arrow{d}{f_2} & \Cones_0  \arrow{d}{f_1} \arrow{r}{g \mapsto \sum_{P} g(P)} & \Z \arrow{d}{f_0}  \\
    		K_2(\Q(T)) \arrow{r}{\partial} &   \bigoplus_{D} K_1(\Q(D)) \arrow{r} &  \bigoplus_{x} K_0(k(x)), 
     		\end{tikzcd}
     	$$
    	where $f_0(1) = e$ (as in \S \ref{symbolsmult}), 
	$$
		f_1([\R_+ v]) = 1-v^{\inv}|_{\nu(\mathbb{G}_m)},
	$$ 
	where $v \in X$ is primitive and $v^{\inv} \colon v(\gm) \to \gm$ denotes the inverse of $v$, 
 	and
	\begin{equation} \label{f2def}
	 	f_2([\R_+ \nu_1,  \R_+ \nu_2]) = \{1-\nu_1^*, 1-\nu_2^*\}
	\end{equation} 
	for $\nu_1, \nu_2 \in X$ with $\nu_1 \wedge \nu_2 = 1$ and $\nu_1^*, \nu_2^* \in X^*$ the dual basis to $\nu_1, \nu_2$. 
 	Moreover, $f_2$ sends the constant function with value $1$ to the symbol $\{-z_1, -z_2\}$.
 \end{proposition}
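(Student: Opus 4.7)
The plan is to extend the prescription \eqref{f2def} for $f_2$ to general counterclockwise arcs in $S^1$, then to verify commutativity of the two squares and $\GL_2(\Z)$-equivariance, and finally to identify $f_2$ on the constant function. The definitions of $f_0$ and $f_1$ on generators require nothing further, as each is given by a formula on a free set of generators.

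To define $f_2([\ell, \ell'])$ for distinct $\ell, \ell' \in S^1_{\Q}$, I choose a \emph{unimodular subdivision} $\ell = v_0, v_1, \ldots, v_k = \ell'$ of primitive vectors in $X$ going counterclockwise with $v_{i-1} \wedge v_i = 1$ for each $i$; such a subdivision exists by the standard continued-fraction / Euclidean algorithm in the plane. I then set $f_2([\ell, \ell']) := \sum_{i=1}^k \{1 - v_{i-1}^*, 1 - v_i^*\}$ using \eqref{f2def} for each summand. Relation \eqref{ellbasic} is then automatic by concatenation of subdivisions. The key nontrivial point is independence of the chosen subdivision. Any two unimodular subdivisions of a given arc are related by a sequence of elementary \emph{stellar moves}, i.e., replacing a pair $(v, v')$ with $v \wedge v' = 1$ by the triple $(v, v+v', v')$; invariance under such a move, after changing to dual bases and writing $x = \nu^*, y = \nu'^*$, reduces to the identity
\[
\{1 - x, 1 - y\} = \{1 - x/y, 1 - y\} + \{1 - x, 1 - y/x\}
\]
in $K_2(\Q(T))$. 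Expanding via bilinearity and using $\{y, 1-y\} = 0 = \{1-x, x\}$ reduces the right-hand side to $\{y-x, 1-y\} + \{1-x, x-y\}$. Applying Steinberg to $t = (y-x)/(1-x)$, whose complement is $1 - t = (1-y)/(1-x)$, gives $\{y-x, (1-y)/(1-x)\} = \{1-x, 1-y\} - \{1-x, -1\}$; substituting back collapses everything to the left-hand side.

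For the middle square on a generator $[\nu_1, \nu_2]$ with $\nu_1 \wedge \nu_2 = 1$, the tame symbol formula \eqref{tamesymbol} yields residue $(1 - \nu_2^*)|_{\nu_2(\gm)} = f_1(1_{\nu_2})$ along $\nu_2(\gm) = \ker \nu_1^*$ and residue $(1 - \nu_1^*)^{-1}|_{\nu_1(\gm)}$ along $\nu_1(\gm) = \ker \nu_2^*$; in the multiplicative group $\varK_1$, the inverse is precisely the additive negation of $f_1(1_{\nu_1})$, so the total matches $f_1(\nabla[\nu_1, \nu_2]) = f_1(1_{\nu_2}) - f_1(1_{\nu_1})$. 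For the rightmost square, $1 - v^*|_{v(\gm)}$ has a simple zero at the identity of $v(\gm)$, which is the identity of $T$, so $\partial f_1([\R_+ v]) = e$ for every primitive $v$, matching $f_0(1) = e$ under the sum-of-values map. $\GL_2(\Z)$-equivariance follows because the standard left action on $X = \Z^2$ dualizes to one on $X^*$, both formulas for $f_1, f_2$ transform correctly under this duality, and counterclockwise arcs correspond to oriented unimodular bases up to the sign $\det \gamma$.

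Finally, let $c_1 \in \Cones_1$ denote the constant function $1$. Since $\nabla c_1 = 0$, we have $\partial f_2(c_1) = 0$, so $f_2(c_1) \in H^2(\gm^2, 2)$. Because $f_2(c_1)$ is a sum of symbols $\{1 - \nu_1^*, 1 - \nu_2^*\}$ each of which is $[m]_*$-fixed by Lemma \ref{mfixed}, we conclude $f_2(c_1) \in H^2(\gm^2, 2)^{(0)} = \Z \cdot \{-z_1, -z_2\}$ by Lemma \ref{fixedpart}. To pin down the integer scalar, apply the residue homomorphism $n \colon K_2(\Q(T)) \to \Cones_1$ from Proposition \ref{reslambda}: by \eqref{fq2}, $n \circ f_2$ is the identity on unimodular generators, hence on all of $\Cones_1$, giving $n(f_2(c_1)) = c_1$; on the other hand Example \ref{firstquadrant} shows $n(\{-z_1, -z_2\}) = c_1$. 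Hence $f_2(c_1) = \{-z_1, -z_2\}$. The main obstacle in this plan is the $K_2$ identity above; once that is in hand, the remaining verifications amount to careful tame-symbol bookkeeping.
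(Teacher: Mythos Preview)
Your approach is correct but differs from the paper's in an interesting way. The paper does \emph{not} define $f_2$ by choosing subdivisions and checking a Steinberg identity. Instead, it lets $\Symb_2 \subset K_2(\Q(T))$ be the subgroup generated by all $\{1-\nu_1^*, 1-\nu_2^*\}$ with $\nu_1 \wedge \nu_2 = 1$, observes that the residue map $n$ of Proposition~\ref{reslambda} restricts to a homomorphism $\Symb_2 \to \Cones_1$, and proves this restriction is an \emph{isomorphism}: surjectivity is immediate from \eqref{fq2}, and injectivity comes for free from the exactness of $\varK$ together with Lemma~\ref{fixedpart} (any $\kappa \in \Symb_2$ with $n(\kappa)=0$ has $\partial\kappa=0$, hence lies in $H^2(\gm^2,2)^{(0)} = \Z\{-z_1,-z_2\}$, and $n(\{-z_1,-z_2\})=1 \neq 0$). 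Then $f_2$ is simply the inverse of $n|_{\Symb_2}$, and \eqref{f2def} follows from \eqref{fq2}. This is softer: no $K_2$ identity is ever checked, and no combinatorial fact about subdivisions is invoked beyond the existence of \emph{one} connecting sequence.

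Your route trades that soft argument for two harder inputs. The Steinberg identity you wrote down is correct and your sketch of its proof is fine. But the assertion that any two unimodular subdivisions of an arc are connected by stellar moves and their inverses is not as innocent as it looks: the \emph{common refinement} $S_1 \cup S_2$ need not be unimodular (e.g.\ for the upper half-circle, $S_1 = \{(1,0),(2,1),(-1,0)\}$ and $S_2 = \{(1,0),(0,1),(-1,0)\}$ have $(2,1)\wedge(0,1)=2$ in $S_1\cup S_2$), so one really needs the 2D factorization theorem (every nontrivial unimodular subdivision of a unimodular cone contains the mediant, then induct). This is classical and not hard, but you should either cite it or include the short inductive argument; as written it is a gap. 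The paper's approach sidesteps this entirely, and would also adapt more readily to higher rank where factorization is much deeper.
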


 \begin{proof}  
     	It is clear that unique $f_0$ and $f_1$ exist having the specified values
     	and that the right-hand square is commutative. 
	For the $\GL_2(\Z)$-equivariance of $f_1$, we compute 
$$
	f_1(\gamma \cdot [\R_+ \nu]) = f_1([\R_+ \nu\gamma^{-1}]) = (1-(\nu\gamma^{-1})^{\inv})|_{\nu\gamma^{-1}(\gm)} = \gamma^* f_1([\R_+ \nu]),
$$ 
where, on the right, $\gamma$ acts as usual by pullback of the right $\Gamma$-action on $T$.
 	Since $\nabla$ is injective, it remains only to construct $f_2$ satisfying \eqref{f2def}, 
     	and to verify that the left-hand square commutes.
     
    	Observe that if $\nu_1, \nu_2 \in X \cong \Z^2$ satisfy $\nu_1 \wedge \nu_2 = 1$ and have   
     	dual basis $\nu_1^*$, $\nu_2^*$, then by \eqref{tamesymbol}, we have
     	\begin{equation} \label{simple} 
     		\partial \{1-\nu_1^*, 1-\nu_2^*\} =  (1-\nu_2^{\inv})|_{\nu_2(\mathbb{G}_m)} - (1-\nu_1^{\inv})|_{\nu_1(\mathbb{G}_m)}.
	\end{equation}
     	which, together with \eqref{lll}, shows that the left-hand square formally commutes on $[\R_+ \nu_1, \R_+ \nu_2] \in \Cones_1$,
	given the property \eqref{f2def}.
     
        	Let $\Symb_2$ be the subgroup of $K_2(\Q(T))$ generated by all symbols $\{1-\nu_1^*, 1-\nu_2^*\}$ with $\nu_i^* \in X^*(T)$ 
	and $\nu_1 \wedge \nu_2=1$.   Then by \eqref{fq2} we have a commutative diagram:
     	$$
     	\begin{tikzcd}[column sep=small]
         	\Symb_2 \arrow[d, "n", "\textrm{Prop}. \ref{reslambda}"']  \arrow[r,"\partial"] & 
          	\bigoplus_{D} K_1(k(D))  &   \{1-\nu_1^*, 1-\nu_2^*\} \arrow[d, mapsto] \arrow[r, mapsto] &  
          	(1-\nu_2^{\inv})|_{\nu_2(\mathbb{G}_m)} - (1-\nu_1^{\inv})|_{\nu_1(\mathbb{G}_m)}  \\
         	\Cones_1 \arrow[r, "\nabla" ,""']   &  \Cones_0  \arrow[u]& \mbox{$[\R_+ \nu_1, \R_+ \nu_2]$} \arrow[r,mapsto] & 
        		{[\R_+ \nu_2] - [\R_+ \nu_1].} \arrow[u,mapsto]
     	\end{tikzcd}
      	$$
     
     	We claim that the map $n$ of Proposition \ref{reslambda}
    	restricts to an isomorphism $ \Symb_2 \xrightarrow{\sim} \Cones_1$. 
      	Once this is proved, it follows from \eqref{simple} that we can take $f_2$ to be the inverse of $n$.
     
 	By \eqref{fq2} again, the image of $n$ on $\Symb_2$ contains all $[\ell, \ell']$, so $n$ is surjective.
    	For injectivity, take $\kappa \in \Symb_2$  with $n(\kappa) = 0$; then the diagram shows that $\partial \kappa = 0$. 
    	It follows, then, that $\kappa$ lies inside the image of $H^2(T,2)$.
    	Since $\kappa$ is $[m]_*$-fixed by Lemma \ref{mfixed}, it follows from Lemma \ref{fixedpart} that $\kappa$ is a multiple of 
	$\{-z_1, -z_2\}$. But as in Example \ref{firstquadrant}, the symbol
    	$$
    		\{-z_1, -z_2\} = \left\{ \frac{1-z_1}{1-z_1^{-1}}, \frac{1-z_2}{1-z_2^{-1}} \right\}
    	$$ 
    	maps under $n$ to the sum of of the characteristic functions of the four (strict) quadrants of $\R^2$,
    	which agrees in $\Cones_1$ with the constant function $1$.  Therefore, $f_2(1) = \{-z_1,-z_2\}$, and $\kappa = 0$. 
\end{proof}

\begin{remark}
   	The morphisms $f_i$ are degreewise injective, and the image of $f$ is the symbol complex $\Symb$ of \S \ref{symbolsmult}
  	and Remark \ref{symbolcomplex}. By said remark, the complex $[\Cones_2 \rightarrow \Cones_1 \rightarrow \Z]$ is thereby
	quasi-isomorphic to the fixed part of the small complex $\varC$ under the trace maps of \S \ref{fixedparts}.
\end{remark}

\subsection{The cocycle and Laurent series} \label{Laurent}

We relate our discussion to an invariant of rational cones  that has appeared in the literature, 
and we recover the cocycle $\Theta$ from the considerations of the prior subsection.

\subsubsection{Connection to cocycles valued in Laurent series}

We use ``exponential coordinates'' near the identity. That is,
given the coordinate functions $z_1, z_2$ on $\Gm^2$, 
we introduce  formal coordinates $u_1, u_2$ at the identity satisfying $z_i=e^{u_i}$.
For  $\nu=z_1^m z_2^n \in X^*$, we then formally have  $\nu=e^{m u_1 + nu_2}$. 
We will also regard the $u_i$ as being linear functions on the Lie algebra $\mathrm{Lie}(\mathbb{G}_m^2)$
via the isomorphism of formal groups
\begin{equation} \label{identification} 
(\mathbb{G}_m^2,1) \xrightarrow{\sim} (\mathrm{Lie}(\mathbb{G}_m^2), 0).
\end{equation}
 
 Consider the composite map
\begin{equation} \label{old cocycle} 
\theta_L \colon \Cones_1  \xrightarrow{f_2} K_2(\Q(\mathbb{G}_m^2)) \rightarrow \{\mbox{meromorphic $2$-forms on $\Gm^2$}\} \rightarrow \Q(\!(u_1,u_2)\!), \end{equation}
where the second map sends a Steinberg symbol $\{f, g\}$ to $\frac{df}{f} \wedge \frac{dg}{g}$,
 and the third map takes a meromorphic form $\omega$ to  $\frac{\omega}{du_1 \wedge du_2}$ in the Laurent series field $\Q(\!(u_1, u_2)\!)$, which we take to be the quotient field of $\Q\ps{u_1,u_2}$.
 
In particular,  
given $\nu_1,\nu_2 \in X$ with $\nu_1 \wedge \nu_2=1$ and dual basis written as $\nu_1^* = e^{\lambda_1}, \nu_2^* = e^{\lambda_2}$
(with the $\lambda_i$ linear forms in the $u_i$), we calculate $\theta_L$ on $[\R_+ \nu_1, \R_+ \nu_2]$ as follows:  
\begin{equation} \label{XXX2} [\R_+ \nu_1, \R_+ \nu_2] \mapsto \{1-\nu_1^*,1-\nu_2^*\} \mapsto  \frac{d\nu_1^*}{1-\nu_1^*} \wedge \frac{d\nu_2^*}{1-\nu_2^*}  \mapsto  \frac{1}{(1-e^{-\lambda_1})(1-e^{-\lambda_2})}
 \end{equation}
 We then regard the last term 
 as the element 
 $\sum_{\mu \in \Z_{\geq 0} \lambda_1 + \Z_{\geq 0} \lambda_2} e^{-\mu}$ 
 of $\frac{1}{\lambda_1\lambda_2} \Q\ps{u_1, u_2} \subset \Q(\!(u_1,u_2)\!)$.

 In fact we have;
 \begin{lemma}
Suppose that $\ell_1, \ell_2 \in S^1$ 
 with $\ell_1 \wedge \ell_2> 0$; then 
\begin{equation} \label{tL1} \theta_L([\ell_1, \ell_2]) = \sum_{\substack{\mu \in X^*\\ \langle \mu, \ell_i \rangle \geq 0}} e^{-\mu}.\end{equation}
\end{lemma}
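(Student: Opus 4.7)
My plan is to reduce to the unimodular case \eqref{XXX2} via a unimodular subdivision of the cone spanned by $\ell_1$ and $\ell_2$, and then to reassemble using Brion's additivity of cone generating functions, cited in the introduction to Section \ref{K2boundary}.

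Concretely, let $v_1, v_2 \in X$ be the primitive integer vectors on $\ell_1, \ell_2$, so $v_1 \wedge v_2 > 0$. By a standard Hirzebruch--Jung / continued-fraction procedure, one can insert primitive integer vectors $v_1 = w_0, w_1, \ldots, w_k = v_2$ in counterclockwise order along the arc from $\ell_1$ to $\ell_2$, with $w_i \wedge w_{i+1} = 1$ for each $i$. Iterating the cocycle relation \eqref{ellbasic} then yields
\begin{equation*}
[\ell_1, \ell_2] \;=\; \sum_{i=0}^{k-1} [\R_+ w_i, \R_+ w_{i+1}]
\end{equation*}
in $\Cones_1$, and applying $\theta_L$ and using \eqref{XXX2} termwise gives
\begin{equation*}
\theta_L([\ell_1, \ell_2]) \;=\; \sum_{i=0}^{k-1} \sum_{\mu \in \Z_{\geq 0} w_i^* + \Z_{\geq 0} w_{i+1}^*} e^{-\mu},
\end{equation*}
where $(w_i^*, w_{i+1}^*)$ is the dual basis to $(w_i, w_{i+1})$ in $X^*$.

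The remaining task is to identify this sum with the generating function $\phi(C) = \sum_{\mu \in C^\vee \cap X^*} e^{-\mu}$ of the big cone $C = \R_{\geq 0} v_1 + \R_{\geq 0} v_2$. This is Brion's additivity applied to the decomposition $C = \bigcup_i C_i$ with $C_i = \R_{\geq 0} w_i + \R_{\geq 0} w_{i+1}$: by inclusion-exclusion one has $\phi(C) = \sum_i \phi(C_i) - \sum_j \phi(\R_{\geq 0} w_j)$, and each correction $\phi(\R_{\geq 0} w_j)$ vanishes because the generating function of a half-space dual is zero as a rational function, via the identity $\sum_{n \in \Z} z^n = 0$. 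Because each $C_i$ is unimodular, $\phi(C_i)$ is precisely the corresponding inner sum above, yielding the claim.

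The only substantive point is Brion's additivity interpreted as an equality of rational functions rather than of formal sums: taken naively as formal sums, the terms on the left overcount each lattice point $\mu$ by the number of indices $i$ for which $\mu \in C_i^\vee$, whereas the target sum counts each such $\mu$ only once. This discrepancy is precisely what Brion's rational-function framework absorbs, so the argument rests entirely on the validity of the cited additivity theorem in this two-dimensional setting.
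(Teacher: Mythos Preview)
Your proof is correct and follows essentially the same approach as the paper: both subdivide the cone by a connecting sequence $w_0,\dots,w_k$ with $w_i\wedge w_{i+1}=1$, apply the unimodular formula \eqref{XXX2} to each piece, and then invoke Brion's additivity to reassemble the generating function of the big dual cone. Your extra paragraph unpacking Brion's additivity via inclusion--exclusion and the vanishing of $\phi(\R_{\geq 0}w_j)$ is a helpful gloss, but the paper simply cites Brion for this step.
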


\begin{proof}
 To verify this, we note that it is possible to choose
 a sequence $(x_0)_{i=0}^k$ in $X$ with $\ell_1 = \R_+ x_0$ and $\ell_2 = \R_+ x_k$ 
and $x_i \wedge x_{i+1}=1$ for all $0 \le i < k$ and, moreover, $x_i$ lie on the counter-clockwise arc
from $\ell_0$ to $\ell_1$.

We then apply \eqref{above} and \eqref{XXX2} recursively to obtain
$$\theta_L([\ell_1, \ell_2]) = \sum_{i=0}^{k-1} \theta_L([x_i, x_{i+1}]) = \sum_{i=0}^{k-1} \sum_{\mu \in C_k^*} e^{-\mu},$$
where $C_k^*$ is the dual cone to the cone spanned by $x_i, x_{i+1}$.  Now it was observed by Brion \cite[2.4, Th{\'e}or{\`e}me]{brion}
(see \cite[Prop 8.2(b)]{BP} for exposition and exact definitions) that the rule associating  a  cone $C$ to $\sum_{\mu \in C^*} e^{-\mu}$
 is additive with respect to decompositions of cones into subcones, and therefore the right-hand side is given by \eqref{tL1} as claimed.  
  \end{proof}

\subsubsection{Recovering the cocycle $\Theta \colon \GL_2(\Z) \rightarrow \barK_2$}
Now, and in the remainder of this section, we also work with the {\em left} action of $\Gamma=\GL_2(\Z)$ on $S^1$
via the rule $\gamma \cdot \R_+ \nu :=  \R_+ \nu \gamma^{-1}$, where we continue to think of $X$ 
as row vectors. (Equivalently, if we regard $X$ as column vectors, then $\gamma \in \Gamma$ acts by left multiplication by its transpose-inverse.)

Set $\ell_0= \R_+ (-1,0)$ so that $f_1(\ell_0)$ is given by the function $1-z_1^{-1}$ on the subtorus $\{ (x, 1) \mid x \in \gm \}$,
which is to say the symbol $\langle 0,1 \rangle$ of \S \ref{symbolsmult}.  Consider the function
\begin{equation} \label{theta recover} 
	\widetilde{\Theta} \colon \Gamma \rightarrow K_2(\Q(\gm^2)), \quad \widetilde{\Theta}(\gamma) = f_2([\ell_0, \gamma \ell_0]).
\end{equation}
Its composition with the quotient map to $\barK_2$ is a cocycle by virtue of \eqref{above}. 
In fact, this composition coincides with $\Theta$, as Proposition \ref{quasi-iso} and \eqref{lll} yield that the residue of $\widetilde{\Theta}(\gamma)$ is
$$ 
	\partial f_2([\ell_0,\gamma\ell_0]) = f_1(\nabla([\ell_0,\gamma\ell_0]))
	= f_1(1_{\gamma \ell_0} - 1_{\ell_0}) = (\gamma^*-1)f_1(\ell_0) = (\gamma^* -1) \langle 0,1 \rangle.
$$ 
 
\begin{example}
	Let us recover the formula for $\Theta_{\gamma}$ of Proposition \ref{expform}, where $\gamma = \smatrix{a&b\\c&d} \in \Gamma$.
	Here, for $\nu \in X$, we abbreviate $\R_+\nu$ by $\nu$.
	Given a connecting sequence $(v_i)_{i=0}^k$ for $\gamma$ as in \S \ref{cocycle},
	we can write $[(-1,0), (\det \gamma) (-d,b)]$ as a sum $\sum_{i=1}^k   [W v_{i-1}, W v_i]$ with $W = \smatrix{0&-1\\1&0}$.
	Now if $\nu_1 \wedge \nu_2= 1$, then $f_2$ sends $[\nu_1, \nu_2]$ to $\langle -W\nu_2, W \nu_1 \rangle$
    	in the notation of our previous section. 
	So 
	$$
		\tilde{\Theta}(\gamma) = 
		f_2( \left[ (-1,0), (\det \gamma) (-d,b) \right]) =  \sum_{i=1}^k f_2([W v_{i-1}, W v_i]) =     \sum_{i=1}^k  \langle  v_i, -v_{i-1} \rangle. $$  
	In effect, the notation of this section absorbed the negative signs by working with the character
	group of $\mathbb{G}_m^2$, rather than its cocharacter group.  
\end{example}

\subsection{Lifting the cocycle}  \label{lifting}

Let $\Theta$ be the cocycle of Proposition \ref{existence}. 
The obstruction to lifting the class of $\Theta$ to $\varK_2$ lies in the torsion group $H^2(\GL_2(\Z), \Z)$.  It follows that 
a multiple of $\Theta$ lifts. Here, we will show how to write down an explicit lift of the restriction of $12\Theta$ to $\SL_2(\Z)$ using Proposition \ref{quasi-iso}. 

To lighten notation, let us write $f_2(\ell_1, \ell_2)$ for $f_2([\ell_1, \ell_2])$ and $\thetaLaurent(\ell_1, \ell_2)$
for $\thetaLaurent([\ell_1,\ell_2])$. 
By \eqref{above} and the fact that $f_2(1) = \{-z_1,-z_2\}$ proven in Proposition \ref{quasi-iso}, we have
 \begin{equation} \label{property} f_2(\ell_1, \ell_2) + f_2(\ell_2, \ell_3) - f_2(\ell_1, \ell_3)  =  \delta(\ell_1, \ell_2, \ell_3) \{-z_1, -z_2\},\end{equation}
 and therefore by the definition \eqref{old cocycle} of $\thetaLaurent$, we have
 \begin{equation} \label{property1.5} \thetaLaurent(\ell_1, \ell_2) + \thetaLaurent(\ell_2, \ell_3) - \thetaLaurent(\ell_1, \ell_3)  =  \delta(\ell_1, \ell_2, \ell_3). \end{equation}

We will introduce a ``correction'' term to $\thetaLaurent$ to eliminate the right-hand side of \eqref{property1.5}.\footnote{One can extract the correction term from the literature by computing explicitly with Dedekind-Rademacher sums (compare Remark \ref{DR}),
but let us see how it comes out of our existing constructions. }  Here is the basic idea in a primitive form that does not quite work. Suppose 
there were a reasonable way to evaluate a value of $\thetaLaurent$ at the origin $(u_1, u_2) = (0,0)$. Let $\thetaLaurent^0$ denote the 
resulting function from pairs $(\ell_1, \ell_2)$ to $\Z$, which again satisfies \eqref{property1.5}. Then by the latter property and
\eqref{property}, the corrected function
$$f_2'(\ell_1, \ell_2) =f_2(\ell_1, \ell_2) - \thetaLaurent^0(\ell_1, \ell_2)  \{-z_1, -z_2\}$$
would be a homogeneous cocycle, i.e.,  
$
	f_2'(\ell_1, \ell_2) + f_2'(\ell_2, \ell_3) = f_2'(\ell_1, \ell_3).
$
Since $\{-z_1,-z_2\}$ has trivial image in $\barK_2$, the resulting $1$-cocycle
$\gamma \mapsto f_2'(\ell_0, \gamma \ell_0)$ would lift $\Theta$ from $\barK_2$ to $\varK_2$, as in \eqref{theta recover}. 

We will not be able to implement this precisely as stated, but we will
be able to do it after replacing the role of the rays $\ell_1, \ell_2$ by elements of $\SL_2(\Z)$. 
To make sense of the evaluation of a value of $\thetaLaurent \in \Q(\!(u_1, u_2)\!)$ at the origin $(u_1, u_2) = (0,0)$, we need the auxiliary data of the second column of the matrices in $\SL_2(\Z)$; this allows us to take a limit 
as $(u_1, u_2) \rightarrow 0$ in a specified direction. 

Elements in the image of $\thetaLaurent$ have the form $L^{-1} P$, with $P \in \Q\ps{u_1, u_2}$ 
and $L$ a product of linear forms.
Accordingly, we can unambiguously speak of its ``degree zero'' component $\thetaLaurent^0$,
namely $\frac{ P^{(\deg L)}} {L}$ with $P^{(\deg L)}$ the homogeneous component of $P$ of degree 
the degree of $L$. 
This degree zero component is now valued in the {\em rational} functions $\Q(u_1, u_2)$,
or more intrinsically via \eqref{identification} as rational functions on the Lie algebra $\Lie (\mathbb{G}_m^2)$. 

  For example, using $(1-e^{-x})^{-1} = x^{-1}  + \frac{1}{2} + \frac{x}{12} + O(x^2)$, we deduce that the degree zero component  in \eqref{XXX2}
  for $\nu_1, \nu_2 \in X$ with $\nu_1 \wedge \nu_2 = 1$ and dual basis $\nu_1^* = e^{\lambda_1}, \nu_2^* = e^{\lambda_2}$ is given by
\begin{equation} \label{tl3}
	\thetaLaurent^0( \nu_1, \nu_2) 
	= \frac{1}{4} + \frac{1}{12} \left(\frac{\lambda_1}{\lambda_2}+ \frac{\lambda_2}{\lambda_1}\right) \in  \Q(u_1, u_2).
\end{equation}

Now given auxiliary vectors $\nu_1'$ and $\nu_2'$ 
that are linearly independent from $\nu_1$ and $\nu_2$ respectively, we define
a regularized value of $\thetaLaurent$ by choosing a decomposition of $\thetaLaurent^0(\nu_1,\nu_2)$ as a sum
\begin{equation} \label{decomp} \thetaLaurent^0(\nu_1, \nu_2) = A_1 + A_2,\end{equation}
where $A_i$ is a  homogeneous rational function  in $(u_1, u_2)$ with poles only along the image of $\nu_i$; that is to say, if $\nu_i(t)=(t^{a_1},t^{a_2})$,
then $A_i$ has poles along the line spanned by $(a_1, a_2)$.   In the example above, for instance, we may take $A_1 = \frac{1}{4} + \frac{1}{12} \frac{\lambda_1}{\lambda_2}$ and  $A_2= \frac{1}{12} \frac{\lambda_2}{\lambda_1}$.  
We now define
\begin{equation} \label{tL0} \thetaLaurent^0(\nu_1, \nu_1', \nu_2, \nu_2') = A_1(\nu_1') + A_2(\nu_2') \in \tfrac{1}{12}\Z 
\end{equation}
Here $A_i(\nu_i')$ means that if $\nu_i'(t) = (t^{a_1}, t^{a_2})$, then we evaluate $A_i$ at  $(a_1, a_2)$. 
The decomposition $A_1+A_2$ isn't unique,  but it {\em is} unique up to the constant terms, 
so the right-hand side of \eqref{tL0} doesn't depend on the choice of decomposition.
 We should regard this
as a ``regularized value of $\thetaLaurent^0(\nu_1, \nu_2)$ at zero'',
where $\nu_1'$ and $\nu_2'$ are used to perform the regularization. 
 
\begin{proposition}\label{phi} \
\begin{enumerate}
    	\item[(i)]
    	There is a unique function $\phi \colon \SL_2(\Z) \times \SL_2(\Z) \rightarrow \frac{1}{12} \Z$
    	which is left $\SL_2(\Z)$-invariant and satisfies
     	\begin{equation} \label{phicocycle}  
     		\phi(\gamma_1, \gamma_2) + \phi(\gamma_2, \gamma_3) - \phi(\gamma_1, \gamma_3)  = 
		\delta(\gamma_1 \ell_0, \gamma_2 \ell_0, \gamma_3 \ell_0)
	\end{equation}
	for all $\gamma_1, \gamma_2, \gamma_3 \in \SL_2(\Z)$.
         \item[(ii)] For $\gamma_1, \gamma_2 \in \SL_2(\Z)$, set
         $\nu_i = \R_+ \gamma_i (-1,0)$ and $\nu_i' = \R_+ \gamma_i (0,-1)$ for $i \in \{1,2\}$.  If 
    $\R\nu_1 \neq \R\nu_2$, then 
      $$ \phi(\gamma_1, \gamma_2) =  \thetaLaurent^0(\nu_1, \nu_1', \nu_2, \nu_2').\footnote{One can also compute readily a formula in the other cases $\nu_1 = \pm \nu_2$, but we do not do so here for brevity. }$$
    \item[(iii)]  The function $\SL_2(\Z) \to \varK_2$ given by
  	$$
		\gamma \mapsto 12 f_2(\ell_0, \gamma \ell_0) - 12\phi(I_2, \gamma) \{-z_1, -z_2\} 
	$$
    	is a cocycle lifting $12 \Theta|_{\SL_2(\Z)}$ from $\barK_2$ to $\varK_2$, where $I_2$ denotes the $2$-by-$2$ identity matrix.
\end{enumerate}
\end{proposition}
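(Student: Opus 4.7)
My plan is to prove (i) and (iii) by elementary algebraic manipulations, with the bulk of the content in (ii).  For uniqueness in (i), I would observe that if $\phi$ and $\phi'$ both satisfy the stated conditions, then $\psi(\gamma) := (\phi - \phi')(I_2, \gamma)$ inherits the cocycle relation with vanishing right-hand side; reducing via left-invariance to the form $\psi(\alpha\beta) = \psi(\alpha) + \psi(\beta)$ shows $\psi$ is a group homomorphism $\SL_2(\Z) \to \tfrac{1}{12}\Z$.  Since $\SL_2(\Z)^{\ab} \cong \Z/12\Z$ is torsion while $\tfrac{1}{12}\Z$ is torsion-free, $\psi \equiv 0$.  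Existence of $\phi$ will come from the explicit construction in (ii).

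For (ii), the key input is the degree-zero component of the identity \eqref{property1.5}:
\[ \thetaLaurent^0(\ell_1, \ell_2) + \thetaLaurent^0(\ell_2, \ell_3) - \thetaLaurent^0(\ell_1, \ell_3) = \delta(\ell_1, \ell_2, \ell_3), \]
an equality of rational functions on $\Lie(\gm^2)$.  For three pairwise linearly independent rays $\nu_1, \nu_2, \nu_3$, I would use \eqref{decomp} to write $\thetaLaurent^0(\nu_i, \nu_j) = A^{(ij)}_i + A^{(ij)}_j$ with $A^{(ij)}_k$ having poles only along $\R\nu_k$.  Substituting into the identity and grouping by pole location yields
\[ (A^{(12)}_1 - A^{(13)}_1) + (A^{(12)}_2 + A^{(23)}_2) + (A^{(23)}_3 - A^{(13)}_3) = \delta. \]
Since each parenthesized term has poles along only one line, the three lines are distinct, and the sum equals a constant, each parenthesized term must itself be a constant, with the three constants summing to $\delta$.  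Evaluating via \eqref{tL0} and rearranging recovers exactly the cocycle relation \eqref{phicocycle}.  Left-invariance of $\phi$ follows from the $\SL_2(\Z)$-equivariance of $\thetaLaurent$, and the values land in $\tfrac{1}{12}\Z$ by inspection of \eqref{tl3}.  The degenerate cases $\R\nu_1 = \pm\R\nu_2$ can be defined via \eqref{phicocycle} using an auxiliary $\gamma_3$ in generic position, with consistency guaranteed by the relation just verified.

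Part (iii) would be a direct calculation of $\Xi(\sigma\tau) - \Xi(\sigma) - \sigma^*\Xi(\tau)$, for $\Xi(\gamma) = 12 f_2(\ell_0, \gamma\ell_0) - 12\phi(I_2, \gamma)\{-z_1, -z_2\}$.  Using the $\SL_2(\Z)$-equivariance of $f_2$, \eqref{property} applied to $(\ell_0, \sigma\ell_0, \sigma\tau\ell_0)$, and the left-invariance identity $\phi(I_2, \tau) = \phi(\sigma, \sigma\tau)$, the cocycle condition reduces precisely to \eqref{phicocycle} with $(\gamma_1, \gamma_2, \gamma_3) = (I_2, \sigma, \sigma\tau)$.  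One uses crucially that $\sigma^*\{-z_1, -z_2\} = \{-z_1, -z_2\}$ in $\varK_2$ for $\sigma \in \SL_2(\Z)$, which follows since $\{-z_1, -z_2\} = f_2(1)$ and the constant function $1 \in \Cones_1$ is $\SL_2(\Z)$-invariant by Proposition \ref{quasi-iso}; alternatively, this can be verified directly on the generators $S = \smatrix{0&-1\\1&0}$ and $T = \smatrix{1&1\\0&1}$ using the identity $2\{-1, x\} = 0$.  That $\Xi$ lifts $12\Theta$ follows from \eqref{theta recover} together with the vanishing of $\{-z_1, -z_2\}$ in $\barK_2$.

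The main obstacle will be the partial-fraction argument of (ii): I must justify that the regularization \eqref{tL0} is well-defined (equivalently, that the decomposition \eqref{decomp} is unique up to an additive constant, since any two such decompositions differ by a degree-zero homogeneous rational function with no poles, hence a constant), and then cleanly extract the cocycle relation by the grouping-by-pole-location argument.  Handling the degenerate rays $\R\nu_1 = \pm\R\nu_2$ requires additional bookkeeping but is essentially forced once the generic case is established.
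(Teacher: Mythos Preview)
Your proposal is correct and follows essentially the same route as the paper's proof: uniqueness in (i) via the vanishing of $H^1(\SL_2(\Z),\tfrac{1}{12}\Z)$, existence and (ii) via the partial-fraction-by-pole-location argument applied to the degree-zero part of \eqref{property1.5}, extension to degenerate rays via an auxiliary $\gamma_3$ in generic position, and (iii) as a formal consequence of \eqref{property} and \eqref{phicocycle}. One small point: your justification that the extension to the degenerate case is well-defined (``consistency guaranteed by the relation just verified'') should invoke the homogeneous cocycle identity \eqref{deltacocycle} for $\delta$, as the paper does; and the claim that $\phi$ lands in $\tfrac{1}{12}\Z$ requires more than \eqref{tl3} alone, since that formula covers only the unimodular case---the general case needs the Dedekind-sum computation in Remark~\ref{DR} (the paper itself defers this to the remark).
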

 
 \begin{proof} 
	For the uniqueness in (i), note that the difference between any two such functions $\phi$ is a homogeneous cocycle. The group of 	such cocycles is $H^1(\SL_2(\Z),\frac{1}{12}\Z) = 0$, since the abelianization of $\SL_2(\Z)$ is torsion.
 	Part (iii) follows from the discussion at the beginning of the section, the cocycle being well-defined since $12\phi$ is $\Z$-valued.

 	Take $\gamma_1,  \gamma_2,  \gamma_3 \in \SL_2(\Z)$, and define $\nu_i,  \nu_i'$ for $i \in \{1,2,3\}$ accordingly, as in 
	the discussion preceding the proposition.
 	If the lines $\R \nu_1, \R \nu_2, \R \nu_3$ are all distinct (i.e., not merely the rays, but the lines themselves), then we claim that 
 	\begin{equation} \label{property2} 
		\thetaLaurent^0(\nu_1, \nu_1', \nu_2, \nu_2') + \thetaLaurent^0(\nu_2, \nu_2', \nu_3, \nu_3') - \thetaLaurent^0(\nu_1, \nu_1', \nu_3, \nu_3')  
		=  \delta(\R_+ \nu_1, \R_+ \nu_2, \R_+ \nu_3).
	\end{equation}

 	Let us denote the right-hand side of \eqref{property2} more simply by $\delta$. 
	We are going to deduce the equality of \eqref{property2} from \eqref{property1.5}, replacing the role of $\ell_i$ therein by $\nu_i$. 
	Splitting $\thetaLaurent^0(\nu_i, \nu_j) = A_{ij}+A_{ji}$, 
	where $A_{ij}$ has poles along $\nu_i=0$ and $A_{ji}$ has poles along $\nu_j=0$,
	the left-hand side of \eqref{property1.5}   is
	$$ 
		(A_{12} - A_{13}) + (A_{21}+A_{23}) + (-A_{31} + A_{32}),
	$$
	and since each of the three quantities in parentheses has a distinct polar locus, each
 	must be a constant $c_i$, where these constants values add up to $c_1+c_2+c_3 = \delta$.  {\em A fortiori}, the same is true after evaluating each parenthesized quantity thus:
	$$ 
		(A_{12} - A_{13})(\nu_1') + (A_{21}+A_{23})(\nu_2') +(-A_{31} + A_{32})(\nu_3') = c_1+c_2+c_3=\delta,
	$$
 	which proves \eqref{property2}.  
 
  Define $\phi$ on pairs $(\gamma_1, \gamma_2)$ with $\R \nu_1 \neq \R \nu_2$ by the formula in (ii). 
  The identity \eqref{property2} then expresses precisely that the coboundary computation \eqref{phicocycle} is valid when the $\nu_i$ 
  are non-proportional.
 It also uniquely specifies a way to extend this $\phi$
 to all pairs $(\gamma_1, \gamma_2)$: we just choose $\gamma_3$
 in generic position with respect to both of them and use \eqref{phicocycle} to define $\phi(\gamma_1, \gamma_2)$. 
 That this is independent
 of choice of $\gamma_3$ follows from the cocycle identity \eqref{deltacocycle} for $\delta$.  This proves the remainder of (i) and (ii).
 \end{proof}
 
\begin{remark} \label{DR}
The function $\phi(I_2, \gamma)$ is closely related to the Rademacher $\varphi$-function
(see \cite{KM} as a reference on the latter). 
We evaluate it in the generic case to illustrate this.

  Suppose that the transpose-inverse of $\gamma$ equals  $\smatrix{p & p'\\ q&  q' }$ with $q > 0$. 
 To compute 
$\phi(\mathrm{I}_2, \gamma)$, we must first of all compute $\thetaLaurent^0((-1,0),(-p,-q))$, recalling (ii) of Proposition \ref{phi}. 
By \eqref{tL1}, we must compute the sum 
\begin{equation} \label{rswt}  \sum_{(n_1, n_2) \in \Z^2 \cap C} e^{-n_1u_1 - n_2u_2},
\end{equation}
where $C$ is the cone spanned by the dual basis  $(-1,\frac{p}{q}), (0,-\frac{1}{q})$ to $(-1,0), (-p,-q)$.
 Now $$\Z^2 \cap C = \{ \alpha (1, -\tfrac{p}{q}) + \beta(0, \tfrac{1}{q})  \mid   \beta \equiv p \alpha \bmod q,\
 \alpha, \beta \in \Z_{\ge 0} \}.$$
Writing $\lambda_1=u_1-\frac{p}{q}u_2$ and $\lambda_2=\frac{1}{q}u_2$ we compute \eqref{rswt} as
 $$    \sum_{\substack{(\alpha, \beta) \in \Z_{\geq 0}^2\\\beta \equiv p \alpha \bmod q}} e^{\alpha \lambda_1 +\beta \lambda_2} = 
\sum_{(\alpha, \beta) \in \Z_{\geq 0}^2} q^{-1}  \left(\sum_{\zeta \in \mu_q} \zeta^{ \beta-p\alpha}\right)
 e^{\alpha \lambda_1 + \beta \lambda_2} = 
 q^{-1}  \sum_{\zeta \in \mu_q} \frac{1}{(1-\zeta^{-p} e^{\lambda_1})(1-\zeta e^{\lambda_2})}.$$
All terms above except the term for $\zeta=1$ are already regular at $(0,0)$;  
the $\zeta=1$ term contributes $\frac{1}{4} +  \frac{1}{12} (\lambda_1 \lambda_2^{-1} 
 + \lambda_2 \lambda_1^{-1})$ by the same computation as \eqref{tl3}, and thus
 the degree zero term equals 
$$  \frac{1}{q}  \left(  \frac{1}{4} +  \frac{1}{12} \left(\frac{\lambda_1}{\lambda_2} 
 + \frac{\lambda_2}{\lambda_1}\right) \right)  + \frac{1}{q} \sum_{\zeta \in \mu_q - \{1\}} \frac{1}{(1-\zeta)(1-\zeta^{-p})}
 $$
 The second term equals $\frac{1}{4} - \frac{1}{4q} + s(p,q)$, where $s(p,q)$ is the standard 
 Dedekind sum: see \cite[(18a) and (33a)]{RG}.
 Noting that
\begin{eqnarray*} \frac{\lambda_1}{q \lambda_2}\Big|_{(0,-1)} =- \frac{p}{q} &\mr{and}&
 \frac{\lambda_2}{q \lambda_1}\Big|_{(-p', -q')} = 
 \frac{q'/q}{qp'-pq'} =
-  \frac{q'}{q},
\end{eqnarray*} 
we get 
 $$ \phi(I, \gamma) =   \frac{1}{4} +   s(p,q)  - \frac{1}{12} \frac{p+q'}{q}    = 
  \frac{\varphi(\gamma)}{12} +\frac{1}{4}
 $$
 if $q > 0$ by \cite[Theorem 2.2]{KM}.
  \end{remark}
 
\subsection{Interpretation via equivariant motivic cohomology} \label{eqmotivic1}
Let us explain how the constructions of this section should be regarded as providing
a class in equivariant motivic cohomology, and 
  outline how one recovers our cocycle directly from this.
  Our construction is {\em ad hoc};  a suitable theory of equivariant motivic cohomology is not (to our knowledge) developed in the literature.

 To simplify our discussion,  we take coefficients in $\Z' := \Z[\frac{1}{6}]$;
 all cohomology groups should be understood with $\Z'$-coefficients. 
 Let $\mathsf{D}_{\Gamma}$ be the derived category of $\Z'[\Gamma]$-modules for $\Gamma=\GL_2(\Z)$. 
  Let $\varK^{\circ}$ be defined analogously to the complex $\varK$ 
 of \eqref{bigcomplex}, but taking $\Z'$-coefficients and replacing
 $\gm^2$ by $\gm^2-\{1\}$; we grade it cohomologically so that it becomes 
 supported in degrees $[-2,0]$.
This complex computes the motivic cohomology of $\gm^2-\{1\}$ with $\Z(2)$-coefficients in degrees $[2,4]$.
With our grading, the motivic cohomology in degree $4+i$ is the cohomology of $\varK^{\circ}$ in degree $i$ for $i \in [-2,0]$. 

As a provisional definition of a particular equivariant motivic cohomology group, we set
$$H^{3}_{\Gamma}(\mathbb{G}_m^2-\{1\},2) = \Hom_{\mathsf{D}_{\Gamma}}(\Z', \varK^{\circ}[-1]).$$
Now  $\varK^{\circ}$ does not compute the motivic cohomology of $\gm^2-\{1\}$
in full, only its truncation to degrees $2$ and greater.  In place of $\varK^{\circ}$, a proper definition of motivic cohomology
would employ a complex (e.g., of Bloch or Voevodsky) which computes the full motivic cohomology of $\gm^2-\{1\}$.\footnote{In fact, $\varK^{\circ}$ is the image of a morphism from a complex which does compute motivic cohomology, i.e., the total complex of a quasi-isomorphic truncation of the double complex underlying the coniveau spectral sequence.} 
However, since $\Gamma$ has no cohomology in degrees greater than $2$ upon inverting $6$,
the above would be isomorphic to a more reasonable definition of equivariant $H^3$. 

Let us produce a class in this $H^3_{\Gamma}$. 
Lemma \ref{nabla} and Proposition \ref{quasi-iso} together furnish a map 
$$
	h \colon \Chains_*(S^1) \rightarrow \varK^{\circ}[-1]
$$
in $\mathsf{D}_{\Gamma}$.
Since  $\Chains_*(S^1)$ has cohomology in degrees $-1,0$, the standard action of $\Gamma$ on $S^1$ induces
an exact triangle 
$$
 	\Z'(\det)[1] \rightarrow  \Chains_*(S^1) \rightarrow \Z'
$$
in $\mathsf{D}_{\Gamma}$, where $(\det)$ refers to twisting the action by $\det \colon \Gamma \rightarrow \langle - 1\rangle$.
Had we taken $\Z$-coefficients, the resulting extension class in $H^2(\Gamma,\Z(\det))$ would have been
the equivariant Euler class of $\R^2$ (i.e., the Euler class of the vector bundle on the classifying space
 $\mr{EGL}_2(\Z)/\GL_2(\Z)$ of $\GL_2(\Z)$ given by $(\mr{EGL}_2(\Z) \times \R^2)/\GL_2(\Z)$).
On the other hand, since $H^i(\Gamma,\Z'(\det)) = 0$ for $i \in \{1,2\}$,
 there is a unique splitting in $\mathsf{D}_{\Gamma}$:
 $$\Chains_*(S^1)  \cong \Z' \oplus \Z'(\det)[1]$$
 compatible with the above sequence. 
 In this way, $h$ splits into components  $h=h_{-1}+h_{-2}[1]$ with 
 $h_{-i} \in  \Hom_{\mathsf{D}_{\Gamma}}(\Z'(\det^{i-1}), \varK^{\circ}[-i])$ for $i \in \{1,2\}$.
This $h_{-1}$ gives a class in
  $ H^3_{\Gamma}(\mathbb{G}_m^2-\{1\},2)$,
and the class of $\Theta$ should be (we did not check details) recovered 
via 
\begin{equation} \label{les}  
 H^3_{\Gamma}(\mathbb{G}_m^2-\{1\}, 2)  \xrightarrow{\mathrm{restrict}} 
 H^3_{\Gamma}( \Q(\mathbb{G}_m^2), 2) \xrightarrow{\mr{s.s.}} H^1(\Gamma, K_2  \Q(\mathbb{G}_m^2)),\end{equation}
 where the last map comes out of a spectral sequence 
 $H^i(\Gamma,H^j(\Q(\gm^2),2)) \Rightarrow H^{i+j}_{\Gamma}(\Q(\gm^2),2)$
 computing equivariant cohomology in terms of $\Gamma$-cohomology on motivic cohomology. 
  
\section{The square of a universal elliptic curve} \label{E2} 
 In the present section we construct the
big cocycles ${}_n \Theta$ of \eqref{bigcocycE2} for primes $n \nmid N$.
Here, the role of $\Gm^2$ is replaced by the self-product $\mc{E}^2$ of the universal
elliptic curve over a modular curve.
As in the $\Gm$-case, our analysis is based on a homological complex
$\varK$ in degrees $[2,0]$
that computes the motivic cohomology $H^{4-i}(\mc{E}^2, 2)$. 
 Two key differences are:
 
\begin{itemize}
\item The motivic cohomology of $\mc{E}^2$ is more complicated than that of $\gm^2$. However, 
through the theory of Fourier-Mukai transform, one can obtain
a reasonable understanding of various isotypical pieces under trace maps. We employ work of Deninger and Murre, taking care with the coefficients. 

\item The complex $\varK$ is not exact in degree zero (even after taking fixed parts).   This has the following consequence. In the 
$\Gm$-case, we made use of an element $e \in \varK_0$
that is the class of the identity of $\Gm^2$. The analogue here arises from the identity section of $\mc{E}^2$, but this
is no longer a boundary from $\varK_1$. The element $e_n \in \varK_0$ that we use is supported on $n$-torsion
for an auxiliary integer $n$: see \eqref{endef}. The symbols we work with have correspondingly more involved definitions but do satisfy the same relations as before.
 \end{itemize}

The contents of the various subsections are as follows.
In \S \ref{FM}, we give an integral refinement of a result of Deninger-Murre \cite{dm} on the decomposition of the motivic cohomology of an abelian variety into isotypical components for the action of trace maps, with a particular view towards the fixed parts that we employ.  In \S \ref{abstract}, we give an abstract construction of a cocycle $\Theta^Z$ as in \eqref{abstractcocycG2} attached to a trace-fixed, $\GL_2(\Z)$-invariant, degree zero formal sum $Z$ of points.  In \S \ref{symbolsE2}, we define our explicit symbols in the terms of the big complex and show that they are
trace-fixed.  \S \ref{expcocycE2} contains the construction of the cocycles ${}_n\Theta$.  In \S \ref{HeckeE}, we consider the compatibility of ${}_n \Theta$ with two types of prime-to-level Hecke operators, those acting on $\GL_2(\Z)$-cocycles and those arising from as correspondences on motivic cohomology.  In Theorem \ref{heckeE2}, we prove that  the two resulting actions agree on the class of ${}_n \Theta$.

\subsection{Fixed parts via the Fourier-Mukai transform} \label{FM}

Let $Y$ be a smooth, separated, connected scheme of finite type over a field $F$ of characteristic $0$,
and let $A$ be a family of abelian varieties of relative dimension $g$ over $Y$.  
Set $d = \dim Y$. 

Let $\Z' = \Z[\frac{1}{(2g+1)!}]$.
For any integer $i$, we set 
$$
	H^i(A,\Z'(g)) = H^i(A,g) \otimes_{\Z} \Z'.
$$ 
As in Section \ref{tracemaps}, there are trace maps $[m]_*$ on $H^i(A,\Z'(g))$.  There are also pullback maps $[m]^*$,
and since multiplication by $m$ has degree $m^{2g}$ on $A$, we have the relation 
\begin{equation} \label{tracepullback}
	[m]_* [m]^* =  m^{2g}.
\end{equation}

We next prove that $H^i(A,\Z'(g))$ is the sum of its isotypic components for pullback maps. (We are eventually interested 
in trace maps $[m]_*$ as these can also be defined for open subschemes, but we will deduce such results from those on pullbacks.)
 The argument  follows \cite[Theorem 2.19]{dm}, with an appeal to the integral Grothendieck-Riemann-Roch of Pappas \cite{pappas} to allow us to work over $\Z'$.  

\begin{theorem} \label{fixedgp}	
	For $i \in \Z$, each class $\alpha \in H^i(A,\Z'(g))$ is the sum of components $\alpha = \sum_{s=0}^{2g} \alpha_s$
	where $[m]^* \alpha_s = m^{2g-s} \alpha_s$ for all $m \in \mathbb{N}$.   
\end{theorem}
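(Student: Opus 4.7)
The plan is to upgrade to $\Z'$-coefficients the rational Beauville--Deninger--Murre decomposition of $H^*(A,\Q(*))$ into simultaneous $[m]^*$-eigenspaces with eigenvalues $m^{2g-s}$ for $0 \le s \le 2g$, which is proven over $\Q$ in \cite{dm}. The rational version is established via a Fourier--Mukai argument, and our task is to ensure that the only primes requiring inversion are those dividing $(2g)!$.

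The strategy is to define a Fourier--Mukai operator on motivic cohomology and exploit it to produce explicit projectors. Let $\hat{A}/Y$ be the dual abelian scheme, $\mathcal{P}$ the normalized Poincar\'e bundle on $A \times_Y \hat{A}$, and $p_1, p_2$ the two projections. I would set
$$ \mathcal{F}(x) = p_{2*}\bigl(\exp(c_1(\mathcal{P})) \cup p_1^*(x)\bigr), $$
and observe that the exponential Chern character introduces denominators no worse than $(2g)!$, so that $\mathcal{F}$ is defined on motivic cohomology with $\Z'$-coefficients by Pappas' integral Grothendieck--Riemann--Roch \cite{pappas}. The key identities $\mathcal{F}^2 = (-1)^g [-1]^*$ and $\mathcal{F} \circ [m]^* = m^g \cdot [m]_* \circ \mathcal{F}$ then follow from their Chow-theoretic counterparts, and together with the elementary degree relation $[m]_* \circ [m]^* = m^{2g}$ they force $[m]^*$ to satisfy the polynomial identity $\prod_{s=0}^{2g}([m]^* - m^{2g-s}) = 0$ on $H^i(A,\Z'(g))$.

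From here, the Deninger--Murre argument proceeds by expanding the exponential Chern character according to the K\"unneth grading, producing explicit projectors $\pi_s$ as polynomials in the K\"unneth components of $\mathcal{F}$. I would verify step by step that all of the rational formulas of \cite{dm} continue to hold over $\Z'$; once this is done, $\sum_s \pi_s = \mathrm{id}$ furnishes the decomposition $\alpha = \sum_s \pi_s(\alpha)$, with each $\pi_s(H^i(A,\Z'(g)))$ contained in the $m^{2g-s}$-eigenspace of $[m]^*$ for every $m$. Independence from $m$ is immediate because all the $[m]^*$ commute, $A$ being commutative.

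The main obstacle is the integral bookkeeping throughout the Fourier--Mukai and projector constructions: one must verify that the denominators introduced by the exponential Chern character, by the K\"unneth decomposition of $\mathcal{F}$, and by the explicit formulas for the Deninger--Murre projectors are all controlled by $(2g)!$. Pappas' integral refinement of Grothendieck--Riemann--Roch supplies the crucial input, and the remaining checks are essentially a repackaging of the rational argument with careful attention to denominators.
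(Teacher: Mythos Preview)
Your proposal is essentially the paper's own approach: Fourier--Mukai on motivic cohomology, with Pappas' integral Grothendieck--Riemann--Roch controlling the denominators so that the Deninger--Murre inversion formula $\mathcal{F}^\vee\circ\mathcal{F}=(-1)^g[-1]^*$ holds over $\Z'$. The paper's projectors are simply the graded pieces $\alpha_s=\mathcal{F}^\vee_{2g-s}\circ\mathcal{F}_s\bigl((-1)^g[-1]^*\alpha\bigr)$, where the grading is by the power of $c_1(\mathcal{P})$ in the Chern character (not a K\"unneth grading); a short degree-and-weight count on $H^i(A,\Z'(g))$ kills all cross terms with $s+t\neq 2g$, so that $\sum_s\alpha_s=\alpha$, and the image of $\mathcal{F}^\vee_{2g-s}$ automatically sits in the $m^{2g-s}$-eigenspace of $[m]^*$.

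One small caution: the three identities you list ($\mathcal{F}^\vee\mathcal{F}=(-1)^g[-1]^*$, $\mathcal{F}\circ[m]^*=m^g[m]_*\circ\mathcal{F}$, $[m]_*[m]^*=m^{2g}$) do not by themselves force the polynomial identity $\prod_{s=0}^{2g}([m]^*-m^{2g-s})=0$; that identity is a \emph{consequence} of the decomposition, which in turn comes from the Chern-degree grading of $\mathcal{F}$ together with the weight argument above. Relying on the polynomial identity as an intermediate step would also cost you extra denominators (Lagrange interpolation), so stick with the direct construction of $\alpha_s$ via $\mathcal{F}^\vee_{2g-s}\circ\mathcal{F}_s$.
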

  
\begin{proof} 
        Let $A^{\vee}$ be the dual abelian scheme 
        that represents $\mr{Pic}_{A/Y}^0$.  
        Let $\mathrm{P}$ be the Poincar{\'e} bundle on $A \times_{Y} A^{\vee}$.
        We may form the Chern character in motivic cohomology: 
        $$  
        		e^{c_1(\mathrm{P})} := \sum_{r=0}^{2g+d}   \frac{c_1(\mathrm{P})^r}{r!}    \in 
        	\bigoplus_{r=0}^{2g+d} H^{2r}(A \times_{Y} A^{\vee}, \Z[\tfrac{1}{r!}](r)).
        $$
        Note that the  powers of $c_1(\mathrm{P})$ beyond $2g+d$ vanish, 
        because they lie in a Chow group that is evidently zero. 
 	Since $(1 \times [m]^*) \mathrm{P} \cong \mathrm{P}^{\otimes m}$, we have
        $$
        		(1 \times [m]^*) c_1(\mathrm{P})^r = m^r c_1(\mathrm{P})^r.
        $$
        
        The diagram 
       \begin{equation} \label{FMdiag}
        	A  \xleftarrow{\pi_1} A\times_{Y} A^{\vee} \xrightarrow{\pi_2} A^{\vee}
        \end{equation}
        provides a morphism defined on $\alpha \in H^i(A,\Q(g))$ by 
        $$
        		\mathcal{F}(\alpha) = (\pi_2)_* (\pi_1^* \alpha \cup e^{c_1(\mathrm{P})}),
	$$
	known as the Fourier-Mukai transform.
	By its definition, $\mathcal{F}$ breaks up as a sum of operators,
	with the $r$th component $\mathcal{F}_r$ corresponding to $\frac{c_1(\mathrm{P})^r}{r!}$.  That is, 
	$\mathcal{F} = \sum_{r=0}^{2g+d} \mc{F}_r$, where, paying attention to denominators, we have
	\begin{equation} \label{FT}
		\mc{F}_r \colon H^i(A,\Z(g)) \to H^{i+2(r-g)}(A^{\vee},\Z[\tfrac{1}{r!}](r)).
        \end{equation}
       	Any element of the image of $\mc{F}_r$ transforms under the image of each $[m]^*$ by $m^r$.
	
	Now, let $\mc{F}^{\vee}$ be defined dually, with the dual abelian variety $A^{\vee}$ in place of $A$. 
	For motivic cohomology with $\Q$-coefficients, Deninger and Murre show in \cite[Corollary 2.22]{dm} that 
	\begin{equation} \label{FT0} \mc{F}^{\vee} \circ \mc{F}=(-1)^g[-1]^* \end{equation} 
	(for usual Chow groups, but the argument applies equally to higher Chow groups).\footnote{It may be 
	helpful to note that \eqref{FT0} is not a formality  as it is, for example, at the level of coherent sheaves. 
	After applying Grothendieck-Riemann-Roch to the coherent sheaf equality, one needs to verify certain
	equalities of Todd classes;  these classes vanish with rational cohomology over $\C$
	because they arise from flat bundles, but these arguments do not apply in the current setting. Rather,
	Deninger and Murre first show this   \cite[Proposition 2.13]{dm} on $H^i(A,\Q(g))$ 
	up to terms in cohomological degree greater than $i$.  That equality holds on the nose follows from the less precise statement
         without any appeal to the coefficients used.    }
         
         The use of rational coefficients in \cite{dm} is mandated not just by the denominators in the Chern character, but
         by two applications of the Grothendieck-Riemann-Roch theorem (GRR), both of which arise from \cite[Lemma 2.8]{dm}
         and are used in the subsequent proposition. 
         The first and most consequential application is for the projection morphism 
         $A^{\vee} \times_S A \to A$, and the second is for the identity section $e_A \colon S \to A$. 
         
        	 Recall that GRR concerns the behavior of the cup product $\mr{CT}(\mc{G}) = e^{c_1(\mc{G})} \cup \mr{td}(T_X)$ 
         of the Chern character of a coherent sheaf $\mc{G}$ on a smooth quasi-projective variety $X$ over $F$ and
         the Todd class of the tangent bundle $T_X$ of $X$ under
         pushforward by a projective morphism $f \colon X \to Y$, where $Y$ is another such variety.  For $e$ the relative
         dimension, it says more precisely regarding the degree $2r$ component that 
         $$
         	\mr{CT}_r(f_*(\mc{G})) = f_*\mr{CT}_{r+e}(\mc{G}) \in H^{2r}(Y,\Q(r)).
         $$
      	 By \cite[Theorem 2.2]{pappas}, 
         GRR remains true integrally if $F$ has characteristic $0$ upon inverting the primes dividing $(e+r+1)!$ if $e \ge 0$
         and $(r+1)!$ if $e < 0$.
         In the two cases of interest to us, we are concerned with $\mr{CT}_r$ for $r \le g$, and $e = g$ and $e = -g$, respectively.
         In particular, both applications of GRR go through with coefficients in $\Z'=\Z[\frac{1}{(2g+1)!}]$.
	Consequently, \eqref{FT0} remains valid with coefficients in $\Z'$, and we will use it, as such, in what follows. 
	
	Now let us return to the equality \eqref{FT0}, which we examine when restricted to $H^i(A, \Z'(g))$. 
	We may write $\mc{F}^{\vee} \circ \mc{F}  = \sum_{s= 0}^{2g+d} \sum_{t=0}^{2g+d} \mc{F}^{\vee}_{t} \circ \mc{F}_s$,
	and  by definition the composition $\mc{F}^{\vee}_{t} \circ \mc{F}_s$, restricted to 
	$H^i(A, \Z'(g))$, has image in $H^{i+ 2(t-g) + 2(s-g)}(A,\Z'(s+t-g))$. Therefore \eqref{FT0}
	implies that each such term with $t+s \neq 2g$ must vanish. 
	In other words, 
	$$
		\mc{F}^{\vee} \circ \mc{F}|_{H^i(A, \Z'(g))} = \sum_{r=0}^{2g} \mc{F}^{\vee}_{2g-s} \circ \mc{F}_s = (-1)^g [-1]^* |_{H^i(A, \Z'(g))},
	$$
	where we now understand all the operators to act on motivic cohomology with $\Z'$-coefficients.
 
       	 Take $\alpha \in H^i(A,\Z'(g))$, and apply $\mathcal{F}^{\vee} \circ \mathcal{F}$ to $\alpha' := (-1)^g [-1]^* \alpha$.
	 Now \eqref{FT0} implies that the result is $\alpha$,
	 so writing $\alpha_s =  \mathcal{F}_{2g-s}^{\vee} \mathcal{F}_s \alpha' \in H^i(A,\Z'(g))$,
	 we have
	       	\begin{equation} \label{alphaarg}
		\alpha = \sum_{s=0}^{2g} \alpha_s,
       	\end{equation}
	Then $[m]^*$ acts as $m^{2g-s}$ on $\alpha_s$ for all $m \ge 1$, as required. 
\end{proof}	

We can now compute the fixed parts of motivic cohomology groups under trace maps for integers relatively prime to a fixed positive
integer $n$.  Let $\mb{N}_n$ denote the monoid of positive integers prime to $n$.  We consider the groups $H^i(A,\Z'(g))$ as
$\Z'[\mb{N}_n]$-modules for the trace maps.  For $s \ge 0$, set
$$
	H^i(A,g)^{(s)}  = \{ \xi \in H^i(A,\Z'(g)) \mid ([m]_*-m^s)\xi = 0 \text{ for all } m \in \mb{N}_n \}.
$$
        
\begin{proposition} \label{fixedpartsFM}  
	We have a direct sum decomposition
	$$
		H^i(A,\Z'(g)) = \bigoplus_{s = 0}^{2g} H^i(A,g)^{(s)},
	$$
	of $\Z'[\mb{N}_n]$-modules, which is natural in $A$ over $Y$. The group $H^i(A,g)^{(0)}$ is zero unless $i = 2g$,
	and $H^{2g}(A,g)^{(0)}$ is naturally isomorphic to $\Z'$ as a $\Z'[\mb{N}_n]$-module.
\end{proposition}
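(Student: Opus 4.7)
The plan is to observe that the Fourier--Mukai argument of Theorem~\ref{fixedgp} simultaneously controls both the pullback and pushforward actions, yielding the direct-sum decomposition with the claimed eigenvalues for $[m]_*$. My approach has three stages: establish the pushforward eigenvalue formula for the Fourier--Mukai projectors, upgrade the resulting decomposition to a direct sum over $\Z'$, and identify the $(0)$-component.

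First I would verify that the projectors $\pi_s := \mathcal{F}^\vee_{2g-s}\mathcal{F}_s\circ((-1)^g[-1]^*)$ from Theorem~\ref{fixedgp}, already known to land in the $m^{2g-s}$-eigenspace of $[m]^*$, also land in the $m^s$-eigenspace of $[m]_*$. The key identity is
\[
    [m]_*\mathcal{F}^\vee(\eta) = \sum_r m^{2g-r}\mathcal{F}^\vee_r(\eta),
\]
which I would prove by base change along the Cartesian square relating $[m]\circ\pi_1$ and $\pi_1\circ([m]\times 1)$, the projection formula along $[m]\times 1$ (using that $\pi_2\circ([m]\times 1)=\pi_2$), and the identity $([m]\times 1)_*c_1(\mathrm{P})^r = m^{2g-r}c_1(\mathrm{P})^r$. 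The last identity follows from $([m]\times 1)^*c_1(\mathrm{P})^r = m^r c_1(\mathrm{P})^r$ combined with the degree relation $([m]\times 1)_*([m]\times 1)^* = m^{2g}$; integrality with coefficients in $\Z[\tfrac{1}{(2g)!}]$ is handled by Pappas's integral Grothendieck--Riemann--Roch, as in the proof of Theorem~\ref{fixedgp}. Applied to $\alpha_s$, this gives $[m]_*\alpha_s = m^s \alpha_s$ for all $m \geq 1$.

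Next I would upgrade the existence decomposition $\alpha = \sum_s \alpha_s$ to a direct sum over $\Z'=\Z[\tfrac{1}{(2g+1)!}]$. Pairwise orthogonality amounts to showing the simultaneous $[m]_*$-eigenspaces $H^i(A,g)^{(s)}$ intersect trivially for $s\neq s'$: an element $\alpha$ in the intersection satisfies $(m^s-m^{s'})\alpha = 0$ for every $m\in\mathbb{N}_n$. Writing $m^s - m^{s'} = m^{\min(s,s')}(m^{|s-s'|}-1)$ and varying $m$, a prime $p$ survives in the gcd only when $p$ divides $m^{|s-s'|}-1$ for every $m\in\mathbb{N}_n$ coprime to $p$; this forces the exponent of $(\Z/p)^\times$ to divide $|s-s'|\leq 2g$, hence $p\leq 2g+1$. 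The gcd therefore divides a power of $(2g+1)!$ and is a unit in $\Z'$, so $\alpha = 0$. Uniqueness of the $\alpha_s$ follows from pairwise orthogonality, and naturality in $A$ over $Y$ is inherited from the Fourier--Mukai construction.

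Finally, for the identification of $H^i(A,g)^{(0)}$, observe that the $s=0$ projector factors as $\mathcal{F}^\vee_{2g}\mathcal{F}_0$, where $\mathcal{F}_0(\alpha) = (\pi_2)_*\pi_1^*\alpha$ is the total-degree map $H^i(A,g)\to H^{i-2g}(A^\vee,0)$. Since $H^j(A^\vee,0)=0$ for $j\neq 0$ and $H^0(A^\vee,0)\cong\Z$ by connectedness, the image vanishes unless $i=2g$; in that case composing with $\mathcal{F}^\vee_{2g}$ lands in the $\Z'$-span of the zero-section class in $H^{2g}(A,g)$. The main obstacle I anticipate is the integrality of the pushforward identity $([m]\times 1)_*c_1(\mathrm{P})^r = m^{2g-r}c_1(\mathrm{P})^r$ in stage one---extracting it from $m^r([m]\times 1)_*c_1(\mathrm{P})^r = m^{2g}c_1(\mathrm{P})^r$ requires ruling out possible $m^r$-torsion---together with the sharp denominator bound in stage two, where confirming that no primes beyond those in $(2g+1)!$ need inversion is the most delicate bookkeeping.
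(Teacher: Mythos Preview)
Your overall plan is sound and Stage 3 matches the paper, but Stages 1 and 2 diverge from the paper's argument in a way that leaves the torsion obstacle you identify unresolved.

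The paper does \emph{not} try to prove the exact identity $[m]_*\alpha_s = m^s\alpha_s$ via Fourier--Mukai. Instead it uses only the elementary degree relation $[m]_*[m]^* = m^{2g}$ of \eqref{tracepullback}: since $[\ell]^*\alpha_s = \ell^{2g-s}\alpha_s$ by Theorem~\ref{fixedgp}, one gets $\ell^{2g-s}[\ell]_*\alpha_s = \ell^{2g}\alpha_s$, hence $\ell^{2g}([\ell]_*-\ell^s)\alpha_s = 0$. This is exactly your torsion-affected statement, but the paper absorbs the ambiguity into the construction of the projector. Setting
\[
\phi_t(\ell)=\prod_{s\neq t}\ell^{2g}([\ell]_*-\ell^s),
\]
each factor with index $s\neq t$ annihilates $\alpha_s$, while on $\alpha_t$ the extra $\ell^{2g}$ in every factor kills the $\ell^{2g-t}$-torsion discrepancy, so $\phi_t(\ell)\alpha_t = r_t(\ell)\alpha_t$ with $r_t(\ell)=\prod_{s\neq t}\ell^{2g}(\ell^t-\ell^s)$. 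One then takes a $\Z'$-combination $\phi_t=\sum_j c_j\phi_t(\ell_j)$ with $\sum_j c_j r_t(\ell_j)=1$; the denominator bound $\Z'=\Z[\tfrac{1}{(2g+1)!}]$ arises because for $p>2g+1$ one can choose $\ell$ a primitive root mod $p$ so that $p\nmid r_t(\ell)$. These $\phi_t\in\Z'[\mb{N}_n]$ are orthogonal idempotents summing to $1$, giving the direct-sum decomposition directly.

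So the paper's route replaces your Stage 1 identity (which you could not extract from $m^r([m]\times 1)_*c_1(\mathrm{P})^r = m^{2g}c_1(\mathrm{P})^r$ without controlling $m^r$-torsion, and which Pappas's GRR does not obviously supply) with a projector built to swallow precisely that torsion. Your Stage 2 gcd argument is correct in spirit but note that pairwise triviality of intersections does not by itself give directness of a sum of more than two summands; the paper's idempotent construction handles this automatically.
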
       
         
 \begin{proof}       
 	Write $\alpha \in H^i(A,\Z'(g))$ as $\alpha = \sum_{s=0}^{2g} \alpha_s$ as in Theorem \ref{fixedgp}. 
        	It follows from \eqref{tracepullback} that, for any prime $\ell \nmid n$, we have $(\ell^{2g-s}[\ell]_*- \ell^{2g})\alpha_s = 0$ for each $s$.           	For each $0 \le t \le 2g$, let
        \begin{equation} \label{projectop}
        		\phi_t(\ell) =\prod_{\substack{s=0\\ s\neq t}}^{2g}   \ell^{2g} ([\ell]_*-\ell^s)
	\end{equation}
        	so that $\phi_t(\ell)\alpha = \phi_t(\ell)\alpha_t$.  Then $\phi_t(\ell)$ acts on $\alpha_t$
	by the scalar 
   	$$
        		r_t(\ell) =\prod_{\substack{s=0\\ s \neq t}}^{2g}  \ell^{2g} (\ell^t-\ell^s).
	$$
	There exist primes $\ell_1, \ldots, \ell_h$  not dividing $n$ and
	$c_1, \ldots, c_h \in \Z'$ such that $\sum_{j=1}^h c_j r_t(\ell_j) = 1$.
	(If $p > 2g+1$, then
	$p$ doesn't divide $r_t(\ell)$ whenever $\ell$ is a primitive root modulo $p$.) 
 	The element
	\begin{equation} \label{phitdef}
		\phi_t = \sum_{j=1}^h c_j \phi_t(\ell_j) \in \Z'[\mb{N}_n]
	\end{equation}
	then satisfies $\phi_t(\alpha) =  \alpha_t$. 
	This element $\phi_t$ defines a projection of $H^i(A,\Z'(g))$ onto $H^i(A,g)^{(t)}$;	 so $H^i(A,g)^{(t)}$ is a $\Z'[\mb{N}_n]$-module direct summand of $H^i(A,\Z'(g))$.  
	
	Suppose now that $\alpha \in H^i(A,g)^{(0)}$.  Then $\alpha = \phi_0(\alpha) = \alpha_0$.
	Referring to \eqref{alphaarg}, this $\alpha$ is necessarily of the form
	$\mathcal{F}^{\vee}_{2g} \beta_0$, where 
	 $\beta_0 \in H^{i-2g}(A^{\vee},\Z')$.  Since 
        $H^{i-2g}(A^{\vee},\Z') = 0$ for $i \neq 2g$, we actually have
        $\alpha = 0$ unless $i = 2g$.  For $i = 2g$, we have the canonical identification 
        $H^0(A^{\vee},\Z') \cong \Z'$, and
        $\mc{F}^{\vee}_{2g}$  carries $1 \in \Z'$ 
        (up to sign) to the fundamental class of the zero section in $H^{2g}(A, \Z'(g))$.  
        To verify the final statement, we have by \eqref{FT0} that  
       $\mathcal{F}_0$ carries the fundamental
        class of the zero section to a generator for $H^0(A^{\vee}, \Z')$. In summary, $H^{2g}(A, g)^{(0)}$
        is a free $\Z'$-module of rank one, generated by the fundamental class of the zero section. 
\end{proof}
  
  
\subsection{The abstract cocycles} \label{abstract}

Let us now specialize to the case of an elliptic curve $E$ over a smooth separated, connected scheme $S$ of finite type over a field $F$.
We will apply Theorem \ref{fixedgp} to $A = E^2$ over $S$.   For the remainder of this section, we set
$$
	\Z' = \Z[\tfrac{1}{30}].
$$

Just as in the case of $\mathbb{G}_m$, we write down a complex computing the cohomology of $E^2$.
Recall from Example \ref{n=2} that the complex $\varK$
given in homological degrees $2$ to $0$ by
\begin{equation} \label{basiccomplex} K_2 k(E^2)  \rightarrow   \bigoplus_{D} K_1 k(D)  \rightarrow 
\bigoplus_{x} K_0 k(x), \end{equation}
the sums being taken over irreducible divisors and codimension $2$ points of $E^2$ respectively, 
computes (from left to right) the cohomology $H^*(E^2, 2)$ in degrees $2$ to $4$. 

Unlike the case of $\gm^2$, none of these cohomology groups of $E^2$ need vanish.
However, by Remark \ref{tracemapsrmk}, the complex admits trace maps $[m]_*$.   Set 
 $$ \varK^{(0)} = \{ \alpha \in \varK \otimes_{\Z} \Z' \mid ([p]_*-1)\alpha = 0 \text{ for all but finitely many primes $p$} \}.$$
Then $\varK^{(0)}$ can be regarded as the direct limit   $  \varinjlim_{n} ({}_n \varK^{(0)})$,
over integers $n$ 
ordered by divisibility,  of complexes 
  ${}_n \varK^{(0)}$ defined by the fixed part of $\varK \otimes_{\Z} \Z'$ under all $m \in \mb{N}_n$.

\begin{lemma} \label{bigcplxleftex}
	The sequence $0 \to \varK_2^{(0)} \to \varK_1^{(0)} \to \varK_0^{(0)}$ is exact. 
\end{lemma}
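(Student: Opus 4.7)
The plan is to identify the first and second homology of $\varK$ with motivic cohomology groups of $E^2$ via the coniveau spectral sequence, and then to invoke Proposition \ref{fixedpartsFM} to conclude vanishing of the $(0)$-isotypic parts. Running Theorem \ref{coniveauthm} for $E^2$ with $n = 2$, the only nonzero rows of the $E_1$-page are the row $q = 2$, which is the complex $\varK$, and the isolated entry $E_1^{0,1} = H^1(k(E^2),2)$; all other terms vanish for motivic-cohomology degree reasons. Every potential higher differential $d_r$ for $r \geq 2$ has vanishing target, so the spectral sequence collapses at $E_2$. Reading off diagonals, I obtain canonical isomorphisms $H_2(\varK) \cong H^2(E^2,2)$ and $H_1(\varK) \cong H^3(E^2,2)$, the stray $(0,1)$-entry contributing only to $H^1(E^2,2)$. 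By Proposition \ref{fixedpartsFM} applied with $g = 2$---noting that $\Z[\tfrac{1}{30}]$ coincides with $\Z[1/(2g+1)!]$ as a subring of $\Q$---both $H^2(E^2,2)^{(0)}$ and $H^3(E^2,2)^{(0)}$ vanish.

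Injectivity at $\varK_2^{(0)}$ then follows immediately: any $\alpha \in \varK_2^{(0)}$ with $\partial \alpha = 0$ lies in $\ker \partial_2 = H^2(E^2,2) \otimes \Z'$, and being $(0)$-fixed places it in $H^2(E^2,2)^{(0)} = 0$. For exactness at $\varK_1^{(0)}$, I would start with $\beta \in \varK_1^{(0)}$ satisfying $\partial \beta = 0$; picking $n$ so that $\beta \in {}_n\varK_1^{(0)}$, the class $[\beta] \in H^3(E^2,2) \otimes \Z'$ inherits the $(0)$-property and hence vanishes, yielding some $\alpha \in \varK_2 \otimes \Z'$ with $\partial \alpha = \beta$. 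To promote $\alpha$ into the fixed subcomplex I would apply the projector $\phi \in \Z'[\mb{N}_n]$ constructed in the proof of Proposition \ref{fixedpartsFM}; this $\phi$ acts as the identity on the $(0)$-isotypic part and as zero on the other isotypic parts of any motivic cohomology group of $E^2$. Since $\phi$ is a polynomial in the trace maps $[m]_*$ and these commute with $\partial$ by Remark \ref{tracemapsrmk}, one has $\partial(\phi\alpha) = \phi\beta = \beta$. Moreover, for any prime $m \in \mb{N}_n$ (enlarging $n$ to exclude the primes used in constructing $\phi$), the element $([m]_* - 1)\alpha$ lies in $\ker \partial_2 = H^2(E^2,2) \otimes \Z'$, on which $\phi$ acts as zero because $H^2(E^2,2)^{(0)} = 0$. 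Therefore $([m]_* - 1)\phi\alpha = \phi([m]_* - 1)\alpha = 0$, and $\phi\alpha \in \varK_2^{(0)}$ is the desired preimage of $\beta$.

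The main subtlety will be that the polynomial identity $\prod_s([\ell]_* - \ell^s) = 0$ which underpins the existence of $\phi$ holds on the cohomology groups $H^i(E^2,2)$ (by the Fourier--Mukai argument of Theorem \ref{fixedgp}) but need not hold on the terms of $\varK$ themselves, so $\phi$ is not strictly idempotent on the chain level. The key technical device in the exactness argument above---namely that the error term $([m]_*-1)\alpha$ already lies in $\ker \partial_2$, where the projector behaves as expected---is what lets one transfer the cohomology-level vanishing to the chain-level construction needed here.
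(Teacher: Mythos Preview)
Your proof is correct and follows essentially the same route as the paper's: both use the coniveau identification of $H_i(\varK)$ with $H^{4-i}(E^2,2)$, invoke Proposition~\ref{fixedpartsFM} for the vanishing of $H^2(E^2,2)^{(0)}$ and $H^3(E^2,2)^{(0)}$, lift an arbitrary preimage $\alpha\in\varK_2\otimes\Z'$, and then correct it by applying the projector $\phi_0\in\Z'[\mb{N}_n]$, using that $([m]_*-1)\alpha$ lands in $\ker\partial_2\cong H^2(E^2,2)\otimes\Z'$ where $\phi_0$ vanishes. Your parenthetical about ``enlarging $n$ to exclude the primes used in $\phi$'' is unnecessary---all trace maps commute, so $\phi_0$ commutes with every $[m]_*$---but it does no harm since $\varK^{(0)}$ is a direct limit.
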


\begin{proof}
	It is enough to prove the same assertion for ${}_n \varK^{(0)}$, since
	the claim then follows by taking the direct limit.  In the following discussion, ``fixed parts'', or a superscript ``$(0)$'',
	refers to being fixed under $\mathbb{N}_n$. 
 	
	Consider the exact sequence  
	$$
		0 \to H^2(E^2,\Z'(2)) \to \varK_2 \xrightarrow{\partial_2} \ker(\varK_1 \to \varK_0) \to H^3(E^2,\Z'(2)) \to 0
	$$
	of $\Z'[\mb{N}_n]$-modules.  
	The map on fixed parts induced by $\partial_2$
	is injective as $H^2(E^2,2)^{(0)}$ is trivial by Proposition \ref{fixedpartsFM}.
	
	If $y \in   {}_n\varK_1^{(0)}$ has trivial residue (i.e., dies in $\varK_0$), then it maps to $H^3(E^2,2)^{(0)}$, which equals $0$
	by Proposition \ref{fixedpartsFM}. 
	Thus, there exists $x \in \varK_2$ with $\partial_2(x) = y$.  For any $m \in \mb{N}_n$, since
	$[m]_*-1$ annihilates $y$, the element $([m]_*-1) x$ lies in the kernel of $\partial_2$. 
	So, there in turn exists $z_m \in H^2(E^2,\Z'(2))$ that maps to $([m]_*-1) x$. 
	Equation \eqref{phitdef} provides an element $\phi_0 \in \Z'[\mb{N}_n]$ that projects any $H^i(E^2,\Z'(2))$
	onto its fixed subspace. For $i = 2$, the fixed part is   trivial, so   
	$$
		([m]_*-1)\phi_0x = \phi_0([m]_*-1)x = \phi_0 z_m = 0.
	$$
	In other words, we have $\phi_0 x \in {}_n \varK_2^{(0)}$.   
	Moreover, $\phi_0$ fixes any 
	element of ${}_n \varK_1^{(0)}$, so $\phi_0 y=y$. Since
	$$
		\partial_2(\phi_0 x) = \phi_0 y 
		= y,
	$$
	the sequence is exact at ${}_n \varK_1^{(0)}$.
\end{proof}


Now there is a  surjective
 \emph{degree map} 
$$
	\deg \colon \varK_0^{(0)} \to \Z',
$$
obtained by composing $\varK_0^{(0)} \rightarrow H^4(E^2, 0)^{(0)}$
with the isomorphism of the latter group with $\Z'$ furnished by
Proposition \ref{FM}.

\begin{proposition} \label{abstractcocyc}
	Let $Z \in \varK_0^{(0)}$ be a $\GL_2(\Z)$-fixed class with $\deg(Z) = 0$ 
	such that there exists\footnote{We must assume the existence of $\eta$ because 
	it	
	is not clear that the resulting sequence $0 \to \varK_2^{(0)} \to \varK_1^{(0)} \to \varK_0^{(0)} \to \Z' \to 0$ should be exact at
	$\varK_0^{(0)}$.}
	 $\eta \in \varK_1^{(0)}$ with $\partial \eta = Z$.  Then
	there is a $1$-cocycle
	$$ 
		\Theta^Z \colon \GL_2(\Z) \longrightarrow  \varK_2^{(0)}, \quad \gamma \mapsto \Theta^Z_{\gamma},
	$$
	where $\Theta^Z_{\gamma}$ is uniquely characterized by the
	property that
  	$$
		\partial\Theta^Z_{\gamma} = (\gamma^*-1) \eta.
	$$
	Moreover, the class of $\Theta^Z$ is independent of the choice of $\eta$.
\end{proposition}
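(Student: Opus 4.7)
\medskip

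The plan is to use the left exactness of the sequence $0 \to \varK_2^{(0)} \to \varK_1^{(0)} \to \varK_0^{(0)}$ established in Lemma \ref{bigcplxleftex} to produce the values $\Theta^Z_{\gamma}$ uniquely, and then mimic the argument of Proposition \ref{existence} from the $\gm^2$-case. The only genuine new feature relative to that earlier argument is the question of independence from the choice of $\eta$, which must be addressed because here $Z$ is no longer rigid in the way that the class $e$ was.

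First, I would verify that for each $\gamma \in \GL_2(\Z)$, the element $(\gamma^* - 1)\eta \in \varK_1^{(0)}$ lies in the kernel of $\partial_1$. Indeed, since $Z$ is $\GL_2(\Z)$-fixed and $\partial_1$ commutes with the $\GL_2(\Z)$-action (the action being pullback along right multiplication, and boundary maps being equivariant, as recalled in \S\ref{smallercomplex}), we compute
\begin{equation*}
\partial_1((\gamma^* - 1)\eta) = (\gamma^* - 1)\partial_1 \eta = (\gamma^* - 1)Z = 0.
\end{equation*}
By Lemma \ref{bigcplxleftex}, the map $\partial_2 \colon \varK_2^{(0)} \to \varK_1^{(0)}$ is injective with image equal to $\ker \partial_1$, so there exists a unique $\Theta^Z_{\gamma} \in \varK_2^{(0)}$ with $\partial \Theta^Z_{\gamma} = (\gamma^* - 1)\eta$.

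Next, I would check the cocycle identity $\Theta^Z_{\gamma\gamma'} = \gamma^* \Theta^Z_{\gamma'} + \Theta^Z_{\gamma}$. Applying $\partial_2$ to both sides and using that pullback is a left action gives
\begin{equation*}
\partial_2(\Theta^Z_{\gamma\gamma'}) = (\gamma^*(\gamma')^* - 1)\eta = \gamma^*((\gamma')^* - 1)\eta + (\gamma^* - 1)\eta = \partial_2(\gamma^* \Theta^Z_{\gamma'} + \Theta^Z_{\gamma}),
\end{equation*}
and the cocycle property follows from the injectivity of $\partial_2$ on $\varK_2^{(0)}$.

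Finally, for the independence of the cohomology class from the choice of $\eta$, suppose $\eta'$ is another lift of $Z$ with $\eta' \in \varK_1^{(0)}$, and write $\Theta^{Z,\eta}, \Theta^{Z,\eta'}$ for the corresponding cocycles. The difference $\eta - \eta'$ lies in $\ker \partial_1$, so by left exactness there exists a unique $\xi \in \varK_2^{(0)}$ with $\partial_2 \xi = \eta - \eta'$. Then
\begin{equation*}
\partial_2(\Theta^{Z,\eta}_{\gamma} - \Theta^{Z,\eta'}_{\gamma}) = (\gamma^* - 1)(\eta - \eta') = (\gamma^* - 1)\partial_2 \xi = \partial_2((\gamma^* - 1)\xi),
\end{equation*}
so by injectivity of $\partial_2$ we obtain $\Theta^{Z,\eta}_{\gamma} - \Theta^{Z,\eta'}_{\gamma} = (\gamma^* - 1)\xi$, which exhibits the difference as the coboundary of $\xi$. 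The only step I would double-check carefully is that all the maps and elements genuinely land in the fixed-part subcomplex $\varK^{(0)}$; this is automatic because pullback by $\gamma \in \GL_2(\Z)$ commutes with the trace maps $[m]_*$ (its action commutes with multiplication by $m$ on $E^2$), and the element $\xi$ lives in $\varK_2^{(0)}$ by the same uniqueness argument as for $\Theta^Z_{\gamma}$.
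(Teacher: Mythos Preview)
Your proof is correct and follows essentially the same approach as the paper: use the left exactness from Lemma \ref{bigcplxleftex} to obtain the unique $\Theta^Z_\gamma$, verify the cocycle relation as in Proposition \ref{existence}, and for independence of $\eta$ lift $\eta-\eta'$ to some $\psi\in\varK_2^{(0)}$ so that the difference of cocycles is the coboundary of $\psi$. Your additional remarks about why the relevant elements remain in the fixed part are accurate and simply make explicit what the paper leaves implicit.
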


\begin{proof}
	By Lemma \ref{bigcplxleftex},
	a unique $\Theta^Z_{\gamma} \in \varK_2^{(0)}$ 
	with residue $(\gamma^*-1)\eta$ exists.  That the resulting function $\Theta^Z$ is a cocycle follows 
	just as in Proposition \ref{existence}.
	
	If $\eta' \in \varK_1^{(0)}$ also satisfies $\partial\eta' = Z$, then $\eta'$ gives rise to another cocycle $\Theta'$.
	By the left exactness in Lemma \ref{bigcplxleftex}, there exists $\psi \in \varK_2^{(0)}$ with $\partial\psi = \eta-\eta'$.  The cocycles
	 $\Theta^Z$ and $\Theta'$ are cohomologous since $\Theta^Z_{\gamma} - \Theta'_{\gamma} = (\gamma^*-1)\psi$.
\end{proof}

Note that if we can also choose $\eta$ to be fixed by a parabolic subgroup
of $\GL_2(\Z)$, then the argument of Proposition \ref{parabolic} implies that $\Theta^Z$ is parabolic (with the same meaning as in that proposition). 

In the remaining sections, we specialize to the case that $E$ is the universal elliptic curve over a modular curve $Y_1(N)$.
In this setting, we will proceed more computationally and produce not only a particularly nice choices of $Z$
(supported on torsion) 
but also nice choices of $\eta$ 
entirely parallel to the $\gm$-case. 
To do this, we first of all set up a class of natural symbols in $\varK$ with which we can compute.

\subsection{Symbols}  \label{symbolsE2}

Fix an integer $N \geq 4$. 
We will work over the base scheme $Y := Y_1(N)$ over $\Q$
whose $S$-points for a $\Q$-scheme $S$ parameterize pairs $(E, P)$
of an elliptic curve $E/S$ and a section $P$ of $E[N]$ 
that is everywhere of exact order $N$ (i.e., the associated map from $\Z/N\Z$ to $E[N]$ is a closed immersion 
of group schemes over $S$).
Though we often omit $N$ from the notation, it should be understood throughout the
remainder of this section that we are working at level $\Gamma_1(N)$. 

Our elliptic curve will be taken to be the universal elliptic curve $\mc{E}$ over $Y$.   Let $\pi \colon \mc{E} \to Y$ be the structure morphism. 
We shall write, for short, 
$$ \mc{E}^2 =\mc{E} \times_Y \mc{E}$$
for the square of the universal elliptic curve over $Y$.  We let $\pi_i \colon \mc{E}^2 \to \mc{E}$ for $i \in \{1,2\}$ denote the $i$th projection map.

It will often be useful to add auxiliary $\Gamma_0(m)$-structure to $Y$. 
For a positive integer $m$ prime to $N$, let $Y_m$ denote the modular curve over $\Q$ 
corresponding to level structure $\Gamma_1(N) \cap \Gamma_0(m)$.
For a $\Q$-scheme $S$, the points of $Y_m(S)$ are equivalence classes of triples $(E,P,K)$, where
$(E, P) \in Y(S)$ and $K$ is an \'etale-locally cyclic $S$-subgroup scheme of order $m$.

\subsubsection{Symbols on $\mc{E}$} \label{symboldef}

Fix a prime number $n \nmid N$.  Denote by $\mc{E}'$ the pullback of $\mc{E}$ from $Y$ to  the modular curve $Y' = Y_n$.
 The curve $\mc{E}'$ is equipped with a canonical cyclic subgroup scheme $\mc{K}$ of order $n$. 

We first define some auxiliary divisors and rational functions on $\mc{E}'$ and $\mc{E}$ that we use to construct our symbols in the
big complex $\varK$.  Note that any $S$-subgroup scheme $\mathsf{G} \subset E[n]$
  of the $n$-torsion of an  elliptic curve $E$ over a  base variety $S$  defines a class in $H^0(E[n],0)^{(0)}$. 
  Namely, $\mathsf{G}$ is a union of connected components of $E[n]$, 
and we associate to  $\mathsf{G}$   the sum of these components
considered in $H^0(E[n], 0)$; this is automatically $[m]_*$-fixed for $m$ relatively prime to $n$.
\begin{itemize}
\item Set 
\begin{align} \label{deltadef}
	\newf = {}_n \newf = n^2(0) - \mc{E}[n] \in H^0(\mc{E}[n],0)^{(0)} &&\mr{and}&&
	\newf'  = {}_n \newf' = n \mc{K} - \mc{E}'[n] \in H^0(\mc{E}'[n],0)^{(0)}.
\end{align}
\end{itemize}
 
Note that we have an exact sequence
$$
	0 \to H^1(\mc{E},1) \to H^1(\mc{E}-\mc{E}[n],1) \to H^0(\mc{E}[n],0) \to H^2(\mc{E},1) \to 0.
$$
Since 
$H^2(\mc{E},1)^{(0)} \cong \Z'$ by Theorem \ref{fixedgp},
and any element of $H^1(\mc{E},1)$ is necessarily an invertible local constant,  a variant of an argument of Kato
\cite[1.10]{kato}\footnote{Kato works with $\Z$-coefficients but avoids $n \in \{2,3\}$.  Uniqueness of a trace fixed element with
a given trace fixed degree zero divisor follows from $H^1(\mc{E},1)^{(0)} = 0$.
Existence follows from the stronger statement that $H^1(\mc{E},1) = H^1(\mc{E},1)^{(2)}$ (see the proof of Lemma \ref{actiononunits}), 
the commutativity of trace maps, and the fact that the greatest common divisor of all $\ell^2-1$ for $\ell$ prime to $n$ divides $24 \in (\Z')^{\times}$.}
yields an exact sequence
\begin{equation} \label{thetaseq}
	0 \to H^1(\mc{E}-\mc{E}[n],1)^{(0)} \xrightarrow{\mathrm{div}} H^0(\mc{E}[n],0)^{(0)} \xrightarrow{\deg} \Z' \to 0,
\end{equation}
where the degree map  is surjective since the class defined by the zero-section has degree $1 \in \Z'$.  
We also have the analogous sequence for $\mc{E}'$.
Since $\newf$ and $\newf'$ have degree zero, we may make the following definition.

\begin{itemize}
\item 
Let 
\begin{eqnarray*}
	\theta = {}_n \theta \in H^1(\mc{E} - \mc{E}[n],1)^{(0)} &\mr{and}& \theta' = {}_n \theta' \in H^1(\mc{E}' - \mc{E}'[n],1)^{(0)}
\end{eqnarray*}
be the unique ``theta functions'' with divisors given by $\delta$ and $\delta'$:
\begin{eqnarray} \label{thetasdef}  
	\partial \theta = \delta &\mr{and}& \partial \theta'=\delta'.	
\end{eqnarray}
\end{itemize}

The morphisms $Y' \rightarrow Y$ and $\mc{E}' \rightarrow \mc{E}$, as well as $(\mc{E}')^2 \rightarrow \mc{E}^2$, 
where we write
$$ 
	(\mc{E}')^2 = \mc{E}^2 \times_{Y} Y',
$$
are finite {\'e}tale of degree $n+1$.\footnote{This is the first point where we use that $n$ is prime. Though it should be possible to extend our constructions below to general $n$, from our point of view it would unnecessarily complicate the discussion.}
Let us denote the norm (i.e., pushforward) maps on motivic cohomology induced by these morphisms by $\Norm$.   Not only do these norms act only on the motivic cohomology of $\mc{E}'$,
but by Lemma \ref{restra}, they also give give a map of complexes $\varK' \rightarrow \varK$,  with $\varK'$ being the analogue of $\varK$ for $(\mc{E}')^2$.
  For example, in the direct sum of zero $K$-groups of residue fields of codimension $1$ points on $\mc{E}$, we have $\Norm(\mc{K}) = \mc{E}[n] + n (0)$ 
and $\Norm(\mc{E}'[n]) =  (n+1)\mc{E}[n]$. (For the first, for example, the norm of $\mc{K}$ gives on each elliptic curve fiber
of $\mc{E} \rightarrow Y$ the sum of {\em all} cyclic subgroups of order $n$, which counts the origin with multiplicity $n+1$ and all other points with multiplicity $1$.)
Thus, we have
$$ \Norm(\newf') = n\Norm(\mc{K}) - (n+1)\mc{E}[n] = n^2(0) - \mc{E}[n] = \newf.$$
 
\subsubsection{Symbols on $\mc{E}^2$} \label{symbolsE2subsec}

We continue to fix an auxiliary prime $n \nmid N$.
We are going to define 
symbols $\langle a,c \rangle_n \in \varK_1^{(0)}$ for primitive pairs $(a,c) \in \Z^2-\{0\}$ and $\langle \gamma \rangle_n \in \varK_2^{(0)}$ for $\gamma \in \GL_2(\Z)$ satisfying relations identical to \eqref{boundary}, but now with the degree zero element $e$ in the $\gm^2$-setting replaced by a special $\GL_2(\Z)$-fixed and trace-fixed cycle $e_n$ that depends on our choice of $n$. The symbols, which also depend on $n$, allow us to give an explicit description of the
abstract cocycle $\Theta^Z$ of Proposition \ref{abstractcocyc} in the case that $Z=e_n$. 

As before, an element of $\Delta = M_2(\Z) \cap \GL_2(\Q)$ provides a morphism
$\mc{E}^2 \rightarrow \mc{E}^2$ 
over $Y$ via right multiplication.   
We denote by $T_n^{\varK}$ the operator on $\varK$ given by the sum of pullbacks by the representatives $g_j$ of \eqref{Gjdef} (replacing $\ell$ by $n$).
While the operator $T_n^{\varK}$ depends on the choice of these coset representatives, its action on $\GL_2(\Z)$-invariant elements does not.  

 \begin{itemize}
	\item 
	In $\varK_0$, we form the element
	\begin{equation} \label{endef}
		 \newE_n = n \left( n^3(0) - nT_n^{\varK}(0) + \mc{E}[n]^2 \right).
	\end{equation}
	Here, we view $H^0(\mc{E}[n]^2,0)$ as a subgroup of $\varK_0$ by the map taking a formal sum of irreducible cycles in $\mc{E}[n]^2$ to the corresponding element of the direct sum of copies of $\Z$ given by the zeroth $K$-groups of those cycles.
The element $\newE_n$ is $\GL_2(\Z)$-fixed as a sum of fixed terms.\footnote{To see this for $T_n^{\varK}(0)$, recall that left multiplication of $g_j$ by an element of $\GL_2(\Z)$ is right multiplication of some $g_{j'}$ by an element of $\GL_2(\Z)$, and $(0)$ is $\GL_2(\Z)$-fixed.}  Note that 
$\newE_n = V_n^{\varK}(0)$, where
\begin{equation} \label{Vn0}
	V_n^{\varK} = n^4 - n^2T_n^{\varK} + n[n]^*.
\end{equation}
The element $e_n$ has degree zero as $T_n^{\varK}$ has degree $n(n+1)$ and $[n]^*$ has degree $n^2$. 
We will explain the significance of this particular choice of $\newE_n$ in Remark \ref{enremark}.

 	\item In $\varK_1$, we form
	\begin{equation} \label{10def}
		\langle 1, 0 \rangle_n =  \newf \boxtimes \theta - \Norm (\newf' \boxtimes \theta').
	\end{equation}
	The external product $\newf \boxtimes \theta$ here should be understood to mean the restriction of the function  
	$\pi_2^* \theta$ on $\mc{E} \times_Y (\mc{E}-\mc{E}[n])$ to the divisor defined by $\pi_1^{-1}(\newf)$.  This defines 
	a class in the direct sum of the multiplicative groups of function fields of the irreducible divisors composing 
	$\mc{E}[n] \times_Y \mc{E}$, so also an element of $\varK_1$.  
	Similarly, $\newf' \boxtimes \theta' \in \varK'_1$ is the external product with respect to $(\mc{E}')^2 = \mc{E}' \times_{Y'} \mc{E}'$.  	 

	More generally, we set
	$$
		\langle a,c \rangle_n = \gamma^*\langle 1,0 \rangle_n  \in \varK_1.
	$$
	where	$\gamma = \smatrix{a & b \\ c & d} \in \SL_2(\Z)$ is arbitrary with first column $(a,c)$.

	\item  In $\varK_2$, we form  
	\begin{equation} \label{1001def}
		\langle \smatrix{1&0\\0&1} \rangle_n = \theta \boxtimes \theta - \Norm (\theta' \boxtimes \theta').
	\end{equation}
 	 Here, $\theta \boxtimes \theta$ denotes the Steinberg symbol $\{\pi_1^* \theta, \pi_2^* \theta\}$, and $\theta' \boxtimes
	 \theta' \in \varK'_2$ is defined analogously. 
	 
	 In general, for $\gamma = \smatrix{a & b \\ c & d} \in \GL_2(\Z)$, we set
	 \begin{equation} \label{gammalrndef} 
	 	\langle \gamma \rangle_n  = \gamma^* \langle \smatrix{1&0\\0&1} \rangle_n \in \varK_2.
	 \end{equation} 
 \end{itemize}
 
In Lemma \ref{actiononunits}, we will show that $\langle a,c \rangle_n$ is independent of the second column of $\gamma$ used in
 defining it. For now, let us fix such a choice and show that our symbols are $[m]_*$-fixed for all $m \in \mb{N}_n$. 
 
\begin{lemma} \label{Symbols are Fixed}
	The symbols $e_n$, $\langle a,c \rangle_n$, and $\langle \gamma \rangle_n$ defined above satisfy
 	$$
		e_n \in \varK_0^{(0)},\  \langle a,c \rangle_n \in \varK_1^{(0)}, \text{ and } \langle \gamma \rangle_n\in \varK_2^{(0)}.
	$$
\end{lemma}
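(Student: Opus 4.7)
The plan is to verify trace-fixedness for the three base symbols $e_n$, $\langle 1,0\rangle_n$, and $\langle\smatrix{1&0\\0&1}\rangle_n$, and then deduce fixedness of $\langle a,c\rangle_n = \gamma^*\langle 1,0\rangle_n$ and $\langle\gamma\rangle_n = \gamma^*\langle\smatrix{1&0\\0&1}\rangle_n$ by commuting $\gamma^*$ past $[m]_*$. For the commutation: since $\gamma \in \GL_2(\Z)$ and $[m]$ are both $\Z$-linear endomorphisms of the abelian surface $\mc{E}^2$, they commute as morphisms, and because $\gamma$ is an isomorphism, the square they form is Cartesian. Lemma \ref{pullbackdiagram} then gives $\gamma^*\circ[m]_* = [m]_*\circ\gamma^*$, which by the termwise description of both operators (Remark \ref{tracemapsrmk} and \S \ref{smallercomplex}) lifts to the complex $\varK$.

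For $e_n$, each of the constituent cycles $(0)$, $T_n^{\varK}(0)$, and $\mc{E}[n]^2$ is supported on $\mc{E}^2[n]$: this is immediate for $(0)$ and $\mc{E}[n]^2$, and for $T_n^{\varK}(0) = \sum_j g_j^*(0)$ it follows because each $g_j$ has determinant $n$ as an integer matrix acting on $\mc{E}^2$, so $\ker(g_j) \subset \mc{E}^2[n]$. For $m \in \mb{N}_n$, multiplication by $m$ restricts to an automorphism of $\mc{E}^2[n]$ that preserves every subgroup (since $m$ is a unit modulo $n$, it is an automorphism of any subgroup of exponent $n$). Therefore $[m]_*$ fixes each of the three cycles and hence fixes $e_n$.

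For $\langle 1,0\rangle_n$ and $\langle\smatrix{1&0\\0&1}\rangle_n$, each of the ingredients $\newf, \newf', \theta, \theta'$ is $[m]_*$-fixed: $\newf$ and $\newf'$ are sums of whole components of $n$-torsion group schemes (fixed by the same argument as for $e_n$), and $\theta, \theta'$ lie in the $(0)$-parts of their respective $H^1$'s by the defining condition \eqref{thetasdef}. The exterior-product identity $[m]_*(\nu_1\boxtimes\nu_2) = [m]_*\nu_1\boxtimes[m]_*\nu_2$ of Example \ref{product example} adapts directly to the relative products $\mc{E}\times_Y \mc{E}$ and $\mc{E}'\times_{Y'} \mc{E}'$, since $[m]$ factors as a product of the multiplication-by-$m$ maps on the two factors. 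Hence $\newf\boxtimes\theta$, $\theta\boxtimes\theta$, and their primed analogues are $[m]_*$-fixed. Finally, the norm $\Norm$ associated to the \'etale cover $p\colon (\mc{E}')^2 \to \mc{E}^2$ commutes with $[m]_*$: the identity $[m]\circ p = p\circ [m]'$ of morphisms yields $[m]_*\circ \Norm = \Norm\circ [m]'_*$ by functoriality of pushforward. Combining, both base symbols are $[m]_*$-fixed.

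The chief technical subtlety is verifying that these compatibilities---the commutation of $\gamma^*$, exterior products, and norms with $[m]_*$---are really valid on the complex $\varK$ rather than merely on motivic cohomology of the total spaces. This is handled by Remark \ref{tracemapsrmk} and Lemma \ref{restra}: the trace, norm, and pullback operations on $\varK$ are all assembled termwise from norms and inclusions of Milnor $K$-theory of residue fields, and the required commutativities reduce to functoriality of these Milnor $K$-theoretic operations along commuting pairs of finite field extensions.
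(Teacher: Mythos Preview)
Your proof is correct and follows essentially the same approach as the paper: verify $[m]_*$-fixedness of the ingredients $\delta, \delta', \theta, \theta'$, pass to exterior products via the factorization of $[m]$, commute the norm $\Norm$ with $[m]_*$, and handle $e_n$ by noting that $[m]$ is an automorphism of the relevant $n$-torsion subgroup schemes. You are slightly more explicit than the paper about the reduction to the base symbols via $\gamma^*[m]_* = [m]_*\gamma^*$, while the paper additionally records that the square formed by $g_j$ and $[m]$ is Cartesian (which is equivalent to your direct argument for $T_n^{\varK}(0)$).
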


\begin{proof} 	Let $m \in \mb{N}_n$.
	First, note that $\delta$, $\theta$, $\delta'$, $\theta'$ are $[m]_*$-fixed, and therefore their exterior products are as well
	(see Example \ref{product example}).  
	Since $\mc{E}' \to \mc{E}$ commutes with the multiplication-by-$m$ map $[m]$, 
	the norm maps $\mathrm{N}$ in \eqref{10def} and \eqref{1001def} commute with $[m]_*$.  
	The Hecke operator $T_n^{\varK}$ in \eqref{endef} commutes with $[m]_*$ in that each
 	$$
 		\begin{tikzcd}
  		\mc{E}^2 \arrow{d}{[m]} \arrow{r}{g_j} & \mc{E}^2 \arrow{d}{[m]} \\
  		\mc{E}^2 \arrow{r}{g_j} & \mc{E}^2 
  		\end{tikzcd}
  	$$
 	for $0 \le j \le n$ is a Cartesian square.
	Also, $\mc{E}[n]^2$ and each $g_j^*(0)$ is $[m]_*$-fixed since 
	$[m]$ is an automorphism of the corresponding subgroup schemes.  It follows that all of the symbols are $[m]_*$-fixed.
\end{proof} 

 \begin{remark} \label{enremark}
 Let us explain where the strange definition of $ \newE_n$  in \eqref{endef} comes from. 
The main issue at hand is that one cannot find a function on $\mc{E}$ with a single pole at the origin,
and therefore (if one is to produce explicit formulas) one needs to choose a $\GL_2(\Z)$-fixed element of
$\varK_0^{(0)}$ somewhat carefully. We will sketch  the important
feature that this particular formula has. 

Take a geometric point $s$ of $Y$ with associated elliptic curve $E = \mc{E}_s$,
and fix a basis  for $E[n]$, i.e., an isomorphism of abelian groups $E[n] \simeq (\Z/n\Z)^2$.
This then identifies  $E[n] \times E[n]$ with $(\Z/n\Z)^2 \times (\Z/n\Z)^2$;
regarding the two copies of $(\Z/n\Z)^2$ as the top and bottom rows of a $2 \times 2$ matrix,
we may thus regard $E[n] \times E[n] \simeq M_2(\Z/n\Z)$. 

Using these coordinates,   the fiber of $e$ above $s$ 
 is the formal sum $\sum_{M \in M_2(\Z/n\Z)} \phi_n(M) M$, where
$$  
	\phi_n(M) =  \begin{cases} 
	n^4-n^3-n^2+n & \ifs \rank(M) = 0, \\ n-n^2  & \ifs \rank(M) = 1, \\ n & \ifs \rank(M) = 2. \end{cases} 
$$
In detail, this function $\phi_n$ is the sum of three functions $\phi_n^{0}$, $\phi_n^1$, and $\phi_n^2$ 
corresponding to the three terms in \eqref{endef}: $\phi_n^{(0)}$, arising from the term $n^4 (0)$,
equals $n^4$ in rank zero and is otherwise zero; $\phi_n^{(1)}$, arising from the term 
$-n^2T_n^{\varK}(0)$, equals $-n^2 \mathrm{deg}(T_n) = -n^3-n^2$ in rank $0$ and $-n^2$ in rank $1$,
and finally $\phi_n^{(2)}$, arising from the term $n E[n]^2$, is simply the constant function with value $n$.

The significance of this particular function $\phi_n$ is that if we push it forward to a $\Z$-valued function on $(\Z/n\Z)^2$
along any of the maps
$$M_2(\Z/n\Z) \rightarrow (\Z/n\Z)^2$$
which come by taking product with a fixed element of $(\Z/n\Z)^2$, 
then the result is {\em zero}. This characterizes it up to a scalar amongst $\GL_2(\Z/n\Z)$-invariant functions. 

Let us explain why this is a natural property to ask for. 
In the context of Proposition \ref{abstractcocyc}, if one wants
an explicit formula for $\Theta^Z$ as an external product
of $\theta$-functions, it is natural to ask that
$Z$ be an external product of the divisors of those $\theta$-functions.
In the coordinates just introduced, these correspond to functions
on $M_2(\Z/n\Z)$ of the form $\Phi(M)= f_1(M_1) f_2(M_2)$, where $M_1$ and $M_2$ are the rows of $M$
 and the $f_i \colon (\Z/n\Z)^2 \rightarrow \Z$ both satisfy $\sum_{x \in \Z/n\Z} f_i(x) = 0$. 
 If such a function $\Phi$ is additionally $\GL_2(\Z/n\Z)$-invariant,
 then its pushforward to $(\Z/n\Z)^2$ along any map $M \rightarrow vM$ with $\nu \in (\Z/n\Z)^2$ is zero.
\end{remark}

\subsection{The explicit cocycle for $n$} \label{expcocycE2}
 
We turn to the construction of our cocycle for a prime $n \nmid N$ and the verification of its explicit formula in terms of the symbols of \S \ref{symbolsE2}.  Recall from \S\ref{cocycle} that a cocycle is parabolic if it has trivial image in the cohomology of all stabilizers of 
nonzero elements of $\Z^2$ under the right action of $\GL_2(\Z)$.  Much as in Proposition \ref{expform}, for $\gamma \in \GL_2(\Z)$ with columns $v_1$ and $v_2$, we write $\langle v_1, v_2 \rangle_n$ for $\langle \gamma \rangle_n$.  Recall also that we defined the notion of a connecting sequence in \S \ref{cocycle}.   
\begin{theorem} \label{expformE2} Let $n$ be a prime not dividing $N$.
\begin{itemize}
\item[a.]
	There is a parabolic $1$-cocycle
	${}_n\Theta \colon \GL_2(\Z) \to \varK_2^{(0)}$
	uniquely characterized  by
		\begin{equation} \label{Thetachar} \partial({}_n\Theta_{\gamma}) 
		= (\gamma^*-1)\langle 0,1 \rangle_n \end{equation}  for all $\gamma \in \GL_2(\Z)$.
\item[b.] 		
	For $\gamma = \smatrix{a&b\\c&d} \in \GL_2(\Z)$ and a connecting sequence $(v_i)_{i=0}^k$ for $\gamma$, we have
	\begin{equation} \label{Thetachar2} 
		{}_n \Theta_{\gamma} = \sum_{i=1}^k \langle v_i, -v_{i-1} \rangle_n.
	\end{equation}
 \end{itemize}
 \end{theorem}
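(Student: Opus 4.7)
The construction proceeds in close analogy with the $\gm^2$-case of Propositions \ref{existence} and \ref{expform}.  My strategy for part (a) is to invoke Proposition \ref{abstractcocyc} with the $\GL_2(\Z)$-invariant class $Z = e_n \in \varK_0^{(0)}$ and the chosen lift $\eta = \langle 0, 1\rangle_n \in \varK_1^{(0)}$.  The hypotheses of Proposition \ref{abstractcocyc} require verification of: (i) $\langle 0, 1\rangle_n \in \varK_1^{(0)}$, which is Lemma \ref{Symbols are Fixed}; (ii) $e_n$ is $\GL_2(\Z)$-fixed of degree zero, which is already built into the definition of $e_n$ via the operator $V_n^{\varK}$ of \eqref{Vn0} and the fact that $\deg T_n^{\varK} = n(n+1)$ and $\deg [n]^* = n^2$; and (iii) the boundary identity $\partial \langle 0, 1\rangle_n = e_n$, or equivalently (by $\GL_2(\Z)$-equivariance of $\partial$ and the $\GL_2(\Z)$-invariance of $e_n$) the identity $\partial \langle 1, 0\rangle_n = e_n$.

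The substantive obstacle is this last identity.  Its verification requires an explicit computation of the tame symbol on $\delta \boxtimes \theta - \Norm(\delta' \boxtimes \theta')$: after pushing $\theta$ through the residue one obtains contributions along codimension-two points of $\mc{E}^2$ and $(\mc{E}')^2$, and after tracking how $\Norm$ combines components via Lemma \ref{restra}, these residues must collapse to precisely $n\bigl(n^3(0) - nT_n^{\varK}(0) + \mc{E}[n]^2\bigr)$; the three coefficients appearing in the definition of $e_n$ are engineered (cf.\ Remark \ref{enremark}) for exactly this cancellation to work, with the auxiliary norm term contributing what the $n \cdot T_n^{\varK}(0)$ piece subtracts.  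Granted this, Proposition \ref{abstractcocyc} produces the cocycle, and uniqueness at the cocycle level (not merely up to coboundary) follows directly from the injectivity of $\partial$ on $\varK_2^{(0)}$ established in Lemma \ref{bigcplxleftex}.

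Part (b) is then a formal consequence, verbatim the argument of Proposition \ref{expform}: given a connecting sequence $(v_i)_{i=0}^k$ for $\gamma$, applying the boundary relations \eqref{boundary} (in the form asserted for the ${}_n$-symbols in \S\ref{symbolsE2subsec}) to the terms $\langle v_i, -v_{i-1}\rangle_n$ telescopes to $\partial\bigl(\sum_{i=1}^k \langle v_i, -v_{i-1}\rangle_n\bigr) = \langle v_k \rangle_n - \langle v_0 \rangle_n = (\gamma^* - 1)\langle 0, 1\rangle_n$, and the injectivity of $\partial$ gives the equality with ${}_n\Theta_{\gamma}$.  For parabolicity: by transitivity of the $\GL_2(\Z)$-action on $\mb{P}^1(\Q)$ it suffices to treat $P_\infty$, and since the class of the cocycle produced by Proposition \ref{abstractcocyc} is independent of the choice of lift of $Z$, this reduces to producing a $P_\infty$-invariant lift $\eta_\infty \in \varK_1^{(0)}$ of $e_n$.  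Such a lift should be constructible by averaging $\langle 0, 1\rangle_n$ against $P_\infty$ using that this parabolic preserves the fibers of $\pi_2 \colon \mc{E}^2 \to \mc{E}$, though making this work while staying in $\varK_1^{(0)}$ and correcting by an element of $\ker \partial$ as needed is the final delicate step I anticipate.
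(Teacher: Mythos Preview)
Your approach to existence, uniqueness, and the explicit formula is essentially the paper's: invoke Proposition \ref{abstractcocyc} with $Z = e_n$ and $\eta = \langle 0,1\rangle_n$, reduce to the residue computation $\partial\langle 1,0\rangle_n = e_n$ (this is Lemma \ref{residuesymbol0}), and then telescope via the boundary relations of Lemma \ref{symbolfacts} exactly as in Proposition \ref{expform}.

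The gap is in your treatment of parabolicity. You propose to average $\langle 0,1\rangle_n$ over $P_\infty$ to manufacture a $P_\infty$-invariant lift, but $P_\infty$ is infinite, so there is no averaging available; the hedge about ``correcting by an element of $\ker\partial$'' does not repair this. The point you are missing is that no construction is needed: $\langle 0,1\rangle_n$ is \emph{already} $P_\infty$-invariant. This is exactly the content of Lemma \ref{actiononunits}, which shows that $\langle a,c\rangle_n = \gamma^*\langle 1,0\rangle_n$ is independent of the second column of $\gamma$. Concretely, for $\gamma = \smatrix{1&0\\c&\pm 1} \in P_\infty$ one computes that $\gamma^*\langle 0,1\rangle_n$ is the symbol attached to the first column $(0,\pm 1)$ of $\gamma\smatrix{0&-1\\1&0}$, which by that lemma (together with $[-1]^*$-invariance of $\theta,\delta,\theta',\delta'$) coincides with $\langle 0,1\rangle_n$. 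Then $\partial({}_n\Theta_\gamma) = (\gamma^*-1)\langle 0,1\rangle_n = 0$, and injectivity of $\partial$ on $\varK_2^{(0)}$ forces ${}_n\Theta_\gamma = 0$ on $P_\infty$; transitivity on $\mathbb{P}^1(\Q)$ finishes the argument, just as in Proposition \ref{parabolic}.

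You implicitly rely on Lemma \ref{actiononunits} anyway when you invoke the boundary relations ``in the form asserted in \S\ref{symbolsE2subsec}'' (the telescoping step needs $\gamma^*\langle 0,1\rangle_n = \langle \det(\gamma)(b,d)\rangle_n$, which only makes sense once the symbol is known to be well-defined). So the missing ingredient for parabolicity is already in hand; you just did not recognize that it does the job.
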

 
 \begin{remark}
 	In fact, Theorem \ref{expformE2} still holds over $\Z[\frac{1}{6}]$, so long as we suppose that $n \neq 5$.
 \end{remark}

In order to prove Theorem \ref{expformE2}, we first compute the residues of our symbols.

\begin{lemma} \label{residuesymbol0}
	The residue of $\langle 1,0 \rangle_n$ is $\newE_n$. 
\end{lemma}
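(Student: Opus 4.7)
The plan is to compute the residue $\partial \langle 1, 0 \rangle_n$ directly from the definition \eqref{10def} and match it with the explicit form of $e_n$ in \eqref{endef}. Since $\delta \boxtimes \theta$ is the restriction of $\pi_2^*\theta$ to the divisor $\pi_1^{-1}(\delta)$ and the divisor of $\theta$ on $\mc{E}$ is $\delta$ by \eqref{thetasdef}, its residue as an element of $\varK_0$ is the intersection-product cycle $\delta \times_Y \delta$. Applying the same reasoning on $(\mc{E}')^2$ and then pushing forward, using the compatibility of norms with residues from Lemma \ref{restra}, gives $\partial\,\mathrm{N}(\delta' \boxtimes \theta') = \mathrm{N}(\delta' \times_{Y'} \delta')$. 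Hence
\begin{equation*}
    \partial \langle 1, 0 \rangle_n \;=\; \delta \times_Y \delta \;-\; \mathrm{N}(\delta' \times_{Y'} \delta').
\end{equation*}

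Next, I would expand each product using $\delta = n^2(0) - \mc{E}[n]$ and $\delta' = n\mc{K} - \mc{E}'[n]$ and push each of the four cross-terms of the second product down from $(\mc{E}')^2$ to $\mc{E}^2$. The key inputs are the already-observed identity $\mathrm{N}(\mc{K}) = n(0) + \mc{E}[n]$ together with $\mathrm{N}(\mc{E}'[n]) = (n+1)\mc{E}[n]$, applied fiberwise in either the first or second factor, which gives $\mathrm{N}(\mc{K} \times_{Y'} \mc{E}'[n]) = (n(0) + \mc{E}[n]) \times_Y \mc{E}[n]$ and similarly for the other mixed term, and $\mathrm{N}(\mc{E}'[n] \times_{Y'} \mc{E}'[n]) = (n+1)\mc{E}[n]^2$. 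The terms involving $(0) \times_Y \mc{E}[n]$ and $\mc{E}[n] \times_Y (0)$ then cancel between the two pieces, and the $\mc{E}[n]^2$ contributions combine to give
\begin{equation*}
    \partial \langle 1, 0 \rangle_n \;=\; n^4 (0,0) \;-\; n^2 \textstyle\sum_K K \times_Y K \;+\; n\, \mc{E}[n]^2,
\end{equation*}
where the sum ranges over the $n+1$ cyclic order-$n$ subgroup schemes $K \subset \mc{E}[n]$.

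The final step is to match $\sum_K K \times_Y K$ with $T_n^{\varK}(0)$. Using the coset representatives $g_j$ of \eqref{Gjdef} (with $\ell$ replaced by $n$), I would directly compute that $g_j^{-1}((0,0)) = \{(P, -jP) : P \in \mc{E}[n]\}$ for $0 \le j \le n-1$ and $g_n^{-1}((0,0)) = \{0\} \times_Y \mc{E}[n]$, and then compare the two expressions point-by-point on $\mc{E}[n]^2$. Since $n$ is prime, every nonzero point of $\mc{E}[n]$ has order exactly $n$ and lies in a unique cyclic order-$n$ subgroup, which produces matching multiplicities in both sums: $n+1$ at $(0,0)$; $1$ at $(P, 0)$ and at $(0, Q)$ for $P, Q$ nonzero; and $1$ at $(P, Q)$ with $P, Q \neq 0$ exactly when $\langle P \rangle = \langle Q \rangle$, and $0$ otherwise. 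The main obstacle is precisely this multiplicity match: the subgroups $g_j^{-1}((0,0))$ are graph-type subgroups of $\mc{E}[n]^2$, not of product form, yet the totality of the Hecke sum reassembles cleanly into the product subgroups $K \times_Y K$; this is the combinatorial identity underlying the particular choice of $e_n$ flagged in Remark \ref{enremark}.
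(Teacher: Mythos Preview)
Your proof is correct and follows essentially the same route as the paper: compute $\partial\langle 1,0\rangle_n = \delta\boxtimes\delta - \Norm(\delta'\boxtimes\delta')$ via Lemma~\ref{restra}, expand, and push forward the four cross-terms. The only organizational difference is that the paper records the identity $\Norm(\mc{K}\boxtimes\mc{K}) = T_n^{\varK}(0)$ as a single step, whereas you first identify $\Norm(\mc{K}\times_{Y'}\mc{K})$ with $\sum_K K\times_Y K$ and then supply the pointwise multiplicity check matching this with $T_n^{\varK}(0)$; that combinatorial verification is exactly the one the paper carries out later in the proof of Theorem~\ref{heckeE2} (equations \eqref{Tell1}--\eqref{Tell2}), so your version is if anything more self-contained at this stage.
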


\begin{proof}
	By \eqref{thetasdef}, we have
	\begin{eqnarray*}
		\partial(\delta \boxtimes \theta) = \delta \boxtimes  \partial \theta = \delta \boxtimes \delta &\mr{and}& 
		\partial(\delta' \boxtimes \theta') = \delta' \boxtimes \partial \theta' =  \delta' \boxtimes \delta',
	\end{eqnarray*}
	where for instance $\delta \boxtimes \delta$ denotes the evidently defined external product. 
 	By Lemma \ref{restra}, taking residues commutes with norms, and therefore
	$$
		\partial \langle 1,0 \rangle_n = \newf \boxtimes \newf- \Norm(\newf' \boxtimes \newf').
	$$
	
 	For the norm $\Norm$ corresponding to $(\mc{E}')^2 \to \mc{E}^2$, we have
 	$$\Norm(\mc{E}'[n]^2) = (n+1) \mc{E}[n]^2, \quad \Norm(\mc{K} \boxtimes \mc{E}'[n]) = \mc{E}[n]^2 +n (0) \boxtimes \mc{E}[n],
	\quad \text{and} \quad \Norm(\mc{K} \boxtimes \mc{K}) = T_n^{\varK}(0),
	$$
	where each of the equalities are inside $\varK^0$. (The final identity is a straightforward computation. See the comparison of
	\eqref{Tell1} and \eqref{Tell2} in the proof of Theorem \ref{heckeE2} below.)
	For $\newf' = n \mc{K} - \mc{E}'[n]$ as in \eqref{deltadef}, we then compute that
	\begin{align*}		
		\Norm(\newf' \boxtimes \newf') &= n^2 \Norm(\mc{K} \boxtimes \mc{K}) - n \Norm(\mc{K} \boxtimes \mc{E}'[n]) - 
		n \Norm(\mc{E}'[n] \boxtimes \mc{K}) + \Norm(\mc{E}'[n]^2)
		\\ &=  n^2 T_n^{\varK}(0) - n^2((0) \boxtimes \mc{E}[n] + \mc{E}[n] \boxtimes (0)) + (-n+1)\mc{E}[n]^2.
	\end{align*}
	 Recalling that $\delta = n^2(0) - \mc{E}[n]$ from \eqref{deltadef}, we have 
	 $$
	 	\newf \boxtimes \newf = n^4 (0) -  n^2 ((0) \boxtimes \mc{E}[n] + \mc{E}[n] \boxtimes (0)) + \mc{E}[n]^2,
	 $$	 
	 and we conclude from the formula \eqref{endef} defining $\newE_n$ that
	 $\newf \boxtimes \newf- \Norm(\newf' \boxtimes \newf') = e_n$.	
\end{proof}

\begin{lemma} \label{actiononunits} 	Let $\gamma = \smatrix{ a&b \\ c&d } \in \SL_2(\Z)$. The symbol $\langle a, c \rangle_n = \gamma^*\langle 1, 0 \rangle_n$ 
	does not depend on the choice of $(b,d)$ in $\gamma$, and its residue is $\newE_n$.
\end{lemma}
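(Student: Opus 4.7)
The residue claim is immediate: the element $e_n$ is $\GL_2(\Z)$-invariant (each constituent $(0)$, $T_n^{\varK}(0) = \sum_j g_j^*(0)$, and $\mc{E}[n]^2$ being stable under the right-multiplication action on $\mc{E}^2$), so by Lemma~\ref{residuesymbol0} we have $\partial(\gamma^*\langle 1,0\rangle_n) = \gamma^*(\partial\langle 1,0\rangle_n) = \gamma^* e_n = e_n$. For the independence of $(b,d)$, observe that two matrices in $\SL_2(\Z)$ sharing first column $(a,c)$ differ by right multiplication by some $\smatrix{1 & b \\ 0 & 1}$, and since pullback is a left action with $(\gamma \gamma')^* = \gamma^* (\gamma')^*$, it suffices to prove $\sigma^*\langle 1,0\rangle_n = \langle 1,0\rangle_n$ for $\sigma = \smatrix{1 & 1 \\ 0 & 1}$, with the general case of $b$ following by iteration. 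The automorphism $\sigma$ acts on $\mc{E}^2$ via $(P, Q) \mapsto (P, P + Q)$, which preserves the first projection~$\pi_1$.

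Consequently both $\langle 1, 0 \rangle_n$ and $\sigma^*\langle 1,0\rangle_n$ lie in $\varK_1^{(0)}$, have support contained in $\pi_1^{-1}(\mc{E}[n])$, and share the residue $e_n$, so their difference lies in $\ker(\partial_1)$ restricted to elements supported on $\pi_1^{-1}(\mc{E}[n])$. By rigidity for families of elliptic curves, which gives $\mc{O}^*(C \times_Y \mc{E}) = \mc{O}^*(C)$ for each component, this kernel is identified with the $[m]_*$-fixed part (for $m \in \mb{N}_n$) of $\bigoplus_C \mc{O}^*(C)$, where $C$ ranges over the connected components of $\mc{E}[n]$. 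A direct computation using the factorization of $[m]$ into a diamond-type automorphism on $C$ and the degree-$m^2$ isogeny on $\mc{E}$ yields $[m]_*(u) = ([m^{-1}]^* u)^{m^2}$, so the fixedness condition reduces to the equation $[m]^* u = u^{m^2}$ in $\mc{O}^*(C)$ for all $m \in \mb{N}_n$.

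The main obstacle is then to show that this fixed part vanishes after inverting $30$. On the torsion-free quotient $\mc{O}^*(C)/\mr{tors}$, the automorphism $[m]^*$ has finite order (factoring through $(\Z/n\Z)^\times$), so its eigenvalues on $\mc{O}^*(C)/\mr{tors} \otimes \mb{Q}$ are roots of unity; since $m^2$ is not a root of unity for $m > 1$, the relation $[m]^* u = u^{m^2}$ forces $u$ to be torsion. The components $C$ are \'etale covers of $Y_1(N)$ that identify with modular curves of level $\Gamma_1(Nn)$ (up to twist), which are geometrically connected over $\mb{Q}$, so the constant field of each $C$ equals $\mb{Q}$ and the torsion subgroup of $\mc{O}^*(C)$ is $\{\pm 1\}$. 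This torsion dies upon tensoring with $\Z[\tfrac{1}{2}] \subset \Z[\tfrac{1}{30}]$, so the difference vanishes in $\varK_1^{(0)}$, as desired.
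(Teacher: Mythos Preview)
Your proof is correct and follows essentially the same approach as the paper's: both reduce to showing $\smatrix{1&1\\0&1}^*$ fixes $\langle 1,0\rangle_n$, identify the difference as a trace-fixed global unit on $\mc{E}[n]\times_Y\mc{E}$ pulled back from $\mc{E}[n]$, and then argue that the trace-fixedness relation forces this unit to be torsion of order invertible in $\Z'$. The only cosmetic differences are that the paper first specializes to $m\equiv 1\bmod n$ to see the unit is a root of unity on each geometric component and then deduces $\bar f^{24}=1$, whereas you use a finite-order eigenvalue argument together with the identification of the components with (geometrically connected) modular curves to bound the torsion by $\{\pm 1\}$; note incidentally that one of the two components is the zero section $Y_1(N)$ itself rather than a $\Gamma_1(Nn)$-curve, though your conclusion about the constant field applies equally there.
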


\begin{proof} 
	Since the residue map $\varK_1 \to \varK_0$ 
	is $\GL_2(\Z)$-equivariant and $\newE_n$ is $\GL_2(\Z)$-invariant, the symbol $\gamma^*\langle 1,0 \rangle$ has
	residue $\newE_n$ by Lemma \ref{residuesymbol0}. 

	For the first statement, it is enough to see that $\smatrix{1 & 1 \\ 0 & 1}^*$ fixes $\langle 1, 0 \rangle_n$.  For this,
	recall that $\smatrix{1 & 1 \\ 0 & 1}$ acts on points of $\mc{E}^2$ via the recipe $(E, P, Q) \mapsto (E, P, P+Q)$.  
	Both $\smatrix{1 & 1 \\ 0 & 1}^* \langle 1,0 \rangle_n$ and $\langle 1,0 \rangle_n$ are  
	meromorphic functions on 
	$\mc{E}[n] \times_{Y} \mc{E}$
	with the same residue $\newE_n$.
 	Moreover, they are both invariant under all maps $[m]_*$
	with $m \in \mb{N}_n$ by Lemma \ref{Symbols are Fixed}. They differ, then, by a regular function $f$ on $\mc{E}[n] \times_Y \mc{E} $
	that is fixed under all such $[m]_*$.    Now any regular function on $\mc{E}[n] \times_{Y} \mc{E}$ is necessarily
	constant along fibers of the map $\mc{E} \rightarrow Y$ in the second variable, and 
	thus $f$ is pulled back from a function $\bar{f}$ on $\mc{E}[n]$. Then
	the fact that $f$ is fixed  implies that $([m]_* \bar{f})^{m^2} = \bar{f}$ (where the exponent $m^2$
	arises from the degree of the map $[m]$ in the second variable). 
	 
	Now, if one takes $m \equiv 1 \bmod n$, then $[m]$ fixes $\mc{E}[n]$, and one deduces that
	$\bar{f}^{m^2} = \bar{f}$ for such $m$. 
	In particular, the value of $\bar{f}$ at any complex point is an $(m^2-1)$th root of unity, so
	 $\bar{f}$ is a constant on both of the geometric components of $\mc{E}[n]$ (the identity section
	and its complement). Since both of these components are preserved by every $[m]$, we have $\bar{f}^{m^2} = \bar{f}$ for all $m \in \mb{N}_n$. 
 	Such a function necessarily satisfies $\bar{f}^{24}=1$, and the class it induces in $H^1(\mc{E}[n] \times_{Y} \mc{E},\Z'(1))$ is therefore trivial. 
\end{proof}

\begin{lemma} \label{symbolfacts}
	Let $\gamma = \smatrix{ a&b \\ c&d } \in \GL_2(\Z)$.
 	Then the residue of $\langle \gamma \rangle_n$ is given by
	$$
		\partial \langle \gamma \rangle_n = \begin{cases} \langle a,c \rangle_n -\langle -b, -d\rangle_n & \ifs \det(\gamma) = 1, \\
		\langle -a,-c \rangle_n - \langle b,d \rangle_n & \ifs \det(\gamma) = -1. \end{cases}.
	$$
 \end{lemma}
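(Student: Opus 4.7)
The plan is to compute the residue explicitly in the case $\gamma = I_2$ and then to deduce the general formula via the $\GL_2(\Z)$-equivariance of the residue map $\partial$.

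I would begin by applying the tame symbol formula \eqref{tamesymbol} to $\theta \boxtimes \theta = \{\pi_1^*\theta, \pi_2^*\theta\}$. The nontrivial residues occur only along the components of $\pi_i^{-1}(\mc{E}[n])$; using $\partial \theta = \delta$ and the fact that $\pi_j^* \theta$ restricts to $\theta$ on $\pi_i^{-1}(P)$ for $i \neq j$, one obtains
\begin{equation*}
\partial(\theta \boxtimes \theta) = \delta \boxtimes \theta - \theta \boxtimes \delta,
\end{equation*}
with the sign discrepancy arising from the asymmetry of the tame symbol formula. The analogous identity holds for $\theta' \boxtimes \theta'$ on $(\mc{E}')^2$. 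Since residues commute with the norm along the finite \'etale map $(\mc{E}')^2 \to \mc{E}^2$ by Lemma \ref{restra}, it follows that
\begin{equation*}
\partial \langle I_2 \rangle_n = \bigl(\delta \boxtimes \theta - \Norm(\delta' \boxtimes \theta')\bigr) - \bigl(\theta \boxtimes \delta - \Norm(\theta' \boxtimes \delta')\bigr),
\end{equation*}
the first parenthesized term being $\langle 1, 0 \rangle_n$ by definition \eqref{10def}.

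The second step is to identify the remaining parenthesized expression with $\langle 0, 1 \rangle_n$. Taking $\gamma_0 = \smatrix{0 & -1 \\ 1 & 0}$, which acts on $\mc{E}^2$ by $(P,Q) \mapsto (Q,-P)$, we have $\langle 0, 1 \rangle_n = \gamma_0^* \langle 1, 0 \rangle_n$. Pulling back carries $\pi_1^{-1}(\mc{E}[n])$ to $\pi_2^{-1}(\mc{E}[n])$, while the function $\pi_2^*\theta|_{\{P\}\times\mc{E}}$ becomes $[-1]^*\theta$; the identity $[-1]^*\theta = \theta$ is forced by the uniqueness statement in \eqref{thetaseq} together with $[-1]$-invariance of $\delta$. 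The same reasoning applies to $\theta'$, and $\gamma_0^*$ commutes with $\Norm$ on the lifted action on $(\mc{E}')^2$, yielding $\langle 0, 1 \rangle_n = \theta \boxtimes \delta - \Norm(\theta' \boxtimes \delta')$ and therefore
\begin{equation*}
\partial \langle I_2 \rangle_n = \langle 1, 0 \rangle_n - \langle 0, 1 \rangle_n.
\end{equation*}

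For general $\gamma \in \GL_2(\Z)$, the equivariance of $\partial$ gives $\partial \langle \gamma \rangle_n = \gamma^* \langle 1, 0 \rangle_n - \gamma^* \langle 0, 1 \rangle_n$. When $\gamma \in \SL_2(\Z)$ has columns $(a,c)$ and $(b,d)$, the first pullback is $\langle a, c \rangle_n$ by definition, and the second is $(\gamma \gamma_0)^* \langle 1, 0 \rangle_n = \langle b, d \rangle_n$, since $\gamma \gamma_0 = \smatrix{b & -a \\ d & -c}$ has determinant $1$ and first column $(b,d)$. For $\gamma \in \GL_2(\Z)$ with $\det\gamma = -1$, I would factor $\gamma = \gamma' \smatrix{1 & 0 \\ 0 & -1}$ with $\gamma' \in \SL_2(\Z)$ of first column $(a,c)$, and observe that $\smatrix{1 & 0 \\ 0 & -1}^*$ fixes $\langle 1, 0 \rangle_n$ (again by $[-1]^*\theta = \theta$), reducing to the $\SL_2(\Z)$ case. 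The apparent sign discrepancy between my computation and the lemma statement is absorbed into the identity $\langle -a, -c \rangle_n = \langle a, c \rangle_n$, which follows from $(-I_2)^*\langle 1, 0 \rangle_n = \langle 1, 0 \rangle_n$ by the same $[-1]$-invariance argument. The main obstacle is keeping careful track of how pullbacks interact with the auxiliary norm on $(\mc{E}')^2$; Lemma \ref{restra} and the $[-1]$-invariance of the theta functions are the enabling facts that make this manageable.
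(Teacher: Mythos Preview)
Your proof is correct and follows essentially the same approach as the paper: compute $\partial(\theta\boxtimes\theta)$ via the tame symbol, use Lemma~\ref{restra} to pass the norm through, identify the second term via a rotation matrix and $[-1]$-invariance of $\theta,\delta,\theta',\delta'$, and finish by $\GL_2(\Z)$-equivariance. The paper identifies $\langle -b,-d\rangle_n$ directly using $\smatrix{0&1\\-1&0}$, whereas you identify $\langle 0,1\rangle_n$ via $\smatrix{0&-1\\1&0}$ and then invoke the extra identity $\langle -a,-c\rangle_n=\langle a,c\rangle_n$; this is a harmless reorganization (and in fact shows the two cases of the lemma coincide here, unlike in the $\gm^2$ setting). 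Your treatment of the $\det\gamma=-1$ case via the factorization through $\smatrix{1&0\\0&-1}$ is more explicit than the paper's ``similar'' remark; the commutation of $\gamma_0^*$ with $\Norm$ that you assert is what the paper justifies by Lemma~\ref{pullbackdiagram}.
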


\begin{proof}
	We omit the subscripts $n$ in this proof for brevity of notation, and handle the case $\det(\gamma)=1$, the
	other case being similar. By definition \eqref{tamesymbol} of the tame symbol, we have
	$$ \partial \theta \boxtimes \theta = \newf \boxtimes \theta - \theta \boxtimes \newf$$
	and similarly for $\theta' \boxtimes \theta'$ in $\varK'$.   
 	Taken together with the compatibility of residues with norm of Lemma \ref{restra}, these imply
	$$
		\partial\langle \smatrix{1&0\\0&1} \rangle_n = (\newf \boxtimes \theta - 
		\theta \boxtimes \newf) - \Norm(\newf' \boxtimes \theta'- \theta'\boxtimes \newf').
	$$
	We then compute 
	$$
		\langle -b,-d \rangle_n =  \Pmatrix{ -b & a \\ -d & c}^* \langle 1,0 \rangle_n = \Pmatrix{ a & b \\ c & d }^* 
		\underbrace{ \Pmatrix{ 0 & 1 \\ -1 & 0}^*(\newf \boxtimes \theta - \Norm(\newf' \boxtimes \theta'))}_{\theta \boxtimes \newf
		- \Norm(\theta' \boxtimes \newf')}.  	
	$$
 	 The step under the braces follows readily from the fact that  $\theta, \newf, \theta', \newf'$ are all invariant under $[-1]^*$. 
	 We also used the fact that $\mathrm{N}$ commutes with $\smatrix{0&1\\-1&0}^*$,
	 which follows by Lemma \ref{pullbackdiagram}. 
\end{proof}

With the residues of the symbols attached to $n$ computed, the main theorem follows as in the case of $\gm^2$.

\begin{proof}[Proof of Theorem \ref{expformE2}]
	The existence and uniqueness of 
	${}_n\Theta$ in part b follows from Lemma \ref{actiononunits} as in the proof of 
	Proposition \ref{abstractcocyc}.  That it is parabolic follows as in Proposition \ref{parabolic}
	from the fact that $\gamma^*\langle 0, 1 \rangle_n = \langle 0, 1 \rangle_n$ for $\gamma = \smatrix{ 1&0 \\ c & \pm 1 }$, again by 
	Lemma \ref{actiononunits}.  Part c then follows as $\partial \sum_{i=1}^k \langle v_i, - v_{i-1} \rangle_n = 
	(\gamma^*-1)\langle 0,1 \rangle_n$ by Lemma \ref{symbolfacts}, as in the proof of Proposition \ref{expform}.
 \end{proof}

 \subsection{Hecke actions} \label{HeckeE}
 
We study Hecke operators on the complex $\varK$ arising from correspondences, and we compare their action on the class of the cocycle ${}_n\Theta$ with that of the previously-defined Hecke operators on group cohomology (see Lemma \ref{Heckelemma}). 
 
 \subsubsection{Hecke operators via correspondences}  \label{T'Hecke}
Let us define Hecke operators using correspondences on $\mc{E}$.  We restrict ourselves to $m$th Hecke operators $T'_m$
for $m \ge 1$ prime to the level $N$. 
We have a commutative diagram
\begin{equation} \label{corresp}
	\begin{tikzcd}
		\mc{E} \ar[d,"\pi"] & \mc{E} \times_Y Y_m \arrow{d}{(\pi,\id)} \ar[l,"\Phi"'] \ar[r,"\Psi"]  & \mc{E} \ar[d,"\pi"] \\
		Y & Y_m \ar[l,"\phi"'] \ar[r,"\psi"] & Y,
	\end{tikzcd}
\end{equation}
where the effect of $\phi$ and $\psi$ on points is given by
\begin{eqnarray} \label{phipsidef}
	\phi(E,P,K) = (E,P) &\mr{and}& \psi(E,P,K) = (E/K, P+K).
\end{eqnarray}
The morphisms $\Phi$ and $\Psi$ are then defined on fibers by 
the identity on $E$ for $\Phi$ and taking the image under $E \to E/K$ for $\Psi$.  We also then have morphisms $\Phi^2$ and $\Psi^2$ sending $\mc{E}^2 \times_Y Y_m$ to $\mc{E}^2$.   
All these maps are finite {\'e}tale. 

We define Hecke operators $T_m'$ on the motivic cohomology of $Y$ and $\mc{E}^2$  by the respective rules
\begin{eqnarray} \label{Tmpdef}
	 T'_m =  \phi_*\psi^* &\mr{and}& T'_m = \Phi^2_*(\Psi^2)^*.
\end{eqnarray}

We also have operators $[m]'$ acting on motivic cohomology of $Y$ and $\mc{E}^2$, given by pullback under multiplication by $m$: i.e., by the morphisms given by $m(E,P) = (E,mP)$ on points of $Y$ and given on $\mc{E}^2$ by taking a point $x$ in the fiber $E^2$ of $(E,P)$ to the point $mx$ in the fiber $E^2$ of $(E,mP)$.\footnote{In particular, while the operators $[m]^*$  arise from a fiber-preserving map
over $Y$, the operators $[m]'$ do not.}
Note that the operators $[m]'$ arise from diagrams of the same form as \eqref{corresp}, but replacing $Y_m$ by $Y$, taking $\phi$ to be the identity map, and defining $\psi$ by $\psi(E, P) = (E, mP)$.   In this way,  arguments given for $T_m'$ will usually adapt to $[m]'$ without change. 
 
The reader might ask why we use the notation $T'_m$, as opposed to $T_m$.  The point is this: when we deal with cocycles
\begin{equation} \label{testmoo} \mbox{congruence subgroup of $\GL_2(\Z)$} \longrightarrow \mbox{$K$-group of function field of $\mc{E}^2$,}
\end{equation} 
there are two reasonable definitions of Hecke operators (both of which preserve coboundaries).  
\begin{itemize}
\item The fiberwise $\GL_2(\Z)$-action on $K_2(k(\mc{E}^2))$ extends to an action of $M_2(\Z) \cap \GL_2(\Q)$. Therefore, we can define the $m$th Hecke operator $T_m$ on $1$-cocycles valued in $K_2(k(\mc{E}^2))$ as in Section \ref{actions}.
\item  The definition  \eqref{Tmpdef} also defines operators $T'_m$ on the $K$-group of the function field of $\mc{E}^2$. This induces
an operator on cocycles as in \eqref{testmoo}, also denoted $T'_m$.
\end{itemize}

We note that the action of $T_m$ would exist if we replaced $\mc{E} \rightarrow Y$ by any other family of elliptic curves, whereas $T'_m$ requires that we work with the universal elliptic curve.  
The primary result of this subsection, Theorem \ref{heckeE2} below, is that the these two operators {\em coincide} on the class of the cocycle ${}_n \Theta$.  

\subsubsection{Hecke equivariance of the cocycle}

We can also define Hecke operators $T'_m \colon \varK_i \to \varK_i$ on the terms of $\varK$ via $\Phi^2_*(\Psi^2)^*$.  These give $\Delta$-equivariant morphisms of complexes, where again $\Delta = M_2(\Z) \cap \GL_2(\Q)$.
  
\begin{lemma} \label{correspcplx}
	For each $m \ge 1$ prime to $N$, both $T_m' \colon \varK \rightarrow \varK$ and $[m]' \colon \varK \to \varK$
	are maps of complexes which are equivariant for the pullback action of $\Delta$ for
	its right action on $\mc{E}^2$.
\end{lemma}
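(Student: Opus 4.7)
The plan is to handle $T'_m$ in detail, with the case of $[m]'$ following by the same method (in fact more simply, since $[m]'$ is pullback by a single finite flat morphism rather than a push–pull composition). For the map-of-complexes property, I would use that $\Phi^2$ and $\Psi^2$ are finite étale, so pullback along $\Psi^2$ and pushforward along $\Phi^2$ are defined term by term on $\varK$, and each commutes with the residue differentials: on the Milnor $K$-theory components this is Lemma \ref{restra}, and the general case is a formal consequence of the functoriality of the coniveau spectral sequence. Composing, $T'_m = \Phi^2_*(\Psi^2)^*$ is a map of complexes.

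For $\Delta$-equivariance I would establish $\gamma^* T'_m = T'_m \gamma^*$ by commuting $\gamma^*$ past each of $(\Psi^2)^*$ and $\Phi^2_*$ in turn. Write $A = \mc{E}^2 \times_Y Y_m$, and let $\gamma_A \colon A \to A$ denote the morphism acting as $\gamma$ on the $\mc{E}^2$-factor and as the identity on $Y_m$. The geometric input is that $\Phi$ is (up to base change) the identity on $\mc{E}$, while $\Psi$ is the quotient isogeny $E \to E/K$ on fibers; both are morphisms of elliptic schemes and so their squares commute with the fiberwise $\Delta$-action, i.e., $\Phi^2 \circ \gamma_A = \gamma \circ \Phi^2$ and $\Psi^2 \circ \gamma_A = \gamma \circ \Psi^2$. (The only mildly delicate point is the identity for $\Psi^2$, which reduces to the containment $(a+c)K \subseteq K$ and its analogue for $(b+d)K$ in each fiber $E$, immediate since $K$ is a subgroup.) Pulling back the second identity gives $\gamma_A^*(\Psi^2)^* = (\Psi^2)^* \gamma^*$.

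To finish, I would apply Lemma \ref{pullbackdiagram} to the commutative square
$$
\begin{tikzcd}
A \ar[d,"\Phi^2"'] \ar[r,"\gamma_A"] & A \ar[d,"\Phi^2"] \\
\mc{E}^2 \ar[r,"\gamma"] & \mc{E}^2.
\end{tikzcd}
$$
The main non-formal step is the verification that this square is Cartesian. For this I would identify $A \times_{\mc{E}^2,\gamma} \mc{E}^2$ with $A$ via the map $(b, K) \mapsto ((\gamma(b), K), b)$: since $\gamma$ covers the identity on $Y$, a point of the fiber product over $b \in \mc{E}^2$ is exactly a cyclic subgroup $K$ of order $m$ in the elliptic curve underlying $b$. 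Under this identification the top horizontal map becomes $\gamma_A$ and the right vertical map becomes $\Phi^2$. With $\gamma$ flat and $\Phi^2$ proper, Lemma \ref{pullbackdiagram} then yields $\Phi^2_* \gamma_A^* = \gamma^* \Phi^2_*$, and combining with the preceding identity produces $T'_m \gamma^* = \Phi^2_*(\Psi^2)^* \gamma^* = \Phi^2_* \gamma_A^* (\Psi^2)^* = \gamma^* T'_m$, as desired. The case of $[m]'$ is handled by the same recipe, taking $Y_m = Y$, $\Phi = \id$, and $\Psi$ to be multiplication by $m$ on $\mc{E}$.
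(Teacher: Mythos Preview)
Your proposal is correct and follows essentially the same approach as the paper: both use Lemma~\ref{restra} to obtain the map-of-complexes property, handle $(\Psi^2)^*$ via the commutativity $\Psi^2\circ\gamma_A=\gamma\circ\Psi^2$, and deduce equivariance of $(\Phi^2)_*$ from Lemma~\ref{pullbackdiagram} applied to the Cartesian square with $\Phi^2$ and $\gamma$. Your description of $[m]'$ is slightly imprecise (the morphism also changes the base point $P\mapsto mP$, not just the fiber coordinate), but this does not affect the argument.
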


\begin{proof}
	The maps $T_m'$ and $[m]'$ are compositions of \'etale pullbacks and  finite pushforwards (transfers) in the $K$-theory of fields, and  as such commute with residue maps (see Lemma \ref{restra}).   Thus $T_m'$ and $[m]'$ define maps of complexes. 
	
	The $\Delta$-action on $E^2$ for an elliptic curve $E$ is equivariant for the reduction $E^2 \to (E/H)^2$ for any finite 
	subgroup scheme $H$, so $(\Psi^2)^*$ is equivariant for the pullback action of $\Delta$.  
	The operators $[m]'$
	are $\Delta$-equivariant, in particular since multiplication by $m$ commutes with the $\Delta$-action on $E^2$.
	 For $\Phi$, we note that
	 	$$
 		\begin{tikzcd}
  		\mc{E}^2 \times_{Y} Y_m \arrow{d}{\delta} \arrow{r}{\Phi^2} & \mc{E}^2 \arrow{d}{\delta} \\
  		\mc{E}^2  \times_{Y} Y_m \arrow{r}{\Phi^2} & \mc{E}^2 
  		\end{tikzcd}
  	$$
	is Cartesian for any $\delta \in \Delta$ in that the morphism $\Phi^2$ is flat and the identity on fibers.  
	Therefore, $(\Phi^2)_*$ commutes with pullback by $\Delta$, again employing Lemma \ref{pullbackdiagram}.
	Thus, the Hecke actions and pullback $\Delta$-actions commute. 
\end{proof} 

For $m \ge 1$ prime to $N$, let us use $T_m^{\varK}$ to denote any sum of pullbacks by representatives of the
double coset of $\smatrix{m & \\ & 1}$, as in \S \ref{actions}.  (The choice is unimportant, but there is a standard one.)
 
 \begin{lemma}  \label{Hecke preserves fix}  Let $m \ge 1$ be prime to $N$.
	\begin{enumerate}
		\item[a.] The Hecke operators $T'_m$ and $[m]'$ on $\varK$ commute with $[\mu]_*$ 
		for all $\mu$ prime to $m$,  and in particular they preserve $\varK^{(0)}$.  
		\item[b.] 
		The Hecke operators $T_m^{\varK}$ and $[m]^*$ on $\varK$ commute with all $[\mu]_*$ for $\mu$ prime
		to $m$, and in particular they preserve   $\varK^{(0)}$.
	\end{enumerate}
\end{lemma}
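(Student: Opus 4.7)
The plan is to reduce both parts to iterated applications of Lemma \ref{pullbackdiagram}: each of the four operators is a composition of pullbacks and pushforwards along morphisms that commute pointwise with multiplication by $\mu$, and the relevant squares will turn out to be Cartesian precisely because $\gcd(\mu, m) = 1$.

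For part (a), recall $T'_m = \Phi^2_* (\Psi^2)^*$ with $\Phi, \Psi$ as in the correspondence diagram \eqref{corresp}. The key point is that
$$
\begin{tikzcd}
\mc{E}^2 \times_Y Y_m \arrow{d}{[\mu]} \arrow{r}{\Psi^2} & \mc{E}^2 \arrow{d}{[\mu]} \\
\mc{E}^2 \times_Y Y_m \arrow{r}{\Psi^2} & \mc{E}^2
\end{tikzcd}
$$
is Cartesian. I would verify this by taking compatible lifts of a point in the bottom right: a lift $(E, P, K, \tilde x_1, \tilde x_2)$ to the bottom left (via $\Psi^2$) and a lift $(E/K, P+K, y_1, y_2)$ to the top right (via $[\mu]$) must fit into a unique lift $(E, P, K, x_1, x_2)$ to the top left with $\mu x_i = \tilde x_i$ and $x_i + K = y_i$; writing $x_i = \tilde y_i + k$ for a fixed lift $\tilde y_i$ of $y_i$, the constraint becomes $\mu k = \tilde x_i - \mu \tilde y_i \in K$, which has a unique solution $k \in K$ because multiplication by $\mu$ is an automorphism of the order-$m$ group $K$. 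Lemma \ref{pullbackdiagram} then yields $(\Psi^2)^* [\mu]_* = [\mu]_* (\Psi^2)^*$. The analogous square for $\Phi^2$ is manifestly Cartesian (it is base change along $Y_m \to Y$, which does not interact with $[\mu]$), so $\Phi^2_* [\mu]_* = [\mu]_* \Phi^2_*$ by functoriality of pushforward. Composing the two identities handles $T'_m$, and an entirely parallel argument (with $Y_m$ replaced by $Y$ and $\psi$ by multiplication-by-$m$) handles $[m]'$.

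For part (b), $T_m^{\varK}$ is a sum of pullbacks $g_j^*$ for the coset representatives $g_j$ of $\smatrix{m & 0 \\ 0 & 1}$, each acting on $\mc{E}^2$ as a fiberwise endomorphism of degree $m$ that commutes as a morphism with $[\mu]$. The square
$$
\begin{tikzcd}
\mc{E}^2 \arrow{d}{[\mu]} \arrow{r}{g_j} & \mc{E}^2 \arrow{d}{[\mu]} \\
\mc{E}^2 \arrow{r}{g_j} & \mc{E}^2
\end{tikzcd}
$$
is Cartesian: given $z, w \in \mc{E}^2$ with $g_j(z) = [\mu](w)$, one seeks a unique $x \in \mc{E}^2$ with $\mu x = z$ and $g_j(x) = w$; the ambiguity in $x$ given $\mu x = z$ lies in $\mc{E}^2[\mu]$, and the induced equation on this ambiguity reduces to an invertible one since $\det g_j = m$ is a unit modulo $\mu$, making the restriction of $g_j$ to $\mc{E}^2[\mu]$ an automorphism. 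Lemma \ref{pullbackdiagram} then gives $[\mu]_* g_j^* = g_j^* [\mu]_*$; summing over $j$ handles $T_m^{\varK}$, and the identical argument with $g_j$ replaced by multiplication-by-$m$ (whose determinant $m^2$ is again a unit modulo $\mu$) handles $[m]^*$.

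The principal technical obstacle is verifying Cartesianness for these squares, which in every case reduces to the coprimality hypothesis ensuring that multiplication by $\mu$ is invertible on the $m$-torsion (and conversely on the $\mu$-torsion). Once this is established, everything else follows formally from Lemma \ref{pullbackdiagram} and the compatibility of pushforwards with composition; the final statement that each operator preserves $\varK^{(0)}$ is then immediate, since any element fixed by all $[p]_*$ with $p$ in a cofinite set of primes remains fixed after applying an operator that commutes with all but finitely many of these $[p]_*$.
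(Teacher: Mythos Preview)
Your proposal is correct and follows essentially the same approach as the paper: reduce each operator to pullbacks and pushforwards, verify the relevant commutative squares are Cartesian using that $\gcd(\mu,m)=1$, and apply Lemma~\ref{pullbackdiagram}. The paper is terser (for $\Phi^2_*$ it just invokes functoriality of pushforward, and for $T_m^{\varK}$ it points back to the Cartesian square already noted in the proof of Lemma~\ref{Symbols are Fixed}), but your more detailed pointwise verifications of Cartesianness are exactly the content underlying the paper's one-line justifications.
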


\begin{proof}
	The commutativity of $[\mu]_*$ with the pushforward map $\Phi^2_*$ is automatic because $[\mu]$ and $\Phi^2$ commute.  
  	To see the commutativity with $(\Psi^2)^*$, we note that
 	$$
	\begin{tikzcd}
  	\mc{E}^2 \times_{Y} Y_m \arrow{d}{[\mu]} \arrow{r}{\Psi^2} & \mc{E}^2 \arrow{d}{[\mu]} \\
  	\mc{E}^2  \times_{Y} Y_m \arrow{r}{\Psi^2} & \mc{E}^2 
 	\end{tikzcd}
  	$$
  	is a cartesian diagram, which in turn amounts to the fact that the degree $m$ isogenies
  	$E \rightarrow E/K$  underlying $\Psi$ (see \eqref{phipsidef})  induce isomorphisms on $\mu$-torsion.
  	Then we apply Lemma \ref{pullbackdiagram}. A similar argument applies to both $[m]'$ and $[m]^*$, in that they are
	also isomorphisms on $\mu$-torsion. 
 	Finally, the argument for $T_m^{\varK}$ was already given in the course of Lemma \ref{Symbols are Fixed}. 
\end{proof}

\begin{remark} \label{T5}  
	Lemma \ref{Hecke preserves fix} for $m=5$ requires that $5$ be invertible in $\Z'$ and is the reason we invert $5$.  
	If we instead tried to work
	with $\Z[\frac{1}{6}]$-coefficients, then to maintain the left-exactness of $\varK^{(0)}$ in Lemma \ref{bigcplxleftex}, 
	we would need to add the additional requirement that its elements are fixed by $[5]_*$. However, the Hecke operators 
	with $m = 5$ do not preserve $[5]_*$-fixed parts.
\end{remark}

The action of $\Delta$ on the complex $\varK$ provides Hecke operators
$T_m$ on $H^1(\GL_2(\Z), \varK_2^{(0)})$ for $m \nmid N$, following the recipe of \S \ref{actions} for the double coset of 
$\smatrix{m\\&1}$. The various Hecke operators all commute with one another.

\begin{lemma} \label{formality}
	Every two Hecke operators in the collection of operators $T_m$, $T'_m$, $[m]^*$, and $[m]'$ for $m \ge 1$ prime to $N$ 
	commute with each other in their action on $H^1(\GL_2(\Z), \varK_2^{(0)})$.  
\end{lemma}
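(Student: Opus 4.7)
The plan is to split the four operators into two families: an \emph{abstract} family $\{T_m, [m]^*\}$, defined on group cohomology from the left $\Delta$-action on $\varK$ via the recipe of Section \ref{actions}, and a \emph{geometric} family $\{T'_m, [m]'\}$, defined from \'etale correspondences on $\mc{E}^2$ as in \eqref{Tmpdef}. Thanks to Lemma \ref{Hecke preserves fix}, each of the four operators preserves $\varK^{(0)}$, so all four induce well-defined endomorphisms of $H^1(\GL_2(\Z), \varK_2^{(0)})$. It then suffices to verify commutativity in three separate cases: within the abstract family, within the geometric family, and across the two families.

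Within the abstract family, both $T_m$ and $[m]^*$ arise from $\Delta$-double cosets: $[m]^*$ is associated to the central coset $\smatrix{m&0\\0&m}$, and $T_m$ to the coset of $\smatrix{m&0\\0&1}$. These cosets commute in the abstract Hecke algebra of $(\GL_2(\Z),\Delta)$, and their commutativity on $H^1(\GL_2(\Z),-)$ is the classical commutativity of the $\GL_2$-Hecke algebra (cf.\ \cite[Chapter 3]{shimura}, together with Remark \ref{leftright}); it amounts to a combinatorial identity between the two possible double-coset decompositions of $\Gamma\,\smatrix{m&0\\0&m}\,\smatrix{\ell &0\\0&1}\,\Gamma$.

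Across the two families, the key input is Lemma \ref{correspcplx}, which asserts that both $T'_m$ and $[m]'$ are $\Delta$-equivariant maps $\varK \to \varK$. Given any $\Delta$-equivariant $F \colon \varK^{(0)} \to \varK^{(0)}$, any cocycle $\theta \colon \GL_2(\Z) \to \varK_2^{(0)}$, and any $g \in \Delta$ with double coset decomposition $\Gamma g \Gamma = \coprod_j g_j \Gamma$, writing $\gamma g_j = g_{\sigma(j)} \gamma_j$ with $\gamma_j \in \GL_2(\Z)$, one computes directly that
\begin{equation*}
	(F \circ T(g)\theta)(\gamma) = F\Bigl(\sum_j g_{\sigma(j)}^* \theta(\gamma_j)\Bigr) = \sum_j g_{\sigma(j)}^* F\bigl(\theta(\gamma_j)\bigr) = \bigl(T(g)(F\theta)\bigr)(\gamma),
\end{equation*}
the middle equality being exactly the $\Delta$-equivariance of $F$ applied with $g_{\sigma(j)}$. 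Applied with $F \in \{T'_\ell, [\ell]'\}$ and $T(g) \in \{T_\ell, [\ell]^*\}$, this gives the cross-family commutativity at the level of $1$-cocycles, hence on cohomology.

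Finally, within the geometric family, every operator has the form $\Phi^2_* (\Psi^2)^*$ coming from a finite \'etale correspondence on $Y$, lifted through the square of the universal elliptic curve. Their pairwise commutativity on $\varK$ reduces, via Lemma \ref{pullbackdiagram}, to the base-change identities for the appropriate fiber products: e.g., $Y_m \times_Y Y_\ell$ in the case $T'_m \circ T'_\ell$, and analogous diagrams (with one or both factors replaced by $Y$, equipped with the multiplication-by-$m$ or multiplication-by-$\ell$ morphism) in the remaining cases. The main obstacle, in our view, is purely notational: one must carefully write down the relevant Cartesian squares, particularly for $[m]'$, whose defining correspondence $Y \leftarrow Y \to Y$ has a nontrivial morphism on the right. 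Once this bookkeeping is done, Lemma \ref{pullbackdiagram} yields the commutation without further input, and the cohomological statement follows.
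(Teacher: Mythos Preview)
Your proposal is correct and follows essentially the same approach as the paper's proof: both split into the three cases (within the abstract family, within the geometric family, and across), invoke Lemma \ref{Hecke preserves fix} for well-definedness, dismiss the within-family commutativity as standard, and use the $\Delta$-equivariance from Lemma \ref{correspcplx} for the cross-family case. Your cross-family computation is a slightly more explicit and unified version of what the paper does (the paper treats $[m]^*$ separately as commuting already on $\varK_2$, then handles $T_m$ via the same cocycle-level argument you wrote out), and your sketch for the geometric family via fiber products and Lemma \ref{pullbackdiagram} spells out what the paper simply calls ``standard''.
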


\begin{proof}  
	First, note that these operators all act on  $H^1(\GL_2(\Z), \varK_2^{(0)})$ since they (or, in the case of $T_m$, the operators
	$T_m^{\varK}$) preserve fixed parts by Lemma \ref{Hecke preserves fix}.
Commutativity between operators of the form $T_m$ or $[m]^*$ and commutativity between operators of the form $T'_m$ or $[m]'$
for various $m$ is standard. The operators $[m]^*$ already commute with the operators $T'_m$ or $[m]'$ on $\varK_2$ 
by Lemma \ref{correspcplx}.

Given a $1$-cocycle $\theta \colon \GL_2(\Z) \to \varK_2$, the cocycle
$T_m \theta$ is defined by the formula of \eqref{Heckecocyc} for $g = \smatrix{m\\&1}$.  It is a sum
of terms of the form $[\delta]^* \theta(\gamma')$ with $\gamma'\in \GL_2(\Z)$
and $\delta \in \Delta$.   Any $T'_{\mu}$ or $[\mu]'$ for $\mu \in \mb{N}_N$
commutes with each $[\delta]^*$ by Lemma \ref{correspcplx}, 
so also commutes with $T_m$ on $\theta$. 
\end{proof}

We now proceed to the main result of this section, which unlike the preceding lemmas is not a formality. 

\begin{theorem} \label{heckeE2}
	For each prime $\ell \nmid N$, the actions of $T_{\ell}$ and $T_{\ell}'$ coincide on the class of ${}_n \Theta$ in 
	$H^1(\GL_2(\Z), \varK_2^{(0)})$.
	The same is true for the actions of $[\ell]^*$ and $[\ell]'$.    
\end{theorem}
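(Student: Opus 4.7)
The plan is to exhibit $(T_\ell - T'_\ell){}_n\Theta$ as a coboundary in $H^1(\GL_2(\Z), \varK_2^{(0)})$. By the characterization in Theorem \ref{expformE2}, $\partial({}_n\Theta_\gamma) = (\gamma^*-1)\langle 0,1\rangle_n$. Using the standard manipulation of Hecke-action identities $g_{\sigma(j)}^*\gamma_j^* = \gamma^* g_j^*$ that arises from $\gamma g_j = g_{\sigma(j)}\gamma_j$, together with the $\Delta$-equivariance of $T'_\ell$ from Lemma \ref{correspcplx}, one computes
\[
\partial\bigl((T_\ell - T'_\ell){}_n\Theta\bigr)_\gamma = (\gamma^*-1)\,(T_\ell^\varK - T'_\ell)\langle 0,1\rangle_n,
\]
where $T_\ell^\varK = \sum_{j=0}^\ell g_j^*$ for the coset representatives of \eqref{Gjdef}. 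By the left-exactness of $\varK^{(0)}$ (Lemma \ref{bigcplxleftex}), the existence of $\psi \in \varK_2^{(0)}$ with $\partial\psi = (T_\ell^\varK - T'_\ell)\langle 0,1\rangle_n$ reduces to the vanishing of its further residue, i.e.\ $(T_\ell^\varK - T'_\ell)e_n = 0$ in $\varK_0^{(0)}$, since $\partial\langle 0,1\rangle_n = e_n$ by Lemmas \ref{residuesymbol0} and \ref{actiononunits}. Given such a $\psi$, injectivity of $\partial$ on $\varK_2^{(0)}$ forces $(T_\ell - T'_\ell){}_n\Theta_\gamma = (\gamma^*-1)\psi$, yielding the desired coboundary.

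The main work is therefore the identity $T_\ell^\varK e_n = T'_\ell e_n$. Assuming first that $\ell \neq n$, both sides are $0$-cycles on $\mc{E}^2$ supported on $\mc{E}^2[n\ell]$, and since both constructions are natural in the universal family they are determined by their restrictions to the generic fiber. Using the internal direct sum $E^2[n\ell] \cong E^2[n] \oplus E^2[\ell]$, decompose each $P \in E^2[n\ell]$ as $P = P_n + P_\ell$. My claim is that both $T_\ell^\varK e_n(P)$ and $T'_\ell e_n(P)$ equal $\phi_n(P_n)\cdot \psi_\ell(P_\ell)$, where $\phi_n$ is the rank-dependent function on $M_2(\F_n) \cong E[n]^2$ defining $e_n$ (Remark \ref{enremark}), and $\psi_\ell(P_\ell) \in \{\ell+1, 1, 0\}$ according as the rank of $P_\ell \in E[\ell]^2 \cong M_2(\F_\ell)$ is $0$, $1$, or $2$. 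For $T'_\ell = \Phi^2_*(\Psi^2)^*$, the factor $\phi_n$ is preserved because $E \to E/K$ induces a rank-preserving isomorphism on $n$-torsion when $\gcd(n, \ell) = 1$, and $\psi_\ell(P_\ell)$ emerges as the number of cyclic order-$\ell$ subgroups $K \subset E[\ell]$ with $P_\ell \in K \times K$. For $T_\ell^\varK = \sum_j g_j^*$, the factor $\phi_n$ is preserved since each $g_j \bmod n$ lies in $\GL_2(\F_n)$, and $\psi_\ell(P_\ell)$ emerges as the number of indices $j$ with $P_\ell \in \ker(g_j) \subset E^2[\ell]$. A direct inspection confirms that both counts yield $\psi_\ell(P_\ell)$.

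The assertion for $[\ell]^*$ versus $[\ell]'$ reduces by the same strategy to the identity $[\ell]^* e_n = [\ell]' e_n$, which holds on the nose: $e_n$ is built from the sections $(0)$, $T_n^\varK(0)$, and $\mc{E}[n]^2$, none of which depend on the level structure $P$, so $e_n$ is invariant under the diamond operator that distinguishes $[\ell]'$ from $[\ell]^*$. The main obstacle I anticipate is the degenerate case $\ell = n$, where the decomposition $E^2[n\ell] = E^2[n] \oplus E^2[\ell]$ fails. A modified fiberwise analysis on $E^2[n^2]$, using the filtration $0 \subset E^2[n] \subset E^2[n^2]$ and tracking the interaction between the inner $T_n^\varK$ appearing in the definition of $e_n$ and the outer $T_n^\varK$ or $T'_n$, should still produce matching coefficient functions on both sides, albeit with more involved bookkeeping.
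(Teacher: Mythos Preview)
Your overall strategy---reducing to the identity $T_\ell^{\varK} e_n = T'_\ell e_n$ in $\varK_0^{(0)}$ via the characterizing residue property and left-exactness---matches the paper's approach exactly, and your direct fiberwise computation for $\ell \neq n$ is correct.

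The gap you anticipate at $\ell = n$ is genuine, and the paper resolves it not by a harder fiberwise analysis but by a commutativity trick that eliminates the case distinction altogether. Write $e_n = V_n^{\varK}(0)$ with $V_n^{\varK} = n^4 - n^2 T_n^{\varK} + n[n]^*$ as in \eqref{Vn0}. Since $(0)$ is $\GL_2(\Z)$-fixed, the operators $T_\ell^{\varK}$ and $T_n^{\varK}$ commute on it (their action on $\GL_2(\Z)$-invariants is independent of the chosen coset representatives); and $T'_\ell$ commutes with all of $V_n^{\varK}$ by Lemma~\ref{correspcplx}, since $T_n^{\varK}$ and $[n]^*$ are sums of $\Delta$-pullbacks. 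Hence both $T_\ell^{\varK} e_n$ and $T'_\ell e_n$ are obtained by applying $V_n^{\varK}$ to $T_\ell^{\varK}(0)$ and $T'_\ell(0)$ respectively, and the problem reduces to $T_\ell^{\varK}(0) = T'_\ell(0)$---precisely your count $\psi_\ell$, with no $\phi_n$ factor to track. This holds uniformly for every prime $\ell \nmid N$, including $\ell = n$. The same reduction handles the second assertion: $[\ell]^*(0) = [\ell]'(0) = \mc{E}[\ell]^2$ on the nose, so your separate level-structure argument for $[\ell]^*$ versus $[\ell]'$, while correct, becomes unnecessary.
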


\begin{proof} 
	Let us say that a cocycle $\theta \colon \GL_2(\Z) \rightarrow \varK_2^{(0)}$
	is associated to $Z \in \varK_0^{(0)}$ if $\partial \theta(\gamma) = (\gamma^*-1) \eta$ with $\eta \in \varK_1^{(0)}$ and  $\partial \eta =Z$. 
	Recall that, by Proposition \ref{abstractcocyc}, any two cocycles associated to $Z$ are cohomologous.  To show
	that the classes of $T_{\ell}'({}_n\Theta)$ and $T_{\ell}({}_n\Theta)$ coincide, it therefore suffices to show
	that $T_{\ell}({}_n\Theta)$ and $T'_{\ell}({}_n\Theta)$ are associated to the same cycle.  Let us consider the two cases:
	\begin{itemize}
		\item  The cocycle $T_{\ell} ( {}_n \Theta)$  
		 is associated to $T_{\ell}^{\varK} \newE_n$.  Indeed, 		
		 $\partial(T_{\ell}^{\varK} \langle 0,1 \rangle_n) = T_{\ell}^{\varK} \newE_n$ as $T_{\ell}^{\varK}$ is a map of complexes, and 
		$T_{\ell}^{\varK} \langle 0,1 \rangle_n$ belongs to $\varK_1^{(0)}$
		by Lemma \ref{Hecke preserves fix}.  Moreover, for $\gamma \in \GL_2(\Z)$, we have
		$$
		 (\gamma^*-1)T_{\ell}^{\varK} \langle 0,1 \rangle_n = \partial (T_{\ell}\Theta)_{\gamma}
		$$
		exactly as in equation \eqref{TT} of the proof of Proposition \ref{ThetaEis}. 
				
 		\item The cocycle $T_{\ell}'  ({}_n \Theta)$ is associated to $T_{\ell}' \newE_n$, since 
		$T_{\ell}' \colon \varK \to \varK$ is a map of complexes that commutes with the $\GL_2(\Z)$-action by Lemma 
		\ref{correspcplx} and preserves fixed parts by Lemma \ref{Hecke preserves fix}.
	\end{itemize} 
	
 		We must therefore show that $T_{\ell}' \newE_n = T_{\ell}^{\varK} \newE_n$.  		
		We claim that it is enough to show the same assertion but replacing  $\newE_n$ by the $\GL_2(\Z)$-fixed class $(0) \in \varK_0$.
		Indeed,  $\newE_n = V_n^{\varK} (0)$, with notation as in \eqref{Vn0}, and 
		  $T_{\ell}^{\varK}$ and $V_n^{\varK}$ commute in their action on the $\GL_2(\Z)$-invariant subgroup 
	of $\varK_0$, whereas $T_{\ell}'$ and $V_n^{\varK}$ commute   by Lemma \ref{correspcplx}
	(noting $V_n^{\varK}$ is a sum of various pullback maps).

%
	It remains therefore only to show that $T_{\ell}'(0) = T_{\ell}^{\varK}(0)$.
	We will describe the fibers of $T_{\ell}'(0)$ and $T_{\ell}^{\varK}(0)$ over a geometric point $s$ of $Y$ and show that they coincide. 
	\begin{itemize}
		\item The fiber of $T_{\ell}'(0)$ is the union of the kernels of all $\varphi^2 \colon E_s^2 \to (E')^2$,
	where $E_s$ is the fiber of $\mc{E}$ over $s$ and $\varphi \colon E_s \rightarrow E'$ is an $\ell$-isogeny.
	In other words, it is the sum of all $K \times K$ where $K$ is a cyclic subgroup scheme of $E_s$ of order $\ell$:
	\begin{equation} \label{Tell1} T_{\ell}'(0) = \sum_{K} K \times K.\end{equation} 	
		\item The fiber of $T_{\ell}^{\varK}(0)$ above $s$ is given
	by those points of $E_s$ in the kernel of some matrix $g_j$ as in \eqref{Gjdef}. 
	Regarding $j$ as valued in $\mathbb{P}^1(\F_{\ell})$, the kernel of $g_j$
	is the set of pairs  $(P,Q) \in E_s[\ell]^2$ such that 
	and $Q/P =-j$
	(by which we mean that if we write $j =a/b$, then $aP+bQ=0$), and thus 
	\begin{equation} \label{Tell2} 
		T_{\ell}^{\varK}(0) = \sum_{j \in \mathbb{P}^1(\F_{\ell})} \{ (P, Q) \in E_s[\ell]^2 \mid  P/Q = j \}. 
	\end{equation} 
	\end{itemize}
	
	One easily checks that \eqref{Tell1} and \eqref{Tell2} coincide. For example, 
	if we choose a basis to identify $E_s[\ell]$ with $\F_{\ell}^2$ and use this to identify $E_s[\ell]^2$
	with $M_2(\F_{\ell})$ with the columns giving the coordinates, then \eqref{Tell1} and \eqref{Tell2}
	become identified with the formal sums of matrices with linearly dependent rows and linearly dependent columns, respectively, 
	in both cases counting the zero matrix with multiplicity $\ell+1$.   
	
 	Finally, to see that the classes of $[\ell]^*({}_n \Theta)$ and $[\ell]'({}_n \Theta)$ are equal, 
	it is similarly enough to show that $[\ell]^*(0) = [\ell]'(0)$. 
	This is immediate: 
	both amount to pullback of the zero section by the matrix $\smatrix{ \ell & 0 \\ 0 & \ell }$, so equal $\mc{E}[\ell]^2$.
\end{proof}

\section{Cocycles for modular curves} \label{modcocyc}

In this section, we pull back the cocycles of Section \ref{E2} to the modular curve via a torsion section.  

As before, we fix $N \ge 4$ and a prime $n \nmid N$, and we write $Y = Y_1(N)$
and $\mc{E}$ for the universal elliptic curve above $Y$.  The surjection $\pi \colon \mc{E} \to Y$ has a canonical $N$-torsion 
section $\iota_N \colon Y \to \mc{E}[N]$.  
In \S \ref{specializeE2}, we pull back the cocycle ${}_n\Theta$ of Theorem \ref{expformE2}
by the section of $\pi^2 \colon \mc{E}^2 \to Y$ given by
$$
 	s = (0,\iota_N) \colon Y \rightarrow \mc{E}^2.
$$
We denote the result as ${}_n \Theta_N$. 
In order to make sense of such a pullback, we must  -- 
as in the case of $\gm^2$ described in \S \ref{specialization} -- 
restrict our cocycle to the congruence subgroup $\tilde{\Gamma}_0(N)$ of $\GL_2(\Z)$ defined in \eqref{tildegammadef} to consist of matrices with lower-left entry divisible by $N$.

In \S \ref{universal}, we describe modifications that enable us to obtain a universal cocycle $\Theta_N$ that should be thought of as the ``$n=1$''
version of the construction: see Theorem \ref{canoncocyc}. Much as with theta-functions, we do not know how to make sense of this on the universal elliptic curve, but we can after pullback. 
The characterizing property 
of $\Theta_N$ is that it gives rise to each ${}_n \Theta_N$ upon application of the Hecke operator $V_n = n^4 - n^2 T_n + n[n]^*$ or its counterpart $V'_n = n^4 - n^2 T'_n + n[n]' $.  
In \S \ref{expformuniv}, we prove an explicit formula for this  universal cocycle $\Theta_N$ modulo a subgroup that vanishes under standard regulator maps. 

Finally, in \S \ref{homologymod}, we construct the zeta map $z_N$ of \eqref{zetamap} and compare with the prior work of Goncharov, Brunault, and Fukaya-Kato.  The map $z_N$ is constructed in Theorem \ref{zeta_map}, where we show that it is Hecke equivariant and takes values in the motivic cohomology of $X_1(N)$ (over $\Z[\frac{1}{N}]$), as opposed to $Y_1(N)$. We also describe an integral, ordinary $p$-adic analogue in Proposition \ref{FKzeta}.
 
We suppose that $\Z' = \Z[\frac{1}{30}]$ throughout this section.

\subsection{Specialization via an $N$-torsion section} \label{specializeE2}

In this subsection, we pull back our cocycles ${}_n \Theta$ for primes $n \nmid N$ via the $N$-torsion section $s \colon Y \to \mc{E}^2$ 
to obtain cocycles
${}_n \Theta_N \colon \bGamma_0(N) \to H^2(Y,\Z'(2))$.
 
\subsubsection{Comparison of Hecke operators upon restriction}

As in the case of $\gm^2$, the section $s$ is not defined on all of $\varK_2$ but 
at least on classes ``regular along $s$''. Writing  
$$\varKs = \varinjlim_{(\Z/N\Z)^{\times}s \subset U} H^2(U, \Z'(2)) \subset \varK_2,$$
where $U$ runs over the open $Y$-subschemes of $\mc{E}^2$ containing all prime-to-$N$ multiples of the image of $s$,
we have a specialization map
$$
	s^* \colon \varKs \to H^2(Y,\Z'(2)),
$$
and similarly we can pull back by any prime-to-$N$ multiple of $s$.\footnote{In the case of 
$\gm^2$, the point $(1,\zeta_N)$ was defined over $\Q(\mu_N)$, whereas the subschemes $U$ were defined over $\Q$, 
so the containment of $(\Z/N\Z)^{\times}s$ for $s \in U(\Q(\mu_N))$ was automatic.}

 Now, for $\gamma = \smatrix{a&b\\c&d} \in \tilde{\Gamma}_0(N)$, 
 we have in fact ${}_n \Theta_{\gamma} \in \varKs$. Indeed, 
 write  $
	U_{\gamma} = \mc{E}^2 - S_{0,n} \cup S_{nb,nd}$, 
that is to say, $U_{\gamma}$ is the complement 
in $\mc{E}^2$ of the kernels of the maps $\mc{E}^2 \rightarrow \mc{E}$ defined by $(P,Q) \mapsto n Q$ and $(P,Q) \mapsto n (bP +dQ)$.
Then $\partial({}_n\Theta_{\gamma}) = (\gamma^*-1)\langle 0,1 \rangle_n$ lies in $H^1(S_{0,n},1)^{(0)} \oplus H^1(S_{nb,nd},1)^{(0)}$ inside $\varK_1^{(0)}$. Since $s$ and its multiples do not lie on either $S_{0,n}$  or $S_{nb,nd}$, it follows that ${}_n\Theta_{\gamma} \in H^2(U_{\gamma},2)$.  Moreover, the image of  any prime-to-$N$ multiple of $s$ is contained in  $U_{\gamma}$ as $N \nmid d$. In this way, the cocycle ${}_n \Theta$ restricted to $\bGamma_0(N)$ takes values in $\varKs \subset \varK_2$.\footnote{At this point, we have no further need for trace fixed parts, but of course ${}_n\Theta$ takes values in $\varK_2(N) \cap \varK_2^{(0)}$.}
  
The operators $T_{\ell}$ and $T_{\ell}'$ act on $H^1(\bGamma_0(N),\varK_2)$ and lift naturally to $H^1(\bGamma_0(N), \varKs)$. 
 For $T_{\ell}$, this is simply because the action of $\Delta_0(N)$, as defined in \eqref{Delta0def},
 sends the section $s$ to a multiple of itself and therefore preserves $\varKs$.
 For $T_{\ell}' = \Phi^2_*(\Psi^2)^*$ as in \S \ref{T'Hecke}, consider the diagram
 $$
	\begin{tikzcd}
		\mc{E}^2 & \mc{E}^2 \times_Y Y_{\ell}  \ar[l,"\Phi^2"'] \ar[r,"\Psi^2"]  & \mc{E}^2   \\
		Y  \ar[u,"s"] & Y_{\ell} \ar[l,"\phi"'] \arrow{u}{(s,\id)} \ar[r,"\psi"] & Y \ar[u,"s"],
	\end{tikzcd}
$$
and note that $\Psi^2$ preserves $s$, whereas the preimage
of the image of $s$ under $\Phi^2$ is again  the image of $(s,\id)$.

\begin{lemma} \label{K2ss}
For primes $\ell \nmid N$, the classes of $T'_{\ell}({}_n \Theta)$ and
$T_{\ell}({}_n \Theta)$ coincide in $H^1(\bGamma_0(N),  \varKs)$, as do the classes of $[\ell]'({}_n\Theta)$ and $[\ell]^*({}_n\Theta)$.
\end{lemma}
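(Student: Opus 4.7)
My plan is to upgrade the cohomological equality provided by Theorem \ref{heckeE2} — which lives in $H^1(\GL_2(\Z), \varK_2^{(0)})$ — to the one stated in $H^1(\bGamma_0(N), \varKs)$. Restriction from $\GL_2(\Z)$ to $\bGamma_0(N)$ is harmless, so the real task is to verify that the witnessing coboundary from Theorem \ref{heckeE2} can be chosen with values in the smaller subgroup $\varKs \subset \varK_2$. This is an analogue of Lemma \ref{K2s} in the present setting.

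I would begin by recalling from the proof of Theorem \ref{heckeE2} that both $T_{\ell}({}_n\Theta)$ and $T'_{\ell}({}_n\Theta)$ are associated, in the sense of Proposition \ref{abstractcocyc}, to the common cycle $T_{\ell}^{\varK} \newE_n = T'_{\ell} \newE_n$, via respective lifts $\eta_1 = T_{\ell}^{\varK} \langle 0, 1\rangle_n$ and $\eta_2 = T'_{\ell} \langle 0, 1\rangle_n$ in $\varK_1^{(0)}$. The construction in Proposition \ref{abstractcocyc} then produces $\psi \in \varK_2^{(0)}$ with $\partial \psi = \eta_1 - \eta_2$ and $(T_{\ell} - T'_{\ell})({}_n\Theta) = d\psi$. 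The lemma reduces to showing that this $\psi$ lies in $\varKs$.

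Unwinding the definition, $\psi \in \varK_2$ lies in $\varKs$ exactly when $\partial \psi \in \bigoplus_D K_1 k(D)$ has nontrivial component only on divisors $D$ disjoint from $(\Z/N\Z)^{\times} s$; one may then take the witnessing open to be $U = \mc{E}^2 \setminus \bigcup_D D$. It therefore suffices to verify this disjointness for the supports of $\eta_1$ and $\eta_2$. For $\eta_1$, an explicit calculation shows that $g_j^* \langle 0, 1\rangle_n$ is supported on the divisor $S_{nj, n}$ for $g_j = \smatrix{\ell & j\\0&1}$ with $0 \le j < \ell$, and on $S_{0, n\ell}$ for $g_{\ell} = \smatrix{1&0\\0&\ell}$. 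Since a divisor $S_{v_1, v_2}$ meets a section $(0, j'\iota_N)$ with $j' \in (\Z/N\Z)^\times$ precisely when $N \mid v_2$, and here $v_2 \in \{n, n\ell\}$ with $\gcd(n\ell, N) = 1$, each such divisor is disjoint from $(\Z/N\Z)^{\times} s$.

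The main difficulty will be the analogous support analysis for $\eta_2 = \Phi^2_* (\Psi^2)^* \langle 0, 1\rangle_n$. The guiding principle is that the $\ell$-Hecke correspondence preserves the $n$-torsion structure, since $\ell$-isogenies restrict to isomorphisms on $n$-torsion (as $\gcd(n, \ell) = 1$). Concretely, the support of $(\Psi^2)^* \langle 0, 1\rangle_n$ on $\mc{E}^2 \times_Y Y_\ell$ is cut out by an equation of the form $n(Q + K) = 0$, and pushing forward under $\Phi^2_*$ yields a sum of divisors on $\mc{E}^2$ whose defining equations involve only $n$- and $\ell$-torsion; a fiberwise check then rules out any intersection with a section $(0, j' \iota_N)$ for $j' \in (\Z/N\Z)^\times$. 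The argument for $[\ell]^*$ versus $[\ell]'$ is strictly parallel but simpler, with $[\ell]^* \langle 0, 1\rangle_n$ supported on $S_{0, n\ell}$ and $[\ell]' \langle 0, 1\rangle_n$ on a comparable $n\ell$-torsion divisor. Once $\psi \in \varKs$ is established, the stated identity in $H^1(\bGamma_0(N), \varKs)$ is immediate.
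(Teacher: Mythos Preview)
Your approach is correct but genuinely different from the paper's. The paper does not track the specific coboundary $\psi$; instead it proves the general injectivity statement
\[
H^1(H,\varKs)\hookrightarrow H^1(H,\varK_2)\qquad\text{for every finite-index }H\le\GL_2(\Z),
\]
which is the $\mc{E}^2$-analogue of Lemma~\ref{K2s}. That injectivity is deduced from a Gysin sequence once one knows that every $\GL_2(\Z)$-orbit of an irreducible divisor on $\mc{E}^2$ passing through $s$ is infinite. The paper verifies this last fact via the N\'eron--Severi group: over the generic fiber (a non-CM elliptic curve $E$), $\mathrm{NS}(E^2)\otimes\Q$ is the $\SL_2(\Z)$-representation on binary quadratic forms (by \cite{RosenShnidman}), in which every nonzero orbit is infinite.

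Your route avoids this entirely by computing the supports of $\eta_1=T_\ell^{\varK}\langle 0,1\rangle_n$ and $\eta_2=T'_\ell\langle 0,1\rangle_n$ directly and checking they miss the sections; since $\partial\psi=\eta_1-\eta_2$, this forces $\psi\in\varKs$. Your analysis of $\eta_1$ is complete. For $\eta_2$, the sketch can be made precise by observing that $(\Psi^2)^{-1}(S_{0,n})\subset\{((E,P,K),x,y):n\ell y=0\}$ (since $ny\in K$ forces $n\ell y=0$), so its image under the finite map $\Phi^2$ lies in $S_{0,n\ell}$, which is disjoint from every $(0,j'\iota_N)$ as $\gcd(n\ell,N)=1$. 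The $[\ell]^*$ versus $[\ell]'$ case is handled identically with both supports landing in $S_{0,n\ell}$.

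Trade-offs: your argument is more elementary and self-contained (no N\'eron--Severi input, no external citation), but it is tailored to this particular coboundary. The paper's injectivity statement is reusable---indeed it is invoked again as \eqref{inclusion} in the proof of Lemma~\ref{comparetheta}---so if you take your route here you will need either to reproduce a similar support analysis there or to prove the injectivity separately at that point.
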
 	 

\begin{proof}  
Theorem \ref{heckeE2} implies that $T'_{\ell}( {}_n \Theta)$ and $T_{\ell}( {}_n \Theta)$, as well as $[\ell]'({}_n\Theta)$ and $[\ell]^*({}_n\Theta)$, 
are cohomologous when considered with target $\varK_2$. So it is enough to check the following claim regarding the inclusion $\varKs \hookrightarrow \varK_2$:
\begin{equation} \label{inclusion}
	\mbox{for any $H \leqslant \GL_2(\Z)$ of finite index, $H^1(H, \varKs) \to H^1(H,\varK_2)$ is injective. }
\end{equation}

This injectivity will follow from the Gysin sequence analogous to \eqref{GysinK2N} into which the above inclusion fits
if one proves the infinitude of all $\GL_2(\Z)$-orbits of irreducible divisors on $\mc{E}^2$ containing the image of $s$.  
Such a divisor induces a divisor on the fiber of $\mc{E}^2$ over the generic point of $Y_1(N)$. 
Restricting to this fiber, it is enough to prove
that, for a non-CM elliptic curve over a field $K$  (in our case, the function field of $Y_1(N)$)
 and an irreducible $K$-divisor $D$ on $E^2$, the $\SL_2(\Z)$-orbit of $D$
 is infinite.  In fact, this is even true at the level of the N{\'e}ron-Severi group:  by \cite[Theorem 4.2]{RosenShnidman}, the $\Q$-vector space
 $\mathrm{NS}(E^2) \otimes_{\Z} \Q$ realizes the representation 
 of $\SL_2(\Z)$ on binary quadratic forms. In this representation, all nonzero orbits are infinite,
 and the class of   $D$ in the N\'eron-Severi group is nonzero because its intersection with a suitable hyperplane section is nonzero.  
 \end{proof}

 \subsubsection{Specialization of the cocycles}
   
Through the right action of $\Delta = M_2(\Z) \cap \GL_2(\Q)$ on $\mc{E}^2$, which preserves fibers, any
$\delta = \smatrix{a&b\\c&d} \in \Delta_0(N)$ acts on the $N$-torsion sections of 
$\mc{E}^2 \to Y$.   This action of $\Delta_0(N)$ does not preserve the section $s$. 
Indeed, let us agree to write points of $\mc{E}^2$ 
as triples $((E, P), x, y)$, where $E$ is an elliptic curve and $P$ is
an $N$-torsion point on $E$ (so that $(E,P)$ defines a point of $Y$) and $x$, $y$
are points of $E$. With this notation, we compute $\delta \circ s$: 
$$  
	(E, P) \stackrel{s}{\longmapsto} ((E, P), 0, P) \stackrel{\delta}{\longmapsto}   ((E, P), 0, dP).
$$
This does not coincide with $s \circ [d]'$, where $[d]'$ for $d$ prime to $N$
is the diamond operator on $Y_1(N)$ that sends $(E,P)$ to $(E, dP)$. Rather,
\begin{equation}
	\label{section equality}
	\delta \circ s = \phi_d^{-1} \circ s \circ [d]',
\end{equation}
where $\phi_d \colon \mc{E}^2 \rightarrow \mc{E}^2$ 
  sends $((E, P), x, y)$ to  $((E, d  P), x, y).$

Consider the action of $\Delta_0(N)$ on $H^2(Y_1(N),\Z(2))$ whereby $\smatrix{a&b\\c&d} \in \Delta_0(N)$ acts as $[d]'$, i.e.,
$\Delta_0(N)$ acts through its lower right-hand map to $(\Z/N\Z)^{\times}$.

\begin{lemma}  \label{phifix} 
The restriction of the pullback map 
$$
	s^* \colon \varKs \rightarrow H^2(Y, \Z'(2))
$$
to the $\Z[\Delta_0(N)]$-span of the image of ${}_n \Theta$ on $\bGamma_0(N)$ is $\Delta_0(N)$-equivariant.
\end{lemma}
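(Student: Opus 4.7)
The key identity \eqref{section equality}, $\delta \circ s = \phi_d^{-1} \circ s \circ [d]'$, yields upon pulling back the relation
$$s^* \circ \delta^* = ([d]')^* \circ s^* \circ (\phi_d^{-1})^*$$
on classes defined in a neighborhood of the multiples of $s$. Since $([d]')^* \circ s^*$ is precisely $s^*$ followed by the asserted action of $\delta \in \Delta_0(N)$ on $H^2(Y,\Z'(2))$, the lemma reduces to showing that $\phi_d^*$ acts as the identity on the $\Z[\Delta_0(N)]$-span $V$ of the image of ${}_n\Theta$ inside $\varKs$.

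The plan is first to observe that $\phi_d$ and any $\delta \in \Delta$ commute as endomorphisms of $\mc{E}^2$: on points, $\phi_d$ affects only the datum $P$ in a triple $((E,P),x,y)$, while the fiberwise right action of $\delta$ affects only $(x,y)$. Thus $\phi_d^*$ commutes with $\delta^*$ on $\varK$, so it suffices to show that $\phi_d^* \, ({}_n\Theta_\gamma) = {}_n\Theta_\gamma$ for every $\gamma \in \bGamma_0(N)$. By the explicit formula of Theorem \ref{expformE2}(b), and since $\langle v_i,-v_{i-1}\rangle_n$ is a pullback by an element of $\SL_2(\Z)$ (which commutes with $\phi_d^*$), this reduces further to the single identity $\phi_d^* \langle \smatrix{1&0\\0&1}\rangle_n = \langle \smatrix{1&0\\0&1}\rangle_n$, i.e., $\phi_d^*$ must fix the symbol $\theta \boxtimes \theta - \Norm(\theta' \boxtimes \theta')$.

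The core of the proof is then the $\phi_d^*$-invariance of the theta functions. Let $\tilde\phi_d \colon \mc{E} \to \mc{E}$ be the map $((E,P),x) \mapsto ((E,dP),x)$, so that $\pi_i \circ \phi_d = \tilde\phi_d \circ \pi_i$ for $i \in \{1,2\}$. The morphism $\tilde\phi_d$ is the identity on every geometric fibre $E$, hence preserves the zero section and the subscheme $\mc{E}[n]$; consequently $\tilde\phi_d^* \newf = \newf$. Because $\tilde\phi_d$ commutes with multiplication by $m$, the operator $\tilde\phi_d^*$ commutes with every $[m]_*$ and therefore preserves the trace-fixed subgroup $H^1(\mc{E}-\mc{E}[n],1)^{(0)}$. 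Uniqueness in the exact sequence \eqref{thetaseq} then forces $\tilde\phi_d^* \theta = \theta$. An identical argument on $\mc{E}'$---using that the cyclic subgroup $\mc{K}$ is recovered intrinsically from the $\Gamma_0(n)$-structure---yields the corresponding invariance for $\theta'$. Finally, the norm along $(\mc{E}')^2 \to \mc{E}^2$ commutes with $\phi_d^*$ by the base-change formula of Lemma \ref{pullbackdiagram}. Assembling these gives $\phi_d^* \langle \smatrix{1&0\\0&1}\rangle_n = \langle \smatrix{1&0\\0&1}\rangle_n$, as desired.

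The main obstacle is bookkeeping---keeping straight the three related fibrations $\mc{E}, \mc{E}', \mc{E}^2$ and the ``$\phi_d$''-type maps on each of them, and verifying that each construction entering into $\langle \smatrix{1&0\\0&1}\rangle_n$ commutes appropriately with these maps. Once the intrinsic (level-free) nature of the theta functions is made explicit via their characterization in \eqref{thetaseq}, invariance under $\phi_d^*$ is forced by uniqueness, and no substantive new computation is required.
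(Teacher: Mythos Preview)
Your proof is correct and follows essentially the same route as the paper's: both reduce via \eqref{section equality} to showing $\phi_d^*$ fixes each ${}_n\Theta_\gamma$, and both then trace this to the fact that the basic symbol is built from theta functions that are ``level-free'' (i.e., depend only on the fiber $E$, not on the torsion point $P$). The only minor variation is that you use the explicit formula of Theorem~\ref{expformE2}(b) to reduce to invariance of $\langle \smatrix{1&0\\0&1}\rangle_n$, whereas the paper invokes the characterization (a) to reduce to invariance of $\langle 0,1\rangle_n$ together with preservation of $\varK_2^{(0)}$; your route avoids the latter check at the cost of appealing to the explicit formula, but both arguments are essentially the same in content and length.
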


\begin{proof} 
In fact, if $x \in \varK_2(N)$ is fixed by $\phi_d^*$ with $\phi_d$ as defined above, then by \eqref{section equality} we have
$$
	s^* \circ \delta^* (x) = [d]' \circ s^* \circ (\phi_d^{-1})^* (x)
	= [d]' s^*(x).
$$
Thus, we need only show that
\begin{equation} \label{phifix2}  
	\phi_d^*({}_n \Theta_{\gamma}) = {}_n \Theta_{\gamma}
\end{equation} 
for all $\gamma \in \widetilde{\Gamma}_0(N)$.

By the characterization of ${}_n \Theta$ in part a of Theorem \ref{expformE2}, 
it is sufficient to verify that  $\phi_d^*$ preserves $\varK_2^{(0)}$ and fixes $\langle 0, 1 \rangle_n$.  
It preserves $\varK_2^{(0)}$ as the relevant diagram with $\phi_d$ and $[m]$ is evidently Cartesian,
and it fixes $\langle 0,1 \rangle_n$ since the latter is ``pulled back from level $1$''; in particular, it restricts to the same function on the fiber $E^2$ over $(E,P)$ and $(E,dP)$.
\end{proof}
 
Recall that $T_{\ell}$ acts on the group of cocycles $\tilde{\Gamma}_0(N) \to H^2(Y, \Z'(2))$ as in \eqref{Heckecocyc}
(where we view $H^2(Y,\Z'(2))$ as a $\Z[\Delta_0(N)]$-module as above), preserving coboundaries, whereas 
$T_{\ell}'$ acts on such cocycles through its action on the motivic cohomology of $Y$ defined in \eqref{Tmpdef}.  The foregoing lemmas, taken together, have established the following.
 
\begin{proposition} \label{modcocycprop}
		For $\gamma \in \tilde{\Gamma}_0(N)$, set
		\begin{equation} \label{nNThetadef}
			{}_n \Theta_{N,\gamma} = s^* ({}_n \Theta_{\gamma}) \in H^2(Y,\Z'(2)).
		\end{equation}
 		\begin{enumerate}
			\item[a.] The map 
			$$
			{}_n\Theta_N \colon \tilde{\Gamma}_0(N) \to H^2(Y,\Z'(2)), \quad \gamma \mapsto {}_n\Theta_{N,\gamma}
			$$ 
			is a parabolic cocycle.
			\item[b.] For each prime $\ell \nmid N$, the cocycles $T_{\ell}({}_n\Theta_N)$ and 
			$T'_{\ell}({}_n\Theta_N)$ are cohomologous. 
		\end{enumerate}	
\end{proposition}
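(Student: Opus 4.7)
The plan is to assemble Proposition \ref{modcocycprop} from the three preceding lemmas together with the explicit description of ${}_n\Theta$. First I would verify well-definedness of ${}_n\Theta_{N,\gamma}$: given $\gamma = \smatrix{a&b\\c&d} \in \bGamma_0(N)$, the residue $\partial({}_n\Theta_\gamma) = (\gamma^*-1)\langle 0,1\rangle_n$ is supported on the divisors $S_{0,n} \cup S_{nb,nd}$, and since $\det\gamma \in \{\pm 1\}$ forces $N \nmid d$, no prime-to-$N$ multiple of the section $s = (0,\iota_N)$ meets these divisors. Hence ${}_n\Theta_\gamma \in \varKs$ and $s^*({}_n\Theta_\gamma)$ is a well-defined element of $H^2(Y,\Z'(2))$. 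For the cocycle property on $\bGamma_0(N)$, the identity ${}_n\Theta_{\gamma\gamma'} = \gamma^*({}_n\Theta_{\gamma'}) + {}_n\Theta_\gamma$ descends upon pullback provided $s^*$ is $\bGamma_0(N)$-equivariant on the subgroup spanned by the values of ${}_n\Theta$, where the target $H^2(Y,\Z'(2))$ carries the $\bGamma_0(N)$-action by diamond operators $[d]'$ through the quotient $\bGamma_0(N) \to (\Z/N\Z)^\times$. This equivariance is precisely the content of Lemma \ref{phifix}. Parabolicity of ${}_n\Theta_N$ then follows immediately from parabolicity of ${}_n\Theta$ (Theorem \ref{expformE2}), since if the restriction of ${}_n\Theta$ to the stabilizer of a cusp is a coboundary of some $\psi \in \varK_2^{(0)}$, a small additional argument shows we may take $\psi \in \varKs$, and applying $s^*$ yields a coboundary on the target.

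For part b, the starting point is Lemma \ref{K2ss}, which already provides a cohomology $T'_\ell({}_n\Theta) - T_\ell({}_n\Theta) = \partial_{\mathrm{gp}} \psi$ at the level of $\bGamma_0(N)$-cocycles valued in $\varKs$, where $\partial_{\mathrm{gp}}$ is the group coboundary. Applying $s^*$ produces a coboundary relation in $H^2(Y,\Z'(2))$, but to conclude one must check that $s^* \circ T_\ell^{\varK}$ and $s^* \circ T'_\ell$, applied to the restriction of ${}_n\Theta$, coincide respectively with the action of the group-theoretic $T_\ell$ and the correspondence-theoretic $T'_\ell$ on the target cocycle ${}_n\Theta_N$. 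For $T'_\ell$, this is a direct consequence of the compatibility diagram
\[
	\begin{tikzcd}
		\mc{E}^2 & \mc{E}^2 \times_Y Y_{\ell}  \ar[l,"\Phi^2"'] \ar[r,"\Psi^2"]  & \mc{E}^2   \\
		Y  \ar[u,"s"] & Y_{\ell} \ar[l,"\phi"'] \arrow{u}{(s,\id)} \ar[r,"\psi"] & Y \ar[u,"s"],
	\end{tikzcd}
\]
together with the base-change identity of Lemma \ref{pullbackdiagram}. For $T_\ell$, one picks the coset representatives $g_j$ of \eqref{Gjdef} with $g_j \in \Delta_0(N)$ and applies Lemma \ref{phifix} termwise: the identity $s^* \circ g_j^* = [d_j]' \circ s^*$ on the $\Delta_0(N)$-span of the values of ${}_n\Theta$, summed over $j$, converts $s^* T_\ell^{\varK}({}_n\Theta)$ into $T_\ell({}_n\Theta_N)$ as cochains.

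The principal bookkeeping obstacle is precisely the discrepancy \eqref{section equality}: a matrix $\delta \in \Delta_0(N)$ does not act fiberwise on $\mc{E}^2$ in a way compatible with the obvious target action $[d]'$ on $Y$ but differs from it by the automorphism $\phi_d$. The role of Lemma \ref{phifix} is to absorb this discrepancy by showing that the values ${}_n\Theta_\gamma$ are $\phi_d^*$-invariant, which in turn follows from the fact that $\langle 0,1\rangle_n$ is pulled back from level one. Once this invariance is in hand, parts a and b are formal. I do not anticipate any deeper obstruction beyond this compatibility, as all the $K$-theoretic left-exactness and trace-fixed-part arguments have been dispatched in Sections \ref{FM}--\ref{HeckeE}.
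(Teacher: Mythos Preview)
Your proposal is correct and follows essentially the same route as the paper's proof: Lemma \ref{phifix} provides the $\Delta_0(N)$-equivariance of $s^*$ needed for the cocycle property and for the identity $s^*T_{\ell}({}_n\Theta) = T_{\ell}({}_n\Theta_N)$, the Cartesian diagram you draw gives $s^*T'_{\ell}({}_n\Theta) = T'_{\ell}({}_n\Theta_N)$, and Lemma \ref{K2ss} closes the loop. Your explicit flag of the ``small additional argument'' for parabolicity (that the bounding element $\psi$ can be chosen in $\varKs$) is a point the paper itself passes over; it is handled by noting that for any parabolic $P_{\alpha}\subset\bGamma_0(N)$ one may choose $g\in\GL_2(\Z)$ taking $\infty$ to $\alpha$ with second column $(b',d')$ satisfying $N\nmid d'$, so that the explicit coboundary $-{}_n\Theta_g$ already lies in $\varKs$.
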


\begin{proof}
	For part a, that ${}_n \Theta_N$ is a cocycle clear from Lemmas \ref{K2ss} and \ref{phifix}.
	That it is parabolic at all but the parabolic $Q = \{ \smatrix{\pm 1 & n \\ 0 & 1} \mid n \in \Z\}$
	follows from a nearly identical argument to that of Proposition \ref{cyclcocycle}, using
	the parabolicity of ${}_n\Theta$ in Theorem \ref{expformE2}b and the equivariance of
	$s^*$ of Lemma \ref{phifix}. However, to see that ${}_n \Theta_N$ is a coboundary on the exceptional parabolic
	$Q$, we argue differently. 
	Since $2$ is invertible in $\Z'$ and $Q \cong \Z \rtimes \Z/2\Z$, it suffices
	to see that ${}_n \Theta_N$ vanishes on the generator $\smatrix{1 & -1 \\ 0 & 1}$ of
	the unipotent subgroup of $Q$.   Using
	 \eqref{Thetachar2} for the connecting sequence $v_0=(0,1),  v_1=(-1,1)$ for $\smatrix{1 & 1 \\ 0 & 1}$,
	and then applying \eqref{1001def} and
	  \eqref{gammalrndef}, we obtain  		
	 $$
		{}_n \Theta_{N,\gamma_0} = s^* ({}_n \Theta_{\gamma_0})  =
		s^*( \langle (-1, 1), (0,-1) \rangle)  
		= s^* \langle \smatrix {-1 & 0 \\ 1 & -1}\rangle^*  ( \theta \boxtimes \theta - \Norm (\theta' \boxtimes \theta')).
	$$
	 Since $ \langle \smatrix {-1 & 0 \\ 1 & -1}\rangle$ applied to the section $s=(0,\iota_N)$
	 gives $(\iota_N, -\iota_N)$, this becomes
	$$ 
		\iota_N^*\theta \cup (-\iota_N)^*\theta - \Norm ((\iota'_N)^*\theta'
		\cup (-\iota'_N)^*\theta'),
	$$
	where $\iota'_N \colon Y' \to (\mc{E}')^2$ is the canonical $N$-torsion section and the norm is from 
	$Y'$ to $Y$. 
	The two terms in the above expression vanish by the evenness of $\theta$ and $\theta'$ 
	and the skew-symmetry of the cup product with $\Z'$-coefficients (cf. \cite[Theorem 15.9]{mvw}).
	
 	As for part b, let $\ell$ be a prime not dividing $N$.  
	By Lemma \ref{phifix}, we have $s^* T_{\ell} ({}_n \Theta) = T_{\ell} ({}_n \Theta_N)$.	
	Moreover, $\iota_N^* \Phi_*\Psi^* = \phi_*\psi^*\iota_N^*$ since in the diagram
	$$
	\begin{tikzcd}
		\mc{E} & \mc{E} \times_Y Y_m  \ar[l,"\Phi"'] \ar[r,"\Psi"]  & \mc{E}   \\
		Y  \ar[u,"\iota_N"] & Y_m \ar[l,"\phi"'] \arrow{u}{(\iota_N,\id)} \ar[r,"\psi"] & Y \ar[u,"\iota_N"],
	\end{tikzcd}
	$$
 	the right-hand square commutes and the left-hand square is cartesian. The analogue for $\mc{E}^2$ 
	then holds with $\iota_N$ replaced by $s$, and
	therefore we have $s^*T'_{\ell}({}_n \Theta) = T'_{\ell}({}_n \Theta_N)$ as well.   
	Again recalling Lemma \ref{K2ss}, we conclude that the cohomology classes of 
	$T_{\ell}({}_n\Theta_N)$ and $T'_{\ell}({}_n\Theta_N)$ are equal. 
\end{proof}

There is also a formula for ${}_n\Theta_{N,\gamma}$ as a sum of cup products of Siegel units that follows in the obvious way by specializing \eqref{Thetachar2}; we do not write it down here, but we will discuss its ``$n=1$'' analogue in the next section. 

\subsection{The universal ``$n=1$'' cocycle} \label{universal}
The cocycle $${}_n \Theta_N \colon  \bGamma_0(N) \rightarrow H^2(Y_1(N), \Z'(2))$$
constructed above depends on the choice of an auxiliary prime $n$
in addition to the level $N$. 
As we shall detail, it satisfies a simple distribution relation in $n$
that permits us to construct an ``$n=1$'' version rationally. 

\subsubsection{Relation between cocycles and statement of the result}

Suppose that $\ell$ is a prime with $\ell \nmid N$.  Set 
\begin{eqnarray*}
	V'_{\ell} = \ell^4 - \ell^2 T'_\ell + \ell [\ell]' &\mr{and}& V_{\ell} = \ell^4 - \ell^2 T_\ell + \ell[\ell]^*.
\end{eqnarray*}
As in \S \ref{HeckeE}, the operator $V_{\ell}'$ acts on $\varK$, and the operator $V_{\ell}$ acts on cocycles valued in $\varK_2$. 
 (Strictly speaking, the action of $V_{\ell}$ depends on choice of representatives for the double coset of $\smatrix{ \ell \\ & 1}$, but recall
that we made a particular choice in defining $T_{\ell}$ in \S \ref{actions}.)
For example, one has by \eqref{Vn0} the equality $\newE_n = V_n(0)$ in $H^0(\GL_2(\Z),\varK_0)$. 
Beyond this, these operators act on several closely related groups; for convenience, we summarize some of these actions and their relationships: 

\begin{itemize}
\item[(i)]
As in the discussion prior to Lemma \ref{K2ss}, the operators $V'_{\ell}$ acts directly on $\varKs$, and $V_{\ell}$ acts on  $H^*( \GL_2(\Z), \varKs)$. \item[(ii)] Using double cosets of $\bGamma_0(N)$ inside $\Delta_0(N)$, the operator $V_{\ell}$ acts on 
$H^1(\bGamma_0(N), \varK_2)$ compatibly with the restriction map $H^1(\GL_2(\Z), \varK_2) \rightarrow H^1(\bGamma_0(N), \varK_2)$, 
and similarly with $\varK_2$ replaced by $\varKs$.  
\item[(iii)] As in Proposition 
\ref{modcocycprop}, the operators $V_{\ell}$ and $V'_{\ell}$ both act on  $H^1(\tilde{\Gamma}_0(N), H^2(Y_1(N), \Z'(2))$.
The specialization map $\varKs \rightarrow H^2(Y_1(N), \Z'(2))$
is equivariant for both operators; this is argued just as in the proof of said proposition.
\end{itemize}

The ``distribution relation'' between our cocycles is then as follows. 
\begin{lemma} \label{comparetheta}
	For any prime $\ell \nmid N$, the classes of $ V_{\ell}({}_n\Theta_{N})$ and $ V_n({}_{\ell}\Theta_{N})$ are equal. 
\end{lemma}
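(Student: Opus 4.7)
The strategy is to reduce the claimed equality to the analogue in $H^1(\GL_2(\Z),\varK_2^{(0)})$, namely
$$ V'_{\ell}({}_n\Theta) \;=\; V'_n({}_{\ell}\Theta) \quad \text{in } H^1(\GL_2(\Z),\varK_2^{(0)}), $$
and then to specialize along $s$ and restrict to $\bGamma_0(N)$. Since both operators $V_{\ell}, V_{\ell}'$ are linear combinations of the Hecke-type operators compared in Lemma \ref{K2ss} and Theorem \ref{heckeE2}, those results imply $V_{\ell}({}_n\Theta)= V'_{\ell}({}_n\Theta)$ in $H^1(\bGamma_0(N),\varKs)$, and by the Hecke-equivariance of $s^*$ (Lemma \ref{phifix}, adapted to $V_{\ell}'$), the same equality survives after specialization. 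The analogous statement swaps the roles of $n$ and $\ell$. Thus it remains to prove the displayed equality above.

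To do this, first observe that $V'_{\ell}$ and $V'_n$ commute as endomorphisms of the complex $\varK^{(0)}$: the operators $T'_n,T'_{\ell},[n]',[\ell]'$ all arise from correspondences on $\mc{E}^2$ whose defining squares are Cartesian in the relevant sense, so Lemma \ref{pullbackdiagram} yields pairwise commutativity. Since $e_n = V'_n(0)$ and $e_{\ell}=V'_{\ell}(0)$ in $\varK_0^{(0)}$ (see \eqref{Vn0}), we obtain the central identity
$$
    V'_{\ell}(e_n) \;=\; V'_{\ell}V'_n(0) \;=\; V'_n V'_{\ell}(0) \;=\; V'_n(e_{\ell})
$$
in $\varK_0^{(0)}$. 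Moreover $V'_{\ell}(e_n)$ is $\GL_2(\Z)$-fixed (by Lemma \ref{correspcplx}, since $e_n$ is $\GL_2(\Z)$-fixed and $V'_{\ell}$ commutes with the pullback $\GL_2(\Z)$-action), is of degree zero (since $\deg\, e_n=0$ and $V'_{\ell}$ preserves degree-zero cycles), and lies in $\varK_0^{(0)}$ (by Lemma \ref{Hecke preserves fix}).

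Now apply Proposition \ref{abstractcocyc}. The cocycle $V'_{\ell}({}_n\Theta)\colon \gamma\mapsto V'_{\ell}({}_n\Theta_{\gamma})$ has residue $(\gamma^*-1)V'_{\ell}\langle 0,1\rangle_n$ by commutativity of $V'_{\ell}$ with $\gamma^*$ (Lemma \ref{correspcplx}), and $V'_{\ell}\langle 0,1\rangle_n \in \varK_1^{(0)}$ is a lift of $V'_{\ell}(e_n)$ by Lemma \ref{Hecke preserves fix} and Lemma \ref{actiononunits}. Thus $V'_{\ell}({}_n\Theta)$ is a cocycle associated, in the sense of Proposition \ref{abstractcocyc}, to the class $V'_{\ell}(e_n)$. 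The same argument, with the roles of $n$ and $\ell$ interchanged, shows that $V'_n({}_{\ell}\Theta)$ is associated to $V'_n(e_{\ell})$. Since these two classes coincide by the central identity above, the independence-of-$\eta$ part of Proposition \ref{abstractcocyc} yields the equality of cohomology classes in $H^1(\GL_2(\Z),\varK_2^{(0)})$. Restricting to $\bGamma_0(N)$, pushing into $H^1(\bGamma_0(N),\varKs)$, and specializing via $s^*$ (using its $V'$-equivariance on the span of the values of the cocycles, just as in the proof of Proposition \ref{modcocycprop}), together with Lemma \ref{K2ss} to swap $V'$ for $V$, yields the lemma.

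The step that requires the most care is simultaneously tracking three different flavors of Hecke-type operators and their compatibilities: the abstract $\GL_2(\Z)$-Hecke operators $V_{\ell}$ (defined via cosets), the correspondence operators $V'_{\ell}$ on $\varK$, and the operators after specialization on $H^2(Y,\Z'(2))$. The equivalence of $V_{\ell}$ and $V'_{\ell}$ acting on ${}_n\Theta$ up to coboundary is precisely the content of Theorem \ref{heckeE2} and Lemma \ref{K2ss}; once those are in hand, the remaining content of the lemma is the clean algebraic identity $V'_{\ell}V'_n(0) = V'_nV'_{\ell}(0)$, upgraded to an equality of cocycle classes via the uniqueness-up-to-coboundary afforded by Proposition \ref{abstractcocyc}.
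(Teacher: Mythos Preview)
Your proof is correct and follows essentially the same route as the paper's: reduce to the identity $V'_{\ell}\,e_n = V'_n\,e_{\ell} = V'_{\ell}V'_n(0)$ in $\varK_0^{(0)}$, invoke the uniqueness of Proposition~\ref{abstractcocyc} to conclude $V'_{\ell}({}_n\Theta)$ and $V'_n({}_{\ell}\Theta)$ are cohomologous in $H^1(\GL_2(\Z),\varK_2^{(0)})$, then descend to $\bGamma_0(N)$, pass to $\varKs$ via \eqref{inclusion}, swap $V'$ for $V$ using Theorem~\ref{heckeE2} and Lemma~\ref{K2ss}, and specialize by $s^*$. One small citation slip: \eqref{Vn0} gives $e_n = V_n^{\varK}(0)$, not $e_n = V'_n(0)$; the latter equality follows from the computations $T_n^{\varK}(0)=T'_n(0)$ and $[n]^*(0)=[n]'(0)$ carried out in the proof of Theorem~\ref{heckeE2}, so you should cite that rather than \eqref{Vn0} alone.
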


\begin{proof}
	 The lemma follows by specialization via $s^*$, noting point (iii) above, from the claim that
$$
 \mbox{the classes of $V_{\ell}({}_n\Theta)$ and $V_n({}_{\ell}\Theta)$ coincide, considered 
 with target $\varKs$. }
$$
	By \eqref{inclusion}, it suffices to prove this instead with target $\varK_2$.  
	Noting point (ii) above, it is moreover sufficient to prove the equality inside $H^1(\GL_2(\Z), \varK_2)$
	rather than $H^1(\bGamma_0(N), \varK_2)$, and then, 
	 by Theorem \ref{heckeE2} it is sufficient to prove  it with the $V$-operators replaced by the $V'$-operators.
	 But just as  in the proof of Theorem \ref{heckeE2}, this is a consequence of the fact that $V'_{\ell} \newE_n = V'_n \newE_{\ell} =
	V'_{\ell} V'_n (0)$.
\end{proof}

Let us state the main theorem of this section.  

\begin{theorem} \label{canoncocyc} 
	There exists a parabolic cocycle 
	$$
 		\Theta_N \colon \bGamma_0(N) \to H^2(Y, \Z'[\tfrac{1}{N}](2))
	$$
	with class uniquely specified by the property that the classes of $V_{\ell}(\Theta_N)$ and ${}_{\ell} \Theta_N$
	are equal for each $\ell$ not dividing $N$. 
	Moreover $T_n$ and $T_n'$ coincide on the class of $\Theta_N$ for all primes $n \nmid N$.
\end{theorem}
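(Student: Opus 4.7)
The plan is to extract $\Theta_N$ from the compatible family $\{{}_n \Theta_N\}_{n \nmid N \text{ prime}}$ using the distribution relation of Lemma \ref{comparetheta}, exploiting the fact that the operators $V_n$ jointly generate the unit ideal in a suitable Hecke algebra after inverting $N$.

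The key algebraic input is the following assertion. In the commutative subring $\mathbb{T}$ of endomorphisms of $H^1(\bGamma_0(N), H^2(Y, \Z'[\tfrac{1}{N}](2)))$ generated by the Hecke operators $\{T_n, [n]^*\}_{n \nmid N}$, there exist primes $n_1, \dots, n_k$ not dividing $N$ and elements $c_1, \dots, c_k \in \mathbb{T}$ with $\sum_i c_i V_{n_i} = 1$. Granted this, I would define
$$\Theta_N := \sum_i c_i \cdot ({}_{n_i} \Theta_N),$$
which is valued in $H^1(\bGamma_0(N), H^2(Y, \Z'[\tfrac{1}{N}](2)))$. For any prime $\ell \nmid N$, commutativity of all Hecke operators involved, combined with Lemma \ref{comparetheta}, then gives
$$V_\ell(\Theta_N) = \sum_i c_i V_\ell({}_{n_i}\Theta_N) = \sum_i c_i V_{n_i}({}_\ell\Theta_N) = \Big(\sum_i c_i V_{n_i}\Big) \cdot {}_\ell \Theta_N = {}_\ell \Theta_N,$$
establishing the characterizing property. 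Parabolicity of $\Theta_N$ is inherited from each ${}_{n_i}\Theta_N$ via Proposition \ref{modcocycprop}(a). For uniqueness, if $\Theta, \Theta'$ both satisfy the defining relation, then $V_n(\Theta - \Theta') = 0$ in cohomology for every prime $n \nmid N$, so applying $\sum_i c_i V_{n_i} = 1$ forces $\Theta = \Theta'$.

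The Hecke equivariance claim then follows formally: by Proposition \ref{modcocycprop}(b), $T_n$ and $T_n'$ coincide on the class of each ${}_{n_i}\Theta_N$, and since both $T_n$ and $T_n'$ commute with each $c_i$ (all lying in the commutative Hecke subalgebra), applying the combination $\sum_i c_i \cdot (-)$ preserves this equality, yielding $T_n(\Theta_N) = T_n'(\Theta_N)$ in cohomology.

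The principal obstacle is the Hecke-algebraic statement that the $V_n$'s generate the unit ideal in $\mathbb{T}$ after inverting $N$. Equivalently, one must rule out the existence of a maximal ideal $\mathfrak{m} \subset \mathbb{T}$ of residue characteristic coprime to $N$ containing every $V_n$ for $n \nmid N$ prime. This reduces to analyzing the eigensystems of $\mathbb{T}$ modulo $\mathfrak{m}$: for any such eigensystem one must exhibit a prime $n \nmid N$ with $V_n$ acting as a nonzero scalar and, further, show that these scalars have greatest common divisor a unit in $\Z'[\tfrac{1}{N}]$ as $n$ varies. The structure of the argument is parallel to Kato's treatment of Siegel units, where the dependence on an auxiliary prime $c$ is removed by exploiting the distribution relation upon inverting $6N$.
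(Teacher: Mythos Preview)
Your proposal is correct and follows the paper's approach essentially verbatim: write $1 = \sum_i c_i V_{n_i}$ in the Hecke algebra acting on $H^1(\bGamma_0(N), H^2(Y, \Z'[\tfrac{1}{N}](2)))$, set $\Theta_N = \sum_i c_i({}_{n_i}\Theta_N)$, and verify the characterizing property via Lemma~\ref{comparetheta}. The paper records your ``principal obstacle'' as Proposition~\ref{Vgenerate2} and proves it exactly along the lines you sketch, by reducing to the weight~$2$ Hecke algebra for $\Gamma_1(N)$ (Lemma~\ref{Heckefactor}) and then ruling out a bad eigensystem via the shape of $\rho|_{I_p}$ for the associated mod~$p$ Galois representation (Proposition~\ref{Vgenerate}); your analogy with Kato's Siegel-unit argument is apt in spirit, though the input here is genuinely deeper than a gcd computation. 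One small point: in your Hecke-equivariance step you need that $T_n'$ commutes with the $c_i \in \mathbb{T}$, which is not automatic since $T_n' \notin \mathbb{T}$; this is the content of Lemma~\ref{formality}, and the paper invokes it explicitly.
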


The proof requires the following statement about the ring-theoretic structure of the Hecke algebra.

\begin{proposition} \label{Vgenerate2}
 	 Write  $M = H^2(Y, \Z'[\frac{1}{N}](2))$,
	and let $\mb{T}_M$ be the subalgebra of the $\Z'[\frac{1}{N}]$-endomorphism ring of $H^1(\bGamma_0(N),M)$ 
	generated by all $T_{\ell}$ for primes $\ell \nmid N$ and $[d]'$ for 
	$d$ prime to $N$.  Then the operators $V_n$ for primes $n \nmid N$ generate $\mb{T}_M$. 
\end{proposition}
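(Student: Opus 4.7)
The goal is to prove the reverse inclusion $\mb{T}_M \subseteq A$, where $A$ denotes the $\Z'[\tfrac{1}{N}]$-subalgebra of $\mb{T}_M$ generated by the operators $V_n$ for primes $n \nmid N$. My plan is to first establish the rational statement $A \otimes \Q = \mb{T}_M \otimes \Q$, and then to upgrade to the integral assertion by local analysis at primes $\ell \nmid 30N$.

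For the rational statement, note that $\mb{T}_M$ is finitely generated as a $\Z'[\tfrac{1}{N}]$-module, since it acts faithfully on the finitely generated module $H^1(\bGamma_0(N), M)$. Hence $\mb{T}_M \otimes \Q$ is a commutative, finite-dimensional, semisimple $\Q$-algebra, i.e., a product of fields indexed by its eigensystems $\lambda \colon \mb{T}_M \to \overline{\Q}$. The rational equality thus reduces to showing these eigensystems are separated by the tuple $(\lambda(V_n))_n$. If $\lambda \neq \lambda'$ agree on every $V_n$, expanding $V_n = n^4 - n^2 T_n + n[n]'$ and dividing by $n$ gives
$$n\bigl(\lambda(T_n) - \lambda'(T_n)\bigr) = \lambda([n]') - \lambda'([n]')$$
for every prime $n \nmid N$. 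The right side is bounded in any archimedean embedding (diamond eigenvalues being roots of unity), while the left grows linearly with $n$ unless $\lambda(T_n) = \lambda'(T_n)$. This forces agreement on $T_n$ for all sufficiently large $n$, and then the relation forces agreement on $[n]'$ as well. Strong multiplicity one, applied via the modular interpretation of the Hecke module $H^1(\bGamma_0(N), M)$ (Eichler--Shimura), then gives $\lambda = \lambda'$, a contradiction.

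The integral version follows by localizing at each prime $\ell \nmid 30N$: since $\mb{T}_M / A$ is a finitely generated torsion $\Z'[\tfrac{1}{N}]$-module, it suffices to show it vanishes after such localizations, which by Nakayama reduces to a mod-$\ell$ generation statement. I would use Dirichlet's theorem to pick, for each residue class $d \in (\Z/N\Z)^\times$, infinitely many primes $n \equiv d \pmod N$; forming differences $V_n - V_m = (n^4 - m^4) - (n^2 T_n - m^2 T_m) + (n-m)[d]'$ among pairs of such primes isolates $[d]' \bmod \ell^k$ via elimination of the $T$-terms (provided $n - m$ can be chosen a unit modulo $\ell$), and then $V_n - n[d]' = n^2(n^2 - T_n)$ recovers $T_n \bmod \ell^k$ whenever $n$ is a unit mod $\ell$. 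The main obstacle is precisely this mod-$\ell$ step: the archimedean bound driving the rational separation has no mod-$\ell$ analogue, so one must replace it with a Chebotarev-type density argument controlling the variation of mod-$\ell$ Hecke eigenvalues across primes in a fixed residue class modulo $N\ell^k$. Carefully managing this variation, while accounting for possible mod-$\ell$ congruences between distinct eigensystems of $\mb{T}_M$, is the technical heart of the proof; everything else follows formally from semisimplicity and Hecke-algebra manipulations.
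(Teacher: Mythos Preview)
There are two genuine gaps. First, your claim that $H^1(\bGamma_0(N),M)$ is finitely generated is unjustified: $M = H^2(Y_1(N),\Z'[\tfrac{1}{N}](2))$ is not known to be finitely generated over $\Z'[\tfrac{1}{N}]$ (this is tied to open finiteness questions for $K_2$ of curves over number fields), so you cannot deduce finite generation or semisimplicity of $\mb{T}_M$ this way. The paper circumvents this entirely by proving (Lemma~\ref{Heckefactor}) that $\mb{T}_M$ is a quotient of the weight-$2$ Hecke algebra $\mb{T}$ for $\Gamma_1(N)$, which yields both properties with no hypothesis on $M$. Also note that the statement is that the $V_n$ generate the \emph{unit ideal} of $\mb{T}_M$ (this is what Proposition~\ref{Vgenerate} proves and what Theorem~\ref{canoncocyc} uses, writing $1 = \sum r_i V_{n_i}$); your subalgebra reading is different and would not by itself give that conclusion.

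Second, and more fundamentally, you correctly identify the integral step as the crux but do not actually carry it out; your mod-$\ell$ elimination scheme has no substitute for the archimedean bound, and you acknowledge this. The paper's argument is of a completely different nature: after reducing to $\mb{T}$, one supposes for contradiction that the ideal $\mf{v}$ generated by the $V_n$ is proper, obtaining a homomorphism $\mb{T} \to F$ to an algebraically closed field of characteristic $p \nmid 30N$ killing every $V_n$. The associated mod-$p$ Galois representation $\rho$ then has $\Tr\rho(\varphi_\ell) = \ell^2 + \ell^{-1}\langle\ell\rangle$ for all $\ell \nmid Np$, so \v{C}ebotarev forces $\rho \simeq \omega^2 \oplus \omega^{-1}\nu$; hence $\rho|_{I_p} \simeq \omega^2 \oplus \omega^{-1}$, contradicting the known shape of inertia for weight-$2$ modular Galois representations (Edixhoven). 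This Galois-theoretic input is exactly the missing idea in your approach. Your rational argument, by contrast, is correct but more elaborate than necessary: as noted in Remark~\ref{Qcoeff}, each individual $V_\ell$ is already a unit in $\mb{T} \otimes \Q$, since $|\lambda(V_\ell)| \geq \ell^4 - \ell^2(\ell+1) - \ell > 0$ for every eigensystem $\lambda$.
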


We will prove Proposition \ref{Vgenerate2} in the remainder of this section. Namely, 
we show 	 in  Lemma \ref{Heckefactor}, the algebra $\mb{T}_M$ is a 
	quotient of the $\Z'[\frac{1}{N}]$-Hecke algebra $\mb{T}$ of weight $2$ modular forms for $\Gamma_1(N)$, with
	$T_{\ell}$ mapping to $T_{\ell}$ and $\langle d \rangle$ mapping to $[d]'$, and we will show in 
  Proposition \ref{Vgenerate} that the $V_n$-operators generate $\mb{T}$.  This last statement
  uses the structure of Galois representations attached to level $N$ eigenforms. 
  
We now prove Theorem \ref{canoncocyc} assuming Proposition \ref{Vgenerate2}.  
  
\begin{proof}[Proof of Theorem \ref{canoncocyc}]
	To construct $\Theta_N$, let us choose
	operators $r_i \in \mb{T}_M$ and primes $n_i \nmid N$ for $1 \le i \le t$ for some $t$ such that $\sum_{i=1}^t r_i V_{n_i} = 1$. 
	We then set $\Theta_N = \sum_{i=1}^t r_i ({}_{n_i} \Theta_N )$, a parabolic cocycle.  
	By Lemma \ref{comparetheta}, we see immediately that, as
	cohomology classes, we have
	$$
		V_{\ell}(\Theta_N) = \sum_{i=1}^t r_i V_{n_i} ({}_{\ell} \Theta_N) = {}_{\ell} \Theta_N.
	$$
	Uniqueness follows as, if $\theta$ is a cocycle with 
	$V_{\ell}\theta = {}_{\ell} \Theta_N$ as cohomology classes for all $\ell \nmid N$, then 
	$\theta = \sum_{i=1}^t r_i V_{n_i} \theta = \Theta_N$.
	To show that $T_n$ and $T_n'$ coincide, it is enough by Proposition \ref{Vgenerate2} to show the same for $V_{\ell} T_n$ and $V_{\ell} T_n'$. This follows by 
	Lemma \ref{K2ss} and the commutativity of the two types of Hecke operators on 
	$H^1(\bGamma_0(N),M)$ which is proved just as in Lemma \ref{formality}. 
\end{proof}

\subsubsection{Normalizations of Hecke operators} 
 \label{eichshim}

Our conventions regarding Hecke operators on cocycles differ slightly from standard conventions in the literature
due to issues of left versus right actions. 
We briefly describe the precise relationship, which will be useful in using results about Galois representations. 

 Let $M_2(\Gamma_1(N))$ denote the complex vector space of weight $2$ modular forms for $\Gamma_1(N)$.  
Elements of $M_2(\Gamma_1(N))$ are $\Gamma_1(N)$-invariant
functions on $\mathbb{H}$ for a natural {\em right} action on functions, namely, 
$$
	f|_{\gamma}(z) = (cz+d)^{-2} f(\gamma z) 
$$
for $\gamma = \smatrix{a&b\\c&d} \in \SL_2(\Z)$. Similarly, elements of $H^1(\Gamma_1(N) \backslash \mathbb{H},\C)$ are represented by
$\Gamma_1(N)$-invariant cochains on $\mathbb{H}$,  where we regard $\SL_2(\Z)$ acting 
on the right on cochains in a fashion dual to its obvious left action on chains. 
 Correspondingly, it is natural
to consider {\em right} Hecke operators on these two groups, as is usually done in the literature (cf. Remark \ref{leftright});
when extending the actions above to $M_2(\Z) \cap \GL_2(\Q)$ we introduce an extra factor of $\det(\gamma)$.

Let us consider the Hecke equivariance of the following two maps:
\begin{equation} \label{Basic_seq} M_2(\Gamma_1(N)) \rightarrow H^1(\Gamma_1(N) \backslash \mathbb{H},\C) \rightarrow H^1(\Gamma_1(N), \C),\end{equation}
 where the first (Eichler-Shimura) sends $f$ to $f(z) dz$ and the second sends a cohomology class to the cocycle that, given $\gamma \in
 \Gamma_1(N)$, evaluates the cohomology class on the homology class of an arbitrary path from $z$ to $\gamma z$ (for an arbitrarily chosen $z \in \mathbb{H}$). 
These maps intertwine the right Hecke $T(h)^R$-actions on all three groups defined by a decomposition $\Gamma_1(N) h \Gamma_1(N) = \coprod_{j=1}^t \Gamma_1(N) h_j$.  For instance, $T(h)^R$ is defined on differential forms as $\sum_{j=1}^t h_j^*$, which is clearly compatible with the sum of the actions of the representatives $h_j$ on a modular form.

Now take  $h = \smatrix{ 1 \\ & \ell}$ for $\ell \nmid N$.  
The action of $T(h)^R$
on $M_2(\Gamma_1(N))$ is readily verified to coincide with the Hecke operator denoted $T_{\ell}^*$ by Edixhoven in \cite{edixhoven}.
By Remark \ref{leftright}, the corresponding operator on $H^1(\Gamma_1(N),\C)$  can also be described as $T(h^*)$
(now defined with left cosets),
and this operator $T(h^*) = T(  \smatrix{ \ell \\ & 1}$)
is exactly  our definition of $T_{\ell}$.  

For $\delta \in \Gamma_0(N)$ with lower right-hand entry $d$, the $T(\delta)^R$-action on cusp forms is the diamond operator $\langle d \rangle$ (denoted $\langle d \rangle^*$ in \cite{edixhoven}.) The $T(\delta^{-1}) = T(\delta)^R$-action on a cocycle becomes precomposition with the conjugation $\gamma \mapsto \delta \gamma \delta^{-1}$ (since in this case $t = 1$ and $\gamma_1 = \delta \gamma \delta^{-1}$).

\subsubsection{Comparison of Hecke algebras for $\Gamma_0$ and $\Gamma_1$}

Let us view modules for $(\Z/N\Z)^{\times}$ as having an action of $\Delta_0(N)$ through the quotient map $\Delta_0(N) \to (\Z/N\Z)^{\times}$ under which a matrix is sent to its lower right-hand corner modulo $N$.  Recall that $\bGamma_1(N)$ is the analogue
of $\Gamma_1(N)$ for $\GL_2(\Z)$ defined in \eqref{tildegammadef2}.

\begin{lemma} \label{Shapiro}
	Let $R$ be a commutative ring, and let $\mc{O} = R[(\Z/N\Z)^{\times}]$. Then Shapiro's lemma defines an isomorphism   
	$$
		H^1(\bGamma_0(N),\mc{O}) \xrightarrow{\sim} H^1(\bGamma_1(N),R)
	$$
	that is compatible with the action  of Hecke operators $T(g)$ as in \eqref{Heckecocyc} with $g \in \Delta_1(N)$.  Moreover, this map is $(\Z/N\Z)^{\times}$-equivariant
	for the action of $d \in (\Z/N\Z)^{\times}$ on the right by precomposition by $\gamma \mapsto \delta\gamma\delta^{-1}$ for
	any $\delta \in \bGamma_0(N)$ with image $d$ in $(\Z/N\Z)^{\times}$.
\end{lemma}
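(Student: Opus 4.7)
The approach is to realize $\mc{O}$ as an induced $\bGamma_0(N)$-module, apply Shapiro's lemma, and verify the Hecke- and $(\Z/N\Z)^\times$-equivariances by explicit cocycle-level computations with compatibly chosen coset representatives.

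First I would identify $\mc{O} = R[(\Z/N\Z)^\times]$ with the induced (equivalently, since $[\bGamma_0(N):\bGamma_1(N)]<\infty$, coinduced) module $\mathrm{Ind}_{\bGamma_1(N)}^{\bGamma_0(N)} R$ via the identification of $\bGamma_1(N) \backslash \bGamma_0(N)$ with $(\Z/N\Z)^\times$ under the reduction map $\smatrix{a&b\\c&d} \mapsto d$. Shapiro's lemma then produces the stated isomorphism, and on cocycles it sends $c \colon \bGamma_0(N) \to \mc{O}$ to the cocycle $c' \colon \bGamma_1(N) \to R$ defined by $c'(\gamma) = \epsilon(c(\gamma))$, where $\epsilon \colon \mc{O} \to R$ extracts the coefficient of $[1]$. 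The cocycle property of $c'$ uses only the triviality of the $\bGamma_1(N)$-action on $\mc{O}$.

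For compatibility with $T(g)$ where $g \in \Delta_1(N)$, I would exploit Lemma \ref{Heckelemma} (the independence of coset representative choices at the level of cohomology) to make a compatible choice. Starting from left coset representatives $g_1, \dots, g_t$ for $\bGamma_0(N) g \bGamma_0(N) = \bigsqcup_j g_j \bGamma_0(N)$, I modify each $g_j$ on the right by an element of $\bGamma_0(N)$ to arrange $g_j \in \Delta_1(N)$; this is possible because $g$ itself lies in $\Delta_1(N)$ and the lower-right reduction $\Delta_0(N) \to (\Z/N\Z)^\times$ is surjective. Next I would verify that $\bGamma_1(N) g \bGamma_1(N) = \bigsqcup_j g_j \bGamma_1(N)$: the containment of the right-hand side is immediate, and the index equality reduces to matching $[\bGamma_\bullet(N) : \bGamma_\bullet(N) \cap g \bGamma_\bullet(N) g^{-1}]$ for $\bullet \in \{0,1\}$, which is a direct computation of congruence conditions on matrix entries. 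With this setup, for $\gamma_1 \in \bGamma_1(N)$ the defining equation $\gamma_1 g_j = g_{\sigma(j)} \gamma_{1,j}$ forces $\gamma_{1,j} \in \bGamma_1(N)$ by tracking the lower-right entry modulo $N$, and since each $g_{\sigma(j)} \in \Delta_1(N)$ acts trivially on $\mc{O}$, one computes
$$
	\epsilon\bigl((T(g) c)(\gamma_1)\bigr) = \sum_{j=1}^t \epsilon\bigl(g_{\sigma(j)} \cdot c(\gamma_{1,j})\bigr) = \sum_{j=1}^t c'(\gamma_{1,j}) = (T(g) c')(\gamma_1).
$$

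For the $(\Z/N\Z)^\times$-equivariance, I would equip $\mc{O}$ with the commuting right action of $(\Z/N\Z)^\times$ by translation, so that $(\Z/N\Z)^\times$ acts on $H^1(\bGamma_0(N), \mc{O})$ through the target. For $d \in (\Z/N\Z)^\times$ and a chosen $\delta \in \bGamma_0(N)$ mapping to $d$, I would compute directly using the cocycle relation $c(\delta \gamma_1) = c(\delta) + \delta \cdot c(\gamma_1)$ together with the left-$\bGamma_1(N)$-invariance of $c(\delta) \colon \bGamma_0(N) \to R$ (an element of $\mc{O}$) to show that the Shapiro image of $c \cdot d$ is the cocycle $\gamma_1 \mapsto c'(\delta \gamma_1 \delta^{-1})$, which matches the prescribed action; well-definedness up to the choice of $\delta$ holds because modifying $\delta$ by an element of $\bGamma_1(N)$ alters the resulting cocycle by an inner automorphism, which acts trivially on $H^1(\bGamma_1(N), R)$ since $R$ has trivial action. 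The main technical obstacle is the coset-representative bookkeeping in the Hecke step; everything else reduces to chasing definitions.
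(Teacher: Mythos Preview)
Your proposal is correct and follows essentially the same approach as the paper: realize the Shapiro map via restriction composed with the ``coefficient of $[1]$'' projection, choose coset representatives $g_j \in \Delta_1(N)$, and verify Hecke compatibility by the direct computation $\epsilon\bigl((T(g)c)(\gamma)\bigr) = \sum_j \epsilon(g_{\sigma(j)} c(\gamma_j)) = (T(g)c')(\gamma)$, with the $(\Z/N\Z)^\times$-equivariance handled by the cocycle relation. You are in fact more careful than the paper on one point: you explicitly argue that the $g_j$ chosen as $\bGamma_0(N)$-representatives also serve as a full set of $\bGamma_1(N)$-representatives for $\bGamma_1(N)g\bGamma_1(N)$, which the paper leaves implicit but is needed for the final equality to make sense.
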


\begin{proof}
	Let us denote the image of a cocycle $\theta \colon \bGamma_0(N) \rightarrow R[(\Z/N\Z)^{\times}]$ under the Shapiro 
	isomorphism by $\overline{\theta}$: it is obtained by restriction of cocycles together with the map 
	$\phi \colon R[(\Z/N\Z)^{\times}] \rightarrow R$ that takes the coefficient of the identity element.    
	For $g \in \Delta_0(N)$ and $\gamma \in \bGamma_0(N)$, equation \eqref{Heckecocyc} states that
 	$T(g)\theta(\gamma) =  \sum_{j=1}^t g_{\sigma(j)}\theta(\gamma_j)$, recalling the notation of \S \ref{actions}.
	If in fact $g \in \Delta_1(N)$ (as defined in \eqref{Delta1def}) and $\gamma \in \bGamma_1(N)$, then we may choose
	the representatives $g_j$ to also belong to $\Delta_1(N)$, in which case the $\gamma_j$ belong to $\bGamma_1(N)$.  
	We then have
	$$
		\overline{T(g) \theta}(\gamma) =  \sum_{j=1}^t \phi (g_{\sigma(j)} \theta(\gamma_j)) =  \sum_{j=1}^t g_{\sigma(j)} \phi(\theta(\gamma_j) ) 
		= \sum_{j=1}^t g_{\sigma(j)} \bar{\theta}(\gamma_j) = (T(g) \overline{\theta})(\gamma).
	$$ 
	In particular, taking $g=\smatrix{\ell & 0 \\0&1}$ exhibits  equivariance for $T_{\ell}$.
	
	Finally, take $d \in (\Z/N\Z)^{\times}$
	and a representative $\delta \in \bGamma_0(N)$. Then for $\gamma \in \bGamma_1(N)$,	
	we get 
	$$
		(\delta \cdot \bar{\theta})(\gamma) = \phi(\theta(\delta \gamma \delta^{-1}))  = \phi(\delta  \theta(\gamma))
	$$ 
	so the Shapiro isomorphism is $(\Z/N\Z)^{\times}$-equivariant.
\end{proof}

Now suppose $6 \in R^{\times}$, and
let $\mb{T}$ denote the $R$-Hecke algebra of weight $2$ modular forms for $\Gamma_1(N)$ generated
by prime-to-level Hecke operators $T_{\ell}$ and diamond operators $\langle \ell \rangle$.   
 
\begin{lemma} \label{Heckefactor}	
	Let $M$ be a $R[(\Z/N\Z)^{\times}]$-module, where $6 \in R^{\times}$. We equip $M$ with the action of  
	$\Delta_0(N)$  via the surjection $\Delta_0(N) \twoheadrightarrow (\Z/N\Z)^{\times}$.
	 Let $\mathbb{T}_M$ be the $R$-algebra of endomorphisms of the cohomology group $H^1(\bGamma_0(N),M)$ generated by $T_n$ for primes 
	$n \nmid N$ and by the elements of $(\Z/N\Z)^{\times}$. Then there is a surjection
	$$ \mathbb{T} \rightarrow \mathbb{T}_M$$
	 carrying $T_n$ to $T_n$ and the diamond operator
	$\langle d \rangle$ to the action of $d \in (\Z/N\Z)^{\times}$.
\end{lemma}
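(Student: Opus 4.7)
I would construct the map $\mb{T} \to \mb{T}_M$ in two steps: first in the ``universal'' case $M = \mathcal{O}$, and then for general $M$ by reduction via a free cover.

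In the case $M = \mathcal{O}$, Lemma \ref{Shapiro} yields an isomorphism $H^1(\bGamma_0(N), \mathcal{O}) \cong H^1(\bGamma_1(N), R)$ intertwining the $T_n$-actions for $n \nmid N$ and taking the coefficient action of $d \in (\Z/N\Z)^{\times}$ on the left to conjugation by a lift $\delta \in \bGamma_0(N)$ on the right. The target identifies with the $+$-part of $H^1(\Gamma_1(N), R)$ under the complex conjugation coming from $\bGamma_1(N)/\Gamma_1(N)$, and is a sub-$\mb{T}$-module of $H^1(\Gamma_1(N), R)$ since $\mb{T}$ commutes with complex conjugation. By the discussion in \S \ref{eichshim}, the diamond operator $\langle d \rangle \in \mb{T}$ acts on $H^1(\Gamma_1(N), R)$ precisely by conjugation by an element of $\Gamma_0(N)$ with lower-right entry $d$, which matches the action transported across Shapiro. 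This yields a ring map $\mb{T} \to \End_R H^1(\bGamma_0(N), \mathcal{O})$ whose image is $\mb{T}_{\mathcal{O}}$, hence a surjection $\mb{T} \twoheadrightarrow \mb{T}_{\mathcal{O}}$.

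For general $M$, I would choose any $\mathcal{O}$-linear surjection $\pi \colon \mathcal{O}^{\oplus I} \twoheadrightarrow M$ with kernel $K$. Since every $\mathcal{O}$-linear map is automatically $\Delta_0(N)$-equivariant, the associated long exact sequence in $\bGamma_0(N)$-cohomology is compatible with both the $T_n$- and $(\Z/N\Z)^{\times}$-actions. The essential input is the vanishing $H^2(\bGamma_0(N), K) = 0$ for any coefficient module $K$: the quotient $\bGamma_0(N) \backslash \mb{H}$ is a non-compact $2$-dimensional orbifold whose isotropy orders divide $12$, which is invertible in $R$ because $6 \in R^{\times}$. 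Thus group cohomology agrees with singular cohomology of the coarse non-compact Riemann surface quotient, and this vanishes above degree $1$. We therefore obtain a $(T_n, (\Z/N\Z)^{\times})$-equivariant surjection
\[
H^1(\bGamma_0(N), \mathcal{O})^{\oplus I} \cong H^1(\bGamma_0(N), \mathcal{O}^{\oplus I}) \twoheadrightarrow H^1(\bGamma_0(N), M),
\]
where the first identification uses that group cohomology of a finitely presented group commutes with arbitrary direct sums. Since the source is a $\mb{T}$-module by the first step, every polynomial relation in $\mb{T}$ among the $T_n$ and $\langle d \rangle$ descends to the same relation among $T_n$ and $d$ on the target. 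This defines the desired ring homomorphism $\mb{T} \to \mb{T}_M$ sending $T_n \mapsto T_n$ and $\langle d \rangle \mapsto d$, with surjectivity automatic from the definition of $\mb{T}_M$.

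The main obstacle will be the cohomological vanishing $H^2(\bGamma_0(N), K) = 0$. Although standard in the theory of congruence subgroups, it rests on the identification of group cohomology with singular cohomology of the coarse modular curve, which requires the invertibility of the isotropy orders in $R$; this is precisely why the hypothesis $6 \in R^{\times}$ is essential. A subsidiary, essentially routine task is to verify that ``$\mb{T}$ commutes with complex conjugation'' works over $R$ rather than only over $\C$, which follows from the compatibility of the Hecke action with the integral structure on $H^1(\Gamma_1(N), -)$.
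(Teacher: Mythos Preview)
Your proposal is correct and follows essentially the same route as the paper: Shapiro's lemma for $M=\mathcal{O}$, then a free $\mathcal{O}$-cover of $M$, then vanishing of $H^2(\bGamma_0(N),-)$ to get surjectivity on $H^1$. The only point where you are less precise than the paper is the $H^2$-vanishing: you invoke the orbifold $\bGamma_0(N)\backslash\mb{H}$, but $\bGamma_0(N)\le\GL_2(\Z)$ contains determinant $-1$ elements, which do not preserve $\mb{H}$ under the M\"obius action and act orientation-reversingly under the natural modification, so the quotient is not straightforwardly a Riemann surface and the ``isotropy divides $12$'' claim needs care. The paper sidesteps this by restricting to the torsion-free subgroup $\Gamma_0(N)\cap\Gamma(4)\le\SL_2(\Z)$, whose index in $\bGamma_0(N)$ divides $2\cdot|\SL_2(\Z/4\Z)|=96\in R^{\times}$, so that transfer gives injectivity and the target is $H^2$ of an honest open surface, hence zero. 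This is the same idea you sketch, just executed more carefully.
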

 
\begin{proof}
	 Write $\mathcal{O}$
	for the group algebra of $(\Z/N\Z)^{\times}$ over $R$, which we view as a quotient of the $R$-monoid 
	algebra of $\Delta_0(N)$ through 
	 the lower right-hand corner map.   
	Then $M$ is isomorphic to a quotient of $\mathcal{O}^{\oplus J}$ for some indexing set $J$.   
	Since $2$ is invertible in $R$, we have   	$$
		H^1(\bGamma_1(N),M) \cong H^1(\Gamma_1(N),M)_+
	$$
	(see the proof of Proposition \ref{maphomology}), so $\mb{T}$ acts on $H^1(\bGamma_1(N),M)$.
	Consider the composition
	\begin{equation} \label{mc_ren} 
		H^1(\bGamma_1(N), R)^{\oplus J} \xrightarrow{\sim} 
		H^1(\bGamma_0(N), \mathcal{O})^{\bigoplus J} \to H^1(\bGamma_0(N), M),
	\end{equation}
	where the first map comes from Shapiro's lemma 
	as in Lemma \ref{Shapiro}, and the cokernel of the last map injects into
	$H^2(\bGamma_0(N),A)$, for $A = \ker(\mc{O}^{\bigoplus J} \to M)$. We claim that $H^2(\bGamma_0(N),A)$ is zero, so that
	the composition in \eqref{mc_ren} is surjective. Since this composition is compatible with the
	action of Hecke and diamond operators as in Lemma \ref{Shapiro}, 
	we will then have the lemma.
	
	To see the claim, note that the restriction map 
	$$
		H^2(\bGamma_0(N),A) \to H^2(\Gamma_0(N) \cap \Gamma(4),A)
	$$ 
	is injective since the index $h = [\bGamma_0(N):\Gamma_0(N) \cap \Gamma(4)]$ is
	invertible in $R$, and the composition of restriction and corestriction is multiplication by $h$.
	The target of restriction is a second cohomology group of an open $2$-manifold, hence trivial.
\end{proof}

\subsubsection{The $V_n$-operators generate}

They following result implies Proposition \ref{Vgenerate2}, in view of the results of the prior subsection. 

\begin{proposition} \label{Vgenerate}
	 Let $\mathbb{T}$ be the Hecke ring for $\Gamma_1(N)$ with $\Z'[\frac{1}{N}]$ coefficients. 
	 The operators $V_n$ for primes $n \nmid N$ generate the unit ideal of $\mb{T}$.
\end{proposition}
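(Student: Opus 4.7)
The plan is to argue by contradiction, using mod-$p$ Galois representations attached to maximal ideals of $\mathbb{T}$. First I would observe that since $\mathbb{T}$ is a finitely generated $\Z'[\tfrac{1}{N}]$-module, the operators $V_n$ generate the unit ideal iff no maximal ideal $\mathfrak{m} \subset \mathbb{T}$ contains them all. Any such $\mathfrak{m}$ has residue characteristic $p$ coprime to $30N$, hence $p \ge 7$. Suppose for contradiction that $V_n \in \mathfrak{m}$ for all primes $n \nmid Np$. I would then invoke standard theory (Deligne for cuspidal eigensystems; direct construction for Eisenstein ones) to attach to $\mathfrak{m}$ a semisimple representation
\[
\bar\rho_\mathfrak{m} \colon G_\Q \to \GL_2(\mathbb{T}/\mathfrak{m}),
\]
unramified outside $Np$, satisfying $\mathrm{tr}(\bar\rho_\mathfrak{m}(\mathrm{Frob}_n)) \equiv T_n$ and $\det(\bar\rho_\mathfrak{m}(\mathrm{Frob}_n)) \equiv n \langle n \rangle \pmod{\mathfrak{m}}$ for $n \nmid Np$. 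Since this representation is cut out from a subquotient of $H^1_{\mathrm{\acute{e}t}}(X_1(N)_{\bar\Q}, \bar\F_p)$, its restriction to $G_{\Q_p}$ is crystalline with Hodge--Tate weights in $\{0,1\}$, via Fontaine--Laffaille (valid since $p \ge 5$).

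The key observation will be the factorization
\[
V_n \equiv \det\bigl(n^2 \cdot I_2 - \bar\rho_\mathfrak{m}(\mathrm{Frob}_n)\bigr) \pmod{\mathfrak{m}},
\]
so the hypothesis $V_n \equiv 0$ says that $\overline{\mathrm{cyc}}_p(\mathrm{Frob}_n)^2 \equiv n^2$ is a Frobenius eigenvalue of $\bar\rho_\mathfrak{m}$. I would rewrite this as a polynomial identity in the trace and determinant and extend from Frobenii to all $\sigma \in G_\Q$ by continuity (Chebotarev density), obtaining
\[
\mathrm{tr}(\bar\rho_\mathfrak{m}(\sigma)) \equiv \overline{\mathrm{cyc}}_p(\sigma)^2 + \bar\epsilon(\sigma)\,\overline{\mathrm{cyc}}_p(\sigma)^{-1} \pmod{\mathfrak{m}},
\]
where $\bar\epsilon$ is the reduction of the nebentype. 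The determinant matches automatically ($\bar\epsilon \cdot \overline{\mathrm{cyc}}_p$ on both sides), so by Brauer--Nesbitt,
\[
\bar\rho_\mathfrak{m} \cong \overline{\mathrm{cyc}}_p^2 \oplus \bar\epsilon\,\overline{\mathrm{cyc}}_p^{-1}.
\]

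The contradiction will be purely local at $p$: the restriction $\overline{\mathrm{cyc}}_p^2|_{I_p}$ equals $\omega^2$, where $\omega = \overline{\mathrm{cyc}}_p|_{I_p}$ is the mod-$p$ fundamental character of niveau $1$, and for $p \ge 5$ one has $\omega^2 \notin \{1, \omega\}$. But the direct summands of a crystalline representation with Hodge--Tate weights in $\{0,1\}$ must have inertial character in $\{1, \omega\}$, yielding the contradiction. The hard part will be securing the Galois-theoretic inputs uniformly across all maximal ideals: in particular, Eisenstein maximal ideals (where $\bar\rho_\mathfrak{m}$ is reducible as $\chi_1 \oplus \chi_2\,\overline{\mathrm{cyc}}_p$ with $\chi_i$ Dirichlet characters unramified at $p$) must be handled by exactly the same local obstruction, and one must confirm that every maximal ideal of $\mathbb{T}$—not merely those arising from classical characteristic-zero eigenforms—admits an attached semisimple representation with these local properties. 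The hypothesis that $30$ is inverted is precisely what ensures both Fontaine--Laffaille applies and the exceptional primes $p \in \{2,3\}$ where $\omega^2 \in \{1, \omega\}$ are excluded.
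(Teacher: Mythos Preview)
Your proposal is correct and follows essentially the same route as the paper: reduce to a maximal ideal, attach the associated mod-$p$ Galois representation, use $V_n \equiv 0$ to force $\bar\rho \cong \omega^2 \oplus \bar\epsilon\,\omega^{-1}$, and derive a local contradiction at $p$. The only cosmetic differences are that you package the local obstruction via Fontaine--Laffaille while the paper cites Edixhoven's explicit description of $\rho|_{I_p}$ for weight~$2$, and you make explicit the pleasant identity $V_n = \det(n^2 I - \bar\rho(\mathrm{Frob}_n))$, which the paper leaves implicit.
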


\begin{proof}
	Let $\mathfrak{v}$ be the ideal of $\mathbb{T}$ that the operators $V_n$ for $n \nmid N$ generate. 
	Suppose by way of contradiction that $\mf{v} \neq \mb{T}$.  Then $\mathbb{T}/\mathfrak{v}$
	is a ring that is finite over $\Z'[\frac{1}{N}]$ and admits a nontrivial homomorphism to a field 
	$F$ that is algebraically closed of finite characteristic $p \nmid N$. 
 
 	In this situation, there exists an associated continuous, semisimple Galois representation
     	$$ 
    		\rho \colon G_{\Q} \to \GL_2(F)
    	$$
     	such that the trace of a 
	Frobenius element $\varphi_{\ell}$ at any prime $\ell \nmid Np$ 
    	coincides with the image of $T_{\ell}$ in $F$, and the determinant of $\varphi_{\ell}$
	is given by the image of the diamond operator $\langle \ell \rangle$ in $F^{\times}$, which we'll denote by the same symbol.
	Since $V_{\ell}$ maps to zero in $F$, this implies that
     	$$  
		 \Tr \rho(\varphi_{\ell}) =\ell^2  +  \ell^{-1} \langle \ell \rangle \in F
	$$
	for $\ell \nmid Np$.
	
       	By \v{C}ebotarev density, $\rho$ is isomorphic to the direct sum $\omega^2 \oplus \omega^{-1}\nu$,
    	where $\omega$ denotes the mod $p$ cyclotomic character
    	and $\nu \colon G_{\Q} \rightarrow F^{\times}$ 
     	is the composition of the cyclotomic character $G_{\Q} \rightarrow (\Z/N\Z)^{\times}$
    	and the diamond operator map $(\Z/N\Z)^{\times} \rightarrow F^{\times}$.
	Restricted to the inertia group above $p$, we get
	$$ \rho|_{I_p} \simeq \omega^2 \oplus \omega^{-1},$$
	which contradicts well-known properties of the Galois representations attached to weight $2$ eigenforms of level $N$:
	that is, the restriction of $\rho$ to any inertia group $I_p$ at $p$ 
	must be $\omega \oplus 1$  or the sum of two tame characters. It is enough to verify this
	separately for Eisenstein series and cusp forms; in the Eisenstein only the former case occurs,
	and in the cuspidal case   the two possibilities are distinguished by whether the image of $T_p$ in $F$
	is zero or nonzero (see ~\cite[Theorems 2.5-2.6]{edixhoven}).	 
\end{proof}

  \begin{remark} \label{Qcoeff}
	If we are willing to work with $\Q$-coefficients in place of coefficients in  $\Z'[\frac{1}{N}]$, then  Proposition \ref{Vgenerate}
	has a much simpler proof. Indeed each $V_{\ell}$ with $\ell \nmid N$ 
	is itself a unit in the Hecke algebra acting on group cohomology.  The key point is that
	the $T_{\ell}$-eigenvalues of any weight $2$ eigenform for $\Gamma_1(N)$ have complex absolute value at most 
	$\ell+1$, and the eigenvalues of diamond operators are roots of unity, so $V_{\ell}$ has eigenvalues of complex 
	absolute value at least $\ell^3-1-\ell(\ell+1) > 0$.  
\end{remark}

\subsection{Explicit formula for the universal cocycle} \label{expformuniv}

We cannot quite write down an explicit formula for a cocycle in the universal class of the previous section
because of our lack of understanding of the motivic cohomology group $H^2(Y, \Q(2))$. 
However, we can at least do it modulo a subgroup $\mc{V}$ which can be seen to vanish
under any standard regulator map.

\subsubsection{The explicit formula, in brief}

For a prime $n \nmid N$, let $\mc{V}_n$ denote the kernel of $V'_n$ on $H^2(Y,\Q(2))$, and let $\mc{V} = \bigcap_{n \nmid N} \mc{V}_n$.
Remark \ref{Qcoeff} implies that
the group $\mc{V}$ maps to 
zero in any quotient of $H^2(Y, \Q(2))$
that factors through the action of the Hecke algebra on $H^1(\Gamma_1(N), \Q)$.

\begin{proposition} \label{expformmod}
	The class of $\Theta_N$ modulo $\mc{V}$ equals the class of the  cocycle
	\begin{equation} \label{WD}
		\tilde{\Gamma}_0(N) \to H^2(Y,\Q(2))/\mc{V}, \quad \gamma \mapsto 
		\sum_{i=1}^k g_{\frac{d_i}{N}}  \cup g_{\frac{-d_{i-1}}{N}} \bmod \mc{V} 
	\end{equation}
 	for $(b_i,d_i)_{i=0}^k$ any $N$-connecting sequence for $\gamma$, where 
	$g_{\frac{a}{N}}$ for $a$ prime to $N$ is the standard Siegel unit on $Y$ (see \S \ref{reviewSiegel}).
\end{proposition}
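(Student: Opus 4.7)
My plan is to exploit the uniqueness built into Theorem \ref{canoncocyc}: the class of $\Theta_N$ is pinned down by the requirement that $V_\ell(\Theta_N) = {}_\ell \Theta_N$ for every prime $\ell \nmid N$. So it suffices to produce a candidate cocycle $\tilde\Theta_N$ whose values are the Siegel-unit symbols on the right-hand side of \eqref{WD} and to verify the analogous relation modulo $\mc{V}$. The argument then reduces to two tasks: first, that $\tilde\Theta_N$ is a well-defined parabolic cocycle on $\tilde\Gamma_0(N)$ valued in $H^2(Y,\Q(2))/\mc{V}$; and second, that $V'_\ell(\tilde\Theta_N)$ and ${}_\ell \Theta_N$ are cohomologous in $H^1(\tilde\Gamma_0(N), H^2(Y,\Q(2))/\mc{V})$ for every $\ell \nmid N$.

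For the first task, I would proceed along the lines of Proposition \ref{comparison}, replacing each cyclotomic unit $1-\zeta_N^a$ by the Siegel unit $g_{a/N}$. Independence of the choice of $N$-connecting sequence, the cocycle identity, and parabolicity at $\infty$ reduce to the Manin-symbol relations \eqref{maninsymbol}, which now follow from the Steinberg identity $\{u,1-u\}=0$ applied to ratios of Siegel units, together with the standard distribution relations among Siegel units. Any residual relation that fails to hold at the level of $H^2(Y, \Q(2))$ is by construction killed in the passage to $H^2(Y, \Q(2))/\mc{V}$, since $\mc{V}$ is invisible under every $V'_n$ and, in particular, under every standard regulator map.

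For the second task, I would match two concrete expressions in $H^2(Y, \Q(2))/\mc{V}$ along an $N$-connecting sequence $(b_i, d_i)_{i=0}^k$ for $\gamma \in \tilde\Gamma_0(N)$. On one side, applying $V'_\ell = \ell^4 - \ell^2 T'_\ell + \ell[\ell]'$ to each summand $g_{d_i/N} \cup g_{-d_{i-1}/N}$, and using the explicit description of the correspondence $\phi,\psi \colon Y_\ell \rightrightarrows Y$ of \S\ref{T'Hecke}, rewrites the result as a combination of cup products of ${}_\ell$-indexed Siegel units on $Y$ and norms from $Y_\ell$ of cup products of their analogues on $Y_\ell$; the underlying input is Kato's distribution relation \cite[Proposition 1.3]{kato} for the theta function ${}_\ell\theta$, in the form asserting that $V'_\ell g_{a/N} = {}_\ell g_{a/N}$. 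On the other side, specializing the explicit formula of Theorem \ref{expformE2}b at $s = (0, \iota_N)$ and unfolding the definitions \eqref{10def} and \eqref{1001def} of $\langle 1, 0\rangle_\ell$ and $\langle \smatrix{1&0\\0&1}\rangle_\ell$ shows that ${}_\ell \Theta_{N,\gamma}$ is given by precisely the same combination. The two sides therefore agree modulo $\mc{V}$, possibly up to explicit coboundaries.

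With both tasks in hand, the proof concludes via the uniqueness argument of Theorem \ref{canoncocyc} applied with coefficients $H^2(Y, \Q(2))/\mc{V}$. The Galois-representation input in Proposition \ref{Vgenerate} is insensitive to the coefficient module, so Proposition \ref{Vgenerate2} still furnishes operators $r_i$ in the relevant Hecke algebra and primes $n_i$ with $\sum_i r_i V_{n_i} = 1$; acting on the class of $\Theta_N - \tilde\Theta_N$ by this identity shows it vanishes. The main obstacle is the second task: since $T'_\ell$ does not act on cup products as a ring homomorphism, reconciling the $Y_\ell$-norm extracted from $V'_\ell(g_{d_i/N} \cup g_{-d_{i-1}/N})$ with the norm term $\Norm(\theta' \boxtimes \theta')$ produced by specializing $\langle \smatrix{1&0\\0&1}\rangle_\ell$ requires delicate bookkeeping between Kato's distribution relation and the explicit formulas for the symbols $\langle v, w\rangle_n$. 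It is precisely at this juncture that the quotient by $\mc{V}$ becomes indispensable, as the residual discrepancy can only be shown to vanish through operators killed by every $V'_n$.
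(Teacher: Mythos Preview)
Your overall strategy—produce a candidate cocycle, verify the relation $V'_\ell(\tilde\Theta_N)\sim{}_\ell\Theta_N$, then invoke uniqueness—matches the paper's logic, and your second task is essentially Lemma~\ref{explicit pullback}: one computes $s^*\langle\gamma\rangle_n = V'_n(g_{c/N}\cup g_{d/N})$, and summing over a connecting sequence gives the exact equality ${}_n\Theta_{N,\gamma}=V'_n f_\gamma$ (not merely up to coboundary). But the order in which you run the argument leaves a real gap.

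The problem is your first task. You propose to check directly that the Siegel-unit expression is independent of the connecting sequence and satisfies the cocycle identity, by verifying Manin-type relations among cup products of Siegel units. That is genuinely hard (it is the content of Brunault's work cited in the paper), and your fallback—``any residual relation \ldots\ is by construction killed in the passage to $H^2(Y,\Q(2))/\mc V$''—is not justified. There is no reason \emph{a priori} why the obstruction to a Manin relation among Siegel units should lie in $\bigcap_n\ker V'_n$. The paper avoids this entirely by reversing the dependency: from the \emph{exact} identity ${}_n\Theta_{N,\gamma}=V'_n f_\gamma$ and the already-known fact that ${}_n\Theta_N$ is a cocycle, one \emph{deduces} that $f_\gamma\bmod\mc V_n$ is independent of the connecting sequence and is a cocycle. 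Passing to the intersection $\mc V=\bigcap_n\mc V_n$ gives the well-definedness modulo $\mc V$ for free. In other words, task~2 is what makes task~1 work, not the other way around.

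There is also a smaller mismatch in your concluding step. You invoke $\sum_i r_i V_{n_i}=1$ from Proposition~\ref{Vgenerate2}, but that identity involves the group-cohomological operators $V_n$, whereas what you have established is a relation for $V'_n$; you would need to know $V_n=V'_n$ on the class of $\tilde\Theta_N$, which you have not shown. The paper sidesteps this: since $\mc V=\bigcap_n\ker V'_n$ by definition, the map $M\to\bigoplus_n M$ given by $(V'_n)_n$ is injective on $M=H^2(Y,\Q(2))/\mc V$, and one checks directly (using that $\tilde\Gamma_0(N)$ acts through a finite group) that the induced map on $H^1(\tilde\Gamma_0(N),M)$ is injective. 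No appeal to Proposition~\ref{Vgenerate2} is needed at this point.
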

	Implicit in the statement is the assertion that the right-hand side of \eqref{WD} is independent of the choice of connecting sequence and
	defines a cocycle.\footnote{It is very likely that the proposition remains true without taking the quotient by $\mc{V}$, but we do not know how to prove it.} 
We explain the proof modulo certain explicit computations with Siegel units that are carried out in the rest of the section. 

\begin{proof} 
	We will prove in Lemma \ref{explicit pullback} that 
 	for $\gamma = \smatrix{a & b \\ c & d} \in \SL_2(\Z)$ 
	with both $c$ and $d$ relatively prime to $N$, we have 
	\begin{equation} \label{key}
		s^* \langle \gamma \rangle_n = V_n' (g_{\frac{c}{N}} \cup  g_{\frac{d}{N}}).
	\end{equation}
 	for each $n \nmid N$.  For a given $N$-connecting sequence $(b_i,d_i)_{i=0}^k$ for $\gamma$, let us set
	$$
		f_{\gamma} = \sum_{i=1}^k   g_{\frac{d_i}{N}} \cup g_{\frac{-d_{i-1}}{N}} \in H^2(Y,\Q(2))
	$$
	with the understanding that this depends on the connecting sequence. 
	From Proposition \ref{expformE2}(c) and \eqref{nNThetadef}, we know that
	$$
 		{}_n \Theta_{N, \gamma} = s^* \sum_{i=1}^k  \left\langle  \smatrix{b_i & -b_{i-1} \\ d_i & -d_{i-1}  }\right\rangle_n \stackrel{\eqref{key}}{=} \sum_{i=1}^k V_n' (g_{\frac{d_i}{N}} \cup  g_{\frac{-d_{i-1}}{N}}).
	$$ 
 	So, by \eqref{key}, 
	we have that
	$${}_n\Theta_{N,\gamma} =  V_n' f_{\gamma}.$$
	This equality uniquely determines $f_{\gamma}$ as an element of $H^2(Y,\Q(2))/\mc{V}_n$.
	From this and the fact that ${}_n \Theta_N$
	is a cocycle, we see that the quantity $f_{\gamma} \bmod \mc{V}_n$
	is independent of choice of connecting sequence, and $\gamma \mapsto f_{\gamma} \bmod \mc{V}_n$ is a cocycle. 
	But then, the latter two facts are true modulo $\mc{V} = \bigcap_n \mc{V}_n$ as well.

	By Theorem \ref{canoncocyc}, the cocycle $V'_n(\Theta_N)$ is cohomologous to ${}_n\Theta_N$.
	In particular, the class of $\gamma \mapsto \Theta_{N,\gamma} - f_{\gamma}$ 
	lies in the kernel of all $V_n'$ acting on $H^1(\bGamma_0(N),M)$ with $M = H^2(Y,\Q(2))/\mc{V}$.
	To see that this common kernel is zero, consider the injection $\iota \colon M \hookrightarrow \bigoplus_{n \nmid N} M$
	induced by the collection of operators $V'_n$. We must show that the map
	$$
		H^1(\bGamma_0(N),M) \to \bigoplus_{n \nmid N} H^1(\bGamma_0(N),M)
	$$
	induced by $\iota$ is injective. 
	This follows from the surjectivity of the map
	\begin{equation} \label{H0surj}
		\bigoplus_{n \nmid N} H^0(\bGamma_0(N),M) \to H^0(\bGamma_0(N),\coker \iota)
	\end{equation}
	which in turn is a consequence of the fact that $\bGamma_0(N)$-action on the $\Q$-vector space $M$ factors through 
	the finite group $(\Z/N\Z)^{\times}$. 
\end{proof}

Aside from the change of modular curve, the following is a corollary of Proposition \ref{expformmod} and its proof.
 
\begin{proposition} \label{Gamma1X}
 	The cocycle $\Theta_N$ restricts to a cocycle
	$$
		\Theta_N \colon \bGamma_1(N) \to H^2(X_1(N),\Z'[\tfrac{1}{N}](2)),
	$$
	satisfying
	$$
 		\Theta_{N, \gamma} \equiv \sum_{i=1}^k g_{\frac{d_i}{N}} \cup g_{\frac{-d_{i-1}}{N}} \bmod \mc{V}
	$$
	for $\gamma \in \Gamma_1(N)$.
 \end{proposition}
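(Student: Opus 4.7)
The plan is to derive this corollary from Proposition \ref{expformmod} combined with the construction of $\Theta_N$ in the proof of Theorem \ref{canoncocyc} and a cusp calculation for the auxiliary cocycles ${}_n\Theta_N$. First, the restriction of $\Theta_N$ from $\bGamma_0(N)$ to $\bGamma_1(N)$ presents no difficulty: the $\bGamma_0(N)$-action on $H^2(Y, \Z'[\tfrac{1}{N}](2))$ factors through $(\Z/N\Z)^\times$ via the diamond operators and is therefore trivial on $\bGamma_1(N)$. Consequently, the restricted cocycle is a homomorphism for the trivial action, and Proposition \ref{expformmod} yields the claimed formula modulo $\mc{V}$ verbatim.

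The substantive point is that for $\gamma \in \bGamma_1(N)$, the value $\Theta_{N, \gamma}$ lies in $H^2(X_1(N), \Z'[\tfrac{1}{N}](2))$ rather than merely in $H^2(Y_1(N), \Z'[\tfrac{1}{N}](2))$. My plan is to establish this first for each ${}_n\Theta_N$ with $n$ prime to $N$, and then propagate it to $\Theta_N$ via the expression $\Theta_N = \sum_{i} r_i ({}_{n_i}\Theta_N)$ from the proof of Theorem \ref{canoncocyc}, where the $r_i \in \mb{T}_M$ lie in the $\Z'[\tfrac{1}{N}]$-Hecke algebra generated by the $T_\ell$ (for $\ell \nmid N$) and the diamond operators. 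Since these generators arise from correspondences or automorphisms that extend to $X_1(N)$, the subgroup $H^2(X_1(N), \Z'[\tfrac{1}{N}](2)) \subset H^2(Y_1(N), \Z'[\tfrac{1}{N}](2))$ is preserved, and the promotion is automatic.

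It remains then to verify that ${}_n\Theta_{N, \gamma} \in H^2(Y_1(N), \Z'(2))$ lifts to $H^2(X_1(N), \Z'(2))$ for $\gamma \in \bGamma_1(N)$. Via the Gysin sequence for $Y_1(N) \hookrightarrow X_1(N)$, this amounts to vanishing of the tame symbol at each cusp of $X_1(N)$. Combining Theorem \ref{expformE2} with the pullback identity \eqref{key}, one has
$$
	{}_n \Theta_{N, \gamma} = \sum_{i=1}^k V'_n\bigl( g_{\frac{d_i}{N}} \cup g_{\frac{-d_{i-1}}{N}} \bigr)
$$
for any $N$-connecting sequence $(b_i, d_i)_{i=0}^k$ of $\gamma$. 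The principal obstacle is the cusp calculation: at every cusp of $X_1(N)$, one must work out the $q$-expansion orders and leading coefficients of the Siegel units $g_{\frac{d_i}{N}}$, assemble the resulting tame symbols of the Steinberg pairs, and verify that the operator $V'_n = n^4 - n^2 T'_n + n[n]'$ annihilates the total sum. This is the classical cusp-vanishing familiar from the construction of Beilinson-Kato elements in $K_2(X_1(N))$ after inverting $N$, but executing it in the present framework demands careful bookkeeping of the cusp orbits of $\Gamma_1(N) \cap \Gamma_0(n)$ and the standard normalization conventions for Siegel units near each cusp.
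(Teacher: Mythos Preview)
Your approach is valid but routes around the paper's key simplification. The paper does not pass through the individual ${}_n\Theta_N$; instead it first shows $\mc{V} \subset H^2(X_1(N), \Z'[\tfrac{1}{N}](2))$ directly, using the Gysin sequence for the cusp divisor $C = X_1(N) - Y_1(N)$ together with the fact that each $V'_n$ acts injectively on the torsion-free module $\mc{O}_C^\times \otimes \Z'[\tfrac{1}{N}]$ (by the eigenvalue estimate of Remark~\ref{Qcoeff}). Given the congruence $\Theta_{N,\gamma} \equiv f_\gamma \bmod \mc{V}$ with $f_\gamma = \sum_i g_{d_i/N} \cup g_{-d_{i-1}/N}$, it then suffices to check that $f_\gamma$ itself has trivial residue at each cusp. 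That is a short telescoping computation (via \cite[Lemma 3.3.12]{fk}): at cusps over $\infty$ the tame symbol is $\prod_i \bigl((1-\zeta_N^{d_i})/(1-\zeta_N^{d_{i-1}})\bigr)^{1/12} = 1$, while at the non-infinity cusps the individual terms already have trivial residue. Your plan to ``verify that $V'_n$ annihilates the total sum'' of tame symbols would reduce to exactly this, since $V'_n$ is \emph{injective} on $\mc{O}_C^\times \otimes \Z'[\tfrac{1}{N}]$ and so contributes no cancellation; the level-$\Gamma_1(N)\cap\Gamma_0(n)$ bookkeeping you anticipate is therefore unnecessary. The paper also notes explicitly why $\Z'[\tfrac{1}{N}]$-coefficients (rather than $\Q$) suffice here: on $\bGamma_1(N)$ the action on coefficients is trivial, so the surjectivity step \eqref{H0surj} in the proof of Proposition~\ref{expformmod} requires no further denominators.
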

 
 \begin{proof}
 	To see that we can work with $\Z'[\frac{1}{N}]$-coefficients, note that the only place where we may need to 
	invert further primes (i.e., those dividing $\varphi(N)$) in the proof of Proposition \ref{expformmod} 
	is for the surjectivity of the map in \eqref{H0surj}, but if we replace $\bGamma_0(N)$ by $\bGamma_1(N)$,
	then this need is alleviated as $\bGamma_1(N)$ acts trivially on $H^2(Y,2)$.
 
 	The second claim is immediate from Proposition \ref{expformmod}, since both cocycles restrict to homomorphisms
	on $\Gamma_1(N)$, so are equal (modulo $\mc{V}$).
 	Set $X = X_1(N)$ (over $\Q$) and $C = X-Y$. We have an exact Gysin sequence
	$$
		0 \to H^2(X,\Z'[\tfrac{1}{N}](2)) \to H^2(Y,\Z'[\tfrac{1}{N}](2)) \to H^1(C,\Z'[\tfrac{1}{N}](1)) \to 0
	$$
	Each $V_n$ for $n$ not dividing $N$ acts on    	$
	H^1(C,\Z'[\tfrac{1}{N}](1)) \cong \mc{O}_C^{\times} \otimes_{\Z} \Z'[\tfrac{1}{N}]$.  
	Since this is torsion-free as a $\Z'[\tfrac{1}{N}]$-module, the argument of Remark \ref{Qcoeff} can be applied
	to show that no nonzero element of $H^1(C,\Z'[\tfrac{1}{N}](1)) $ is killed by all $V_n$ with $n \nmid N$. 
	From this, we see that $\mc{V} \subseteq H^2(X,\Z'[\frac{1}{N}](2))$.

	It therefore suffices
	to show that $\sum_{i=1}^k g_{\frac{d_i}{N}} \cup g_{\frac{-d_{i-1}}{N}}$ has trivial residue in 
	$\mc{O}_C^{\times} \otimes_{\Z} \Z'[\frac{1}{N}]$. 
	It follows from \cite[Lemma 3.3.12]{fk} that the tame symbol of this sum at the cusp 
	$\infty \colon \Spec \Q(\mu_N) \to X_1(N)$ has image in $\Q(\mu_N)^{\times} \otimes_{\Z} \Z'[\frac{1}{N}]$ equal to
	$$
		\prod_{i=1}^k \left(\frac{1-\zeta_N^{d_i}}{1-\zeta_N^{d_{i-1}}}\right)^{1/12} = 1,
	$$
	and similarly for the other cusps over the infinity cusp of $X_0(N)$.
	At the other, non-infinity cusps, the same lemma tells us that the residues of the individual terms 
	$g_{\frac{d_i}{N}} \cup g_{\frac{-d_{i-1}}{N}}$ are trivial.
\end{proof}

\subsubsection{Review of Siegel units} \label{reviewSiegel}

Let us consider units on the modular curve $Y(M)$ for $M \ge 3$, which is the moduli space of triples $(E,P,Q)$ with $E$
an elliptic curve and $(P,Q)$ an ordered basis of $E[M]$.  The universal
elliptic curve $\mc{E}(M)$ has two canonical order $M$ sections $\iota_{M,1}, \iota_{M,2} \colon Y(M) \to \mc{E}(M)$
corresponding to $P$ and $Q$.  For $(c,d) \in \Z^2-M\Z^2$ and $m$ prime to $\frac{M}{(c,M)} \cdot \frac{M}{(d,M)}$, let 
$$ 
	{}_m g_{\frac{c}{M},\frac{d}{M}} = (c\iota_{M,1}+d\iota_{M,2})^*({}_m\theta) \in \mc{O}_{Y(M)}^{\times} \otimes \Z[\tfrac{1}{6}],
$$
where ${}_m \theta \in \mc{E}(M)^{\times} \otimes \Z[\frac{1}{6}]$ is the theta-function defined analogously to \eqref{thetasdef}: it has zeroes of multiplicity $1$ along nonzero $m$-torsion points, and a pole of order $(m^2-1)$ at the identity section. 

Next let  
$$
	g_{\frac{c}{M},\frac{d}{M}} = {}_m g_{\frac{c}{M},\frac{d}{M}} \otimes (m^2-1)^{-1} \in \mc{O}_Y^{\times} \otimes_{\Z} \Q
$$
for any $m \equiv 1 \bmod M$ and prime to $6$, independent of the choice.  
(In fact, we may define $g_{\frac{c}{M},\frac{d}{M}}$ as an element of $\mc{O}_Y^{\times} \otimes_{\Z} \Z[\frac{1}{6M}]$.)
Then 
$$
	{}_m g_{\frac{c}{M},\frac{d}{M}} = g_{\frac{c}{M},\frac{d}{M}}^{m^2} \cdot g_{\frac{mc}{M},\frac{md}{M}}^{-1}.
$$ 
For any $m \ge 1$ and $(c,d) \in \Z^2 - M\Z^2$, the Siegel units satisfy the distribution relation 
$$
	\prod_{i = 0}^{m-1} \prod_{j=0}^{m-1} g_{\frac{c}{Mm} + \frac{i}{m}, \frac{d}{Mm} + \frac{j}{m}} =
	g_{\frac{c}{M},\frac{d}{M}}.
$$

The Siegel units ${}_m g_{0,\frac{d}{M}}$ and $g_{0,\frac{d}{M}}$ are units rationally on $Y_1(M)$.  We denote them more simply
by ${}_m g_{\frac{d}{M}}$ and $g_{\frac{d}{M}}$, respectively.

\subsubsection{Some computations with Siegel units}

Our goal here is to prove \eqref{key}.
\begin{lemma} \label{thetapullback}
	Let us consider ${}_n\theta'$ defined analogously to \eqref{thetasdef} as a rational function on the universal elliptic curve 
	$\mc{E}_n$ over the modular curve $Y_1(Nn)$.  
 	For $d \in (\Z/N\Z)^{\times}$, we have
	$$
		(d\iota_N)^*({}_n \theta') 
		=  g_{\frac{nd}{N}}^{-1} \cdot \prod_{j=0}^{n-1} g_{\frac{d}{N}+\frac{j}{n}}^n 
		\in 
		\mc{O}_{Y'}^{\times} \otimes_{\Z} \Q.
	$$ 
\end{lemma}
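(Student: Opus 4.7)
The plan is to obtain the identity by passing to $n$-th powers, where it reduces to a divisorial identity on $\mc{E}_n$.  The key observation is that the function
\[
F := \prod_{k=0}^{n-1} \tau_{kN\iota_{Nn}}^*({}_n\theta)
\]
on $\mc{E}_n$ agrees with $({}_n\theta')^n$ up to a root of unity.  Indeed, since $\mathrm{div}({}_n\theta) = n^2(0) - \mc{E}_n[n]$ and $kN\iota_{Nn} \in \mc{E}_n[n]$ (so translation by it preserves $\mc{E}_n[n]$ as a divisor), we have
\[
\mathrm{div}\bigl(\tau_{kN\iota_{Nn}}^*({}_n\theta)\bigr) = n^2(-kN\iota_{Nn}) - \mc{E}_n[n].
\]
Summing over $k = 0, \ldots, n-1$ yields $n^2\mc{K} - n\mc{E}_n[n] = n\cdot\mathrm{div}({}_n\theta')$.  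Moreover, $F$ and $({}_n\theta')^n$ are both $[m]_*$-fixed for all $m \in \mb{N}_n$, since translation by $n$-torsion commutes with $[m]$ for such $m$.  The uniqueness statement in the sequence \eqref{thetaseq}, applied over $Y_1(Nn)$ and tensored with $\Q$, then forces $F = ({}_n\theta')^n$ in $\mc{O}_{\mc{E}_n - \mc{E}_n[n]}^{\times} \otimes \Q$.

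Pulling back by $d\iota_N$ (which equals $dn\iota_{Nn}$ after pullback to $Y_1(Nn)$) and using that $\tau_P \circ (d\iota_N) = d\iota_N + P$ as sections, we compute
\[
(d\iota_N)^* F = \prod_{k=0}^{n-1} \bigl((dn+kN)\iota_{Nn}\bigr)^*({}_n\theta) = \prod_{k=0}^{n-1} {}_n g_{d/N + k/n}.
\]
Using the identity ${}_n g_{a/(Nn)} = g_{a/(Nn)}^{n^2}\cdot g_{na/(Nn)}^{-1}$ with $a = dn + kN$, and observing that $na/(Nn) = (dn+kN)/N \equiv nd/N \pmod{1}$ independently of $k$, this collapses to
\[
(d\iota_N)^* F = g_{nd/N}^{-n} \cdot \prod_{k=0}^{n-1} g_{d/N + k/n}^{n^2} = \left( g_{nd/N}^{-1} \prod_{k=0}^{n-1} g_{d/N + k/n}^{n}\right)^{n}.
\]
Combining with $(d\iota_N)^* F = \bigl((d\iota_N)^*{}_n\theta'\bigr)^n$ and extracting $n$-th roots, which is valid in the torsion-free group $\mc{O}^{\times} \otimes \Q$, yields the claimed formula.

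The main obstacle is to justify rigorously that the proportionality constant between $F$ and $({}_n\theta')^n$ is a root of unity, so that it vanishes after tensoring with $\Q$.  This amounts to an analogue of \eqref{thetaseq} for $\mc{E}_n$ over $Y_1(Nn)$: one needs that the trace-fixed units on the universal elliptic curve away from its $n$-torsion, modulo the divisor relation, reduce to locally constant torsion classes.  This follows from the same style of argument as in Kato's theorem used to produce \eqref{thetaseq}, since $H^1(\mc{E}_n,1)^{(0)}$ is concentrated in roots of unity.  Once this point is established, the rest is a direct manipulation of the defining relations among Siegel units, and no further care is needed with the auxiliary level structure beyond noting that the right-hand side descends to $\mc{O}_{Y'}^{\times} \otimes \Q$ by the $\mathrm{Gal}(Y_1(Nn)/Y')$-invariance of the product over $k$.
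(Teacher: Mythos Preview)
Your proof is correct and follows essentially the same route as the paper: both express $({}_n\theta')^n$ as the product $\prod_{j=0}^{n-1} \tau_{j\iota_n}^*({}_n\theta)$ via the uniqueness in \eqref{thetaseq}, pull back along $d\iota_N$, and simplify using ${}_n g_a = g_a^{n^2} g_{na}^{-1}$. One small imprecision: your justification that $F$ is $[m]_*$-fixed should invoke not that translation literally commutes with $[m]$ (it does not), but rather the intertwining $[m]_* \tau_Q^* = \tau_{mQ}^* [m]_*$ together with the fact that $k \mapsto mk$ permutes $\Z/n\Z$ for $m$ prime to $n$.
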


\begin{proof}
	For $\mc{E}_n \rightarrow Y_1(Nn)$, we have canonical order $N$ and order $n$ sections $\iota_N$ and $\iota_n$, respectively.
	Let $\phi \colon \mc{E}_n\to \mc{E}_n$ be translation by $\iota_n$, i.e., 
	given on the fiber over $(E,P,Q)$ with $P$ of order $N$ and $Q$ of order $n$ by $\phi(x) = x + Q$.  Then
	$$
		n^2\mc{K} - nE[n] = \sum_{j=0}^{n-1} (j\phi)^* (n^2(0)- E[n]),
	$$
	so 
	$$
		({}_n\theta')^n = \sum_{j=0}^{n-1} (j\phi)^* {}_n\theta,
	$$
	where we view ${}_n \theta$ as the theta-function with divisor $n^2(0)-E[n]$ on $\mc{E}_n$.
	Since, for $d \in (\Z/N\Z)^{\times}$, the section $d\iota_N+j\iota_n$ has order divisible by $N$, the pullback $(d\iota_N+j\iota_n)^*{}_n\theta$ is well-defined.
	We then see that
	$$
		(d\iota_N)^* (j\phi)^* {}_n \theta = (d\iota_N+j\iota_n)^* {}_n\theta
		= {}_n g_{\frac{d}{N}+\frac{j}{n}} = g_{\frac{nd}{N}}^{-1} \cdot g_{\frac{d}{N}+\frac{j}{n}}^{n^2}.
	$$
	Taking the product over $0 \le j \le n-1$ and the $n$th root gives the result.
\end{proof}

\begin{lemma} \label{explicit pullback}
	For $\gamma = \smatrix{a & b \\ c & d} \in \SL_2(\Z)$ 
	with both $c$ and $d$ relatively prime to $N$, we have 
	$$
		s^* \langle \gamma \rangle_n = V_n' (g_{\frac{c}{N}} \cup  g_{\frac{d}{N}}) \in H^2(Y,\Q(2)).
	$$
\end{lemma}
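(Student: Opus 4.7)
The plan is a direct computation that unwinds $s^*\langle\gamma\rangle_n$ into pieces matching those of $V_n'\xi$, where $\xi := g_{\frac{c}{N}}\cup g_{\frac{d}{N}}$. Starting from \eqref{1001def}, I would apply Lemma~\ref{pullbackdiagram} twice to commute $s^*$ and $\gamma^*$ past the norm: right multiplication by $\gamma$ acts fiberwise over $Y$ and the base change of $s$ along $Y' \to Y$ is a section $s'$ of $(\mc{E}')^2 \to Y'$, so both relevant squares are Cartesian. This reduces the computation to
$$
	s^*\langle \gamma \rangle_n = \{(c\iota_N)^*\theta, (d\iota_N)^*\theta\} - \Norm^{Y'/Y}\{(c\iota_N)^*\theta', (d\iota_N)^*\theta'\}.
$$
The first bracket is $\{{}_n g_{\frac{c}{N}}, {}_n g_{\frac{d}{N}}\}$. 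Using ${}_n g_{\frac{c}{N}} = g_{\frac{c}{N}}^{n^2}/g_{\frac{nc}{N}}$ together with $g_{\frac{nc}{N}} = [n]' g_{\frac{c}{N}}$, bilinearity expands it as
$$
	n^4\xi + [n]'\xi - n^2\bigl(\{[n]'g_{\frac{c}{N}}, g_{\frac{d}{N}}\} + \{g_{\frac{c}{N}}, [n]'g_{\frac{d}{N}}\}\bigr).
$$

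For the norm term, Lemma~\ref{thetapullback} writes $(c\iota_N)^*\theta' = g_{\frac{nc}{N}}^{-1}\cdot B_c^n$, where $B_c := \prod_{j=0}^{n-1} g_{\frac{c}{N}+\frac{j}{n}}$ is well-defined on $Y'$, and similarly for $d$. Bilinearity splits the Steinberg symbol into four summands. The ``doubly-pulled-back'' summand becomes $(n+1)[n]'\xi$ under $\Norm^{Y'/Y}$ via the projection formula. The two ``mixed'' summands reduce to $-n\{[n]'g_{\frac{c}{N}}, C_d\}$ and $-n\{C_c, [n]'g_{\frac{d}{N}}\}$, where $C_c := \Norm^{Y'/Y}(B_c) \in \mc{O}_Y^\times\otimes\Q$. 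Crucially, I can evaluate $C_c$ directly via a separate base-change argument: applying Lemma~\ref{pullbackdiagram} to the Cartesian diagram
$$
	\begin{tikzcd}
		Y' \ar[r,"c\iota_N"] \ar[d,"\phi"'] & \mc{E}' \ar[d,"\pi"] \\
		Y \ar[r,"c\iota_N"] & \mc{E}
	\end{tikzcd}
$$
together with the norm compatibility $\Norm^{\mc{E}'/\mc{E}}(\theta') = \theta$ from \S\ref{symboldef} gives
$$
	\Norm^{Y'/Y}\bigl((c\iota_N)^*\theta'\bigr) = (c\iota_N)^*\bigl(\Norm^{\mc{E}'/\mc{E}}\theta'\bigr) = (c\iota_N)^*\theta = {}_n g_{\frac{c}{N}}.
$$
Comparing with $(c\iota_N)^*\theta' = g_{\frac{nc}{N}}^{-1} B_c^n$ and extracting an $n$th root in the $\Q$-tensored unit group yields $C_c = g_{\frac{c}{N}}^n \cdot [n]' g_{\frac{c}{N}}$.

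Substituting the resulting $C_c, C_d$ into the two mixed summands and collecting contributions, all the symbols $\{[n]'g_{\frac{c}{N}}, g_{\frac{d}{N}}\}$, $\{g_{\frac{c}{N}}, [n]'g_{\frac{d}{N}}\}$ and the stray $[n]'\xi$-terms telescope, leaving
$$
	s^*\langle \gamma \rangle_n = n^4\xi + n[n]'\xi - n^2\,\Norm^{Y'/Y}\{B_c, B_d\}.
$$
The proof therefore reduces to the single identity
$$
	\Norm^{Y'/Y}\{B_c, B_d\} = T_n'\xi = \phi_*\psi^*\{g_{\frac{c}{N}}, g_{\frac{d}{N}}\},
$$
which, since $\Norm^{Y'/Y} = \phi_*$, amounts to $\phi_*\{B_c,B_d\} = \phi_*\{\psi^*g_{\frac{c}{N}}, \psi^*g_{\frac{d}{N}}\}$ in $H^2(Y,\Q(2))$.

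The main obstacle is this final identity. Geometrically, both $B_c$ and $\psi^*g_{\frac{c}{N}}$ are units on $Y'$ obtained by evaluating distinct theta-type functions on $\mc{E}/\mc{K}$ at the common section $c\iota_N \bmod \mc{K}$: $\psi^*g_{\frac{c}{N}}$ uses Kato's theta ${}_n\theta_{\mc{E}/\mc{K}}$ on the quotient, whereas $B_c$ uses the isogeny pushforward $(\pi_{\mc{K}})_*({}_n\theta_{\mc{E}})$, and the two differ by a rational function on $\mc{E}/\mc{K}$ with divisor supported on $(\mc{E}/\mc{K})[n]$. My approach is to show that the ratio $h_c := B_c/\psi^*g_{\frac{c}{N}}$ can be expressed, after pullback to the level-$Nn$ cover $Y_{Nn}$, as a product of individual Siegel units $g_{\frac{c}{N}+\frac{j}{n}}$ together with factors pulled back from $Y$; then an application of the distribution relation of \S\ref{reviewSiegel} combined with the projection formula shows that the discrepancy $\{\psi^*g_{\frac{c}{N}}, h_d\} + \{h_c, \psi^*g_{\frac{d}{N}}\} + \{h_c, h_d\}$ pushes forward trivially under $\phi_*$. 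Once this is in hand, assembling all contributions matches $V_n'\xi$ on the nose.
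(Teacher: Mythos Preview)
Your computation is correct and follows the same route as the paper up to the point where you reach
\[
	s^*\langle \gamma \rangle_n = n^4\xi + n[n]'\xi - n^2\,\Norm^{Y'/Y}\{B_c, B_d\},
\]
so it remains to show $\phi_*\{B_c,B_d\} = \phi_*\psi^*\{g_{\frac{c}{N}},g_{\frac{d}{N}}\}$. But your ``main obstacle'' is no obstacle at all: in fact $B_c = \psi^* g_{\frac{c}{N}}$ already as elements of $\mc{O}_{Y'}^{\times}\otimes\Q$, so $h_c = 1$ and there is nothing to cancel. The paper's proof simply asserts this identity and is otherwise the same computation you carry out.

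The source of your difficulty is a conflation of theta functions. The Siegel unit $g_{\frac{c}{N}}$ is built from ${}_m\theta$ with $m \equiv 1 \bmod Nn$ (see \S\ref{reviewSiegel}), \emph{not} from ${}_n\theta$. Your sentence ``$\psi^*g_{\frac{c}{N}}$ uses Kato's theta ${}_n\theta_{\mc{E}/\mc{K}}$ \ldots\ whereas $B_c$ uses $(\pi_{\mc{K}})_*({}_n\theta_{\mc{E}})$'' is therefore off: both sides use ${}_m\theta$. Since the isogeny $\pi_{\mc{K}} \colon \mc{E} \to \mc{E}/\mc{K}$ has degree $n$ prime to $m$, it restricts to an isomorphism on $m$-torsion, so $(\pi_{\mc{K}})_*({}_m\theta_{\mc{E}})$ has divisor $m^2(0) - (\mc{E}/\mc{K})[m]$ and inherits the trace-fixed property; by the uniqueness in \eqref{thetaseq} it equals ${}_m\theta_{\mc{E}/\mc{K}}$. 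Evaluating at $c\bar{P}$ gives
\[
	B_c = \prod_{j=0}^{n-1} {}_m\theta_{\mc{E}}(cP+jQ)^{1/(m^2-1)}
	= {}_m\theta_{\mc{E}/\mc{K}}(c\bar{P})^{1/(m^2-1)}
	= \psi^* g_{\frac{c}{N}}.
\]
With this in hand your reduction is complete and the elaborate plan involving $h_c$, the distribution relation, and the projection formula is unnecessary. (Had the underlying theta really been ${}_n\theta$, your divisor remark would be relevant, since $n$-torsion does not behave well under a degree-$n$ isogeny; but that situation never arises here.)
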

\begin{proof}
 	Note that $s^* \langle \gamma \rangle_n$ is the pullback
	of $\langle \smatrix{1&0\\0&1} \rangle_n$ by the section $(c\iota_N,d\iota_N)$. 
	We have
	\begin{equation} \label{thetatheta} 
		(c\iota_N,d\iota_N)^*({}_n\theta \boxtimes {}_n \theta) =  \frac{g_{\frac{c}{N}}^{n^2}}{g_{\frac{nc}{N}}} \cup
		\frac{g_{\frac{d}{N}}^{n^2}}{g_{\frac{nd}{N}}}.
	\end{equation}
	For the $N$-torsion section of $\iota_N \colon Y' \to \mc{E}'$ 
	(with $Y' = Y_n$ as in \S \ref{symboldef}), 
	Lemma \ref{thetapullback} tells us that
	\begin{equation} \label{theta'theta'}
		(c\iota_N,d\iota_N)^*({}_n \theta' \boxtimes {}_n \theta')
		= \frac{\prod_{i=0}^{n-1}g_{\frac{c}{N}+\frac{i}{n}}^n}{g_{\frac{nc}{N}}} \cup  
		\frac{\prod_{j=0}^{n-1} g_{\frac{d}{N}+\frac{j}{n}}^n}{g_{\frac{nd}{N}}}.
	\end{equation}
	The individual functions here are defined on $Y_1(Nn)$, but the product is defined on $Y'$. 

	Given that we have a cartesian diagram
	$$
		\begin{tikzcd}[column sep = large]
			Y' \arrow{r}{(c\iota_N,d\iota_N)} \arrow{d} & (\mc{E}')^2 \arrow{d} \\
			Y \arrow{r}{(c\iota_N,d\iota_N)} & \mc{E}^2, 
		\end{tikzcd}
	$$
	Lemma \ref{pullbackdiagram} implies that
	the norms for $Y' \to Y$ and $(\mc{E}')^2 \to \mc{E}^2$ commute with pullback by the 
	$N$-torsion section $(c\iota_N,d\iota_N)$.
	Recalling now that   
	$\langle \smatrix{1&0\\0&1} \rangle_n =  {}_n\theta \boxtimes {}_n\theta - \Norm({}_n\theta' \boxtimes {}_n\theta')$,
	we then obtain
	from \eqref{thetatheta} and \eqref{theta'theta'} that  	
	\begin{equation} \label{combined}
		(c\iota_N,d\iota_N)^* \langle \smatrix{1&0\\0&1} \rangle_n
		= \frac{g_{\frac{c}{N}}^{n^2}}{g_{\frac{nc}{N}}} \cup \frac{g_{\frac{d}{N}}^{n^2}}{g_{\frac{nd}{N}}}
		- \left( \sum_{\langle(\alpha,\beta)\rangle} 
		\frac{\prod_{i=0}^{n-1} g_{\frac{i\alpha}{n},\frac{c}{N}+\frac{i\beta}{n}}^n}
	 	{g_{\frac{nc}{N}}} \cup \frac{\prod_{j=0}^{n-1} g_{\frac{j\alpha}{n},\frac{d}{N}+\frac{j\beta}{n}}^n}
	 	{g_{\frac{nd}{N}}}  \right),
	\end{equation}
	where the first sum runs over chosen generators of the $n+1$ cyclic subgroups of order $n$ in $(\Z/n\Z)^2$.
	Note that in \eqref{combined}, we work with the cover $Y''$ of $Y$ obtained
	by the additional data of a full level $n$ structure. The group $H^2(Y,\Q(2))$ injects into $H^2(Y'',\Q(2))$ 
	under pullback, since
	$Y'' \to Y$ is finite and these groups are $\Q$-vector spaces. We can then compute the norm 
	 $\Norm({}_n\theta' \boxtimes {}_n\theta')$ by taking a sum over the actions of 
	coset representatives for $\GL_2(\Z/n\Z)$ modulo the upper triangular subgroup.
	
	We now analyze the terms of \eqref{combined}. In the second term, we have the following. 
	\begin{itemize}
	\item The numerators give $n^2 T'_n(g_{\frac{c}{N}} \cup g_{\frac{d}{N}})$. Indeed, 
	by definition, $T_n' (g_{\frac{c}{N}} \cup g_{\frac{d}{N}})$
 	is obtained by pulling back $g_{\frac{c}{N}} \cup g_{\frac{d}{N}}$ to $Y'$ along $\psi \colon (E,P,K) \mapsto (E/K,P+K)$, 
	and then taking the norm $\phi_*$   along $Y'/Y$. The pullback
	$\psi^*g_{\frac{c}{N}}$ is given by $\prod_{i=0}^{n-1} g_{\frac{c}{N} + \frac{i}{n}}$, and
	the norm is as before.
	\item 
	The cross terms are 
	\begin{eqnarray*}
		-n^2g_{\frac{nc}{N}} \cup g_{\frac{d}{N}} - ng_{\frac{nc}{N}} \cup g_{\frac{nd}{N}} &\mr{and}&
		-n^2g_{\frac{c}{N}} \cup g_{\frac{nd}{N}} - ng_{\frac{nc}{N}} \cup g_{\frac{nd}{N}}
	\end{eqnarray*}
	by the distribution relation. 
	\item
	The denominators contribute $(n+1) g_{\frac{nc}{N}} \cup g_{\frac{nd}{N}}$.  	\end{itemize}
	Subtracting this from the first term 
	and noting that $[n]'(g_{\frac{c}{N}}\cup g_{\frac{d}{N}}) = g_{\frac{nc}{N}} \cup g_{\frac{nd}{N}}$,
	we obtain
	$$
		s^*\langle \gamma \rangle_n = (n^4-n^2T'_n+n[n]') (g_{\frac{c}{N}} \cup g_{\frac{d}{N}})
		= V'_n (g_{\frac{c}{N}} \cup g_{\frac{d}{N}}).
	$$
\end{proof}

\subsection{Maps on the homology of $X_1(N)$} \label{homologymod}

We conclude by comparing our cocycle $\Theta_N$ to related ``zeta maps'' 
on the homology of modular curves.

\subsubsection{Zeta maps with $\Z'[\frac{1}{N}]$-coefficients}
 
Since $\Theta_N$ restricts to a homomorphism on $\tilde{\Gamma}_1(N)$ which is trivial on parabolic subgroups, we have the 
following analogue of Proposition \ref{maphomology}.  We note that $H^2(X_1(N),2)$ is preserved by the Hecke and diamond operators on $H^2(Y_1(N),2)$.

\begin{theorem} \label{zeta_map} 
	The map
	$$
		z_N \colon H_1(X_1(N),\Z')_+ \to H^2(X_1(N),\Z'[\tfrac{1}{N}](2)) 
	$$
	sending the image of $\vec\gamma = \{ 0 \to \gamma \cdot 0 \}$ to $\Theta_{N,\gamma}$ for all $\gamma \in \Gamma_1(N)$
	is a Hecke-equivariant homomorphism in the sense that $z_N(T_{\ell} \vec\gamma) = T'_{\ell} \cdot z_N(\vec\gamma)$ for primes 
	$\ell \nmid N$ and $z_N(\langle d \rangle \vec\gamma) = [d]' \cdot z_N(\vec\gamma)$ for $d \in (\Z/N\Z)^{\times}$.
\end{theorem}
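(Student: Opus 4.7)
The plan is to deduce this theorem from two prior results: first, the fact that the restriction of $\Theta_N$ to $\Gamma_1(N)$ lands in $H^2(X_1(N), \Z'[\tfrac{1}{N}](2))$ (Proposition \ref{Gamma1X}); second, the equality of the classes of $T_\ell \Theta_N$ and $T'_\ell \Theta_N$ on $\bGamma_0(N)$ guaranteed by Theorem \ref{canoncocyc}. The well-definedness of $z_N$ follows by the argument already used in Proposition \ref{maphomology}: since $\bGamma_1(N)/\Gamma_1(N)$ acts on $H_1(Y_1(N),\Z')$ by complex conjugation and the $\Gamma_1(N)$-action on the target is trivial, the restriction of $\Theta_N$ to $\Gamma_1(N)$ induces a homomorphism on $H_1(Y_1(N),\Z')_+$; parabolicity then forces this to factor through $H_1(X_1(N),\Z')_+$.

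For the diamond operator identity, choose $\delta \in \bGamma_0(N)$ with lower-right entry congruent to $d$ modulo $N$, so that $\langle d \rangle \vec\gamma = \vec{\delta\gamma\delta^{-1}}$ and $\delta$ acts on the target as $[d]'$. Applying the cocycle relation twice,
$$\Theta_{N,\delta\gamma\delta^{-1}} = \Theta_{N,\delta} + \delta \cdot \Theta_{N,\gamma} + (\delta\gamma) \cdot \Theta_{N,\delta^{-1}} = \Theta_{N,\delta} + [d]' \Theta_{N,\gamma} + [d]' \Theta_{N,\delta^{-1}},$$
where the trivial action of $\gamma \in \Gamma_1(N)$ was used in the second equality. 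Since $\Theta_{N,1}=0$ forces $[d]'\Theta_{N,\delta^{-1}} = -\Theta_{N,\delta}$, the outer terms cancel, and we get $z_N(\langle d \rangle \vec\gamma) = [d]' z_N(\vec\gamma)$.

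For $T_\ell$ with $\ell \nmid N$, I will combine Theorem \ref{canoncocyc} with a bookkeeping argument modeled on Theorem \ref{varpi}. Using left coset representatives $g_j$ of $\bGamma_0(N) \smatrix{\ell \\ & 1} \bGamma_0(N)$ chosen so that each has lower-right entry congruent to $1$ modulo $N$, the action of each $g_{\sigma(j)}$ on $H^2(Y_1(N),\Z'(2))$ is trivial. Exactly as in Theorem \ref{varpi}, one computes
$$(T_\ell \Theta_N)_\gamma = \sum_{j=0}^\ell g_{\sigma(j)}^* \Theta_{N,\gamma_j} = \sum_{j=0}^\ell \Theta_{N,\gamma_j} = z_N(T_\ell\vec\gamma),$$
the last equality coming from the parallel identity $T_\ell \vec\gamma = \sum_j \vec\gamma_j$ obtained by adjointness of the left coset decomposition. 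On the other hand, Theorem \ref{canoncocyc} asserts that $T_\ell \Theta_N$ and $T'_\ell \Theta_N$ are cohomologous on $\bGamma_0(N)$, and any coboundary restricted to $\Gamma_1(N)$ is identically zero since $\Gamma_1(N)$ acts trivially on the target. Hence the two cocycles induce the same homomorphism from $\Gamma_1(N)^{\mathrm{ab}}$, and in particular
$$z_N(T_\ell \vec\gamma) = (T_\ell\Theta_N)_\gamma = (T'_\ell \Theta_N)_\gamma = T'_\ell \cdot z_N(\vec\gamma).$$
The principal conceptual content sits entirely in the prerequisite results already cited; the main obstacle in the present argument is only to ensure that classes on $\bGamma_0(N)$ in $H^1(\bGamma_0(N),H^2(Y_1(N),\Z'(2)))$ transfer correctly to homomorphisms on $\Gamma_1(N)$ valued in $H^2(X_1(N),\Z'[\tfrac{1}{N}](2))$, which is handled by Proposition \ref{Gamma1X} together with the triviality of coboundaries on $\Gamma_1(N)$.
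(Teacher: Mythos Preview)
Your proposal is correct and follows essentially the same approach as the paper: the paper's proof is a terse two-sentence reference to Proposition \ref{Gamma1X} (and Proposition \ref{maphomology}) for well-definedness and to the argument of Theorem \ref{varpi} for Hecke equivariance, and you have simply unpacked those references with the appropriate details, including the coset bookkeeping and the observation that coboundaries vanish on $\Gamma_1(N)$ because the action there is trivial. Your separate treatment of the diamond operator via the cocycle relation is a mild elaboration but is exactly in the spirit of what the paper leaves implicit.
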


\begin{proof}
	The existence of a map to $H^2(X_1(N),\Z'[\frac{1}{N}](2))$ follows from Proposition \ref{Gamma1X} just as in Proposition
	\ref{maphomology}, since the induced $\tilde{\Gamma}_1(N)$-action on the latter cohomology group is trivial. 
	The  Hecke equivariance follows as in the argument of Theorem \ref{varpi}.  
\end{proof}

In \cite[Proposition 2.16]{goncharov}, Goncharov outlined a construction of an analogue of $z_N$ for $Y(N)$ via a map from a complex computing the cohomology of the modular curve $Y(N)(\C)$ to a certain ``Euler complex'' involving a Bloch group. 
In a recent preprint, Brunault \cite[Theorem 4.3]{brunault-K4} gives what amounts to an explicit construction of a well-defined
homomorphism
$$
	z_N^{\circ} \colon 
	H_1(X_1(N),C^{\circ}_1(N),\Z) \to H^2(Y_1(N),\Z[\tfrac{1}{6N}](2)), \quad [u:v]_N \mapsto g_{\frac{u}{N}} \cup g_{\frac{v}{N}},
$$
directly verifying that Steinberg symbols of Siegel units satisfy the Manin relations, improving earlier work in \cite{brunault}.

The map $z_N^{\circ}$ agrees on $H_1(X_1(N),\Z)$ with the restriction to $\Gamma_1(N)$ of the explicit map $f$ of
the proof of Proposition \ref{expformmod} (after taking its image in $K$-theory), showing it to be a homomorphism without the need to reduce modulo $\mc{V}$.  (Note that we invert $5$ in order to obtain Hecke equivariance for the $5$th Hecke operator, not to have a well-defined cocycle.) 
However, that still leaves a needed argument to show $f$ agrees with $\Theta_N$ on $\Gamma_1(N)$ to deduce the Hecke equivariance of $f$.

\subsubsection{Ordinary zeta maps with $\zp$-coefficients}

Fix a prime $p \ge 5$ dividing $N$.
Let $\mb{T}^*_N$ denote the full adjoint weight $2$ Hecke algebra for $\Gamma_1(N)$ over $\zp$ (see \eqref{adjhecke}). 
We also view it as acting via adjoint operators on 
$H^2_{\et}(Y_1(N),\qp(2))$, and let us use a superscript $\ord$ to denote the $U_p^*$-ordinary part for this action.
This $U_p^*$-ordinary part is canonically a direct summand via application of Hida's idempotent in $\mb{T}^*_N$.

In \cite[Theorem 3.3.9]{fk} (see also Lemma 5.2.5 therein), Fukaya and Kato construct the following Hecke-equivariant zeta map to the $U_p^*$-ordinary part of cohomology (or more precisely, the negative of this map precomposed with an Atkin-Lehner involution).

\begin{theorem}[Fukaya-Kato] \label{FKzeta}
	There is a $\mb{T}_N^*$-equivariant homomorphism
	$$
		z_{N,\et}^{\ord} \colon H_1(X_1(N),C_1^{\circ}(N),\zp) \to H^2_{\et}(Y_1(N),\qp(2))^{\ord}, \quad
		[u:v]_N \mapsto g_{\frac{u}{N}} \cup g_{\frac{v}{N}},
	$$
	where we identify the cup product of Siegel units with its $U_p^*$-ordinary projection.
\end{theorem}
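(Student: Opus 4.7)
The plan is to realize $z_{N,\et}^{\ord}$ as the $p$-adic étale realization of the universal cocycle $\Theta_N$ of Theorem \ref{canoncocyc}, followed by Hida's ordinary idempotent and extension from the closed modular curve to relative homology. Concretely, I would form
$$
\Theta_{N,\et}^{\ord} = e^{\ord} \circ r_{\et} \circ \Theta_N \colon \bGamma_0(N) \longrightarrow H^2_{\et}(Y_1(N),\qp(2))^{\ord},
$$
where $r_{\et}$ is the $p$-adic étale regulator and $e^{\ord}$ is the idempotent cut out by $U_p^*$. By Proposition \ref{Gamma1X} and Theorem \ref{zeta_map}, this cocycle induces a homomorphism from $H_1(X_1(N),\zp)_+$ with values in $H^2_{\et}(Y_1(N),\qp(2))^{\ord}$, and Theorem \ref{canoncocyc} ensures its compatibility with the pairs $(T_\ell,T'_\ell)$ and $([\ell]^*,[\ell]')$ for primes $\ell \nmid N$.

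The crucial point for extending to relative homology is that Proposition \ref{expformmod} becomes an equality on the nose after ordinary projection. Indeed, $\mc{V} = \bigcap_{n\nmid N} \ker V'_n$ inside $H^2(Y_1(N),\Q(2))$, and by Proposition \ref{Vgenerate} the $V_\ell$ for primes $\ell \nmid N$ generate the unit ideal of the Hecke algebra $\mb{T}$ with $\Z'[\frac{1}{N}]$-coefficients; this generation descends to the quotient Hecke algebra acting on $H^2_{\et}(Y_1(N),\qp(2))^{\ord}$, so $\mc{V}$ maps to zero there. With the explicit Siegel-unit formula then exact in our target, I would \emph{define}
$$
z_{N,\et}^{\ord}([u:v]_N) = e^{\ord}(g_{u/N} \cup g_{v/N})
$$
on all Manin symbols $[u:v]_N$ with $u,v$ both nonzero mod $N$, and verify well-definedness by checking the Manin relations \eqref{maninsymbol} directly at the level of Steinberg symbols of Siegel units, following Brunault's argument in \cite{brunault-K4}. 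That the resulting map agrees with $\Theta_{N,\et}^{\ord}$ on $H_1(X_1(N),\zp)_+$ is then immediate from the now-exact formula of Proposition \ref{expformmod}.

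Hecke equivariance for $T_\ell^*$ and $\langle \ell \rangle^*$ with $\ell \nmid N$ follows from Theorem \ref{canoncocyc} together with \eqref{compareEis}, exactly as in the proof of Theorem \ref{varpi}. The main obstacle will be equivariance under the bad Hecke operators $U_\ell^*$ for $\ell \mid N$, and especially under $U_p^*$: our construction of $\Theta_N$ by pullback along the $N$-torsion section is engineered for prime-to-level Hecke operators and does not interact naturally with the bad ones. A natural route to proceed would be to compute the action of $U_\ell^*$ directly on the Siegel-unit formula using the distribution relations for $g_{\star}$ along $Y_1(N\ell) \to Y_1(N)$ and match this against the coset action of $U_\ell^*$ on the Manin-symbol side; executing this matching (particularly at $\ell = p$, where one must additionally verify that the result lies in the ordinary summand) is precisely what the delicate $p$-adic regulator computation of Fukaya-Kato accomplishes, and which our motivic construction would need to reproduce.
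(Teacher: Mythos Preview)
This theorem is not proved in the paper; it is stated as a result of Fukaya and Kato \cite[Theorem 3.3.9]{fk}, and the only ``proof'' the paper offers is the sentence immediately following the statement: the proof is quite involved and uses a $p$-adic regulator computation for norm-compatible Beilinson--Kato elements up the cyclotomic tower. So there is no proof in the paper to compare your proposal against.

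That said, your proposal is not a proof either, and you say so yourself in the final paragraph. Everything up to the last paragraph is fine and matches what the paper's machinery actually delivers: you recover prime-to-level Hecke equivariance from Theorem~\ref{canoncocyc} and Theorem~\ref{zeta_map}, you correctly note that $\mc{V}$ dies in the \'etale realization (the paper makes exactly this point just after Theorem~\ref{FKzeta}), and you can get well-definedness on Manin symbols via Brunault. But the assertion of Theorem~\ref{FKzeta} is full $\mb{T}_N^*$-equivariance, which includes $U_\ell^*$ for $\ell \mid N$. Your last paragraph concedes that this step would ``need to reproduce'' Fukaya--Kato's regulator computation. The paper is explicit that its methods do not reach this: see the remark after Proposition~\ref{p-adic_zeta}, ``We remark that we do not show that $z_N^{\ord}$ is equivariant for $p$th Hecke operators\ldots\ It would be interesting to prove this.'' So the gap you identify is real and is precisely the content of Fukaya--Kato's theorem that goes beyond what the paper establishes; your proposal is a correct reduction of the problem to the at-level operators, not a proof.
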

	
The proof of Theorem \ref{FKzeta} is quite involved but in particular uses a $p$-adic regulator computation of the values of a related map taken up the cyclotomic tower, which are norm-compatible sequences of Beilinson-Kato elements in Iwasawa cohomology.\footnote{They in fact obtain a map to the subgroup given by the cohomology of the integral model $Y_1(N)_{/\Z[\frac{1}{N}]}$. It is also possible to see our zeta maps are similarly valued in the motivic cohomology of $X_1(N)_{/\Z[\frac{1}{N}]}$, for instance using explicit formulas for ${}_n \Theta_N$.}

The restriction of the ordinary zeta map $z_{N,\et}^{\ord}$ 
to $H_1(X_1(N),\zp)$ is the \'etale realization of the zeta map $z_N$ of Theorem \ref{zeta_map}. That is, the explicit formula for $z_N(\vec\gamma) = \Theta_{N,\gamma}$ given in Theorem \ref{expformE2} agrees in its \'etale realization with that of $z_{N,\et}^{\ord}$.
To see this, note that the group $\mc{V}$ providing the ambiguity in the explicit formula for $\Theta_N$ of Theorem \ref{expformE2} vanishes in the \'etale realization, since the prime-to-level Hecke operators on $H^2_{\et}(Y_1(N),\qp(2))$ factor through the $\zp$-Hecke algebra of weight $2$ modular forms, where each $V'_{\ell}$ has trivial kernel (see Remark \ref{Qcoeff}).

\begin{remark}
	The operators $T'_{\ell}$ on $H^2(Y,2)$ defined in \S \ref{T'Hecke} arise from the composition of the operators $[\ell]'$ and the
	dual (or adjoint) Hecke operators $T_{\ell}^*$ (or $T(\ell)^*$) in \cite[1.2.3]{fk}. On \'etale cohomology, where we know that their
	actions factor through the usual weight $2$ Hecke algebra, we have that $T'_{\ell}$ acts as $T_{\ell} = \langle \ell \rangle T_{\ell}^*$.
	So, the Hecke-equivariance at prime-to-level operators in Theorem \ref{zeta_map} matches that of Theorem \ref{FKzeta}.	
\end{remark}

\begin{remark} 
	Jun Wang \cite[\S 5.1]{wang} (see also \cite[Theorem 3.7]{lw}) proved the analogue of Theorem \ref{FKzeta} 
	for $p \nmid N$, in which case one 
	need not take ordinary parts. His map is shown to take values in the quotient of 
	$H^2_{\et}(Y_1(N),\zp(2))$ by the finite subgroup $H^2_{\et}(\Z[\frac{1}{Np}],\zp(2))$.  The $p$-adic \'etale realization of our map
	$z_N$ takes image in $H^2_{\et}(Y_1(N),\zp(2))$ and induces Wang's map in the quotient.
\end{remark}

In \cite{fks}, it is shown that if $p \nmid \varphi(N)$, then there exists an integral version of $z_N$ to the primitive part of $H^2_{\et}(Y_1(N),\zp(2))^{\ord}$ for the action of $(\Z/N\Z)^{\times}$
by diamond operators, after excluding the $\omega^{-2}$-eigenspace
for $(\Z/p\Z)^{\times}$.  Let us describe a motivic version of this, without some of these assumptions.

Let  $\iota \colon (\Z/p\Z)^{\times} \to (\Z/N\Z)^{\times}$ denote the canonical map splitting reduction modulo $p$.  We have an idempotent
$$
	\varepsilon = 1- \frac{1}{p-1} \sum_{a = 1}^{p-1} \omega^2(a)\iota(a) \in \zp[(\Z/N\Z)^{\times}].
$$
This idempotent applied to $H^2(X_1(N),\zp(2))$ serves to remove the 
$\omega^{-2}$-eigenspace of $(\Z/p\Z)^{\times}$, where $a \in (\Z/p\Z)^{\times}$ acts as $[\iota(a)]'$.

\begin{proposition} \label{p-adic_zeta}
	There exists a unique homomorphism
	$$
		z_N^{\ord} \colon H_1(X_1(N),\zp)_+ \to \varepsilon \cdot H^2(X_1(N),\zp(2))
	$$
	which 
	\begin{itemize}
		\item factors through the $U_p$-ordinary projection $H_1(X_1(N),\zp)_+ \to H_1(X_1(N),\zp)_+^{\ord}$ and
		\item satisfies $z_N^{\ord}(V_n\vec\gamma) = \varepsilon \cdot {}_n\Theta_{N,\gamma}$ for all primes $n \nmid N$
		and $\gamma \in \Gamma_1(N)$ with $\vec\gamma \in H_1(X_1(N),\zp)^{\ord}$.
	\end{itemize}
	It is Hecke equivariant for the prime-to-level Hecke operators in the sense of Theorem \ref{zeta_map}.
\end{proposition}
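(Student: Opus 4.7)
The plan is to mirror the construction of Theorem \ref{canoncocyc} in the $p$-adic ordinary setting, with the idempotent $\varepsilon$ providing just enough room to run the argument integrally. The key input is an integral refinement of Proposition \ref{Vgenerate} adapted to the ordinary Hecke algebra: \emph{the operators $V_n$ for primes $n \nmid N$ generate the unit ideal of the $U_p^*$-ordinary Hecke algebra acting on $\varepsilon \cdot H^2(X_1(N),\zp(2))^{\ord}$.} Granting this, the rest of the construction proceeds formally.

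To establish this key claim, I would argue by contradiction along the lines of Proposition \ref{Vgenerate}. A failure produces a maximal ideal of the ordinary Hecke algebra and an associated continuous semisimple Galois representation $\rho \cong \omega^2 \oplus \omega^{-1}\nu$ over a finite extension of $\F_p$. Because we are in the $U_p^*$-ordinary part, $\rho$ must come from an ordinary weight $2$ mod $p$ eigensystem, so standard local theory forces $\rho|_{I_p}$ to be an extension of $1$ by $\omega \cdot \chi|_{I_p}$, where $\chi$ is the associated diamond character. Matching this with the shape $\omega^2 \oplus \omega^{-1}\nu|_{I_p}$ (and using $\chi = \nu$ forced by the determinant computation) imposes a precise constraint on $\chi|_{I_p}$ which, after tracking the relationship between the character acting on $H^2(X_1(N),\zp(2))$ via $\iota((\Z/p\Z)^{\times})$ and the diamond character of the eigensystem, places the eigensystem in the $\omega^{-2}$-eigenspace for $\iota$. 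The idempotent $\varepsilon$ is designed precisely to kill this eigenspace, yielding the desired contradiction.

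With the key claim in hand, choose primes $n_1,\dots,n_t \nmid N$ and elements $r_i$ in the ordinary Hecke algebra with $\sum_i r_i V_{n_i} = 1$. For each $i$, Proposition \ref{Gamma1X} (completed at $p$) together with the recipe of Proposition \ref{maphomology} produces a Hecke-equivariant homomorphism ${}_{n_i}z_N \colon H_1(X_1(N),\zp)_+ \to H^2(X_1(N),\zp(2))$ attached to ${}_{n_i}\Theta_N$. Define
$$
z_N^{\ord}(x) = \sum_{i=1}^t r_i \cdot \varepsilon \cdot {}_{n_i}z_N(x_{\ord}),
$$
where $x_{\ord}$ denotes the $U_p^*$-ordinary projection of $x$. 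Independence of the choice of $(r_i, n_i)$, the defining property $z_N^{\ord}(V_n \vec\gamma) = \varepsilon \cdot {}_n \Theta_{N,\gamma}$, and uniqueness all follow as in the proof of Theorem \ref{canoncocyc} from the $p$-adic analog of Lemma \ref{comparetheta}, which asserts $V_n \cdot {}_\ell z_N = V_\ell \cdot {}_n z_N$ as cohomology classes. Hecke equivariance for prime-to-$N$ operators is inherited from the individual ${}_{n_i}z_N$'s via Theorem \ref{zeta_map} together with the commutation of prime-to-$N$ Hecke operators with each $V_n$, with $\varepsilon$, and with Hida's idempotent (cf.\ Lemma \ref{formality}).

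The main obstacle will be the key claim, specifically the character bookkeeping. One must track carefully how the $\omega^2$ in the definition of $\varepsilon$ matches the constraint coming from the shape of $\rho|_{I_p}$ for an ordinary weight $2$ eigensystem, taking into account the normalization of the determinant in the Galois representation attached to the Hecke algebra (which in Proposition \ref{Vgenerate} matches $\langle \ell \rangle$ rather than $\ell \langle \ell \rangle$), together with any twists arising in the passage between the cohomological action of $\iota$ on $H^2(X_1(N),\zp(2))$ and the nebentype of a modular form. A direct computation of $V_\ell$-eigenvalues on ordinary Eisenstein series $E_{\chi_1,\chi_2}$ (which factor as $\ell(\ell - \chi_2(\ell))(\ell^2 - \chi_1(\ell))$) provides a concrete sanity check: the bad factor corresponds to $\chi_2 \equiv \omega$ or $\chi_1 \equiv \omega^2$ mod $p$, and exactly one of these survives ordinarity, giving the eigenspace that $\varepsilon$ must kill.
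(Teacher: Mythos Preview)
Your proposal is correct and follows the same overall architecture as the paper: prove that the $V_n$ generate the unit ideal of the relevant ordinary Hecke algebra by a Galois-representation contradiction, then build $z_N^{\ord}$ as a $\mb{T}_M$-linear combination of the ${}_{n_i}\Theta_N$ exactly as in Theorem \ref{canoncocyc}.

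The one substantive difference lies in how you pin down $\rho|_{I_p}$ in the contradiction step. You appeal directly to the standard shape of an ordinary weight $2$ mod $p$ representation, namely $\rho|_{I_p}^{\mathrm{ss}} \cong (\omega \cdot \chi|_{I_p}) \oplus 1$ with $\chi$ the nebentype. The paper instead takes a detour: it uses Hida theory \cite[Theorem 1.2]{hida} to descend from level $N$ to level $N'p$ (with $N'$ the prime-to-$p$ part of $N$), then Ohta's result \cite[Proposition 1.3.5]{ohta} to realize the eigensystem in weight $j+2$ at level $N'$ (where $\omega^j$ is the $(\Z/p\Z)^{\times}$-part of the nebentype), and finally invokes Edixhoven \cite[Theorem 2.5]{edixhoven} to get $\rho|_{I_p} \simeq \omega^{j+1} \oplus 1$. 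Matching this against $\rho \simeq \omega^{-1} \oplus \omega^{j+2}$ (the paper also passes to the adjoint operators $V_{\ell}^*$, which accounts for the sign flip relative to your $\omega^2 \oplus \omega^{-1}\nu$) forces $j \equiv -2$, killed by $\varepsilon$. The paper's detour buys a clean reduction to the prime-to-$p$-level situation where the cited theorems apply verbatim, sidestepping any concern about the local theory when $p^2 \mid N$; your direct route is shorter but needs the ordinary shape at arbitrary $p$-power level, which is true but less commonly stated in this form. Your Eisenstein sanity check and the character bookkeeping you flag are exactly the right concerns, and they resolve as you indicate.
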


\begin{proof}
	The proof mirrors that of Proposition \ref{Vgenerate}. 
	Consider the $\zp$-algebra of endomorphisms $\mb{T}_M$ generated 
	by the Hecke operators of $T_{\ell}$ for $\ell \nmid N$, $U_{\ell}^*$ for $\ell \mid N$, and $[d]'$ for $d \in (\Z/N\Z)^{\times}$ acting on 
	$H^1(\bGamma_1(N),M) = H^1(\Gamma_1(N),M)_+$. The $U_p^*$-ordinary part $\mb{T}_M^{\ord}$ of this Hecke algebra 
	acts on $H^1(\bGamma_1(N),M)^{\ord} \cong \Hom(H_1(Y_1(N),\Z)_+^{\ord},M)$.
	
	First, we note this Hecke algebra $\mb{T}_M^{\ord}$  
	is a quotient of the Hecke algebra $\mb{T}^{\ord}$ for $U_p$-ordinary modular forms of weight $2$ for 
	$\Gamma_1(N)$ that is generated by these operators,
	by a map taking an operator to its adjoint, i.e., via the map that sends $T_{\ell}^*$  
		to $T_{\ell}$, $U_{\ell}$ to $U_{\ell}^*$, and $\langle d \rangle^{-1}$ to $[d]'$ (see \S \ref{eichshim}).
	For the direct summand 
	$\varepsilon \cdot H^1(\bGamma_1(N), M)^{\ord} = H^1(\bGamma_1(N),\varepsilon \cdot M)^{\ord}$, 
	the corresponding Hecke algebra is a quotient of $\varepsilon \cdot \mb{T}^{\ord}$.

	We claim that the operators $V_{\ell}^* = 
	\ell( \ell^3 - \ell\langle \ell \rangle^{-1}T_{\ell} + \langle \ell \rangle^{-1})$ generate $\varepsilon \cdot \mb{T}^{\ord}$,
	which will tell us that the operators $V_{\ell}$ generate $\mb{T}_M^{\ord}$.
	Suppose they do not. We then have a nonzero homomorphism $\phi \colon \varepsilon \cdot \mb{T}^{\ord} \to F$ 
	to an algebraically closed field $F$ of characteristic $p$ such that $V_{\ell}^* \in \ker \phi$ for all $\ell \nmid N$. 
	
	Let $N'$ be the prime-to-$p$ part of $N$.  Hida theory (see \cite[Theorem 1.2]{hida})
	provides a $U_p$-ordinary eigenform $f$ in $M_2(\Gamma_1(N'p),F)^{\ord}$ such that $\phi(T_{\ell})$ for
	$\ell \nmid N$ and $\phi(U_{\ell})$ for $\ell \mid N$ is its $\ell$th Fourier coefficient $a_{\ell}(f) \in F$.
	Let $\omega^j$ for $1 \le j \le p-1$ be the restriction of the 
	Nebentypus of $f$ to $(\Z/p\Z)^{\times}$,
	where $\omega$ denotes the mod $p$ cyclotomic character.
	A result of Ohta \cite[Proposition 1.3.5]{ohta} implies that
	$f$ arises from an eigenform $f'$ in the $T_p$-ordinary part of $M_{j+2}(\Gamma_1(N'),F)$
	with $a_{\ell}(f) = a_{\ell}(f')$ for $\ell \neq p$.
	
	As in the proof of Proposition \ref{Vgenerate}, to $f'$ we may associate a semisimple Galois representation 
	$\rho \colon G_{\Q} \to \GL_2(F)$ satisfying 
	$\rho|_{I_p} \simeq \omega^{j+1} \oplus 1$ (again by \cite[Theorem 2.5]{edixhoven}).
	On the other hand, since
	$\ell T_{\ell} - 1 - \ell^3 \langle \ell \rangle \in \ker \phi$
	for all $\ell \nmid Np$ by assumption, we must have
	$$  
		\ell \Tr \rho(\varphi_{\ell}) = 1 + \ell^{j+3}\chi(\ell) \in F
	$$
	for some $F$-valued Dirichlet character $\chi$ of modulus $N'$,
	where $\varphi_{\ell}$ denotes the Frobenius at $\ell$. By \v{C}ebotarev density, we then have
	$\rho \simeq \omega^{-1} \oplus \omega^{j+2}\chi$.  
	This in turn forces $j = -2$, but the $\omega^{-2}|_{I_p}$-eigenspace
	 of $\varepsilon \cdot \mb{T}^{\ord}$ is trivial. Thus, we have the necessary 
	contradiction.
	
	Since the operators $V_{\ell}$ generate $\mb{T}_M^{\ord}$, as in the proof of Theorem \ref{canoncocyc} we may 
	construct a $U_p^*$-ordinary parabolic cocycle $\Theta_N^{\ord} \colon \bGamma_1(N) \to \varepsilon \cdot M$
	as a $\mb{T}_M$-linear combination of the restrictions of the cocycles ${}_n \Theta_N$ to $\bGamma_1(N)$,
	where the coefficients sum to Hida's ordinary idempotent in $\mb{T}_M$.
	The class of $V_n \Theta_N^{\ord}$ for a prime $n \nmid N$ is the ordinary projection of the class of ${}_n \Theta_N$.
	This in turn gives rise to the homomorphism $z_N^{\ord}$ in the statement of the proposition. In particular, note that 
	its image lands in $H^2(X_1(N),\zp(2))$ via the argument of Proposition \ref{Gamma1X}.
\end{proof}

We remark that we do not show that $z_N^{\ord}$ is equivariant for $p$th Hecke operators, as prior to this point we only considered 
prime-to-level operators on our cocycles.  It would be interesting to prove this.
Passing to \'etale cohomology, the explicit formula for $\Theta_{N,\gamma}$ 
of Theorem  \ref{expformE2} holds in $H^2_{\et}(Y,\qp(2))$ without ambiguity, since $\mc{V}$ vanishes there.  From this, we see that the $\qp$-linear extension of the $p$-adic \'etale realization of our ordinary zeta map $z_N^{\ord}$ induces $\varepsilon$ applied to the restriction of the zeta map $z_{N,\et}^{\ord}$ of Fukaya-Kato to $H_1(X_1(N),\zp)$. This is Hecke-equivariant for the full Hecke algebra by Theorem \ref{FKzeta}.

\end{document}